\newtheorem{theorem}{Theorem}[section]
\newtheorem{lemma}[theorem]{Lemma}
\newtheorem{definition}[theorem]{Definition}
\newtheorem{Remark}[theorem]{Remark}
\numberwithin{equation}{section}
\title{Time optimal sampled-data  controls for  heat equations}
\author{Gengsheng Wang\thanks{School of Mathematics and Statistics, Wuhan University, Wuhan, 430072, China (wanggs62@yeah.net). The  author was partially supported by the National Natural Science Foundation of China
under grant 11571264.}
\and Donghui Yang\thanks{School of Mathematical
Sciences, Central South  University, Changsha, 410075, China
(donghyang@139.com). The  author was partially supported by the National Natural Science Foundation of China
under grant 11371375.}
\and Yubiao Zhang\thanks{Center for Applied Mathematics, Tianjin University, Tianjin, 300072, China (yubiao\b{ }zhang@whu.edu.cn).} }
\begin{document}

\date{ }
\maketitle

\begin{abstract}
In this paper, we first design a time optimal control problem for
the heat equation with sampled-data controls, and then use it to approximate
a time optimal control problem for the heat equation with distributed controls.
Our design  is reasonable from perspective of sampled-data controls. And
it
might provide a right way  for the numerical approach of a time optimal distributed control problem, via the corresponding semi-discretized (in time variable) time optimal control problem.

The study of such a time optimal sampled-data control problem is not easy, because it
may have infinitely many optimal controls. We find connections among this problem, a minimal norm sampled-data control problem and a minimization problem. And obtain some properties on these problems.
Based on these,  we not only build up error estimates for optimal time and optimal controls between the time optimal sampled-data control problem and the time optimal distributed control  problem, in terms of  the sampling period,
but also prove that such estimates  are {\it optimal} in some sense.

\end{abstract}

\noindent\textbf{Keywords.} Sampled-data controls, time  optimal control, the heat equation, error estimates\\

\noindent\textbf{2010 Mathematics Subject Classifications.}
35K05, 
49J20, 
93C57 

\bigskip

\section{Introduction}

\subsection{Motivation and problems}

In most  published literature on time optimal control problems,  controls
are distributed in time, i.e., they can vary at each instant of time.
However,
 in practical application, it is more convenient to use  controls which vary only finite times.  Sampled-data controls are such  kind of  controls.
  In this paper, we will design and study  a time optimal control problem for
the heat equation with sampled-data controls. And then we use it to approximate
a   time optimal control problem for the heat equation with distributed controls,
through building up several  error estimates for optimal time and optimal controls between these two problems, in terms of  the sampling period.
Such errors estimates have laid  foundation for us  to replace distributed
controls by sampled-data controls in time optimal control problems for heat equations.

Throughout this paper, $\mathbb{R}^+\triangleq(0,\infty)$; $\Omega\subset \mathbb R^d$ ($d\in\mathbb N^+\triangleq\{1, 2, \dots\}$)  is a bounded domain with a $C^2$ boundary $\partial\Omega$;
$\omega\subset\Omega$ is an open and nonempty subset with its characteristic function $\chi_\omega$; $\lambda_1$ is the first eigenvalue of $-\Delta$ with the homogeneous Dirichlet boundary condition over $\Omega$; $B_r(0)$ denotes the
 closed ball in $L^2(\Omega)$, centered at $0$ and  of radius  $r>0$; for each measurable set $\mathcal{A}$ in $\mathbb{R}$, $|\mathcal{A}|$ denotes its Lebesgue measure;
$\langle \cdot,\cdot \rangle$ and $\|\cdot\|$ denote the usual inner product and    norm of $L^2(\Omega)$, respectively;  $\langle \cdot,\cdot \rangle_\omega$ and $\|\cdot\|_\omega$ stand for the usual inner product and  norm in $L^2(\omega)$, respectively.

First, we introduce a  time optimal distributed control problem for the heat equation. Take  $B_r(0)$ (with
$r>0$ arbitrarily fixed) as our target. For each  $M\geq 0$ and $y_0\in L^2(\Omega)\setminus B_r(0)$,
consider the following time optimal distributed control problem:
\begin{eqnarray}\label{time-1}
 (TP)^{M,y_0}:\;\;T(M,y_0)=\inf\left\{\hat t>0
 \; :\;
 \exists\,\hat u\in \mathcal U^M
 \;\;\mbox{s.t.}\;\;
 y(\hat t;y_0,\hat u)\in B_r(0)\right\},
\end{eqnarray}
where
\begin{eqnarray*}\label{U-ad}
\mathcal U^M\triangleq
\left\{ u\in L^2(\mathbb R^+;L^2(\Omega))
\; :\;
\|u\|_{L^2(\mathbb R^+;L^2(\Omega))}\leq M \right\},
\end{eqnarray*}
and where  $y(\cdot;y_0,u)$  is the solution to the following distributed controlled heat  equation:
\begin{eqnarray}\label{heat-1}
\left\{
\begin{array}{lll}
\partial_t y-\Delta y=\chi_\omega u &\mbox{in}&\Omega\times \mathbb R^+,\\
y=0 &\mbox{on}&\partial \Omega\times\mathbb R^+,\\
y(0)=y_0 &\mbox{in}&\Omega.
\end{array}
\right.
\end{eqnarray}
  Since $y(t;y_0,0)\rightarrow 0$ as $t\rightarrow\infty$, we find that  $T(M,y_0)<\infty$, for all  $M\geq 0$ and $y_0\in L^2(\Omega)\setminus B_r(0)$.
About $(TP)^{M,y_0}$,
we introduce some concepts in the following definition:
\begin{definition}\label{Yuanyuanwangdefinition1.1}
(i) The number $T(M,y_0)$ is called the optimal time; $\hat u\in \mathcal U^M$ is called an admissible control if $y(\hat t;y_0,\hat u)\in B_r(0)$ for some $\hat t>0$;  $u^*\in \mathcal U^M$ is called an optimal control  if $y(T(M,y_0);y_0,u^*)\in B_r(0)$. (ii) Two optimal controls are said to be different (or the same),
if they are different (or the same)  on their effective domain $\big(0,T(M,y_0)\big)$.
      \end{definition}
Several  notes on the problem $(TP)^{M,y_0}$ are given in order:
\begin{itemize}
    \item   It is shown  in Theorem \ref{Lemma-existences-TP} that for each $M\geq0$
    and   $y_0\in L^2(\Omega)\setminus B_r(0)$, $(TP)^{M,y_0}$ has a unique optimal control.
  \item In many time optimal distributed control problems for  heat equations, controls are taken from $L^\infty(\mathbb{R}^+;L^2(\Omega))$. However, the current setting is also significant (see, for instance,   \cite{GL} and \cite{WXZ}).
\end{itemize}

Next, we are  going to design a time optimal sampled-data control problem for the heat equation. For this purpose, we  define the
 following  space of sampled-data controls (where $\delta>0$ is arbitrarily fixed):
\begin{eqnarray}\label{heat-Linfty-delta}
 L_{\delta}^2(\mathbb R^+;L^2(\Omega))\triangleq\Big\{u_\delta \in L^2(\mathbb R^+;L^2(\Omega))
 \;:\;
 u_{\delta}\triangleq \sum_{i=1}^\infty\chi_{((i-1)\delta,i\delta]} u^i,
 \;\{u^i\}_{i=1}^\infty\subset L^2(\Omega) \Big\},
\end{eqnarray}
endowed with the $L^2(\mathbb R^+;L^2(\Omega))$-norm.
Here and in what follows, $\chi_{((i-1)\delta,i\delta]}$ denotes the characteristic function of the interval $\big((i-1)\delta,i\delta\big]$ for each $i\in\mathbb N^+$. The numbers $\delta$,\,$2\delta$,\,$\ldots$,\,$i\delta$,\,$\ldots$ are called the sampling instants, while $\delta$ is called the sampling period.
Each   $u_{\delta}$  in the  space $L_{\delta}^2(\mathbb R^+;L^2(\Omega))$ is called a sampled-data control.
 For each $u_{\delta}\in L_{\delta}^2(\mathbb R^+;L^2(\Omega))$ and each $y_0\in L^2(\Omega)$, write $y(\cdot;y_0,u_\delta)$ for the solution to the following sampled-data controlled heat equation:
\begin{equation}\label{heat-2}
\left\{
\begin{array}{lll}
\partial_t y-\Delta y=\chi_\omega u_{\delta} &\mbox{in}&\Omega\times \mathbb R^+,\\
y=0 &\mbox{on}&\partial \Omega\times \mathbb R^+,\\
y(0)=y_0 &\mbox{in}&\Omega.
\end{array}
\right.
\end{equation}
For each  $M\geq0$, $y_0\in L^2(\Omega)\setminus B_r(0)$ and each $\delta>0$,
 consider the following time  optimal sampled-data control problem:
\begin{eqnarray}\label{time-2}
 (TP)_{\delta}^{M,y_0}:\;\;\;T_\delta(M,y_0)=\inf\left\{ k\delta
 \;:\;
 \exists\; k\in\mathbb N^+,
 \exists\,u_\delta\in \mathcal U^M_\delta
 \;\mbox{s.t.}\;
  y(k\delta;y_0,u_\delta)\in B_r(0) \right\},
\end{eqnarray}
where
\begin{eqnarray}\label{U-ad-delta}
\mathcal U^M_\delta\triangleq
\left\{ u_\delta\in L_\delta^2(\mathbb R^+;L^2(\Omega))
\;\; :\;\;
\|u_\delta\|_{L^2(\mathbb R^+;L^2(\Omega))} \leq M \right\}.
\end{eqnarray}
 Since $y(t;y_0,0)\rightarrow 0$ as $t\rightarrow\infty$, we see  that
 $T_\delta(M,y_0)<\infty$, for all  $M\geq0$, $y_0\in L^2(\Omega)\setminus B_r(0)$ and  $\delta>0$.
With respect to $(TP)_{\delta}^{M,y_0}$, we introduce some concepts in the following definition:
 \begin{definition}\label{wgsdefinition1.1}
 (i) The number $T_\delta(M,y_0)$ is called  the optimal time; $u_{\delta}\in \mathcal U^M_\delta$ is called  an admissible control if $y(\hat k\delta;y_0,u_{\delta})\in B_r(0)$ for some $\hat k\in\mathbb N^+$;  $u_{\delta}^*\in \mathcal U^M_\delta$ is called an optimal control if $y(T_{\delta}(M,y_0);y_0,u_{\delta}^*)\in B_r(0)$.
 (ii)  A control $u^*_{\delta}$ is called the optimal control with the minimal norm, if $u^*_{\delta}$ is an optimal control and satisfies that
$\|u^*_{\delta}\|_{L^2(0, T_\delta(M,y_0);L^2(\Omega))}
\leq
\|v^*_{\delta}\|_{L^2(0, T_\delta(M,y_0);L^2(\Omega))}$
for any optimal control $v^*_{\delta}$.
(iii) Two optimal controls are said to be different (or the same), if they are different
(or the same) over $(0, T_{\delta}(M,y_0))$.
\end{definition}

 Several notes on this problem are given in order:
\begin{itemize}
      \item  The optimal time $T_\delta(M,y_0)$ is a multiple of $\delta$ (see (\ref{time-2})).  For each $M\geq0$, each $y_0\in L^2(\Omega)\setminus B_r(0)$ and each $\delta>0$, $(TP)_\delta^{M,y_0}$  has
  a unique optimal control with the minimal norm (see (ii) in  Theorem \ref{Lemma-existences-TP}); Given $y_0\in L^2(\Omega)\setminus B_r(0)$, there are infinitely many pairs  $(M,\delta)$ so that  $(TP)_\delta^{M,y_0}$  has infinitely  many different optimal controls (see
  Theorem \ref{Lemma-nonunique-TP}).
  \item We may  design a time optimal sampled-data control problem
   in another way: To find a control $u_\delta^*$ in $\mathcal{U}_\delta^M$ so that $y(\cdot; y_0,u_\delta^*)$ enters $B_r(0)$ in the shortest time $\hat T_\delta(M,y_0)$ (which may not be a multiple of $\delta$). We denote  this problem by $(\widehat{TP})_{\delta}^{M,y_0}$.  Several reasons for us to design  time optimal sampled-data control problem to be
      $(TP)_{\delta}^{M,y_0}$ are as follows: (i) Each sampled-data control
      $u_\delta$ has the form:  $\sum_{i=1}^\infty\chi_{((i-1)\delta,i\delta]}  u^i$ with some $\{u^i\}_{i=1}^\infty\subset L^2(\Omega)$.
      From the perspective of sampled-data controls, each $u^i$ should be active in the whole subinterval $((i-1)\delta, i\delta]$. Thus, our definition for   $T(M,y_0)$ is reasonable.  (ii)
       In the definition $(\widehat{TP})_{\delta}^{M,y_0}$,
         in order to make sure if the control process should be finished,
         we need to observe the solution (of the controlled equation) at each time.
         However, in our definition $(TP)_{\delta}^{M,y_0}$, we only need to observe the solution at time points $i\delta$, with $i=1,2,\dots$. (iii) Our  design on  $(TP)_{\delta}^{M,y_0}$ might provide a right way to approach numerically $(TP)^{M,y_0}$ via a discretized time optimal control problem. For instance,
         if we semi-discretize $(TP)^{M,y_0}$ in time variable, then our
          design on  $(TP)_{\delta}^{M,y_0}$ can be borrowed to define a semi-discretized (in the time variable) time optimal control problem.

\end{itemize}

%
%

\subsection{Main results}
The main results of this paper are presented in the following three  theorems.

\begin{theorem}\label{Theorem-time-order}
Let $y_0\in L^2(\Omega)\setminus B_r(0)$. Then the following conclusions are true:

(i) For each $(M_1, M_2)$ with  $0<M_1\leq M_2$, there is  $\delta_0\triangleq \delta_0(M_1,M_2,y_0,r)>0$ so that
\begin{eqnarray}\label{TP-order-0}
0\leq T_\delta(M,y_0)-T(M,y_0)\leq   2\delta
  \;\;\mbox{for all}\;\; \delta\in(0,\delta_0)
  \;\;\mbox{and}\;\; M\in[M_1,M_2].
\end{eqnarray}
Moreover, $\delta_0$  depends on  $M_1$, $M_2$,  $y_0$ and $r$ continuously.

(ii) For each  $M>0$ and $\eta\in(0,1)$, there exists  a measurable set $\mathcal A_{M,\eta}\subset(0,1)$ (depending also on $y_0$ and $r$) with $\lim_{h\rightarrow 0^+} \frac{1}{h} |\mathcal A_{M,\eta}\cap (0,h)|=\eta$ so that
\begin{eqnarray}\label{TP-order-1}
 \delta> T_\delta(M,y_0)-T(M,y_0) > (1-\eta) \delta
  \;\;\mbox{for each}\;\; \delta\in  \mathcal A_{M,\eta}.
\end{eqnarray}
\end{theorem}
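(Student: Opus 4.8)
The plan is to treat the two inequalities in (i) separately, and then to read off (ii) from the first inequality together with a counting argument. Throughout write $T:=T(M,y_0)$ and $I_i:=\big((i-1)\delta,i\delta\big]$.

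\medskip
\noindent\emph{Lower bound, and the cheap half of (ii).} For the left inequality in (\ref{TP-order-0}) I would use that every sampled-data control is in particular an admissible distributed control. If $u_\delta^*$ is optimal for $(TP)_\delta^{M,y_0}$ with $y\big(T_\delta(M,y_0);y_0,u_\delta^*\big)\in B_r(0)$ and $T_\delta(M,y_0)=k^*\delta$, then $u_\delta^*\in\mathcal U^M$, so $T_\delta(M,y_0)$ is an admissible time for $(TP)^{M,y_0}$ and hence $T\le T_\delta(M,y_0)$. Since $T_\delta(M,y_0)\in\delta\mathbb N^+$, this upgrades to $T_\delta(M,y_0)\ge\lceil T/\delta\rceil\,\delta$, so whenever $T/\delta\notin\mathbb N$ one gets for free that $T_\delta(M,y_0)-T\ge\big(1-\{T/\delta\}\big)\delta$, where $\{\cdot\}$ denotes the fractional part. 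This already supplies the lower estimate in (\ref{TP-order-1}) on the set where $\{T/\delta\}<\eta$.

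\medskip
\noindent\emph{Upper bound of (i).} By Theorem \ref{Lemma-existences-TP} and the minimal-norm characterization, for each grid time $m\delta\ge T$ the distributed minimal-norm control $v_m$ steering $y_0$ onto $\partial B_r(0)$ exactly at $m\delta$ satisfies $\|v_m\|_{L^2}\le M$ (since the minimal norm is nonincreasing in the final time and equals $M$ at $T$), and admits the adjoint representation $v_m(s)=c\,\chi_\omega\varphi_m(s)$ with $\varphi_m$ solving the adjoint heat equation backward from a terminal datum in $L^2(\Omega)$. I would take $m=\lceil T/\delta\rceil$, replace $v_m$ by its sample-and-hold $u_\delta$ with $u^i=\tfrac1\delta\int_{I_i}v_m$ for $i\le m$ and $u^i=0$ afterwards, and read the state \emph{one step later}, at $(m+1)\delta$. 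Two features make this effective: averaging is an $L^2$-contraction, so $\|u_\delta\|_{L^2}\le\|v_m\|_{L^2}\le M$; and the switch-off of $v_m$ sits \emph{exactly} at the grid point $m\delta$, so no discontinuity falls in the interior of a sampling interval. Evaluating at $(m+1)\delta$ keeps every active interval $I_1,\dots,I_m$ at distance at least $\delta$ from the observation time, so the analytic smoothing of the heat semigroup together with the zero-mean property $\int_{I_i}(u^i-v_m)=0$ controls $y\big((m+1)\delta;y_0,u_\delta\big)-y\big((m+1)\delta;y_0,v_m\big)$. As $y(m\delta;y_0,v_m)\in\partial B_r(0)$, free evolution over the last interval contracts its norm to at most $e^{-\lambda_1\delta}r$, leaving a margin of order $\delta$; if the approximation error is smaller than this margin we conclude $y\big((m+1)\delta;y_0,u_\delta\big)\in B_r(0)$, whence $T_\delta(M,y_0)\le(m+1)\delta\le T+2\delta$.

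\medskip
\noindent\emph{The main obstacle.} The crux is exactly the comparison ``approximation error $<$ contraction margin'': both are $O(\delta)$, so one cannot merely send $\delta\to0^+$ but must track constants. The obstruction is the terminal behaviour of $v_m$: the adjoint representation makes $v_m$ analytic on $[0,m\delta)$ but with time-derivative blowing up like $1/(m\delta-s)$, so a crude bound on the sample-and-hold error near the final interval is only $\Theta(\delta)$ with a data-dependent constant, not obviously dominated by $\lambda_1 r\,\delta$. Resolving this is where I expect the real work to lie: one must sharpen the terminal estimate to $o(\delta)$ — using that the deviation of $v_m$ from its mean is felt through the \emph{strongly continuous}, not merely bounded, semigroup, so that continuity of $v_m$ up to $m\delta$ forces the weighted error sum $\sum_i\frac1{m+1-i}\,\mathrm{osc}_{I_i}(v_m)$ to be negligible relative to the margin — or else replace the explicit sample-and-hold by the genuine minimal-norm sampled-data control and compare its norm to $M$ directly through the associated minimization problem. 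The uniformity claim, namely a single $\delta_0(M_1,M_2,y_0,r)$ valid for all $M\in[M_1,M_2]$ and depending continuously on the data, I would then extract by making the bound on $\|\varphi_m\|$, the margin constant $\lambda_1 r$, and the error constant uniform over the compact range $[M_1,M_2]$, invoking the continuity and strict monotonicity of $M\mapsto T(M,y_0)$.

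\medskip
\noindent\emph{Part (ii).} I would set $\mathcal A_{M,\eta}\triangleq\{\delta\in(0,\delta_0):0<\{T/\delta\}<\eta\}$. On this set the lower bound $T_\delta(M,y_0)-T>(1-\eta)\delta$ is already given above, while the upper bound $T_\delta(M,y_0)-T<\delta$ reduces to reachability at the \emph{first} admissible grid point $\lceil T/\delta\rceil\delta$, whose margin $\big(1-\{T/\delta\}\big)\delta>(1-\eta)\delta$ is comparatively large; here the construction above, used with no extra coasting step, should close the estimate once $\delta$ is small. The density $\lim_{h\to0^+}\frac1h|\mathcal A_{M,\eta}\cap(0,h)|=\eta$ is then a direct computation: $\{\delta:\{T/\delta\}<\eta\}=\bigcup_{n}\big(T/(n+\eta),\,T/n\big]$, each constituent interval has length $T\eta/\big(n(n+\eta)\big)$, and $\sum_{n\ge T/h}T\eta/\big(n(n+\eta)\big)\sim\eta h$ yields the claimed value. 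The step I expect to recheck most carefully is once more reachability at the first grid point, where the margin can be as small as $(1-\eta)\delta$ and the terminal error estimate of the previous paragraph is stressed the hardest.
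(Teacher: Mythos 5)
You have correctly reproduced the paper's outline: the lower bound $T\le T_\delta(M,y_0)$ from $\mathcal U^M_\delta\subset\mathcal U^M$, the upper bound by exhibiting an admissible sampled-data control at the grid time $(\lceil T/\delta\rceil+1)\delta$, and, for (ii), the set $\{\delta:0<\{T/\delta\}<\eta\}$ together with the density computation — this is exactly the set $\mathcal B_{M,\eta}$ of Theorem \ref{theorem-optimality}. But what you have written is a plan, not a proof: the decisive quantitative step — that the loss caused by sampling is $o(\delta)$ and hence beaten by the contraction margin $\approx\lambda_1 r\,\delta$ — is precisely the point you defer (``where I expect the real work to lie''), and you propose two possible rescues without executing either. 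Moreover, the difficulty you flag is real and, in your formulation, fatal: the crude sample-and-hold error estimate degenerates near the terminal time unless one knows that the terminal datum $z^*$ of the adjoint representation lies in $H_0^1(\Omega)$ with a quantitative bound. That is a genuine theorem in the paper (Theorem \ref{Lemma-minizer-P-L2-uni-bdd}, proved via the relation $y(T;y_0,v^*)=-rz^*/\|z^*\|$ and parabolic smoothing of the \emph{forward} trajectory, see (\ref{z-beta-uni-bdd})), not a fact one may assume.

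The paper closes this gap on the dual side and never compares states. Two estimates do all the work: (a) the minimal-norm function decreases at a definite rate, $N(T_1,y_0)-N(T_2,y_0)\ge\lambda_1^{3/2}r\,(T_2-T_1)$ (Theorem \ref{Proposition-NP-Lip-T}), so waiting one extra grid step ``buys'' at least $\lambda_1^{3/2}r\,\delta$ of norm; and (b) sampling ``costs'' only $O(\delta^2)$ of norm, $0\le N_\delta(k\delta,y_0)-N(k\delta,y_0)\le C\delta^2$ (Theorem \ref{theorem-NP-delta-error}). The quadratic gain in (b) is structural and is exactly what your state-space computation misses: time-averaging is the orthogonal projection onto piecewise-constant functions, so comparing the functionals $J^{k\delta,y_0}$ and $J_\delta^{k\delta,y_0}$ at the minimizer $z^*_\delta$ gives
\begin{eqnarray*}
\|\chi_\omega\varphi\|_{L^2(0,k\delta;L^2(\Omega))}^2-\|\chi_\omega\overline\varphi_\delta\|_{L^2(0,k\delta;L^2(\Omega))}^2
=\|\chi_\omega(\varphi-\overline\varphi_\delta)\|_{L^2(0,k\delta;L^2(\Omega))}^2
\le \delta^2\,\|\partial_t\varphi\|_{L^2(0,k\delta;L^2(\Omega))}^2
\le \delta^2\,\|z^*_\delta\|_{H_0^1(\Omega)}^2,
\end{eqnarray*}
where $\varphi=\varphi(\cdot;k\delta,z^*_\delta)$ and $\overline\varphi_\delta$ is its interval-wise time average — i.e.\ the \emph{square} of the $O(\delta)$ approximation error, not the error itself (see (\ref{NP-delta-error-3})). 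Combining (a), (b) and $N(T(M,y_0),y_0)=M$ yields $M\ge N_\delta((\hat k_\delta+1)\delta,y_0)+\tfrac12\lambda_1^{3/2}r\,\delta$ once $\delta$ is below an explicit threshold, which is exactly the admissibility you need for the upper bound in (i); the same inequality with margin $(1-\eta)\delta$ gives (ii); and the explicit formula for $\delta_0$ in terms of $T(M_1,y_0)$, $T(M_2,y_0)$ and $T^*_{y_0}$ supplies the uniformity over $M\in[M_1,M_2]$ and the continuity in the data, which your sketch also leaves unaddressed. If you wish to keep your primal (state-comparison) route, it can in principle be completed — the $H_0^1$ bound on $z^*$ kills the feared $1/(m\delta-s)$ blow-up, and summing the smoothing weights $C/\big((m+1-i)\delta\big)$ over the intervals $I_i$ gives an error of order $\delta^{3/2}=o(\delta)$ — but neither of these two ingredients appears in your write-up, and both require proof.
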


\begin{theorem}\label{Theorem-time-control-convergence}
Let $y_0\in L^2(\Omega)\setminus B_r(0)$. For each $M>0$ and $\delta>0$, let $u^*_M$
and $u^*_{M,\delta}$
be  the optimal control to  $(TP)^{M,y_0}$ and   the optimal control with the minimal norm   to $(TP)_\delta^{M,y_0}$, respectively. Then the following conclusions are true:

 (i) For each $(M_1,M_2)$ with $0<M_1\leq M_2$, there is $C\triangleq C(M_1,M_2,y_0,r)>0$ so that
\begin{eqnarray} \label{TP-control-converge-0}
  \sup_{M_1\leq M\leq M_2}\|u_{M,\delta}^*-u_M^*\|_{L^2(0,T(M,y_0);L^2(\Omega))}
  &\leq& C \delta
  \;\;\mbox{for each}\;\; \delta>0.
  \end{eqnarray}
Moreover, $C$ depends on   $M_1$, $M_2$,  $y_0$  and $r$ continuously.

(ii) For each  $M>0$ and $\eta\in(0,1)$, there is   a measurable set $\mathcal A_{M,\eta}\subset(0,1)$ (depending also on $y_0$ and $r$) with $\lim_{h\rightarrow 0^+} \frac{1}{h} |\mathcal A_{M,\eta}\cap (0,h)|=\eta$ so that
\begin{eqnarray}\label{TP-control-converge-0-1}
  \|u_{M,\delta}^*-u_M^*\|_{L^2(0,T(M,y_0);L^2(\Omega))}
  &\geq& \frac{1}{2}\lambda_1^{3/2}r (1-\eta)\delta
  \;\;\mbox{for each}\;\; \delta \in \mathcal A_{M,\eta}.
\end{eqnarray}
\end{theorem}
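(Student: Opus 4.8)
The plan is to treat both controls as minimal-norm controls and to transfer the optimal-time estimates of Theorem \ref{Theorem-time-order} to the controls themselves, via a quantitative stability analysis of the associated dual minimization problems. Write $T=T(M,y_0)$ and $T_\delta=T_\delta(M,y_0)=k^*\delta$, let $e_1$ be the first Dirichlet eigenfunction of $-\Delta$ normalized by $\|e_1\|=1$, and let $N(t,y_0)$ and $N_\delta(k,y_0)$ denote the minimal control norms needed to steer $y_0$ into $B_r(0)$ at time $t$, respectively at the sampling instant $k\delta$ using a sampled control. I use throughout that $u_M^*$ is the minimal-norm control reaching $B_r(0)$ at time $T$, with $\|u_M^*\|_{L^2(0,T;L^2(\Omega))}=N(T,y_0)=M$, while $u_{M,\delta}^*$ is the minimal-norm sampled control reaching $B_r(0)$ at time $T_\delta$, with $\|u_{M,\delta}^*\|_{L^2(0,T_\delta;L^2(\Omega))}=N_\delta(k^*,y_0)\le M$. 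By self-adjointness of the heat semigroup, $u_M^*(s)=\chi_\omega e^{(T-s)\Delta}z^*$, and $u_{M,\delta}^*$ has the analogous form in which this adjoint is replaced by its piecewise-constant time average over the sampling intervals, where in each case the terminal datum minimizes a strictly convex dual functional carrying the ball-target penalty $r\|\cdot\|$.

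For (i) I would first record norm closeness: since sampled controls form a subclass, $N_\delta(k^*,y_0)\ge N(T_\delta,y_0)$, and combining with $N(T,y_0)=M$, the monotonicity and Lipschitz-in-time behaviour of $N(\cdot,y_0)$, and $0\le T_\delta-T\le 2\delta$ from Theorem \ref{Theorem-time-order}(i), one gets $M-C\delta\le\|u_{M,\delta}^*\|\le M$. The sharp linear rate, however, requires more than norm closeness and I would obtain it from strong convexity of the two dual functionals. Uniformly for $M\in[M_1,M_2]$ the horizon $T(M,y_0)$ stays in a compact interval and the dual minimizers stay in a compact set bounded away from $0$ (this is where $M_1>0$ enters), so the observability (coercivity) constants are uniform and stable as $\delta\to0$. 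The two functionals differ only through (a) the horizon gap $T_\delta-T=O(\delta)$ and (b) the replacement of the adjoint by its piecewise-constant time average; bounding the latter by $O(\delta)$ and invoking strong convexity yields $\|z^*-z_\delta^*\|=O(\delta)$ and then $\|u_{M,\delta}^*-u_M^*\|_{L^2(0,T;L^2(\Omega))}=O(\delta)$, the constant inheriting continuous dependence on $M_1,M_2,y_0,r$ from the coercivity constants.

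For (ii) I would work on the very set $\mathcal A_{M,\eta}$ furnished by Theorem \ref{Theorem-time-order}(ii), on which $T_\delta-T>(1-\eta)\delta$, and extract the lower bound by testing the control difference against the slowest decaying mode. Setting $w(T)=y(T;y_0,u_{M,\delta}^*)-y(T;y_0,u_M^*)$ and pairing with $g(s)=e^{-\lambda_1(T-s)}\chi_\omega e_1$ gives, by Cauchy--Schwarz and $\|\chi_\omega e_1\|\le\|e_1\|=1$, the duality bound
\begin{eqnarray*}
  \|u_{M,\delta}^*-u_M^*\|_{L^2(0,T;L^2(\Omega))}
  \;\ge\; \sqrt{2\lambda_1}\,\bigl|\langle w(T),e_1\rangle\bigr|.
\end{eqnarray*}
It then suffices to bound $|\langle w(T),e_1\rangle|$ below by order $\lambda_1 r(1-\eta)\delta$ (up to the numerical constant). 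The mechanism is that the sampled trajectory is genuinely behind: since $T<T_\delta$, the observation that $u_{M,\delta}^*|_{(0,T)}$ would otherwise be an admissible distributed control of norm $\le M$ at time $T$ (and hence, by uniqueness in Theorem \ref{Lemma-existences-TP}, equal to $u_M^*$, which is impossible) forces $\|y(T;y_0,u_{M,\delta}^*)\|>r$, whereas $\|y(T;y_0,u_M^*)\|=r$. Because the sampled trajectory only reaches $\partial B_r(0)$ a time $T_\delta-T>(1-\eta)\delta$ later, and the slowest mode can contract no faster than at rate $\lambda_1$, the first-eigenmode discrepancy at time $T$ is at least of order $\lambda_1 r(T_\delta-T)$, giving the claimed bound.

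The step I expect to be the main obstacle is twofold. In (i), the sharp linear rate (rather than $O(\sqrt\delta)$, which a crude uniform-convexity argument would give) hinges on controlling the sampling error of the adjoint near the terminal time, where the adjoint state is least regular (only $L^2$ terminal datum), together with establishing uniform strong convexity of the dual functionals with constants that neither degenerate as $\delta\to0$ nor as $M$ ranges over $[M_1,M_2]$; this is the technical heart. In (ii), the delicate point is to turn the heuristic ``the slowest mode contracts at rate at most $\lambda_1$'' into a rigorous lower bound on $|\langle w(T),e_1\rangle|$ valid for arbitrary $y_0$ --- in particular handling the control $u_{M,\delta}^*$ on the terminal sliver $(T,T_\delta)$ and the fact that the contact point with $\partial B_r(0)$ need not be aligned with $e_1$ --- and extracting the exact constant $\tfrac12\lambda_1^{3/2}r(1-\eta)$.
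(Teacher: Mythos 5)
Both halves of your proposal have a genuine gap, and in each case it is precisely the step you flag as ``the technical heart'' / ``the delicate point'': what you flag is not a technicality to be checked but the missing proof itself, and the paper closes it by a different mechanism.

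For (i): your scheme is ``the two dual functionals differ by $O(\delta)$, so uniform strong convexity gives $\|z^*-z^*_\delta\|=O(\delta)$.'' Strong convexity converts closeness of functional \emph{values} $\varepsilon$ into minimizer distance $\sqrt{\varepsilon/c}$, so this scheme can only yield $O(\sqrt{\delta})$ (as you yourself concede), and nothing in the proposal upgrades it: the value gap really is of order $\delta$, since $V(T,y_0)-V_\delta(T_\delta,y_0)=-\tfrac12 M^2+\tfrac12 M_\delta^2$ with $M-M_\delta\sim\delta$, so no functional-value argument can reach the linear rate. (Separately, uniform strong convexity of $J^{T,y_0}$ in $L^2(\Omega)$ is not available off the shelf: the quadratic term is not coercive in $\|z\|^2$ because of parabolic smoothing, and $r\|z\|$ is not strongly convex; at best a local statement near the nonzero minimizer holds, which you do not prove.) The paper instead uses a \emph{first-order} (variational-inequality/monotonicity) argument: with $\hat z^*=z^*/\|z^*\|$, $\hat z^*_\delta=z^*_\delta/\|z^*_\delta\|$, the terminal-state identities $y(T^*;y_0,u^*)=-r\hat z^*$ and $y(T^*_\delta;y_0,u^*_\delta)=-r\hat z^*_\delta$ (Theorems \ref{Lemma-Norm-fun} and \ref{Lemma-NP-bangbang}) combined with the monotonicity of the scaled control law give (\ref{0213-order-control-part1.1}), i.e. $\|\hat z^*-\hat z^*_\delta\|^2\leq -\tfrac1r\langle \hat z^*-\hat z^*_\delta,\, y(T^*;y_0,\hat u_\delta)-y(T^*_\delta;y_0,u^*_\delta)\rangle$, so Cauchy--Schwarz bounds the minimizer distance \emph{linearly} by a trajectory perturbation; that perturbation is then estimated by $O(|T^*_\delta-T^*|+|M-M_\delta|+\delta)=O(\delta)$ using Theorems \ref{Theorem-time-order}, \ref{Proposition-NP-Lip-T}, \ref{theorem-NP-delta-error} and the $H_0^1$ bounds of Theorem \ref{Lemma-minizer-P-L2-uni-bdd}. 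Without this (or an equivalent first-order device) your part (i) stops at $O(\sqrt{\delta})$, which is the content of Theorem \ref{Theorem-time-control-convergence-1}, not of this theorem.

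For (ii): the paper's proof is a reverse triangle inequality in \emph{control-norm} space, not a state-space argument. One has $\|u^*_M\|_{L^2(0,T;L^2(\Omega))}=N(T,y_0)=M$ exactly (norm saturation, Theorem \ref{Theorem-eq-cont-discrete}), while $\|u^*_{M,\delta}\|_{L^2(0,T;L^2(\Omega))}\leq\|u^*_{M,\delta}\|_{L^2(0,T_\delta;L^2(\Omega))}=N_\delta(T_\delta,y_0)$ (Theorem \ref{Lemma-existences-TP}(iii), together with $T\leq T_\delta$), and the \emph{second} inequality of (\ref{ineqs-optimality}) in Theorem \ref{theorem-optimality} supplies the norm gap $M-N_\delta(T_\delta,y_0)\geq\tfrac12\lambda_1^{3/2}r(1-\eta)\delta$ on $\mathcal A_{M,\eta}$; subtracting norms gives (\ref{TP-control-converge-0-1}) at once. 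Your route requires a lower bound of order $\delta$ on the single Fourier coefficient $|\langle w(T),e_1\rangle|$, and this step fails: the time lag $T_\delta-T>(1-\eta)\delta$ gives no control whatsoever on the \emph{direction} of $w(T)$. Both terminal states lie near $\partial B_r(0)$ and their difference may be (nearly) orthogonal to $e_1$; moreover the sampled control is still active on $(T,T_\delta)$, so the trajectory is not freely decaying there and the ``slowest mode contracts at rate $\lambda_1$'' heuristic does not apply. Note also that you invoke only the time-gap half of Theorem \ref{theorem-optimality} (via Theorem \ref{Theorem-time-order}(ii)); the half actually needed is the norm-gap inequality in (\ref{ineqs-optimality}), which your argument never recovers and which is where the constant $\tfrac12\lambda_1^{3/2}r(1-\eta)$ really comes from (namely, the lower Lipschitz bound in (\ref{NP-Lip-T}) combined with the $O(\delta^2)$ estimate of Theorem \ref{theorem-NP-delta-error}).
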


\begin{theorem}\label{Theorem-time-control-convergence-1}
Let $y_0\in L^2(\Omega)\setminus B_r(0)$. For each $M>0$,   let $u^*_M$ be  the optimal control to  $(TP)^{M,y_0}$. Then the following conclusions are true:

(i) For each $(M_1,M_2)$ with $0<M_1\leq M_2$, there is $C\triangleq C(M_1,M_2,y_0,r)>0$ so that
\begin{eqnarray}\label{TP-control-converge-1}
  \sup_{M_1\leq M\leq M_2}\|u_{M,\delta}-u_M^*\|_{L^2(0,T(M,y_0);L^2(\Omega))}
  &\leq& C \sqrt{\delta}
  \;\;\mbox{for each}\;\; \delta>0,
\end{eqnarray}
where
$u_{M,\delta}$ is any optimal control to $(TP)_\delta^{M,y_0}$. Moreover,
$C$ depends on   $M_1$, $M_2$,  $y_0$  and $r$ continuously.

(ii) For each  $M>0$ and $\eta\in(0,1)$, there is   a measurable set $\mathcal A_{M,\eta}\subset(0,1)$ (depending also on $y_0$ and $r$) with $\lim_{h\rightarrow 0^+} \frac{1}{h} |\mathcal A_{M,\eta}\cap (0,h)|=\eta$ so that for each $\delta\in \mathcal A_{M,\eta}$, there is an optimal control $\hat u_{M,\delta}$ to $(TP)_\delta^{M,y_0}$ so that
\begin{eqnarray} \label{TP-control-converge-1-1}
  \|\hat u_{M,\delta}-u_M^*\|_{L^2(0,T(M,y_0);L^2(\Omega))}
  \geq C_{M} \sqrt{(1-\eta)\delta},
\end{eqnarray}
for some positive constant $C_{M}\triangleq C_M(y_0,r)$.
\end{theorem}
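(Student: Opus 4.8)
The plan is to prove both parts by a single, sharp bookkeeping of the control energy split across the two intervals $[0,T]$ and $[T,T_\delta]$, where I abbreviate $T:=T(M,y_0)$ and $T_\delta:=T_\delta(M,y_0)$. Throughout I write $S(\cdot)$ for the Dirichlet heat semigroup, $z:=y(T;y_0,u_{M,\delta})$ and $w:=y(T_\delta;y_0,u_{M,\delta})\in B_r(0)$, and I use the Pontryagin characterization of the distributed optimal control from Theorem \ref{Lemma-existences-TP}: $u_M^*=\frac{M}{\|\phi\|}\phi$ on $(0,T)$, where $\phi(t)=\chi_\omega\varphi^*(t)=\chi_\omega S(T-t)\varphi^*(T)$, the adjoint terminal datum is $\varphi^*(T)=-\lambda\,p^*$ with $\lambda>0$, and $p^*:=y(T;y_0,u_M^*)\in\partial B_r(0)$, so $\|p^*\|=r$ and $\|u_M^*\|_{L^2(0,T;L^2(\Omega))}=M$. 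For part (i), I would estimate $\|u_{M,\delta}-u_M^*\|^2_{L^2(0,T;L^2(\Omega))}$ directly. Setting $a:=\|u_{M,\delta}\|_{L^2(T,T_\delta;L^2(\Omega))}$, the energy budget gives $\|u_{M,\delta}\|^2_{L^2(0,T;L^2(\Omega))}\le M^2-a^2$, while the control–adjoint duality identity $\langle\phi,\,u_{M,\delta}-u_M^*\rangle_{L^2(0,T;L^2(\Omega))}=\langle z-p^*,\varphi^*(T)\rangle$ together with $\varphi^*(T)=-\lambda p^*$ yields, after expanding the square,
\[
\|u_{M,\delta}-u_M^*\|^2_{L^2(0,T;L^2(\Omega))}\ \le\ -a^2+\frac{2M\lambda}{\|\phi\|}\big(\langle z,p^*\rangle-r^2\big).
\]

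To close part (i) I then need to bound the overshoot $\langle z,p^*\rangle-r^2$. Multiplying \eqref{heat-2} by $S(T_\delta-\cdot)p^*$ and integrating over $[T,T_\delta]$ gives
\[
\langle z,p^*\rangle-r^2\ \le\ -\int_T^{T_\delta}\!\big\langle u_{M,\delta}(s),\,\chi_\omega S(T_\delta-s)p^*\big\rangle\,ds\ +\ \big\langle z,\,(I-S(T_\delta-T))p^*\big\rangle,
\]
where I used $\|w\|\le r$. By parabolic regularity at the positive time $T$, $p^*\in D(\Delta)$, so the last term is $O(\delta)$ (since $T_\delta-T\le2\delta$ by Theorem \ref{Theorem-time-order}(i)), while Cauchy–Schwarz bounds the integral by $a\cdot r\sqrt{2\delta}$. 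Substituting gives $\|u_{M,\delta}-u_M^*\|^2_{L^2(0,T;L^2(\Omega))}\le -a^2+\kappa\,a\sqrt{\delta}+C\delta$ with $\kappa=2M\lambda r\sqrt2/\|\phi\|$; maximizing the right-hand side over $a\ge0$ (the maximum is $\kappa^2\delta/4$) yields the $O(\delta)$ bound for the square, hence the claimed $C\sqrt\delta$. Uniformity over $M\in[M_1,M_2]$ and continuous dependence of $C$ follow by tracking $\lambda$, $\|\phi\|$, $\|\Delta p^*\|$ and $\|z\|$, all of which are bounded and continuous in $M$ on the compact interval via the continuity of $T$, $u_M^*$ and the adjoint data in $M$.

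For part (ii) I would exhibit a genuinely different optimal control realizing the rate. I restrict $\delta$ to the set $\mathcal A_{M,\eta}$ furnished by Theorem \ref{Theorem-time-order}(ii), on which $T_\delta-T>(1-\eta)\delta$; the monotonicity and Lipschitz behaviour of the sampled minimal-norm function then force a norm slack $M^2-\|u^*_{M,\delta}\|^2_{L^2(0,T_\delta;L^2(\Omega))}\gtrsim(1-\eta)\delta$ for the minimal-norm optimal control $u^*_{M,\delta}$. I then choose a sampled-data perturbation $v$ supported on $(0,T]$ that is orthogonal to $u^*_{M,\delta}$, has $\|v\|_{L^2(0,T;L^2(\Omega))}\asymp\sqrt{(1-\eta)\delta}$, and whose effect $S(T_\delta)$-image on the terminal state is negligible; such directions exist in abundance because the control-to-state map on $(0,T]$ is compact with rapidly decaying singular values. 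Setting $\hat u_{M,\delta}:=u^*_{M,\delta}+v$, orthogonality and the slack keep $\|\hat u_{M,\delta}\|\le M$, the overshoot room absorbs the tiny state perturbation so $y(T_\delta;y_0,\hat u_{M,\delta})\in B_r(0)$, and smallness of $v$ preserves the optimal time $T_\delta$; thus $\hat u_{M,\delta}$ is optimal. Finally the triangle inequality with Theorem \ref{Theorem-time-control-convergence}(i) gives $\|\hat u_{M,\delta}-u_M^*\|_{L^2(0,T;L^2(\Omega))}\ge\|v\|_{L^2(0,T;L^2(\Omega))}-C\delta\ge C_M\sqrt{(1-\eta)\delta}$ for small $\delta$.

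The main obstacle, and the whole point of the argument, is the exact cancellation in part (i): the energy $a^2$ removed from $[0,T]$ enters the estimate with a minus sign and precisely counterbalances the linear-in-$a$ duality gain $\kappa a\sqrt\delta$ coming from the overshoot on the last interval, pinning the rate at $\sqrt\delta$; a naive estimate discarding the $-a^2$ term only yields the weaker $\delta^{1/4}$. The parallel difficulty in part (ii) is the explicit construction of the admissible perturbation $v$ that is simultaneously of size $\sqrt{(1-\eta)\delta}$ on $(0,T]$, orthogonal to $u^*_{M,\delta}$ so as to respect the norm budget, and nearly annihilated by the control-to-state map so that optimality is preserved — all while quantifying the norm slack as $\gtrsim(1-\eta)\delta$. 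Both halves rest on uniform (in $M$) control of the adjoint data and of the smoothing constants, which I would extract from the continuity statements established earlier.
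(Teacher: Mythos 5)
Your part (i) follows a genuinely different route from the paper's: the paper never uses the adjoint structure of $u^*_M$ at this point, but instead shows, via convexity of the set of optimal controls and the minimal-norm property of $u^*_{M,\delta}$, that $\|u_{M,\delta}-u^*_{M,\delta}\|^2_{L^2(0,T_\delta;L^2(\Omega))}\le 2M\big(M-N_\delta(T_\delta,y_0)\big)$, and then feeds in the $O(\delta)$ norm slack $M-N_\delta(T_\delta,y_0)\le C\delta$ established in Step 3 of the proof of Theorem \ref{Theorem-time-control-convergence}. Your energy-budget/duality computation is correct as far as it goes, but it has a genuine gap at the one step that pins the rate: the claim that $p^*=y(T;y_0,u^*_M)\in D(\Delta)$ ``by parabolic regularity at the positive time $T$''. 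Parabolic smoothing does not apply at $t=T$, because the control acts up to time $T$: for $u\in L^2(0,T;L^2(\Omega))$, maximal regularity gives $y\in L^2(0,T;H^2\cap H^1_0)\cap H^1(0,T;L^2)\hookrightarrow C([0,T];H^1_0)$, and $H^1_0$ is sharp (one can exhibit $L^2$ controls whose terminal state lies in $H^1_0\setminus H^2$). Consistently, the paper only proves $z^*\in H^1_0$ (Theorem \ref{Lemma-minizer-P-L2-uni-bdd}), hence only $p^*=-rz^*/\|z^*\|\in H^1_0$; its one $H^2$ bound on a terminal state (Step 1 in the proof of Theorem \ref{Theorem-time-control-convergence}) is proved for the sampled minimal-norm control only, by Abel summation over sampling intervals, an argument unavailable for $u^*_M$ or for an arbitrary $u_{M,\delta}$. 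With only $p^*\in H^1_0$, your term $\langle z,(I-S(T_\delta-T))p^*\rangle$ is $O(\sqrt\delta)$ rather than $O(\delta)$, and your final estimate degrades to $\|u_{M,\delta}-u^*_M\|=O(\delta^{1/4})$. The regularity can in fact be rescued from the feedback form $u^*_M=\chi_\omega S(T-\cdot)z^*$ with $z^*\in H^1_0$, using that multiplication by $\chi_\omega$ maps $H^1$ boundedly into $H^s$ for $s<1/2$, so that $\|\Delta S(T-t)\chi_\omega S(T-t)z^*\|\lesssim (T-t)^{-(1-s/2)}\|z^*\|_{H^1_0}$ is integrable in $t$; but this is a substantive additional argument (whose constants must also be made uniform over $M\in[M_1,M_2]$), not a citation of parabolic regularity.

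Your part (ii) fails for a structural reason. The terminal state of $u^*_{M,\delta}$ lies exactly on the boundary, $y(T_\delta;y_0,u^*_{M,\delta})=-rz^*_\delta/\|z^*_\delta\|$ by Theorem \ref{Lemma-NP-bangbang}, so there is no ``overshoot room'' at all; and orthogonality to $u^*_{M,\delta}$ is precisely the worst possible choice of perturbation direction. Indeed, since $u^*_{M,\delta}=\chi_\omega\overline\varphi_\delta(\cdot;T_\delta,z^*_\delta)$, for every grid-aligned sampled-data $v$ one has, by duality and Lemma \ref{lemma-control-op-dual},
\begin{equation*}
\big\langle y(T_\delta;y_0,u^*_{M,\delta}),\,y(T_\delta;0,v)\big\rangle
=-\frac{r}{\|z^*_\delta\|}\big\langle \chi_\omega\varphi(\cdot;T_\delta,z^*_\delta),v\big\rangle_{L^2}
=-\frac{r}{\|z^*_\delta\|}\big\langle u^*_{M,\delta},v\big\rangle_{L^2}=0,
\end{equation*}
so the terminal effect of any $v\perp u^*_{M,\delta}$ is exactly orthogonal to the boundary point, whence $\|y(T_\delta;y_0,u^*_{M,\delta}+v)\|^2=r^2+\|y(T_\delta;0,v)\|^2>r^2$ unless $y(T_\delta;0,v)=0$; small singular values make the violation small but never remove it. And exact annihilation is impossible: if $y(T_\delta;0,v)=0$, peeling off the last nonzero sample and using $\int_0^\delta S(\sigma)\,\mathrm d\sigma=(-\Delta)^{-1}(I-S(\delta))$ together with backward uniqueness shows $\chi_\omega v^m$ equals a real-analytic function, hence vanishes (when $\Omega\setminus\overline\omega\neq\emptyset$), and induction gives $v=0$. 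The missing idea is exactly the paper's dilation in Lemma \ref{0912-diam-O}: take $\alpha u^*_{M,\delta}$ with $\alpha-1\sim M-M_\delta\sim(1-\eta)\delta$ (affordable by the norm slack), which pushes the terminal state strictly inside $B_r(0)$ because $\langle y(T_\delta;y_0,u^*_{M,\delta}),y(T_\delta;0,u^*_{M,\delta})\rangle=-rM_\delta^2/\|z^*_\delta\|<0$; the inward margin so created is linear in $\alpha-1$ and absorbs a perturbation $\beta\hat v_\delta$ exactly up to $\beta\sim\sqrt{(1-\eta)\delta}$, which is where the $\sqrt\delta$ rate comes from. Your derivation of the norm slack from Theorems \ref{Proposition-NP-Lip-T} and \ref{theorem-NP-delta-error}, and your concluding triangle inequality against Theorem \ref{Theorem-time-control-convergence}(i), are fine, but without the dilation the construction of $\hat u_{M,\delta}$ collapses.
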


Several remarks on the main results are given in order.
\begin{itemize}
  \item Theorem \ref{Theorem-time-order} and Theorem \ref{Theorem-time-control-convergence} present two facts. First,  the error between $T_\delta(M,y_0)$ and $T(M,y_0)$ and the error between $u_M^*$ and $u^*_{M,\delta}$ have the order $1$ with respect to the sampling period $\delta$. Second, this order is {\it optimal}, because of the lower bound estimates (\ref{TP-order-1}) and (\ref{TP-control-converge-0-1}), and because of the property that $\lim_{h\rightarrow 0^+} \frac{1}{h} |\mathcal A_{M,\eta}\cap (0,h)|=\eta$ with any $\eta\in(0,1)$. Notice that when $\delta\in (0,1)\setminus\mathcal A_{M,\eta}$, (\ref{TP-order-1}) may not be true (see Theorem~\ref{theorem6.2-wang},
      as well as Remark~\ref{remark6.3-yuan}).

\item  Theorem~\ref{Theorem-time-control-convergence-1}, as well as Theorem \ref{Theorem-time-control-convergence},
 presents two facts. First, in $(TP)_\delta^{M,y_0}$,
 the  optimal control with the minimal norm differs from some of other optimal controls, from perspective of the order of the errors. Second, the order of the error between any optimal control of $(TP)_\delta^{M,y_0}$ and the optimal control to $(TP)^{M,y_0}$ is
 $1/2$, with respect to $\delta$. Moreover, this order is {\it optimal} in the sense (ii) of
Theorem~\ref{Theorem-time-control-convergence-1}.

  \item  Since we aim  to approximate $u^*_M$ by $u^*_{M,\delta}$ and because the
  efficient domain of $u^*_M$  is $(0,T(M,y_0))$, we  take the $L^2(0,T(M,y_0);L^2(\Omega))$-norm in the estimates in  Theorem \ref{Theorem-time-control-convergence} and Theorem~\ref{Theorem-time-control-convergence-1}.

  \item  There have been many  publications on  optimal sampled-data control problems (with fixed ending time point).
             In \cite{BourdinTrelat-1} (see also \cite{BourdinTrelat-2}), the authors built up the Pontryagin maximum principle for some optimal sampled-data control problems. In  \cite{BourdinTrelat}, the authors showed that for some LQ problem,
        the optimal sampled-data control converges to the optimal distributed control as  the sampling period tends to zero. In \cite{TWZ}, the authors built up some  error estimates
        between  the optimal distributed control and the optimal sampled-data control for some periodic  heat equations. About more works on sampled-data controls, we would
like to mention \cite{Ackermann,ACM,BB,CF, FR,GS, IK,Imura, LandauZito,ST} and the
references therein.

  \item There have been some literatures on the approximations of time optimal control problems for the parabolic equations. We refer
      to \cite{GY, WZheng} for  semi-discrete finite element approximations,
      and \cite{TWW,Y} for  perturbations of equations. About more works on time optimal control problems, we would
like to mention \cite{AEWZ,HOF,HOF1,GL,GuoXuYang,GuoYang,KL1,KL2,LiYong,{LZ},
PWCZ,PWZ,Wang,WX,WXZ,WCZ,WangZhang,WZ,CZ-2} and the
references therein.

\item About approximations of time optimal sampled-data controls,
we have not found any literature in the past publications.

\end{itemize}

\subsection{The strategy to get the main results}

 The  strategy to prove the main theorems
 is as follows: We first introduce
 two norm optimal control problems which correspond  to time optimal control problems $(TP)^{M,y_0}$ and $(TP)_\delta^{M,y_0}$ respectively; then get error estimates between the above two  norm optimal  control problems (in terms of $\delta$); finally,  obtain the desired error estimates between
 $(TP)^{M,y_0}$ and $(TP)_\delta^{M,y_0}$ (in terms of $\delta$),
 through using  connections between the time optimal control problems and the corresponding norm optimal control problems (see (iii) of Theorem \ref{Lemma-existences-TP} and Theorem \ref{Theorem-eq-cont-discrete}, respectively).

To explain our strategy more clearly, we will  introduce   two norm optimal control problems. The first one corresponds to $(TP)^{M,y_0}$ and is as:
   \begin{eqnarray}\label{0113-NP-0}
 (NP)^{T,y_0}:~
 N(T,y_0)\triangleq\inf\{\|v\|_{L^2(0,T;L^2(\Omega))}\;:\;y(T;y_0,v)\in B_r(0)\},
\end{eqnarray}
where $y_0\in L^2(\Omega)\setminus B_r(0)$, $T>0$ and $y(\cdot;y_0,v)$ is the solution of
(\ref{heat-1}) with $u$ being replaced by the zero extension of $v$ over $\mathbb{R}^+$. The second one corresponds to $(TP)_\delta^{M,y_0}$
and is defined by
\begin{eqnarray}\label{0113-NP-delta}
 (NP)_{\delta}^{k\delta,y_0}:~
 N_\delta(k\delta,y_0)\triangleq\inf\{\|v_\delta\|_{L_\delta^2(0,k\delta;L^2(\Omega))}
 ~:~y(k\delta;y_0,v_\delta)\in B_r(0)\},
\end{eqnarray}
where $y_0\in L^2(\Omega)\setminus B_r(0)$, $T>0$, $(\delta,k)\in \mathbb R^+ \times \mathbb N^+$,
\begin{eqnarray}\label{L2-delta}
 L_\delta^2(0,k\delta;L^2(\Omega))\triangleq \{f|_{(0,k\delta]}\;:\;f\in L_\delta^2(\mathbb R^+;L^2(\Omega))\},
 \end{eqnarray}
 and $y(\cdot;y_0,v_\delta)$ is the solution of
(\ref{heat-1}) with $u$ being replaced by the zero extension of $v_\delta$ over $\mathbb{R}^+$.

Some concepts about the above two norm optimal control problems are given in the following definition:
\begin{definition} (i) In the problem $(NP)^{T,y_0}$, $N(T,y_0)$ is called the optimal norm;  $v\in L^2(0,T;L^2(\Omega))$ is called an admissible control if $y(T;y_0,v)\in B_r(0)$;  $v^*$ is called an optimal control if
$y(T;y_0,v^*)\in B_r(0)$ and  $\|v^*\|_{L^2(0,T;L^2(\Omega))}=N(T,y_0)$.

(ii) In the problem $(NP)_\delta^{k\delta,y_0}$, $N_\delta(k\delta,y_0)$ is called the optimal norm;  $v_\delta\in L_\delta^2(0,k\delta;L^2(\Omega))$ is called an admissible control  if $y(k\delta;y_0,v_\delta)\in B_r(0)$; and $v_\delta^*$ is called an optimal control if
$y(k\delta;y_0,v_\delta^*)\in B_r(0)$ and $\|v_\delta^*\|_{L_\delta^2(0,k\delta;L^2(\Omega))}=N_\delta(k\delta,y_0)$.

\end{definition}
We mention that both $(NP)^{T,y_0}$ and $(NP)_{\delta}^{k\delta,y_0}$
have  unique nonzero minimizers in $L^2(\Omega)$ (see Theorems \ref{Lemma-Norm-fun}-\ref{Lemma-NP-bangbang}).

Inspired by \cite{C. Fabre}, we study the above two minimal norm control problems by
 two minimization problems. The first one  corresponds to $(NP)^{T,y_0}$ and reads
\begin{eqnarray}\label{fun-0}
(JP)^{T,y_0}:~
V(T,y_0)\triangleq\inf_{z\in L^2(\Omega)} J^{T,y_0}(z)
&\triangleq&\inf_{z\in L^2(\Omega)}\Big[\frac{1}{2} \|\chi_\omega\varphi(\cdot;T,z)\|_{L^2(0,T;L^2(\Omega))}^2 \nonumber\\
         & & +\langle y_0,\varphi(0;T,z) \rangle + r \|z\|\Big],
\end{eqnarray}
where $\varphi(\cdot;T,z)$ is the solution to the adjoint heat equation:
\begin{equation}\label{adjoint-1}
\left\{
\begin{array}{lll}
\partial_t \varphi+\Delta \varphi=0 &\mbox{in}&\Omega\times (0,T),\\
\varphi=0 &\mbox{on}&\partial \Omega\times(0,T),\\
\varphi(T)=z\in L^2(\Omega).
\end{array}
\right.
\end{equation}
The second minimization problem corresponds to
 $(NP)^{T,y_0}_\delta$ and is as:
\begin{eqnarray}\label{fun-P}
(JP)_{\delta}^{k\delta,y_0}:~
V_\delta(k\delta,y_0)\triangleq\inf_{z\in L^2(\Omega)} J_{\delta}^{k\delta,y_0}(z)
&\triangleq&\inf_{z\in L^2(\Omega)}\Big[\frac{1}{2} \|\chi_\omega\overline{\varphi}_\delta(\cdot;k\delta,z)\|_{L^2(0,k\delta;L^2(\Omega))}^2  \nonumber\\
         & & +\langle y_0,\varphi(0;k\delta,z) \rangle + r \|z\|\Big],
\end{eqnarray}
where $\overline{\varphi}_\delta(\cdot;k\delta,z)$ is defined by \begin{eqnarray}\label{adjoint-solution-P}
  \overline{\varphi}_\delta(t;k\delta,z)\triangleq\sum_{i= 1}^k \chi_{((i-1)\delta,i\delta]}(t)   \frac{1}{\delta} \int_{(i-1)\delta}^{i\delta} \varphi(s;k\delta,z) \,\mathrm ds
   \;\;\mbox{for each}\;\;    t\in (0,k\delta].
\end{eqnarray}
We mention that both $(JP)^{T,y_0}$ and $(JP)_{\delta}^{k\delta,y_0}$
have  unique nonzero minimizers in $L^2(\Omega)$ (see Theorems \ref{Lemma-Norm-fun}-\ref{Lemma-NP-bangbang}).

We prove  Theorem \ref{Theorem-time-order} by the following steps:
 \begin{itemize}
\item[(a)] Build up connections between $(TP)^{M,y_0}_\delta$ and  $(NP)^{T_\delta(M,y_0),y_0}_\delta$ (see (iii) of Theorem \ref{Lemma-existences-TP}); and connections between $(TP)^{M,y_0}$
    and $(NP)^{T(M,y_0),y_0}$ (see Theorem \ref{Theorem-eq-cont-discrete}).

    \item[(b)] Obtain the lower  and upper bounds for the derivative of the map $T\rightarrow N(T,y_0)$ (see Theorem \ref{Proposition-NP-Lip-T}).
    \item[(c)] Compute the error estimate between the map $T\rightarrow N(T,y_0)$ and the map  $k\delta\rightarrow N_\delta(k\delta,y_0)$ (see Theorem \ref{theorem-NP-delta-error}).
    \item[(d)]  Get (i) of Theorem \ref{Theorem-time-order}, with the aid of the above (a)-(c).
    \item[(e)] By using the above (a)-(c) again, we build up  sets $\mathcal{A}_{M,\eta}$ and obtain related properties in  Theorem \ref{theorem-optimality}, which leads to (ii) of Theorem \ref{Theorem-time-order}.
 \end{itemize}
The steps to prove Theorem \ref{Theorem-time-control-convergence} and Theorem~\ref{Theorem-time-control-convergence-1}
are as follows:
\begin{itemize}
  \item[(1)]   Build up  connections between  $(TP)^{M,y_0}_\delta$ and $(NP)^{T_\delta(M,y_0),y_0}_\delta$ (see (iii) of Theorem \ref{Lemma-existences-TP}),
      and connections between $(TP)^{M,y_0}$ and $(NP)^{T(M,y_0),y_0}$  (see Theorem \ref{Theorem-eq-cont-discrete}).
  \item[(2)] With the aid of the connections  obtained in (1), we can transfer the  estimate
  in (i) of
      Theorem \ref{Theorem-time-control-convergence} into an  estimate between optimal controls of $(NP)^{T(M,y_0),y_0}$ and $(NP)^{T_\delta(M,y_0),y_0}_\delta$.
  \item[(3)]  Find connections between  $(NP)^{T_\delta(M,y_0),y_0}_\delta$ (or
    $(NP)^{T(M,y_0),y_0}$) and $(JP)^{T_\delta(M,y_0),y_0}_\delta$ (or $(JP)^{T(M,y_0),y_0}$) (see Theorem \ref{Lemma-NP-bangbang} and Theorem \ref{Lemma-Norm-fun}, respectively).
  \item[(4)]   Obtain the error estimate between the minimizers of $(JP)^{T(M,y_0),y_0}$ and $(JP)^{T_\delta(M,y_0),y_0}_\delta$.
  \item[(5)] Using the connections obtained in (3) and the estimate obtained in (4), we get an
       error estimate between optimal controls of $(NP)^{T(M,y_0),y_0}$ and $(NP)^{T_\delta(M,y_0),y_0}_\delta$.
       This, along with results in (2), leads to the  estimate
       in (i) of  Theorem \ref{Theorem-time-control-convergence}.
       \item[(6)] Using connections obtained in (1) and (3), and using Theorem \ref{theorem-optimality}, we prove the estimate in (ii) of Theorem \ref{Theorem-time-control-convergence}.

   \item[(7)] Obtain the least order of the diameter of the  set $\mathcal{O}_{M,\delta}$ (in $
       L^2(0,T(M,y_0); L^2(\Omega)))$, in terms of $\delta$, (see Lemma \ref{0912-diam-O}). Here,
       \begin{equation}\label{geometic-OMdelta}
       \mathcal{O}_{M,\delta}\triangleq\{u_\delta|_{(0,T(M,y_0))}\; :\; u_\delta\;\;
       \mbox{is an optimal control to}\;\; (TP)^{M,y_0}_\delta\}.
       \end{equation}

   \item[(8)] Derive  the  estimates in  Theorem~\ref{Theorem-time-control-convergence-1}, with the aid of
       Lemma \ref{0912-diam-O} and the estimates in Theorem \ref{Theorem-time-control-convergence}.

   \end{itemize}

We would like to give the following note:
\begin{itemize}
    \item  The above introduced  strategy was  used to study other properties of time optimal distributed control problems (see, for instance,  \cite{TWW} and \cite{WZ}). It could be used to study  numerical approximations of time optimal distributed control problems, via
        discrete time optimal control problems.
\end{itemize}

The rest of the paper is organized as follows: Section 2 shows
a kind of approximate null controllability for the equation (\ref{heat-2}). Section 3 concerns with the existence and uniqueness of time optimal control problems. Section 4 provides some connections among time optimal control problems, norm optimal control problems and some minimization problems.  Section 5 presents several auxiliary estimates. Section 6 proves the main results. Section 7 (Appendix) gives two lemmas. The first one presents
an equivalence between controllability and observability in an abstract setting, which was taken from \cite{WangWangZ}. The second one gives an inequality which was quoted from \cite{PWX}. Since both \cite{WangWangZ} and \cite{PWX} have not appeared, we put them and their proofs in Appendix.

\bigskip

\section{ $L^2$-approximate null controllability with a cost}

In this section, we   present   a kind of approximate null controllability for the sampled-data controlled equation (\ref{heat-2}).
Such controllability will be defined in the next Definition~\ref{Def-ap-controllable-P} and
 will play a key role in getting some estimates in Section 5.

\begin{definition}\label{Def-ap-controllable-P}
  (i) Let $(\delta,k)\in\mathbb{R}^+\times\mathbb N^+$. Equation (\ref{heat-2}) is said to have the  $L^2$-approximate null controllability with a cost over $[0,k\delta]$,  if for any $\varepsilon>0$, there is  $C(\varepsilon,\delta,k)>0$  so that for each
   $y_0\in L^2(\Omega)$, there is  $u^{y_0}_\delta\in L_\delta^2(0,k\delta;L^2(\Omega))$ (see (\ref{L2-delta})) satisfying that
  \begin{eqnarray}\label{P-p-appro-controllable}
  \frac{1}{C(\varepsilon,\delta,k)} \|u^{y_0}_\delta\|_{L^2(0,k\delta;L^2(\Omega))}^2 + \frac{1}{\varepsilon} \|y(k\delta;y_0,u^{y_0}_\delta)\|^2
   \leq \|y_0\|^2.
 \end{eqnarray}

  (ii) Equation (\ref{heat-2}) is said to have the  $L^2$-approximate null controllability with a cost, if it has the  $L^2$-approximate null controllability with a cost over $[0,k\delta]$, for each
  $(\delta,k)\in\mathbb{R}^+\times\mathbb N^+$.

 \end{definition}

To prove  the  $L^2$-approximate null controllability with a cost for
Equation (\ref{heat-2}), we need some preliminaries.
  For each $f\in L^2(\mathbb R^+;L^2(\Omega))$ and $\delta>0$, we let
\begin{eqnarray}\label{adjoint-solution-P-1}
  \bar f_\delta(t)\triangleq\sum_{i= 1}^\infty \chi_{((i-1)\delta,i\delta]}(t)   \frac{1}{\delta} \int_{(i-1)\delta}^{i\delta} f(s) \,\mathrm ds
   \;\;\mbox{for each}\;\;    t\in \mathbb R^+.
\end{eqnarray}
 \begin{lemma}\label{lemma-control-op-dual}
For each $f,\,g\in L^2(\mathbb R^+;L^2(\Omega))$ and each   $\delta>0$,
\begin{eqnarray}\label{delta-dual-eq}
 \langle \bar f_\delta,g \rangle_{L^2(\mathbb R^+;L^2(\Omega))}
 =  \langle f, \bar g_\delta \rangle_{L^2(\mathbb R^+;L^2(\Omega))}
 = \langle \bar f_\delta, \bar g_\delta \rangle_{L^2(\mathbb R^+;L^2(\Omega))}.
\end{eqnarray}

\end{lemma}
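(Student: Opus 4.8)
The plan is to reduce all three quantities in (\ref{delta-dual-eq}) to a single manifestly symmetric expression, exploiting that the map $f \mapsto \bar f_\delta$ acts on each sampling interval $((i-1)\delta, i\delta]$ merely by replacing $f$ with its ($L^2(\Omega)$-valued) average over that interval. Conceptually, $\bar{(\cdot)}_\delta$ is the orthogonal projection of $L^2(\mathbb{R}^+; L^2(\Omega))$ onto the closed subspace of functions that are constant on each interval $((i-1)\delta, i\delta]$, and the three identities are exactly the self-adjointness and idempotency of this projection; I would nonetheless prefer a direct computation, which is short and self-contained.

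First I would verify that $\bar f_\delta$ actually belongs to $L^2(\mathbb{R}^+; L^2(\Omega))$ whenever $f$ does, so that every inner product in (\ref{delta-dual-eq}) is well defined. Setting $a_i \triangleq \frac{1}{\delta}\int_{(i-1)\delta}^{i\delta} f(s)\,\mathrm ds \in L^2(\Omega)$, the Cauchy--Schwarz inequality gives $\|a_i\|^2 \le \frac{1}{\delta}\int_{(i-1)\delta}^{i\delta}\|f(s)\|^2\,\mathrm ds$, so that $\|\bar f_\delta\|_{L^2(\mathbb{R}^+;L^2(\Omega))}^2 = \sum_{i=1}^\infty \delta\|a_i\|^2 \le \|f\|_{L^2(\mathbb{R}^+;L^2(\Omega))}^2$; in particular $\bar{(\cdot)}_\delta$ is a contraction.

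The core step is then a direct expansion. Since the intervals $((i-1)\delta, i\delta]$, $i\in\mathbb N^+$, partition $\mathbb R^+$, I would write $\langle \bar f_\delta, g\rangle_{L^2(\mathbb{R}^+;L^2(\Omega))} = \sum_{i=1}^\infty \int_{(i-1)\delta}^{i\delta}\langle \bar f_\delta(t), g(t)\rangle\,\mathrm dt$, and on the $i$-th interval replace $\bar f_\delta(t)$ by the constant $a_i$. Pulling this fixed vector out of the $t$-integral turns the inner integral into $\langle a_i, \int_{(i-1)\delta}^{i\delta} g(t)\,\mathrm dt\rangle = \delta\langle a_i, b_i\rangle$, where $b_i \triangleq \frac{1}{\delta}\int_{(i-1)\delta}^{i\delta} g(t)\,\mathrm dt$. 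This produces
\[
 \langle \bar f_\delta, g\rangle_{L^2(\mathbb{R}^+;L^2(\Omega))}
 = \frac{1}{\delta}\sum_{i=1}^\infty \left\langle \int_{(i-1)\delta}^{i\delta} f(s)\,\mathrm ds,\ \int_{(i-1)\delta}^{i\delta} g(s)\,\mathrm ds\right\rangle,
\]
which is symmetric in $f$ and $g$ and hence equals $\langle f, \bar g_\delta\rangle_{L^2(\mathbb{R}^+;L^2(\Omega))}$; this is the first identity. Running the same computation with $g$ replaced by $\bar g_\delta$ (also constant, equal to $b_i$, on each interval) reproduces the identical symmetric sum, giving $\langle \bar f_\delta, \bar g_\delta\rangle_{L^2(\mathbb{R}^+;L^2(\Omega))}$ as well.

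There is no serious obstacle here; the two points deserving care are the interchange of the integral with the fixed inner product (legitimate because $a_i$ is independent of $t$ and $g$ is integrable over the bounded interval) and the absolute convergence of the resulting series, which follows from Cauchy--Schwarz, $\sum_{i} \delta\|a_i\|\,\|b_i\| \le \|\bar f_\delta\|_{L^2(\mathbb{R}^+;L^2(\Omega))}\|\bar g_\delta\|_{L^2(\mathbb{R}^+;L^2(\Omega))}$, together with the $L^2$-bound from the first step. The remainder is routine bookkeeping.
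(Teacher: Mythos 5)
Your proof is correct and follows essentially the same route as the paper's: decompose the inner product over the sampling intervals $((i-1)\delta, i\delta]$, use that $\bar f_\delta$ is constant there to pull it out of the $t$-integral, and identify the result with the inner product against $\bar g_\delta$ (the paper proves $\langle \bar f_\delta, g\rangle = \langle \bar f_\delta, \bar g_\delta\rangle$ and gets the remaining identity by symmetry, while you reduce all three quantities to one symmetric sum — an organizational difference only). Your added checks (that $\bar f_\delta \in L^2(\mathbb{R}^+;L^2(\Omega))$ with $\|\bar f_\delta\| \le \|f\|$, and the absolute convergence of the series) are sound refinements the paper leaves implicit.
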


\begin{proof}
Arbitrarily fix $\delta>0$ and $f,\,g\in L^2(\mathbb R^+;L^2(\Omega))$. To prove (\ref{delta-dual-eq}), it suffices to show
\begin{eqnarray}\label{0111-sec2-1}
 \langle \bar f_\delta, g \rangle_{L^2(\mathbb R^+;L^2(\Omega))}
 =\langle \bar f_\delta, \bar g_\delta \rangle_{L^2(\mathbb R^+;L^2(\Omega))}.
\end{eqnarray}
By (\ref{adjoint-solution-P-1}), one can directly check that
\begin{eqnarray*}
&& \langle \bar f_\delta, g \rangle_{L^2(\mathbb R^+;L^2(\Omega))}
 = \sum_{i=1}^\infty\langle \bar f_\delta, g\rangle_{L^2((i-1)\delta, i\delta; L^2(\Omega))}
  = \sum_{i=1}^\infty      \Big\langle \bar f_\delta (i\delta), \int_{(i-1)\delta}^{i\delta} g(t) \,\mathrm dt  \Big\rangle
  \nonumber\\
  &=& \sum_{i=1}^\infty      \big\langle \bar f_\delta (i\delta), \bar g_\delta (i\delta)  \big\rangle \delta
  =  \sum_{i=1}^\infty\langle \bar f_\delta, \bar g_\delta \rangle_{L^2((i-1)\delta, i\delta; L^2(\Omega))}
    = \langle \bar f_\delta, \bar g_\delta \rangle_{L^2(\mathbb R^+;L^2(\Omega))},
\end{eqnarray*}
which leads to (\ref{0111-sec2-1}). This ends the proof of this lemma.
\end{proof}

The following interpolation inequality plays an important  role in the proof of the  $L^2$-approximate null controllability with a cost.

\begin{lemma}\label{Theorem-ob-int-0}
 There exists  $C\triangleq C(\Omega,\omega)>0$ so that when  $0<S<T$,
\begin{eqnarray}\label{ob-int-0}
  \|\varphi(0;T,z)\|\leq e^{C(1+\frac{1}{T-S})} \|z\|^{1/2}  \Big\|\frac{1}{S}\int_0^{S}\varphi(t;T,z) \,\mathrm dt \Big\|_\omega^{1/2}\;\;\mbox{for all}\;\;z\in L^2(\Omega).
\end{eqnarray}

\end{lemma}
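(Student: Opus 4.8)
The plan is to reduce the time-averaged observation appearing in (\ref{ob-int-0}) to a \emph{single-time} observation, to which a standard pointwise-in-time interpolation inequality for the heat equation applies. Throughout I write $\varphi(t)=\varphi(t;T,z)$, and I recall that in terms of the Dirichlet heat semigroup $\{e^{t\Delta}\}_{t\geq0}$ on $L^2(\Omega)$ one has $\varphi(t;T,z)=e^{(T-t)\Delta}z$. The one ingredient I would borrow is the interpolation inequality quoted in the Appendix from \cite{PWX}: there is $C=C(\Omega,\omega)>0$ such that for every $w\in L^2(\Omega)$ and every $\tau>0$,
\[
\|e^{\tau\Delta}w\|\le e^{C(1+\frac{1}{\tau})}\,\|w\|^{1/2}\,\|e^{\tau\Delta}w\|_\omega^{1/2}. \qquad (\mathrm{I})
\]
Everything else is the bookkeeping needed to put the averaged quantity into the form $e^{\tau\Delta}w$ and to restore the left-hand side of (\ref{ob-int-0}).

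First I would rewrite the average as a single-time value of a heat evolution. Set $w\triangleq\frac{1}{S}\int_0^S e^{s\Delta}z\,\mathrm ds$ and $\bar\varphi\triangleq\frac{1}{S}\int_0^S\varphi(t;T,z)\,\mathrm dt$. Using $\varphi(t;T,z)=e^{(T-t)\Delta}z$, the semigroup property, and a change of variables, one checks that $\bar\varphi=e^{(T-S)\Delta}w$; equivalently $\bar\varphi=\varphi(S;T,w)$. In particular $\|\bar\varphi\|_\omega$ is exactly the observation term $\big\|\frac{1}{S}\int_0^S\varphi(t;T,z)\,\mathrm dt\big\|_\omega$ in (\ref{ob-int-0}). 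Applying $(\mathrm{I})$ with gap $\tau=T-S$ to the datum $w$ then yields
\[
\|\bar\varphi\|\le e^{C(1+\frac{1}{T-S})}\,\|w\|^{1/2}\,\|\bar\varphi\|_\omega^{1/2}. \qquad (\mathrm{II})
\]
It remains to replace $\|w\|$ by $\|z\|$ and $\|\bar\varphi\|$ by $\|\varphi(0;T,z)\|$, i.e.\ to prove (a) $\|w\|\le\|z\|$ and (b) $\|\varphi(0;T,z)\|\le\|\bar\varphi\|$. Part (a) is immediate from Minkowski's integral inequality and the contractivity $\|e^{s\Delta}\|\le1$.

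Part (b) is the only genuinely non-formal step, and it is where the obvious route fails: integrating the pointwise interpolation inequality over $t\in(0,S)$ would only control $\frac{1}{S}\int_0^S\|\varphi(t)\|_\omega\,\mathrm dt$, an \emph{average of norms}, whereas (\ref{ob-int-0}) asks for the \emph{norm of the average}, which is smaller; hence a direct integration gives the wrong direction. To obtain (b) I would expand in the Dirichlet eigenbasis $\{e_j\}$ of $-\Delta$, with $-\Delta e_j=\lambda_j e_j$ and $z=\sum_j a_j e_j$. A direct computation gives $\varphi(0;T,z)=\sum_j a_j e^{-\lambda_j T}e_j$ and $\bar\varphi=\sum_j a_j e^{-\lambda_j T}\frac{e^{\lambda_j S}-1}{\lambda_j S}e_j$, so the ratio of the $j$-th coefficients is $\frac{e^{\lambda_j S}-1}{\lambda_j S}\ge1$ by the elementary inequality $e^x-1\ge x$. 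Consequently $\|\bar\varphi\|^2=\sum_j a_j^2 e^{-2\lambda_j T}\big(\frac{e^{\lambda_j S}-1}{\lambda_j S}\big)^2\ge\sum_j a_j^2 e^{-2\lambda_j T}=\|\varphi(0;T,z)\|^2$, which is (b). Chaining (b), $(\mathrm{II})$ and (a) produces
\[
\|\varphi(0;T,z)\|\le\|\bar\varphi\|\le e^{C(1+\frac{1}{T-S})}\,\|z\|^{1/2}\,\|\bar\varphi\|_\omega^{1/2},
\]
which is precisely (\ref{ob-int-0}).

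The main obstacle is thus step (b): the averaging has to be moved \emph{inside} the norm, which is what rules out simply integrating the pointwise inequality and forces the spectral comparison above, the heart of which is the scalar inequality $\frac{e^x-1}{x}\ge1$. A secondary point to verify is the identity $\bar\varphi=e^{(T-S)\Delta}w$ together with the interchange of the Bochner integral and the semigroup, both of which follow from the strong continuity and uniform boundedness of $\{e^{t\Delta}\}$ on $L^2(\Omega)$; and one should note that the prefactor $e^{C(1+\frac{1}{T-t})}$ for $t\in(0,S]$ is dominated by $e^{C(1+\frac{1}{T-S})}$, so no optimization over intermediate times is needed.
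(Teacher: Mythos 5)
Your proof is correct and follows essentially the same route as the paper's: both define the averaged datum $w=f^z=\frac{1}{S}\int_0^S e^{\Delta(S-t)}z\,\mathrm dt$, apply the quoted interpolation inequality from \cite{PWX} across the gap $T-S$, bound $\|w\|\le\|z\|$ by contractivity, and prove $\|\varphi(0;T,z)\|\le\|\bar\varphi\|$ by the same spectral comparison $\frac{1}{S}\int_0^S e^{\lambda_j t}\,\mathrm dt\ge 1$. No gaps; your step (b) is exactly the paper's ``second fact.''
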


\begin{proof}
Let $0<S<T$. Arbitrarily fix $z\in L^2(\Omega)$. We define a function $f^z$ over $\Omega$ by
\begin{eqnarray}\label{ob-int-01}
 f^z \triangleq \frac{1}{S} \int_0^S e^{\Delta(S-t)}z \,\mathrm dt.
\end{eqnarray}
By \cite[(iii) of Theorem 2.1]{PWX}, (Since the paper \cite{PWX} has not appeared, we put \cite[(iii) of Theorem 2.1]{PWX} and its proof in Appendix, see Lemma \ref{lemma-1009-interpolation}.), there is $C\triangleq C(\Omega,\omega)>0$ so that
\begin{eqnarray}\label{ob-int-02}
 \|e^{\Delta(T-S)}f^z\| \leq e^{C(1+\frac{1}{T-S})}
 \|f^z\|^{1/2}  \|\chi_\omega e^{\Delta(T-S)}f^z\|^{1/2}.
\end{eqnarray}
Two facts are given in order: First, it follows from (\ref{ob-int-01}) that
\begin{eqnarray}\label{yuanyuan2.8}
 \|f^z\|\leq \|z\|
 \;\;\mbox{and}\;\;
  e^{\Delta(T-S)}f^z=\frac{1}{S}\int_0^{S}\varphi(t;T,z) \,\mathrm dt.
\end{eqnarray}
Second, write $\{\lambda_j\}_{j=1}^\infty$ for the family of all eigenvalues of $-\Delta$ with the zero Dirichlet boundary condition so that $\lambda_1<\lambda_2\leq\cdots$. Let  $\{e_j\}_{j=1}^\infty$ be the family of  the corresponding normalized eigenvectors. Let $z=\sum_{j=1}^\infty z_j e_j$ for some $\{z_j\}_{j=1}^\infty\in  l^2$. Then it follows that
\begin{eqnarray*}
 \frac{1}{S} \int_0^S e^{\Delta(T-t)} z \,\mathrm dt
 =\sum_{j=1}^\infty \Big(\frac{1}{S} \int_0^S e^{\lambda_j t} \,\mathrm dt\Big) e^{-\lambda_j T} z_j e_j.
\end{eqnarray*}
Since $\frac{1}{S} \int_0^S e^{\lambda_j t} \,\mathrm dt \geq 1$ for each $j\in\mathbb N^+$, it follows from (\ref{ob-int-01}) and the above equality that
\begin{eqnarray}\label{yuanyuan2.9}
\|e^{\Delta(T-S)}f^z\| &=&
 \Big\| \frac{1}{S} \int_0^{S} e^{\Delta (T-t)}z\, \mathrm dt\Big\|
  \geq  \|e^{\Delta T} z\|= \|\varphi(0;T,z)\|.
\end{eqnarray}
Finally, the facts (\ref{yuanyuan2.8}) and (\ref{yuanyuan2.9}), along with (\ref{ob-int-02}), lead to (\ref{ob-int-0}). This ends the proof.
\end{proof}
The next Theorem~\ref{Theorem-eq-controllable-observable} contains  the main results of this section. The conclusion (iii) in Theorem~\ref{Theorem-eq-controllable-observable} will play an important role in our further studies.
\begin{theorem}\label{Theorem-eq-controllable-observable}
The following conclusions are true:

 (i) Equation (\ref{heat-2}) has the  $L^2$-approximate null controllability with a cost
   if and only if  given $\varepsilon>0$, $\delta>0$ and $k\in \mathbb{N}^+$, there is $C(\varepsilon,\delta,k)>0$ (which also depends on $\Omega$ and $\omega$) so that
 \begin{eqnarray}\label{ob-0}
  \|\varphi(0;k\delta,z)\|^2 \leq C(\varepsilon,\delta,k) \|\chi_\omega \overline \varphi_\delta(\cdot;k\delta,z)\|_{L^2(0,k\delta;L^2(\Omega))}^2
  +\varepsilon\|z\|^2
  \;\;\mbox{for all}\;\;z\in L^2(\Omega),
 \end{eqnarray}
 where $\overline \varphi_\delta(\cdot;k\delta,z)$ is given by (\ref{adjoint-solution-P}).

  (ii) Given $\delta>0$ and $k\geq 2$, Equation (\ref{heat-2}) has the  $L^2$-approximate null controllability with a cost over $[0,k\delta]$.

 (iii) Given $\varepsilon>0$, $\delta>0$ and  $k\geq 2$, the  constants $C(\varepsilon,\delta,k)$ in (\ref{ob-0}) and (\ref{P-p-appro-controllable}) can be taken as
\begin{equation}\label{WGS2.13}
C(\varepsilon,\delta,k)=e^{C[1+1/(k\delta)]}/\varepsilon\;\;\mbox{with}\;\;C\triangleq C(\Omega,\omega).
\end{equation}
\end{theorem}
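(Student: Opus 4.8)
The plan is to prove (i) by abstract duality, then to establish the observability inequality (\ref{ob-0}) with the explicit constant claimed in (iii), and finally to read off (ii) together with the controllability form of the constant by combining the two. For (i), I would place the system (\ref{heat-2}) on $[0,k\delta]$ in the abstract controllability/observability framework of the Appendix lemma taken from \cite{WangWangZ}. Write $y(k\delta;y_0,u_\delta)=e^{\Delta k\delta}y_0+\mathcal L_{k\delta}u_\delta$, where $\mathcal L_{k\delta}u_\delta=\int_0^{k\delta}e^{\Delta(k\delta-s)}\chi_\omega u_\delta(s)\,\mathrm ds$ is the reachable-state map on $L_\delta^2(0,k\delta;L^2(\Omega))$. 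The key computation is the adjoint: for $z\in L^2(\Omega)$ one has $\langle\mathcal L_{k\delta}u_\delta,z\rangle=\langle u_\delta,\chi_\omega\varphi(\cdot;k\delta,z)\rangle_{L^2(0,k\delta)}$, and since each $u_\delta\in L_\delta^2$ coincides with its own sampling average, Lemma \ref{lemma-control-op-dual} lets me replace $\chi_\omega\varphi$ by its time-average, so that $\mathcal L_{k\delta}^*z=\chi_\omega\overline\varphi_\delta(\cdot;k\delta,z)$ (the averaging commuting with the spatial cutoff). With $\|\mathcal L_{k\delta}^*z\|^2=\|\chi_\omega\overline\varphi_\delta(\cdot;k\delta,z)\|_{L^2(0,k\delta)}^2$ and $(e^{\Delta k\delta})^*z=\varphi(0;k\delta,z)$, the abstract equivalence converts the approximate null controllability with cost (\ref{P-p-appro-controllable}) into the weak observability estimate (\ref{ob-0}), with dual constants.

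For the observability inequality with the stated constant, I would invoke the interpolation inequality of Lemma \ref{Theorem-ob-int-0} with $T=k\delta$ and, crucially, the \emph{balanced} window $S=\lfloor k/2\rfloor\delta$ rather than $S=\delta$. Two elementary identities link it to $\overline\varphi_\delta$: writing $j=S/\delta$, one has $\frac1S\int_0^S\varphi(t;k\delta,z)\,\mathrm dt=\frac1j\sum_{i=1}^j(\overline\varphi_\delta\text{ on the }i\text{-th interval})$, whence by Cauchy--Schwarz $\big\|\frac1S\int_0^S\varphi\,\mathrm dt\big\|_\omega\le S^{-1/2}\|\chi_\omega\overline\varphi_\delta\|_{L^2(0,k\delta)}$. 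Squaring Lemma \ref{Theorem-ob-int-0}, inserting this bound, and applying Young's inequality yields $\|\varphi(0;k\delta,z)\|^2\le\varepsilon\|z\|^2+\frac{e^{4C(1+1/(T-S))}}{4\varepsilon S}\|\chi_\omega\overline\varphi_\delta\|_{L^2(0,k\delta)}^2$. For $k\ge2$ the balanced choice gives $T-S=\lceil k/2\rceil\delta\ge k\delta/2$ and $S=\lfloor k/2\rfloor\delta\ge k\delta/4$, so $1/(T-S)\le2/(k\delta)$ and $1/S\le4/(k\delta)$; bounding the prefactor by $\frac1{k\delta}\le e^{1/(k\delta)}$ then collapses the constant into the form $e^{C'(1+1/(k\delta))}/\varepsilon$ with $C'=C'(\Omega,\omega)$. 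This proves (\ref{ob-0}), and combining it with the equivalence (i) furnishes both (ii) and the controllability form of the constant in (iii).

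The one genuinely delicate step is extracting the precise constant $e^{C(1+1/(k\delta))}/\varepsilon$. The naive window $S=\delta$ produces a constant of size $e^{4C(1+1/((k-1)\delta))}/(\varepsilon\delta)$, whose prefactor $1/\delta$ cannot be absorbed into $e^{C/(k\delta)}$ uniformly in $k$: for small $\delta$ and large $k$ the exponential tends to $e^{4C}$ while $1/\delta$ stays large. Balancing $S\approx k\delta/2$ is exactly what makes $k\delta$ (rather than $\delta$) the governing scale, and is the point on which the whole estimate hinges; everything else is routine semigroup bookkeeping and elementary inequalities.
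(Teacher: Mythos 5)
Your proposal is correct and follows essentially the same route as the paper: part (i) via the abstract duality lemma from the Appendix (with the reachable-state map and the averaged adjoint state $\chi_\omega\overline\varphi_\delta$ identified as adjoints through Lemma \ref{lemma-control-op-dual}), and parts (ii)--(iii) via Lemma \ref{Theorem-ob-int-0} applied with the balanced window $S=[k/2]\delta$, the $L^1$--$L^2$ (Cauchy--Schwarz) link between $\frac1S\int_0^S\varphi\,\mathrm dt$ and $\|\chi_\omega\overline\varphi_\delta\|_{L^2(0,k\delta)}$, Young's inequality, and the elementary bounds $1/(k\delta-[k/2]\delta)\leq 2/(k\delta)$, $1/([k/2]\delta)\leq e^{c(1+1/(k\delta))}$ to collapse the constant into the form $e^{C(1+1/(k\delta))}/\varepsilon$. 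Your closing remark that the naive window $S=\delta$ would leave an unabsorbable $1/\delta$ prefactor correctly identifies the same point the paper's choice of $S=[k/2]\delta$ is designed to handle.
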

\begin{proof}
We first prove the conclusion (i). Arbitrarily fix $\delta>0$, $k\in \mathbb{N}^+$ and $\varepsilon>0$. We will put our problems under the framework of \cite[Lemma 5.1]{WangWangZ} in the following manner: Let
$X\triangleq
L^2(\Omega)$, $Y\triangleq
L^2_\delta(0,k\delta;L^2(\Omega))$ and $Z\triangleq
 L^2(\Omega)$.
 Define operators $ \mathcal R: Z\rightarrow X$ and $ \mathcal O: Z\rightarrow Y$ by
\begin{eqnarray*}
\mathcal R z  \triangleq  \varphi(0;k\delta,z)\;\;\mbox{and}\;\;\mathcal O z\triangleq  \chi_\omega \overline \varphi_\delta(\cdot;k\delta,z)\;\;\mbox{for all}\;\;z\in Z.
\end{eqnarray*}
One can directly check that $\mathcal R^*: X^*\rightarrow Z^*$ and $
\mathcal O^*: Y^*\rightarrow Z^*$ are given respectively  by
\begin{eqnarray*}
\mathcal R^* y_0=y(k\delta;y_0,0),~y_0\in L^2(\Omega);\;\;
\mathcal O^* u_\delta=y(k\delta;0,u_\delta),~u_\delta\in L^2(0,k\delta;L^2(\Omega)).
\end{eqnarray*}
From these,  Definition~\ref{Def-ap-controllable-P} and (\ref{ob-0}), we can apply
   \cite[Lemma 5.1]{WangWangZ} to get the conclusion (i) of Theorem~\ref{Theorem-eq-controllable-observable}.
 (Since the paper \cite{WangWangZ} has not appeared, we put \cite[Lemma 5.1]{WangWangZ} and its proof in Appendix, see Lemma \ref{lemma-0428-fn}.)

We next prove the conclusions (ii) and (iii).   Arbitrarily fix $\varepsilon>0$, $\delta>0$ and $k\geq 2$. By the conclusion (i), we find that it suffices to show (\ref{ob-0}) with the triplet $(\varepsilon,\delta,k)$. To this end,  we use   (\ref{ob-int-0}) (where $T=k\delta$ and $S=[k/2]\delta$, with $[k/2]$  the integer so that $k/2-1 < [k/2]\leq k/2$) to get that for each $z\in L^2(\Omega)$,
\begin{equation*}
 \|\varphi(0;k\delta,z)\|^2
 \leq  e^{2C(1+\frac{1}{k\delta-[k/2]\delta})}
  \Big\| \frac{1}{[k/2]\delta} \int_0^{[k/2]\delta}\varphi(t;k\delta,z) \,\mathrm dt \Big\|_\omega\|z\|,
\end{equation*}
where $C$ is given by (\ref{ob-int-0}).
Then by  Young's inequality, we find that for  each $z\in L^2(\Omega)$,
\begin{eqnarray}\label{wwang2.32}
\|\varphi(0;k\delta,z)\|^2
  &\leq&    \frac{1}{\varepsilon} e^{4C(1+\frac{1}{k\delta-[k/2]\delta})} \frac{1}{([k/2]\delta)^2}
  \Big\|\int_0^{[k/2]\delta}\varphi(t;k\delta,z) \,\mathrm dt \Big\|_\omega^2
  + \varepsilon\|z\|^2. ~~~
 \end{eqnarray}
Two observations are given in order: First, it follows from (\ref{adjoint-solution-P}) that for each $z\in L^2(\Omega)$,
 \begin{eqnarray}\label{wwang2.33}
  & & \Big\|\int_0^{[k/2]\delta}\varphi(t;k\delta,z)\,\mathrm dt\Big\|_\omega
  \leq  \sum_{i=1}^{[k/2]} \Big\|\frac{1}{\delta}\int_{(i-1)\delta}^{i\delta}\varphi(t;k\delta,z)\, \mathrm dt \Big\|_\omega \delta
\nonumber\\
&=&
 \|\chi_\omega\overline\varphi_\delta(\cdot;k\delta,z)\|_{L^1(0,[k/2]\delta; L^2(\Omega))}
\leq  \sqrt{[k/2]\delta} \|\chi_\omega\overline\varphi_\delta(\cdot;k\delta,z)\|_{L^2(0,k\delta; L^2(\Omega))};
 \end{eqnarray}
 Second, since
\begin{eqnarray*}
k/4\leq [k/2]\leq k/2\;\;\mbox{and}\;\;{1}/([k/2]\delta)\leq e^{4+{1}/{k\delta}},
\end{eqnarray*}
 one can directly check that
\begin{eqnarray}\label{wwang2.331}
e^{4C\big(1+\frac{1}{k\delta-[k/2]\delta}\big)} \frac{1}{[k/2]\delta}
\leq  e^{8(C+1)(1+\frac{1}{k\delta})}.
\end{eqnarray}
 Finally, from  (\ref{wwang2.32}), (\ref{wwang2.33}) and (\ref{wwang2.331}), we get
 (\ref{ob-0}), with $C(\varepsilon,\delta,k)$  given by (\ref{WGS2.13}), where
 $C(\Omega,\omega)$ may differ from that in (\ref{wwang2.331}). This proves (ii), as well as (iii).

In summary, we end  the proof of Theorem~\ref{Theorem-eq-controllable-observable}.
\end{proof}

\section{Existence and uniqueness of optimal controls}

In this section, we will prove that for each $(M,y_0)$, $(TP)^{M,y_0}$ has the unique optimal control, while for some
 $(M,y_0,\delta)$,
  $(TP)^{M,y_0}_\delta$  has infinitely many optimal controls. The later may cause difficulties in our studies.  Fortunately, we observe that the optimal control with the minimal norm to $(TP)^{M,y_0}_\delta$ (see Definition \ref{wgsdefinition1.1}) is unique.
The first main theorem in this section is stated in the next Theorem~\ref{Lemma-existences-TP}.
It deserves mentioning what follows: The conclusion (iii) of Theorem~\ref{Lemma-existences-TP} should belong to the materials in the next section.  The reason that we put it here is that we will use it in the proof of
the non-uniqueness of optimal controls to $(TP)_\delta^{M,y_0}$. More precisely,
in the proof of Lemma~\ref{0806-exist-l2}, we will use it.

\begin{theorem}\label{Lemma-existences-TP}
Let $y_0\in L^2(\Omega)\setminus B_r(0)$. Let $M\geq 0$. The following conclusions are true:

(i) The problem   $(TP)^{M,y_0}$ has a unique optimal control.

 (ii) For  each   $\delta >0$, $(TP)_\delta^{M,y_0}$ has a unique  optimal control with the minimal norm.

 (iii) Let $u^*_\delta$ (with $\delta>0$) be the  optimal control with the minimal norm
to $(TP)_\delta^{M,y_0}$. Then $u^*_\delta|_{(0,T_\delta(M,y_0))}$
(the restriction of $u^*_\delta$ over $(0,T_\delta(M,y_0))$) is an optimal control to $(NP)_\delta^{T_\delta(M,y_0),y_0}$ and
the $L^2(0,T_\delta(M,y_0);L^2(\Omega))$-norm of $u^*_\delta$ is $N_\delta(T_\delta(M,y_0),y_0)$.
\end{theorem}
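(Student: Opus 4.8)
The plan is to handle all three parts by reducing each time-optimal question to a norm-minimization over a nonempty closed convex subset of a Hilbert space, where existence comes from weak compactness and uniqueness from the strict convexity of the $L^2$-norm. Throughout I use the affine representation $y(t;y_0,u)=e^{\Delta t}y_0+\int_0^t e^{\Delta(t-s)}\chi_\omega u(s)\,\mathrm ds$ and the fact that the control-to-state map $u\mapsto\int_0^{\cdot}e^{\Delta(\cdot-s)}\chi_\omega u(s)\,\mathrm ds$ is compact, owing to the smoothing of the heat semigroup.

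For existence in (i), I take a minimizing sequence $\{(t_n,u_n)\}$ with $u_n\in\mathcal U^M$, $y(t_n;y_0,u_n)\in B_r(0)$ and $t_n\downarrow T(M,y_0)$. Since $\|u_n\|_{L^2(\mathbb R^+;L^2(\Omega))}\le M$, a subsequence converges weakly to some $u^*$, and $\|u^*\|\le M$ by weak lower semicontinuity, so $u^*\in\mathcal U^M$. The compactness of the control-to-state map, together with $t_n\to T(M,y_0)$, upgrades the weak convergence to $y(t_n;y_0,u_n)\to y(T(M,y_0);y_0,u^*)$ in $L^2(\Omega)$; as $B_r(0)$ is closed, the limit lies in $B_r(0)$, so $u^*$ is optimal. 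For (ii), the admissible times form a nonempty subset of $\delta\mathbb N^+$, so $T_\delta(M,y_0)=k^*\delta$ is attained at the least such integer $k^*$; applying the same weak-compactness argument inside the closed, convex and bounded (hence weakly compact) set $\mathcal U^M_\delta$ produces an optimal control, establishing existence.

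The uniqueness in (i) rests on identifying every optimal control with the minimal-norm control that reaches the target at the optimal time. Since a control of norm $\le M$ hitting $B_r(0)$ at time $\hat t$ exists exactly when $N(\hat t,y_0)\le M$, the definition of $(TP)^{M,y_0}$ gives $T(M,y_0)=\inf\{\hat t>0:N(\hat t,y_0)\le M\}$; invoking the continuity and strict monotonicity of $T\mapsto N(T,y_0)$ (Theorem \ref{Proposition-NP-Lip-T}) then yields $N(T(M,y_0),y_0)=M$. Consequently any optimal control $u^*$ satisfies $\|u^*\|_{L^2(0,T;L^2(\Omega))}\le M=N(T,y_0)$ with $T=T(M,y_0)$ while steering $y_0$ into $B_r(0)$ at $T$, so it must attain $N(T,y_0)$, i.e. it solves $(NP)^{T,y_0}$. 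Because the admissible set of $(NP)^{T,y_0}$ is a nonempty closed convex subset of $L^2(0,T;L^2(\Omega))$, its least-norm element (the projection of the origin) is unique, so the optimal control of $(TP)^{M,y_0}$ is unique. The uniqueness of the minimal-norm optimal control in (ii) is softer: the restrictions to $(0,T_\delta(M,y_0))$ of the optimal controls form the set $\{v\in L^2_\delta(0,k^*\delta;L^2(\Omega)):y(k^*\delta;y_0,v)\in B_r(0),\ \|v\|_{L^2(0,k^*\delta;L^2(\Omega))}\le M\}$, again a nonempty closed convex subset of a Hilbert space, whose least-norm element is unique; since optimal controls are identified through their restriction to $(0,T_\delta(M,y_0))$, the minimal-norm optimal control is unique.

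Finally, for (iii), existence of an optimal control $w$ to $(TP)_\delta^{M,y_0}$ (from (ii)) gives $\|w\|_{L^2(0,T_\delta;L^2(\Omega))}\le M$ with $y(T_\delta;y_0,w)\in B_r(0)$, so $N_\delta(T_\delta(M,y_0),y_0)\le M$; hence the optimal control $v^*_\delta$ of $(NP)_\delta^{T_\delta(M,y_0),y_0}$, which exists by the results mentioned after its definition, has norm $N_\delta(T_\delta,y_0)\le M$, and its zero-extension lies in $\mathcal U^M_\delta$ and reaches the target at $T_\delta$, so it is itself an optimal control to $(TP)_\delta^{M,y_0}$. Minimality of the norm of $u^*_\delta$ then forces $\|u^*_\delta\|_{L^2(0,T_\delta;L^2(\Omega))}\le\|v^*_\delta\|_{L^2(0,T_\delta;L^2(\Omega))}=N_\delta(T_\delta,y_0)$, while admissibility of $u^*_\delta|_{(0,T_\delta)}$ for $(NP)_\delta$ gives the reverse inequality; equality proves both assertions of (iii). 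I expect the genuine obstacle to be the uniqueness in (i), precisely the input $N(T(M,y_0),y_0)=M$: this requires the continuity and strict decrease of the norm function, where the analytic content (approximate null controllability and observability for the heat equation) is concentrated, whereas (ii) and (iii) are comparatively formal consequences of Hilbert-space projection and the definitions.
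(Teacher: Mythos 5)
Your proposal is correct, and for existence in (i), for all of (ii), and for (iii) it is essentially the paper's own argument: discreteness of the admissible times gives attainment of $T_\delta(M,y_0)$; uniqueness of least-norm elements of a nonempty closed convex set (your Hilbert-space projection is the same mathematics as the paper's parallelogram-law argument); and the two-sided comparison between $u^*_\delta|_{(0,T_\delta(M,y_0))}$ and the zero extension of the norm-optimal control of $(NP)_\delta^{T_\delta(M,y_0),y_0}$ yields (iii) exactly as in the paper's proof.

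Where you genuinely diverge is the uniqueness in (i). The paper quotes from \cite{GL} (Lemma 4.3 there) the fact that every optimal control of $(TP)^{M,y_0}$ has $L^2(0,T(M,y_0);L^2(\Omega))$-norm exactly $M$, and then concludes by the parallelogram law. You instead derive $N(T(M,y_0),y_0)=M$ from the characterization $T(M,y_0)=\inf\{\hat t>0: N(\hat t,y_0)\le M\}$ plus continuity and strict monotonicity of $N(\cdot,y_0)$, identify every time-optimal control as a norm-optimal control for $(NP)^{T(M,y_0),y_0}$, and invoke uniqueness of the least-norm element. The two routes are equivalent in substance (both rest on the no-gap fact that optimal controls exhaust the norm budget, whose analytic content is outsourced either way), and yours has the merit of making explicit the identification of time-optimal with norm-optimal controls, i.e. the content of (iv) of Theorem \ref{Theorem-eq-cont-discrete}. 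Two caveats on your citations, though. First, you invoke Theorem \ref{Proposition-NP-Lip-T} for continuity/strict monotonicity; but that theorem is proved later in the paper using (ii)--(iii) of Theorem \ref{Theorem-eq-cont-discrete} — in particular precisely $N(T(M,y_0),y_0)=M$ — and the existence of time-optimal controls, so quoting it here is circular within the paper's internal ordering. You should quote (i)--(ii) of Theorem \ref{Theorem-eq-cont-discrete} directly, whose proof the paper outsources to \cite{TWW}, \cite{Y}, \cite{WZ} and which does not require uniqueness. Second, in (iii) you justify existence of the optimal control to $(NP)_\delta^{T_\delta(M,y_0),y_0}$ by the results "mentioned after its definition" (Theorems \ref{Lemma-Norm-fun}--\ref{Lemma-NP-bangbang}); those are stated only for $(\delta,k)\in\mathcal P_{T^*_{y_0}}$, i.e. $k\ge 2$ and $k\delta<T^*_{y_0}$, which need not hold at $T_\delta(M,y_0)$ (e.g. when $T_\delta(M,y_0)=\delta$). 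The paper instead uses the elementary weak-compactness/projection argument (as in \cite{HOF}), which your own framework already supplies; cite that instead. Neither caveat affects the correctness of the underlying mathematics.
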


\begin{proof}
Arbitrarily fix $M\geq 0$ and $y_0\in L^2(\Omega) \setminus  B_r(0)$. We will prove conclusions (i), (ii) and (iii) one by one.

(i) Because $y(t;y_0,0)\rightarrow 0$ as $t\rightarrow\infty$,  the null control is an admissible control to  $(TP)^{M,y_0}$, which implies that $(TP)^{M,y_0}$ has an admissible control.
Then  by the standard way as that used in the proof of \cite[Lemma 1.1]{HOF}, one can show that $(TP)^{M,y_0}$ has an optimal control.

To show the uniqueness of the optimal control to $(TP)^{M,y_0}$, we first notice that each optimal control $u^*$ to $(TP)^{M,y_0}$ has the property that
\begin{eqnarray}\label{0322-eq-lemma-1}
 \|u^*\|_{L^2(0,T(M,y_0);L^2(\Omega))}=M.
\end{eqnarray}
(The  property (\ref{0322-eq-lemma-1})
can be proved by the same way as that used to show  \cite[Lemma 4.3]{GL}.)
 Next, we notice that if $u^*_1$ and $u^*_2$ are optimal controls to $(TP)^{M,y_0}$,
 then $(u^*_1+u^*_2)/2$ is also an optimal control to $(TP)^{M,y_0}$. From this,  (\ref{0322-eq-lemma-1}) and the parallelogram law in $L^2(0,T(M,y_0);L^2(\Omega))$, we can easily use the contradiction argument to get
 the uniqueness. This   ends the proof of the conclusion (i).

 (ii) Arbitrarily fix $\delta>0$.  We first show that $(TP)_\delta^{M,y_0}$ has an optimal control. Indeed, since the null control is clearly an admissible control
 to $(TP)^{M,y_0}_\delta$, it follows by the definition of $T_\delta(M,y_0)$ (see (\ref{time-2})) that there exists  $\hat k\in\mathbb N^+$ so that
\begin{eqnarray}\label{0118-sec3-17}
  T_\delta(M,y_0)=\hat k\delta.
\end{eqnarray}
Meanwhile, since $y_0\in L^2(\Omega)\setminus B_r(0)$,
 by the definition of the infimum in  (\ref{time-2}), we see that there is   $k_0 \in \mathbb N^+$ and   $u_\delta^0\in L^2_\delta(\mathbb R^+;L^2(\Omega))$ so that
\begin{equation}\label{0118-sec3-18}
 T_\delta(M,y_0) \leq k_0\delta \leq T_\delta(M,y_0) +\delta/2;
 \end{equation}
\begin{equation}\label{0118-sec3-18-1}
y(k_0\delta;y_0,u_\delta^0)\in B_r(0)
\;\;\mbox{and}\;\;
 \|u_\delta^0\|_{L^2(\mathbb R^+;L^2(\Omega))} \leq M.
\end{equation}
From  (\ref{0118-sec3-17}) and (\ref{0118-sec3-18}),
 we find that $\hat k\delta\leq k_0\delta\leq \hat k\delta+\delta/2$, which leads to that $k_0=\hat k$. This, along with (\ref{0118-sec3-17}) and  (\ref{0118-sec3-18-1}), yields that
$T_\delta(M,y_0)=k_0\delta\in(0,\infty)$,
which, together with  (\ref{0118-sec3-18-1}), implies that $u_\delta^0$ is an optimal control to  $(TP)_\delta^{M,y_0}$.

  Next, we will prove that  $(TP)_\delta^{M,y_0}$ has a unique
   optimal control with the minimal norm. Indeed, since $L^2_\delta(\mathbb R^+;L^2(\Omega))$ is a closed subspace of $L^2(\mathbb R^+;L^2(\Omega))$,  by  Definition \ref{wgsdefinition1.1},  one can use a standard way (i.e., taking a minimization sequence) to  show the existence of the optimal control with the minimal norm to $(TP)_\delta^{M,y_0}$. To show the uniqueness, we let $u_1$ and $u_2$ be two optimal controls with the minimal norm. By Definition~\ref{wgsdefinition1.1}, one can easily check that $(u_1+u_2)/2$ is also an optimal control with the minimal norm to $(TP)^{M,y_0}_\delta$. By making use of Definition~\ref{wgsdefinition1.1} again,   we find that
   \begin{eqnarray*}
   \|u_1\|_{L^2(0,T_\delta(M,y_0); L^2(\Omega))}=\|u_2\|_{L^2(0,T_\delta(M,y_0); L^2(\Omega))}=\|(u_1+u_2)/2\|_{L^2(0,T_\delta(M,y_0); L^2(\Omega))}.
   \end{eqnarray*}
   These, along with
   the parallelogram law for $L^2(0,T_\delta(M,y_0);L^2(\Omega))$, yields that
   \begin{eqnarray*}
   (u_1-u_2)/2=0\;\;\mbox{in}\;\;L^2(0,T_\delta(M_\delta,y_0);L^2(\Omega)),
   \;\;\mbox{i.e.,}\;\;u_1=u_2.
   \end{eqnarray*}
    So $(TP)_\delta^{M,y_0}$ has a unique optimal control with the minimal norm.

  (iii) Let  $u^*_\delta$ be  the optimal control with the minimal norm to $(TP)_\delta^{M,y_0}$.
  We will show that $u^*_\delta|_{(0,T_\delta(M,y_0)}$
     is  an optimal control to $(NP)_\delta^{T_\delta(M,y_0),y_0}$.
 Indeed, we have that
 \begin{eqnarray}\label{0118-sec4-301}
   y(T_\delta(M,y_0);y_0,u^*_\delta)\in B_r(0)
   \;\;\mbox{and}\;\;
   \|u^*_\delta\|_{L^2_\delta(\mathbb R^+;L^2(\Omega))}\leq M,
 \end{eqnarray}
 from which, one can easily check that $u^*_\delta|_{(0,T_\delta(M,y_0))}$  is an admissible control to $(NP)_\delta^{T_\delta(M,y_0),y_0}$. Then by the optimality of $N_\delta(T_\delta(M,y_0),y_0)$ and the second inequality in (\ref{0118-sec4-301}), we see that
 \begin{eqnarray}\label{TP-minimal-1}
  N_\delta(T_\delta(M,y_0),y_0)\leq
  \|u^*_\delta|_{(0,T_\delta(M,y_0))}\|_{L^2_\delta(L^2_\delta
  (0,T_\delta(M,y_0);L^2(\Omega))} \leq M<\infty.
 \end{eqnarray}
 Meanwhile, since $(NP)_\delta^{T_\delta(M,y_0),y_0}$ has an admissible control,
 we can use a standard argument (see for instance the proof of \cite[Lemma 1.1]{HOF}) to show that $(NP)_\delta^{T_\delta(M,y_0),y_0}$ has an optimal control $v^*_\delta$.  Write $\widetilde v^*_\delta$ for the zero extension of $v^*_\delta$ over $\mathbb R^+$.
  Then we have that
 \begin{eqnarray}\label{TP-minimal-2}
 y(T_\delta(M,y_0);y_0,\widetilde v^*_\delta)\in B_r(0)
  \;\;\mbox{and}
  \;\; \|\widetilde v^*_\delta\|_{L^2_\delta(\mathbb{R}^+; L^2(\Omega))}=N_\delta(T_\delta(M,y_0),y_0).
 \end{eqnarray}
  From  (\ref{TP-minimal-2}) and (\ref{TP-minimal-1}), it follows that  $\widetilde v^*_\delta$ is an optimal control to $(TP)_\delta^{M,y_0}$. Since $u^*_\delta$ is the optimal control with the minimal norm to $(TP)_\delta^{M,y_0}$, we see from (\ref{TP-minimal-1}), (ii) of Definition~\ref{wgsdefinition1.1} and the second equality in (\ref{TP-minimal-2}) that
 \begin{eqnarray*}
   N_\delta(T_\delta(M,y_0),y_0)
   &\leq& \|u^*_\delta\|_{L^2_\delta(0,T_\delta(M,y_0);L^2(\Omega))}
   \nonumber\\
   &\leq&
   \|\widetilde v^*_\delta\|_{L^2_\delta(0,T_\delta(M,y_0);L^2(\Omega))}
   =N_\delta(T_\delta(M,y_0),y_0).
 \end{eqnarray*}
 The above, together with the first conclusion in (\ref{0118-sec4-301}), implies that $u^*_\delta|_{(0,T_\delta(M,y_0))}$ is an optimal control to $(NP)_\delta^{T_\delta(M,y_0),y_0}$ and that
 \begin{eqnarray*}
 \|u^*_\delta\|_{L^2_\delta(0,T_\delta(M,y_0);L^2(\Omega))}
 = N_\delta(T_\delta(M,y_0),y_0).
 \end{eqnarray*}

  In summary, we end the proof of Theorem~\ref{Theorem-time-control-convergence-1}.
\end{proof}
The next theorem  concerns with the non-uniqueness of optimal controls to $(TP)_{\delta}^{M,y_0}$.
\begin{theorem}\label{Lemma-nonunique-TP}
Let $y_0\in L^2(\Omega)\setminus B_r(0)$. Then  there are    sequences $\{M_n\}$ dense in $\mathbb R^+$ and   $\{\delta_n\}\subset\mathbb R^+$, with $\lim_{n\rightarrow\infty} \delta_n=0$, so that for each $n$, the problem $(TP)_{\delta_n}^{M_n,y_0}$ has infinitely many different optimal controls.
\end{theorem}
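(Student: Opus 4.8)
The plan is to reduce the statement to a density argument resting on a single dichotomy: for a fixed sampling period $\delta$, the problem $(TP)_\delta^{M,y_0}$ has a \emph{unique} optimal control exactly when the budget is saturated at the optimal time, and it has infinitely many as soon as there is strict slack. The first point I would establish is that ``infinitely many different optimal controls'' is the same as ``at least two''. Indeed, the set $\mathcal O_{M,\delta}$ of restrictions of optimal controls defined in (\ref{geometic-OMdelta}) is convex: if $u_1,u_2\in\mathcal U_\delta^M$ both drive $y_0$ into $B_r(0)$ at $T_\delta(M,y_0)$, then $(u_1+u_2)/2\in\mathcal U_\delta^M$ does too, by linearity of the flow and convexity of $B_r(0)$. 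Hence $\mathcal O_{M,\delta}$ is either a single point or contains a whole segment, and by Definition \ref{wgsdefinition1.1}(iii) the latter means infinitely many pairwise different optimal controls. So it suffices to exhibit, for a dense family of pairs $(M,\delta)$ with $\delta\to 0$, two distinct optimal controls.

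Next I would prove the slack criterion. Set $k\delta=T_\delta(M,y_0)$. By (iii) of Theorem \ref{Lemma-existences-TP} the minimal-norm optimal control $u_\delta^*$ satisfies $\|u_\delta^*\|_{L^2(0,k\delta;L^2(\Omega))}=N_\delta(k\delta,y_0)$, and $N_\delta(k\delta,y_0)$ is the least norm needed to reach $B_r(0)$ at $k\delta$; thus every optimal control of $(TP)_\delta^{M,y_0}$ has norm in $[N_\delta(k\delta,y_0),M]$. If $M=N_\delta(k\delta,y_0)$, every optimal control has norm exactly $N_\delta(k\delta,y_0)$ and hence is an optimal control of $(NP)_\delta^{k\delta,y_0}$, which is unique, so $\mathcal O_{M,\delta}$ is a singleton. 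If instead $M>N_\delta(k\delta,y_0)$, I would perturb $u_\delta^*$ by a nonzero element $h$ of the kernel of the (affine) control-to-state map $u_\delta\mapsto y(k\delta;0,u_\delta)$. Since $\Omega\setminus\omega$ has positive measure, any sampled control supported in $(0,\delta]$ with values in $\Omega\setminus\omega$ lies in this kernel, so $y(k\delta;y_0,u_\delta^*+th)=y(k\delta;y_0,u_\delta^*)\in B_r(0)$ for every $t$, while by continuity $\|u_\delta^*+th\|\le M$ for all small $|t|$ because $\|u_\delta^*\|=N_\delta(k\delta,y_0)<M$. The controls $\{u_\delta^*+th\}$ are then admissible, optimal, and pairwise different on $(0,T_\delta(M,y_0))$, so there are infinitely many.

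Then I would show the strict-slack set is dense. For fixed $\delta$, a target can be reached at $k\delta$ within budget $M$ iff $N_\delta(k\delta,y_0)\le M$, so from (\ref{time-2}) and (iii) of Theorem \ref{Lemma-existences-TP} one gets $T_\delta(M,y_0)=k\delta$ iff $N_\delta(k\delta,y_0)\le M<N_\delta((k-1)\delta,y_0)$ (with the convention $N_\delta(0,y_0)=+\infty$). Consequently $M>N_\delta(T_\delta(M,y_0),y_0)$ for every $M$ lying outside the countable set $\{N_\delta(j\delta,y_0):j\in\mathbb N^+\}$; that is, the slack set $G_\delta\subset\mathbb R^+$ is co-countable, hence dense. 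Finally, fixing any $\delta_n\to 0$ (e.g. $\delta_n=1/n$) and using that each $G_{\delta_n}$ is dense, a routine enumeration of the rational subintervals of $\mathbb R^+$ lets me choose $M_n\in G_{\delta_n}$ so that $\{M_n\}$ meets every such interval, i.e. $\{M_n\}$ is dense; by construction each $(TP)_{\delta_n}^{M_n,y_0}$ has infinitely many optimal controls.

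The step I expect to be the main obstacle is the strict-slack construction in the second paragraph: one must guarantee that the perturbing direction stays admissible and genuinely alters the control on the effective domain $(0,T_\delta(M,y_0))$. This is precisely where the nontriviality of the kernel of the control-to-state map (here supplied by the positive measure of $\Omega\setminus\omega$, and alternatively available for $k\ge 2$ from the sampled structure) and the \emph{strict} inequality $N_\delta(k\delta,y_0)<M$ are both indispensable, while the uniqueness in the saturated case relies on the uniqueness of the minimal-norm control from Theorem \ref{Lemma-existences-TP}. The passage from the dense slack sets to the two requested sequences is, by comparison, purely a packaging matter.
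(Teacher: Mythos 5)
Your reduction ``at least two implies infinitely many'' via convexity is exactly the paper's closing step, and your dense-slack set (take $M$ outside the countable set $\{N_\delta(j\delta,y_0):j\in\mathbb N^+\}$, so that $N_\delta(T_\delta(M,y_0),y_0)<M$ strictly, which is the content of Lemma \ref{0806-exist-l1}) matches the paper's choice (\ref{0806-th2.3-1}). The genuinely different part is the key step: where the paper argues by contradiction using a Hahn--Banach separation, the interpolation inequality of Lemma \ref{Theorem-ob-int-0} and backward uniqueness, you perturb the minimal-norm optimal control by a nonzero element of the kernel of the control-to-state map. Where this applies it is far more elementary, and it exposes that, since Definition \ref{wgsdefinition1.1}(iii) measures ``different'' in $L^2(\Omega)$ rather than in $L^2(\omega)$, non-uniqueness under strict slack is almost automatic. (One small repair: admissibility is $\|u\|_{L^2(\mathbb R^+;L^2(\Omega))}\leq M$, while Theorem \ref{Lemma-existences-TP}(iii) only controls $\|u^*_\delta\|_{L^2(0,T_\delta(M,y_0);L^2(\Omega))}=N_\delta$; so first replace $u^*_\delta$ by its truncation $\chi_{(0,T_\delta(M,y_0))}u^*_\delta$, still optimal, before adding $th$.)

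There is, however, a genuine gap. Your kernel element needs $|\Omega\setminus\omega|>0$, and the standing hypotheses allow $\omega=\Omega$ (or $\Omega\setminus\omega$ of measure zero); in that case every control ``with values in $\Omega\setminus\omega$'' is the zero element of $L^2(\Omega)$ and your family $\{u^*_\delta+th\}$ collapses to one control. You flag the fallback, nontriviality of the kernel for $k\geq 2$ from the sampled structure, but you neither prove it nor make it compatible with your packaging. It does hold: writing $A_i\triangleq\int_{(k-i)\delta}^{(k-i+1)\delta}e^{\Delta\tau}\,\mathrm d\tau$, the state map sends $\sum_{i}\chi_{((i-1)\delta,i\delta]}u^i$ to $\sum_i A_i\chi_\omega u^i$, and when $\chi_\omega$ acts as the identity the commuting, injective operators $A_i$ give the nonzero kernel element $(A_2v,-A_1v,0,\dots,0)$ --- but this requires $k\geq 2$. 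Your packaging fixes $\delta_n=1/n$ first and then picks $M_n$ in a prescribed rational interval, and this does \emph{not} guarantee $T_{\delta_n}(M_n,y_0)\geq 2\delta_n$: for large $M_n$ one can have $T_{\delta_n}(M_n,y_0)=\delta_n$, i.e.\ $k=1$, where the kernel is trivial and your criterion fails. This is precisely why the paper proves Lemma \ref{0806-exist-l2} and chooses $\delta_n$ \emph{after} $M_n$ (handling the $\delta$-dependence of the exceptional sets by excluding the single countable set $\{N_{1/2^k}(j/2^k,y_0):k,j\in\mathbb N^+\}$). Your construction is repairable --- e.g.\ assign each rational interval $(a_j,b_j)$ to an index $n$ with $\delta_n<T(b_j,y_0)/2$, so that by Theorem \ref{Theorem-eq-cont-discrete} and $\mathcal U^{M_n}_{\delta_n}\subset\mathcal U^{M_n}$ one gets $T_{\delta_n}(M_n,y_0)\geq T(M_n,y_0)\geq T(b_j,y_0)>2\delta_n$ --- but as written the proposal does not cover the case $\omega=\Omega$, which the paper's separation/unique-continuation argument handles uniformly with no case distinction.
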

To prove Theorem \ref{Lemma-nonunique-TP}, we need  two lemmas.
\begin{lemma}\label{0806-exist-l1}
Let $y_0\in L^2(\Omega)\setminus B_r(0)$. Then for each $(M,\delta)\in \mathbb R^+\times \mathbb R^+$, with $2\delta\leq T_\delta(M,y_0)<\infty$, it stands that
 \begin{eqnarray}\label{eq-delta-00}
  N_\delta(T_\delta(M,y_0),y_0) \leq M <  N_\delta(T_\delta(M,y_0)-\delta,y_0).
 \end{eqnarray}
\end{lemma}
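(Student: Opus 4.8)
The plan is to prove the two inequalities in (\ref{eq-delta-00}) separately. Throughout I write $T_\delta\triangleq T_\delta(M,y_0)$ and recall from (ii)--(iii) of Theorem~\ref{Lemma-existences-TP} that $T_\delta=\hat k\delta$ for some $\hat k\in\mathbb N^+$; the hypothesis $2\delta\leq T_\delta$ forces $\hat k\geq 2$, which is the only place this hypothesis is used.

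The left inequality $N_\delta(T_\delta,y_0)\leq M$ is essentially already contained in the proof of (iii) of Theorem~\ref{Lemma-existences-TP}. Letting $u^*_\delta$ be the optimal control with the minimal norm to $(TP)_\delta^{M,y_0}$ (which exists and is unique by (ii) of that theorem), its restriction to $(0,T_\delta)$ is admissible for $(NP)_\delta^{T_\delta,y_0}$ and satisfies $\|u^*_\delta\|_{L^2(\mathbb R^+;L^2(\Omega))}\leq M$. Comparing with the optimal norm exactly as in (\ref{TP-minimal-1}) then gives $N_\delta(T_\delta,y_0)\leq M$, so this half requires no new work.

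For the right inequality $M< N_\delta(T_\delta-\delta,y_0)$, I would argue by contradiction. Since $\hat k\geq 2$, the time $T_\delta-\delta=(\hat k-1)\delta$ is a positive multiple of $\delta$ with $\hat k-1\in\mathbb N^+$. If $N_\delta((\hat k-1)\delta,y_0)=\infty$ the inequality is trivial, so assume it is finite; then $(NP)_\delta^{(\hat k-1)\delta,y_0}$ has an admissible control and hence, by the standard existence argument invoked in the proof of Theorem~\ref{Lemma-existences-TP}, admits an optimal control $v^*$ with $y((\hat k-1)\delta;y_0,v^*)\in B_r(0)$ and $\|v^*\|_{L_\delta^2(0,(\hat k-1)\delta;L^2(\Omega))}=N_\delta((\hat k-1)\delta,y_0)$. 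Suppose, contrary to the claim, that $N_\delta((\hat k-1)\delta,y_0)\leq M$. Then the zero extension of $v^*$ over $\mathbb R^+$ lies in $\mathcal U^M_\delta$ and steers $y_0$ into $B_r(0)$ at the sampling instant $(\hat k-1)\delta$, so $(\hat k-1)\delta$ is an admissible time for $(TP)_\delta^{M,y_0}$. Since $(\hat k-1)\delta<\hat k\delta=T_\delta$ and $\hat k-1\in\mathbb N^+$, this contradicts the definition of $T_\delta$ as the infimum of admissible times in (\ref{time-2}); hence $N_\delta((\hat k-1)\delta,y_0)>M$.

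I expect the only delicate point to be the \emph{strictness} of the right inequality: a purely variational estimate would only yield $N_\delta(T_\delta-\delta,y_0)\geq M$, and it is precisely the attainment of the minimal norm by a genuine optimal control $v^*$ (combined with the fact that admissible times for $(TP)_\delta^{M,y_0}$ are forced to be multiples of $\delta$, so no competing time lies strictly between $(\hat k-1)\delta$ and $\hat k\delta$) that upgrades this to the strict inequality. Everything else reduces to bookkeeping with the zero-extension map between $L^2_\delta(0,k\delta;L^2(\Omega))$ and $\mathcal U^M_\delta$.
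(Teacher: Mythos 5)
Your proposal is correct and follows essentially the same route as the paper: the left inequality by restricting an optimal control of $(TP)_\delta^{M,y_0}$ and comparing with the optimal norm, and the right inequality by contradiction, using the attainment of $N_\delta((\hat k-1)\delta,y_0)$ by an optimal control whose zero extension would make $(\hat k-1)\delta$ an admissible time for $(TP)_\delta^{M,y_0}$, contradicting the optimality of $T_\delta(M,y_0)=\hat k\delta$. Your closing remark is also on point: the attainment of the minimum (not just finiteness of the infimum) is exactly what rules out the boundary case $N_\delta(T_\delta-\delta,y_0)=M$ and yields the strict inequality, which is how the paper's argument works as well.
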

\begin{proof}
Let $(M,\delta)\in \mathbb R^+\times \mathbb R^+$ so that $2\delta\leq T_\delta(M,y_0)<\infty$. Then by (\ref{time-2}), we see  that
 \begin{eqnarray}\label{0117-step2-7}
  T_\delta(M,y_0) = \hat k \delta
  \;\;\mbox{for some integer}\;\;
  \hat k\geq 2.
 \end{eqnarray}
Thus,  (\ref{eq-delta-00}) is equivalent to the following inequality:
 \begin{eqnarray}\label{0117-step2-6}
   N_\delta(\hat k\delta,y_0) \leq M < N_\delta((\hat k-1)\delta,y_0) .
 \end{eqnarray}
 To prove (\ref{0117-step2-6}), we  let $u_\delta^1$ be an optimal control to $(TP)_\delta^{M,y_0}$.   Then we have that
 \begin{eqnarray}\label{0117-step2-11}
  \|u_\delta^1\|_{L_\delta^2(\mathbb R^+;L^2(\Omega))} \leq M
   \;\;\mbox{and}\;\;
   y(T_\delta(M,y_0);y_0,u_\delta^1)\in B_r(0).
 \end{eqnarray}
According to (\ref{0117-step2-11}) and (\ref{0117-step2-7}), $u_\delta^1|_{(0,\hat k\delta)}$ is an admissible control to $(NP)_\delta^{\hat k\delta,y_0}$. Then by the optimality of $N_\delta(\hat k\delta,y_0)$ and the first inequality in (\ref{0117-step2-11}), we get that
 \begin{eqnarray*}
   N_\delta(\hat k\delta,y_0) \leq \|u_\delta^1\|_{L_\delta^2(0,\hat k\delta;L^2(\Omega))} \leq M,
 \end{eqnarray*}
 which leads to the first inequality in (\ref{0117-step2-6}).

 We now show the second inequality in (\ref{0117-step2-6}). By contradiction, we suppose that
 \begin{eqnarray}\label{0118-sec3-50}
  N_\delta((\hat k-1)\delta,y_0) \leq M.
 \end{eqnarray}
  Then we would obtain  from (\ref{0118-sec3-50})  that $(NP)_\delta^{(\hat k-1)\delta,y_0}$ has an admissible control, since $M<\infty$. Thus, by a standard way (see for instance the proof of \cite[Lemma 1.1]{HOF}), one can prove that $(NP)_\delta^{(\hat k-1)\delta,y_0}$ has
  an optimal control $v_\delta^1$. Hence,
      \begin{eqnarray}\label{0117-step2-8}
 \|v_\delta^1\|_{L_\delta^2(0,(\hat k-1)\delta;L^2(\Omega))} =N_\delta((\hat k-1)\delta,y_0)
   \;\;\mbox{and}\;\;
   y((\hat k-1)\delta;y_0,v_\delta^1)\in B_r(0).
 \end{eqnarray}
 Write $\widetilde v_\delta^1$ for the zero extension of $v_\delta^1$ over $\mathbb R^+$. From (\ref{0117-step2-8}) and (\ref{0118-sec3-50}), we find that $\widetilde v_\delta^1$ is an admissible control to $(TP)_\delta^{M,y_0}$. Then by the optimality of $T_\delta(M,y_0)$, we get  that
 $T_\delta(M,y_0) \leq (\hat k-1)\delta$,
  which contradicts (\ref{0117-step2-7}). Thus, the second inequality in (\ref{0117-step2-6}) is true.
 We end the proof of this lemma.
\end{proof}
\begin{lemma}\label{0806-exist-l2}
Let $y_0\in L^2(\Omega)\setminus B_r(0)$. Then for each $M>0$ and $N>0$, there exists an integer $n\geq N$ so that
$2/2^n \leq T_{1/2^n}(M,y_0)<\infty$.
\end{lemma}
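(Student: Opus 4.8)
The plan is to reduce the assertion to ruling out the one-step case. Recall from (\ref{time-2}) that $T_\delta(M,y_0)$ is always a positive integer multiple of $\delta$, and that it is finite for every $\delta>0$ (as already noted right after (\ref{U-ad-delta})). Hence the target inequality $2\delta\le T_\delta(M,y_0)<\infty$ is equivalent to $T_\delta(M,y_0)\neq\delta$, i.e. to the statement that \emph{no} sampled-data control $u_\delta\in\mathcal U^M_\delta$ can steer $y_0$ into $B_r(0)$ by the first sampling instant $\delta$. So it suffices to exhibit, for each $M>0$ and $N>0$, an integer $n\ge N$ with $\delta=1/2^n$ for which this one-step controllability fails.

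The key quantitative observation is that the one-step reachable set shrinks to $\{y_0\}$ as $\delta\to 0^+$. Indeed, fix $\delta>0$ and $u_\delta\in\mathcal U^M_\delta$, and write $u^1\triangleq u_\delta|_{(0,\delta]}\in L^2(\Omega)$ for its first piece. By (\ref{heat-2}) and the variation-of-constants formula,
\[
 y(\delta;y_0,u_\delta)=e^{\Delta\delta}y_0+\int_0^\delta e^{\Delta(\delta-s)}\chi_\omega u^1\,\mathrm ds .
\]
Since $\{e^{\Delta t}\}_{t\ge0}$ is a contraction semigroup on $L^2(\Omega)$, the integral term is bounded in norm by $\delta\|u^1\|$. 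Moreover the budget constraint gives $\sqrt\delta\,\|u^1\|=\|u^1\|_{L^2(0,\delta;L^2(\Omega))}\le\|u_\delta\|_{L^2(\mathbb R^+;L^2(\Omega))}\le M$, whence $\delta\|u^1\|\le\sqrt\delta\,M$. Therefore
\[
 \|y(\delta;y_0,u_\delta)\|\ge\|y_0\|-\|e^{\Delta\delta}y_0-y_0\|-\sqrt\delta\,M .
\]

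Now I would exploit that $y_0\notin B_r(0)$, i.e. $\|y_0\|>r$, and set $d\triangleq\|y_0\|-r>0$. By strong continuity of the heat semigroup at $t=0$ there is $\delta^*>0$ such that $\|e^{\Delta\delta}y_0-y_0\|<d/2$ and simultaneously $\sqrt\delta\,M<d/2$ for all $\delta\in(0,\delta^*)$. For such $\delta$ the displayed lower bound yields $\|y(\delta;y_0,u_\delta)\|>\|y_0\|-d=r$ for every $u_\delta\in\mathcal U^M_\delta$, so $y(\delta;y_0,u_\delta)\notin B_r(0)$; hence $T_\delta(M,y_0)>\delta$, and, being a multiple of $\delta$, this forces $T_\delta(M,y_0)\ge 2\delta$. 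Finally, given $M>0$ and $N>0$, choose any integer $n\ge N$ large enough that $1/2^n<\delta^*$; then $\delta=1/2^n$ satisfies $2/2^n\le T_{1/2^n}(M,y_0)<\infty$, as required.

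I do not expect a serious obstacle; the whole argument rests on the single heuristic that converting the energy budget $M$ into a pointwise-in-time bound on the constant piece $u^1$ costs a factor $1/\sqrt\delta$, so the one-step displacement is at most $\sqrt\delta\,M\to0$, while $e^{\Delta\delta}y_0$ stays near $y_0\notin B_r(0)$. The only point needing care is that the budget $\|u_\delta\|_{L^2(\mathbb R^+;L^2(\Omega))}\le M$ is taken over all of $\mathbb R^+$ whereas only the first piece $u^1$ influences $y(\delta;\cdot)$; extracting $\sqrt\delta\,\|u^1\|\le M$ from it is exactly what makes the estimate uniform over $\mathcal U^M_\delta$. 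Note that this reasoning in fact delivers the conclusion for \emph{all} sufficiently small $\delta$, not merely along the dyadic sequence; the restriction to $\delta=1/2^n$ is only what is needed downstream, where Lemma~\ref{0806-exist-l1} and Theorem~\ref{Lemma-nonunique-TP} are set up along that sequence.
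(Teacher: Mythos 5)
Your proof is correct. It rests on exactly the same quantitative core as the paper's proof: by H\"older/Cauchy--Schwarz the control can displace the state by at most $\sqrt{\delta}\,M$ over a time interval of order $\delta$, while the free evolution $e^{\Delta\delta}y_0$ stays near $y_0\notin B_r(0)$. The difference is organizational rather than substantive, but it is worth noting. The paper argues by contradiction: assuming $T_{1/2^n}(M,y_0)<2/2^n$ for all $n\geq N$, it picks an optimal control $u_n^*$ for each $n$ (invoking (ii) of Theorem \ref{Lemma-existences-TP}), shows the control contribution tends to zero, and passes to the limit to conclude $y_0\in B_r(0)$ by closedness of the ball. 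You instead prove the needed non-reachability directly and uniformly over all admissible controls, which avoids any appeal to the existence of optimal controls and yields the stronger statement that $T_\delta(M,y_0)\geq 2\delta$ for \emph{every} sufficiently small $\delta>0$, not merely along a dyadic subsequence. Your preliminary reduction (that it suffices to rule out $T_\delta(M,y_0)=\delta$) is also legitimate: the set of achievable times in (\ref{time-2}) is a nonempty subset of the discrete set $\{\delta,2\delta,\dots\}$, so its infimum is attained and is a positive multiple of $\delta$, whence $T_\delta(M,y_0)\neq\delta$ forces $T_\delta(M,y_0)\geq 2\delta$.
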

\begin{proof}
Arbitrarily fix $y_0\in L^2(\Omega)\setminus B_r(0)$.
It is clear that $T_{1/2^n}(M,y_0)<\infty$ for all $M>0$ and $n\in \mathbb{N}^+$. Thus, we only need to show that for any $M>0$ and $N>0$, $2/2^n \leq T_{1/2^n}(M,y_0)$, when $n\geq N$. By contradiction,  suppose that it were not true. Then there would be   $M>0$ and $N>0$ so that
\begin{eqnarray}\label{0806-exist-l2-pf-1}
 T_{1/2^n}(M,y_0) < 2/2^n\;\;\mbox{for all}\;\;n\geq N.
 \end{eqnarray}
Let $u^*_n$, with $n\geq N$, be an optimal control to $(TP)_{1/2^n}^{M,y_0}$ (see   (ii) of Theorem \ref{Lemma-existences-TP}). Then we have that
\begin{eqnarray}\label{wang3.16}
 y(T_{1/2^n}(M,y_0);y_0,u_n^*)\in B_r(0)
 \;\;\mbox{and}\;\;
 \|u^*_n\|_{L^2(\mathbb R^+;L^2(\Omega))} \leq M\;\;\mbox{for all}\;\;n\geq N.
\end{eqnarray}
  By the last inequality in (\ref{wang3.16}),
  H\"{o}lder's inequality  and (\ref{0806-exist-l2-pf-1}), we can easily check that
\begin{eqnarray*}
\int_0^{T_{1/2^n}(M,y_0)}e^{\Delta(T_{1/2^n}(M,y_0)-t}\chi_\omega u^*_n(t)dt\rightarrow 0,\;\;\mbox{as}\;\;n\rightarrow\infty.
\end{eqnarray*}
This, along with (\ref{0806-exist-l2-pf-1}) and the first  conclusion in (\ref{wang3.16}), yields that
\begin{eqnarray*}
y_0=\lim_{n\rightarrow\infty} y(T_{1/2^n}(M,y_0);y_0,u_n^*)\in B_r(0),
\end{eqnarray*}
which contradicts the assumption that $y_0\in L^2(\Omega)\setminus B_r(0)$.  This ends the proof.
\end{proof}
We are now on the position to prove Theorem \ref{Lemma-nonunique-TP}.
\begin{proof}[Proof of Theorem \ref{Lemma-nonunique-TP}]
Choose a sequence $\{M_n\}_{n=1}^\infty$ dense in $\mathbb R^+$ so that
\begin{eqnarray}\label{0806-th2.3-1}
  \{M_n\}_{n=1}^\infty
  \subset
  \mathbb R^+ \setminus
  \{N_{1/2^k}(j/2^k,y_0)~:~k,\,j\in\mathbb N^+\}.
\end{eqnarray}
By Lemma \ref{0806-exist-l2}, there exists an increasing subsequence  $\{k_n\}_{n=1}^\infty$ (in $\mathbb N^+$), with $\lim_{n\rightarrow\infty}k_n=\infty$, so that
\begin{eqnarray}\label{0806-th2.3-2}
 2/2^{k_n}\leq T_{1/2^{k_n}}(M_n,y_0)<\infty
 \;\;\mbox{for each}\;\;
 n\in\mathbb N^+.
\end{eqnarray}
Write $\delta_n\triangleq1/2^{k_n}$, $n\in \mathbb{N}^+$.
Then, by (\ref{0806-th2.3-2}), we can apply  Lemma \ref{0806-exist-l1} to get that
\begin{eqnarray*}
 N_{\delta_n}(T_{\delta_n}(M_n,y_0),y_0)\leq M_n
 <N_{\delta_n}(T_{\delta_n}(M_n,y_0)-\delta_n,y_0).
\end{eqnarray*}
This, along with (\ref{0806-th2.3-1}), yields that
\begin{eqnarray}\label{0806-th2.3-3}
 N_{\delta_n}(T_{\delta_n}(M_n,y_0),y_0)< M_n
 <N_{\delta_n}(T_{\delta_n}(M_n,y_0)-\delta_n,y_0).
\end{eqnarray}

The key to show Theorem \ref{Lemma-nonunique-TP} is to  claim that for each $n\in \mathbb{N}^+$, $(TP)^{M_n,y_0}_{\delta_{n}}$  has at least two different optimal controls.
By contradiction, we suppose that for some $n_0\in\mathbb N^+$, $(TP)^{M_{n_0},y_0}_{\delta_{n_0}}$ had a unique optimal control.
To get a contradiction, we define two convex subsets in $L^2(\Omega)$ as follows:
\begin{eqnarray*}
 A_{n_0} &\triangleq& \left\{y(T_{\delta_{n_0}}(M_{n_0},y_0);y_0,u_{\delta_{n_0}})
 ~:~\|u_{\delta_{n_0}}\|_{L^2_{\delta_{n_0}}(\mathbb R^+;L^2(\Omega))} \leq N_{\delta_{n_0}}(T_{\delta_{n_0}}(M_{n_0},y_0),y_0)\right\},
 \nonumber\\
 B_{n_0} &\triangleq& \left\{y(T_{\delta_{n_0}}(M_{n_0},y_0);0,v_{\delta_{n_0}})
 ~:~\|v_{\delta_{n_0}}\|_{L^2_{\delta_{n_0}}(\mathbb R^+;L^2(\Omega))} \leq M_{n_0}-N_{\delta_{n_0}}(T_{\delta_{n_0}}(M_{n_0},y_0),y_0)\right\}.
\end{eqnarray*}
We first show that
\begin{eqnarray}\label{WANG3.21}
A_{n_0}\cap B_r(0)=\{\hat\eta\}\;\;\mbox{for some}\;\;\hat\eta\in L^2(\Omega),
\end{eqnarray}
i.e.,   $A_{n_0}\cap B_r(0)$ contains  only one element.
In fact, by (ii) and (iii) of Theorem \ref{Lemma-existences-TP}, the optimal control with the minimal norm $u^*_{\delta_{n_0}}$ to $(TP)^{M_{n_0},y_0}_{\delta_{n_0}}$ satisfies that
\begin{eqnarray*}
 y(T_{\delta_{n_0}}(M_{n_0},y_0);y_0,u^*_{\delta_{n_0}})\in B_r(0)
 \;\;\mbox{and}\;\;
 \|u^*_{\delta_{n_0}}\|_{L^2_{\delta_{n_0}}(\mathbb R^+;L^2(\Omega))} = N_{\delta_{n_0}}(T_{\delta_{n_0}}(M_{n_0},y_0),y_0).
\end{eqnarray*}
These imply that $A_{n_0}\cap B_r(0)\neq\emptyset$.
We next show that $A_{n_0}\cap B_r(0)$ contains only one element. Suppose, by contradiction, that it contained two different elements $y_1$ and $y_2$. Then by
the definition of $A_{n_0}$, there would be two different controls $u_1$ and $u_2$ so that
\begin{eqnarray}\label{wang3.21}
y_1=y(T_{\delta_{n_0}}(M_{n_0},y_0);y_0,u_1),\;\;\|u_1\|_{L^2_{\delta_{n_0}}(\mathbb R^+;L^2(\Omega))}\leq N_{\delta_{n_0}}(T_{\delta_{n_0}}(M_{n_0},y_0),y_0);
\end{eqnarray}
\begin{eqnarray}\label{wang3.22}
y_2=y(T_{\delta_{n_0}}(M_{n_0},y_0);y_0,u_2),\;\;\|u_2\|_{L^2_{\delta_{n_0}}(\mathbb R^+;L^2(\Omega))}\leq N_{\delta_{n_0}}(T_{\delta_{n_0}}(M_{n_0},y_0),y_0).
\end{eqnarray}
Since $y_1, y_2\in B_r(0)$, we have that ${(y_1+y_2)}/{2}\in B_r(0)$. From this  (\ref{wang3.21}) and (\ref{wang3.22}), one can easily check that
\begin{eqnarray}\label{wang3.23}
(y_1+y_2)/{2}\in A_{n_0}\cap B_r(0).
\end{eqnarray}
Meanwhile, since $u_1\neq u_2$, by the second inequality in (\ref{wang3.21}) and the second inequality in (\ref{wang3.22}), using the parallelogram law, we find that
\begin{eqnarray}\label{wang3.24}
\left\|(u_1+u_2)/{2}\right\|_{L^2_{\delta_{n_0}}(\mathbb R^+;L^2(\Omega))}<N_{\delta_{n_0}}(T_{\delta_{n_0}}(M_{n_0},y_0),y_0),
\end{eqnarray}
which, together with (\ref{0806-th2.3-3}), indicates that
\begin{eqnarray*}
\|(u_1+u_2)/{2}\|_{L^2_{\delta_{n_0}}(\mathbb R^+;L^2(\Omega))}<M_{n_0}.
\end{eqnarray*}
From this and  (\ref{wang3.23}), we see that $(u_1+u_2)/2$ is an optimal control to $(TP)^{M_{n_0},y_0}_{\delta_{n_0}}$. This, along with Definition~\ref{wgsdefinition1.1} and the conclusions (ii) and (iii) of Theorem~\ref{Lemma-existences-TP}, yields that
\begin{eqnarray*}
\left\|(u_1+u_2)/{2}\right\|_{L^2_{\delta_{n_0}}(\mathbb R^+;L^2(\Omega))}\geq N_{\delta_{n_0}}(T_{\delta_{n_0}}(M_{n_0},y_0),y_0),
\end{eqnarray*}
which contradicts (\ref{wang3.24}). Hence,  (\ref{WANG3.21}) is true.

Next, by the definitions of  $A_{n_0}$ and $B_{n_0}$, one can easily check that
each element of $(A_{n_0}+B_{n_0})\cap B_r(0)$ can be expressed as:
\begin{eqnarray}\label{wang3.27}
y(T_{\delta_{n_0}}(M_{n_0},y_0);y_0,u_{n_0}^*)\;\;\mbox{with}\;\;u_{n_0}^*
\;\;\mbox{an optimal control to}\;\; (TP)^{M_{n_0},y_0}_{\delta_{n_0}}.
\end{eqnarray}
Since it was assumed that $(TP)^{M_{n_0},y_0}_{\delta_{n_0}}$ had a unique optimal control,
it follows from (\ref{wang3.27}) that $(A_{n_0}+B_{n_0})\cap B_r(0)$ contains only one element. This, along with (\ref{WANG3.21}), yields that
\begin{eqnarray}\label{0806-th2.3-4}
 (A_{n_0}+B_{n_0})\cap B_r(0)=A_{n_0}\cap B_r(0)=\{\hat\eta\}
 \;\;\mbox{for some}\;\;
 \hat \eta\in L^2(\Omega).
\end{eqnarray}
By (\ref{0806-th2.3-4}), we can apply  the Hahn-Banach separation theorem to find  $\eta^*\in L^2(\Omega)$, with $\|\eta^*\|=r>0$, so that
\begin{eqnarray*}
\sup_{w\in A_{n_0}+B_{n_0}} \langle w,\eta^* \rangle
 \leq \inf_{z\in B_r(0)} \langle z,\eta^* \rangle.
 \end{eqnarray*}
This, along with  (\ref{0806-th2.3-4}), yields that
\begin{eqnarray}\label{wang3.30}
 \sup_{w\in \hat \eta+B_{n_0}} \langle w,\eta^* \rangle
 \leq   \langle \hat \eta,\eta^* \rangle,
 \;\;\mbox{i.e.,}\;\;
 \sup_{w\in B_{n_0}} \langle w,\eta^* \rangle\leq 0.
\end{eqnarray}

{\it From now on and throughout the proof of Theorem~\ref{Lemma-nonunique-TP},
we simply write $T_{\delta_{n_0}}$ for $T_{\delta_{n_0}}(M_{n_0},y_0)$;
  simply write $\varphi(\cdot)$ and $\overline\varphi_{\delta_{n_0}}(\cdot)$ for
$\varphi(\cdot;T_{\delta_{n_0}},\eta^*)$ (see (\ref{adjoint-1}))
and $\overline\varphi_{\delta_{n_0}}(t;T_{\delta_{n_0}},\eta^*)$ (see (\ref{adjoint-solution-P})), respectively.}

 Arbitrarily fix $u_{\delta_{n_0}}\in L^2_{\delta_{n_0}}(\mathbb R^+;L^2(\Omega))$.
Three facts are given in order. Fact one: Since  $M_{n_0}>N_{\delta_{n_0}}(T_{\delta_{n_0}},y_0)$ (see (\ref{0806-th2.3-3})), it follows from the definition of $B_{n_0}$
that
\begin{eqnarray*}
 y(T_{\delta_{n_0}};0,u_{\delta_{n_0}})\in \lambda B_{n_0},\;\;\mbox{with}\;\;
 \lambda=\frac{\|u_{\delta_{n_0}}\|_{L^2_{\delta_{n_0}}(\mathbb{R}^+;L^2(\Omega))}}
 {M_{n_0}-N_{\delta_{n_0}}(T_{\delta_{n_0}},y_0)}.
\end{eqnarray*}
This, along with (\ref{wang3.30}), yields that
\begin{eqnarray}\label{Wang3.31}
\langle y(T_{\delta_{n_0}};0,u_{\delta_{n_0}}), \eta^*\rangle\leq 0.
\end{eqnarray}
Fact two: One can directly check that
\begin{eqnarray}\label{wang3.32}
\langle u_{\delta_{n_0}},\chi_\omega \varphi\rangle_{L^2(0,T_{\delta_{n_0}}; L^2(\Omega))}
 =\langle y(T_{\delta_{n_0}};0,u_{\delta_{n_0}}), \eta^*\rangle.
\end{eqnarray}
Fact three:  we have that
\begin{eqnarray}\label{wang3.33}
\langle u_{\delta_{n_0}},\chi_\omega \varphi\rangle_{L^2(0,T_{\delta_{n_0}}; L^2(\Omega))}
=\langle u_{\delta_{n_0}},\chi_\omega \overline\varphi_{\delta_{n_0}}\rangle_{L^2(0,T_{\delta_{n_0}}; L^2(\Omega))}.
\end{eqnarray}
 The proof of (\ref{wang3.33}) is as follows: Let $f=u_{\delta_{n_0}}$ and let
$g$ be the zero extension of $\chi_\omega\varphi(\cdot)$
over $\mathbb{R}^+$. Since $u_{\delta_{n_0}}\in L^2_{\delta_{n_0}}(\mathbb R^+;L^2(\Omega))$, it follows by (\ref{heat-Linfty-delta}), (\ref{adjoint-solution-P-1}) and
(\ref{adjoint-solution-P}) that $\bar f_{\delta_{n_0}}=u_{\delta_{n_0}}$
and $\bar g_{\delta_{n_0}}(\cdot)
=\overline\varphi_{\delta_{n_0}}(\cdot)$
(where $\overline\varphi_{\delta_{n_0}}(\cdot)$ is treated as its zero extension over $\mathbb{R}^+$).
 Then, by Lemma \ref{lemma-control-op-dual},
we obtain  (\ref{wang3.33}).

Now, from facts (\ref{Wang3.31}),  (\ref{wang3.32}) and  (\ref{wang3.33}), we see that
\begin{eqnarray*}
  \langle u_{\delta_{n_0}},\chi_\omega \overline\varphi_{\delta_{n_0}}\rangle_{L^2(0,T_{\delta_{n_0}}; L^2(\Omega))} \leq 0.
\end{eqnarray*}
Since $u_{\delta_{n_0}}$ was arbitrarily taken from $L^2_{\delta_{n_0}}(\mathbb R^+;L^2(\Omega))$, the above inequality  implies that
\begin{eqnarray}\label{0806-th2.3-6}
  \chi_\omega \overline\varphi_{\delta_{n_0}}(t)
  =0
  \;\;\mbox{a.e.}\;\;
  t\in(0,T_{\delta_{n_0}}).
\end{eqnarray}
Since $T_{\delta_{n_0}}\geq 2\delta_{n_0}$ (see (\ref{0806-th2.3-2})),   we apply (\ref{0806-th2.3-6}) and Lemma \ref{Theorem-ob-int-0} (where $T=T_{\delta_{n_0}}$ and $S=\delta_{n_0}$) to get that
$\varphi(0)=0$ in $L^2(\Omega)$.
Then from the backward uniqueness property for the heat equation (see, for instance, \cite{FHLin}), we deduce that $\eta^*=0$.
This leads to a contradiction. Hence, we ends the proof of the key claim: For each $n\in \mathbb{N}^+$, $(TP)^{M_n,y_0}_{\delta_{n}}$ has at least two different  optimal controls.

Finally, we observe that any convex combination of optimal controls to $(TP)^{M_n,y_0}_{\delta_{n}}$ (with $n\in\mathbb N^+$) is still an optimal control to $(TP)^{M_n,y_0}_{\delta_{n}}$. Therefore, for each $n\in\mathbb N^+$, $(TP)^{M_n,y_0}_{\delta_{n}}$ has infinitely many different optimal controls.
This ends the proof of Theorem~\ref{Lemma-nonunique-TP}.
\end{proof}

\section{Connections among different problems}
This section presents  connections among  $(TP)^{M,y_0}_\delta$, $(NP)^{k\delta,y_0}_\delta$ and $(JP)_\delta^{k\delta,y_0}$ (and  among
$(TP)^{M,y_0}$, $(NP)^{T,y_0}$ and $(JP)^{T,y_0}$).
We define,   for each $y_0\in L^2(\Omega)\setminus B_r(0)$,
\begin{eqnarray}\label{T*}
 T^*_{y_0}\triangleq \sup\{t> 0\;:\;e^{\Delta t}y_0\not\in B_r(0)\};
\end{eqnarray}
\begin{eqnarray}\label{P-T*}
 \mathcal P_{T^*_{y_0}}\triangleq\{(\delta,k)\;:\;\delta>0,~k\in\mathbb N^+
 \;\;\mbox{s.t.}\;\;  2\delta\leq k\delta<T^*_{y_0}\}.
\end{eqnarray}
 We mention that $0<T^*_{y_0}<\infty$ for each $y_0\in L^2(\Omega)\setminus B_r(0)$ (because the semigroup $\{e^{\Delta t}\}_{t\geq 0}$ has the  exponential decay).

\subsection{Connections between time optimal control problems and norm optimal control problems}

We first present the following  equivalence theorem. We will omit its proof, because it can be proved by the same way as one of proofs of
 \cite[Proposition 4.1]{TWW}, \cite[Proposition 3.1]{Y} and
 \cite[Theorem 1.1 and Theorem 2.1]{WZ}.

\begin{theorem}\label{Theorem-eq-cont-discrete}
Let $y_0\in L^2(\Omega)\setminus B_r(0)$. Let $T_{y_0}^*$ be given by (\ref{T*}).
Then the
following conclusions are true:

(i) The
 function $N(\cdot,y_0)$ is strictly decreasing and continuous  from $(0,T^*_{y_0})$ onto $(0,+\infty)$. Moreover, $\lim_{T\rightarrow {T^*_{y_0}}^-} N(T,y_0)=0$.

   (ii) When $M>0$ and  $T\in(0,T^*_{y_0})$,
 $N(T(M,y_0),y_0)=M$ and $T(N(T,y_0),y_0)=T$.

   (iii) The function $T(\cdot,y_0)$ is strictly decreasing and continuous
   from $(0,+\infty)$ onto $(0,T^*_{y_0})$.

   (iv) For each $M>0$, the optimal control to  $(TP)^{M,y_0}$, when restricted on
   $(0, T(M,y_0))$, is the optimal control to $(NP)^{T(M,y_0),y_0}$. For each $T\in
   (0,T^*_{y_0})$, the zero extension of the  optimal control to $(NP)^{T(M,y_0),y_0}$ is the optimal control to  $(TP)^{M,y_0}$.
\end{theorem}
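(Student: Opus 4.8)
The plan is to establish the four conclusions about $(NP)^{T,y_0}$, $(TP)^{M,y_0}$, and their equivalence. Since the paper defers to proofs in \cite{TWW,Y,WZ}, I would follow the standard duality/variational machinery for time and norm optimal control problems for the heat equation. The starting point is that $N(T,y_0)$ is the optimal value of a convex minimization over the admissible set $\{v : y(T;y_0,v)\in B_r(0)\}$, which is nonempty precisely when the free solution $e^{\Delta T}y_0$ can be steered into $B_r(0)$ using finite-cost control; the key input here is the $L^2$-approximate null controllability with cost (the distributed analogue of Theorem~\ref{Theorem-eq-controllable-observable}), together with the exponential decay of $\{e^{\Delta t}\}_{t\geq 0}$ that makes $T^*_{y_0}$ finite.

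First I would prove (i). Monotonicity of $T\mapsto N(T,y_0)$ is the intuitive fact that more time makes the target easier to hit with less control energy; I would make this rigorous by a restriction/extension argument: given an optimal control on $(0,T_1)$ one builds an admissible control on $(0,T_2)$ for $T_2>T_1$ by concatenating with zero and using that the uncontrolled flow keeps moving toward $B_r(0)$ on $(0,T^*_{y_0})$, yielding $N(T_2,y_0)\le N(T_1,y_0)$; strict monotonicity and continuity follow from the strict convexity/uniqueness of minimizers and a standard perturbation estimate for the solution operator in $T$. The boundary behavior $\lim_{T\to {T^*_{y_0}}^-}N(T,y_0)=0$ comes directly from the definition (\ref{T*}) of $T^*_{y_0}$: as $T\uparrow T^*_{y_0}$ the free solution $e^{\Delta T}y_0$ approaches $\partial B_r(0)$ (or enters it), so arbitrarily small control suffices. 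That $N$ maps onto $(0,+\infty)$ follows from continuity together with $N(T,y_0)\to\infty$ as $T\to 0^+$, which I would derive from the observability constant in the approximate controllability inequality blowing up as $T\to 0$.

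Next, conclusions (ii) and (iii) are essentially inverse-function statements flowing from (i). The equations $N(T(M,y_0),y_0)=M$ and $T(N(T,y_0),y_0)=T$ encode that the time and norm optimal problems are dual in the sense that the optimal-time map and optimal-norm map are mutual inverses; I would prove $N(T(M,y_0),y_0)=M$ by showing both inequalities, using that an optimal control for $(TP)^{M,y_0}$ saturates the constraint (the analogue of (\ref{0322-eq-lemma-1}), $\|u^*\|=M$) together with the definition of $N$. The identity $T(N(T,y_0),y_0)=T$ and the monotonicity/continuity/surjectivity of $T(\cdot,y_0)$ in (iii) then follow formally from (i)--(ii) by inverting the strictly decreasing continuous bijection $N(\cdot,y_0)$.

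Finally, (iv) is the concrete correspondence between optimal controls, which I would obtain by comparing constraints and objective values. If $u^*$ is optimal for $(TP)^{M,y_0}$, its restriction steers $y_0$ into $B_r(0)$ at time $T(M,y_0)$ and has norm $\le M = N(T(M,y_0),y_0)$ by (ii), so it is admissible and norm-minimal, hence optimal for $(NP)^{T(M,y_0),y_0}$; conversely the zero extension of an $(NP)$-optimal control is admissible for $(TP)^{M,y_0}$ and achieves the optimal time, so it is $(TP)$-optimal. The main obstacle I anticipate is proving strict monotonicity and the precise $T\to 0^+$ and $T\to {T^*_{y_0}}^-$ limits of $N(\cdot,y_0)$ rigorously, since these rest on quantitative controllability/observability estimates and a delicate use of the backward uniqueness and exponential decay of the heat semigroup; everything else is then formal inversion and constraint-matching.
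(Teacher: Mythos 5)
Your architecture — prove (i) first, deduce (ii) from constraint saturation plus continuity of $N(\cdot,y_0)$, then obtain (iii) and (iv) by inverting the bijection and matching constraints — is the same standard route as in the references the paper defers to (\cite{TWW}, \cite{Y}, \cite{WZ}; the paper itself omits the proof). Your handling of (ii), (iii), (iv), and of the limit $N(T,y_0)\to 0$ as $T\to T^{*-}_{y_0}$ (controllability with bounded cost near $T^*_{y_0}$, then a convex-combination/scaling step) can all be made rigorous. However, two steps inside your proof of (i) fail as stated.

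First, you propose to derive $N(T,y_0)\to+\infty$ as $T\to 0^+$ ``from the observability constant in the approximate controllability inequality blowing up as $T\to 0$''. This is backwards: observability (controllability-with-cost) inequalities bound the minimal norm from \emph{above}, by a quantity like $e^{C(1+1/T)}\|y_0\|$; the blow-up of an upper bound implies nothing about $N$ itself. The correct argument is elementary and needs no observability: since $\{e^{\Delta t}\}_{t\geq0}$ is contractive, H\"{o}lder's inequality gives $\|y(T;y_0,v)-e^{\Delta T}y_0\|\leq \sqrt{T}\,\|v\|_{L^2(0,T;L^2(\Omega))}$, hence $N(T,y_0)\geq \big(\|e^{\Delta T}y_0\|-r\big)/\sqrt{T}\to+\infty$, because $\|e^{\Delta T}y_0\|\to\|y_0\|>r$. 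Second, strict monotonicity of $N(\cdot,y_0)$ does not follow from ``strict convexity/uniqueness of minimizers'' alone; you need one genuinely extra ingredient. Either use the scaling trick: for $T_1<T_2$, the zero extension $\tilde v_1^*$ of the $(NP)^{T_1,y_0}$-optimal control satisfies $\|y(T_2;y_0,\tilde v_1^*)\|\leq e^{-\lambda_1(T_2-T_1)}r<r$, i.e.\ it lands strictly inside the target, so $\theta\,\tilde v_1^*$ is still admissible for a suitable $\theta<1$ (solve $\theta e^{-\lambda_1(T_2-T_1)}r+(1-\theta)\|e^{\Delta T_2}y_0\|\leq r$, which is possible since $\|e^{\Delta T_2}y_0\|>r$ for $T_2<T^*_{y_0}$), giving $N(T_2,y_0)\leq\theta N(T_1,y_0)<N(T_1,y_0)$. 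Or use uniqueness of the $(NP)^{T_2,y_0}$-optimal control together with its representation $v^*=\chi_\omega\varphi(\cdot;T_2,z^*)$ (Theorem \ref{Lemma-Norm-fun}) and unique continuation/backward uniqueness (the mechanism used in the proofs of Theorems \ref{Lemma-nonunique-TP} and \ref{Lemma-NP-bangbang}): if $N(T_1,y_0)=N(T_2,y_0)$, then $\tilde v_1^*$ would be optimal for $(NP)^{T_2,y_0}$ while vanishing on $(T_1,T_2)$, forcing $z^*=0$, a contradiction. A similar warning applies to continuity: left-continuity is not a soft ``perturbation estimate'', because the optimal final state lies exactly on $\partial B_r(0)$, so restricting the time-$T_0$ optimal control to $(0,T)$ generally exits the target and must be repaired by mixing in a null control of controlled cost. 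Without these ingredients, strict monotonicity and continuity — and with them the bijectivity on which your (ii) and (iii) rest — remain unproven.
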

We next recall (iii) of Theorem \ref{Lemma-existences-TP} for the connections between
 $(TP)_\delta^{M,y_0}$ and $(NP)_\delta^{T_\delta(M,y_0),y_0}$.

\subsection{Connections between norm optimal control problems and the minimization problems }

The first theorem of this subsection concerns with connections between  problems $(NP)^{T,y_0}$ and   $(JP)^{T,y_0}$ (given by (\ref{fun-0})).
  Its proof can be done by  the same methods as those in the proofs of  of Lemma 3.5 and Proposition 3.6 in \cite{TWW}. We omit it here.

\begin{theorem}\label{Lemma-Norm-fun}
Let $y_0\in L^2(\Omega)\setminus B_r(0)$ and let $T\in (0,T^*_{y_0})$,
with $T_{y_0}^*$  given by (\ref{T*}).
 Then the following conclusions are true:

(i) The functional $J^{T,y_0}$ has a unique nonzero minimizer $z^*$ in $L^2(\Omega)$.

(ii) Problem $(NP)^{T,y_0}$ has a unique optimal control $v^*$, which satisfies that
\begin{eqnarray}\label{op-control-0}
 v^*(t) = \chi_\omega \varphi(t;T,z^*)
 \;\;\mbox{a.e.}\;\;   t\in (0,T),
\end{eqnarray}
and that
\begin{eqnarray}\label{op-final-dual}
 y(T;y_0,v^*)=-r z^*/\|z^*\|.
\end{eqnarray}

(iii) It holds that $V(T,y_0)=-\frac{1}{2} N(T,y_0)^2=-\frac{1}{2} \|\chi_\omega\varphi(\cdot;T,z^*)\|_{L^2(0,T;L^2(\Omega))}^2$.
\end{theorem}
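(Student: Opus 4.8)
The plan is to prove all three parts through a single chain of convex-duality arguments linking the functional $J^{T,y_0}$ to the control problem $(NP)^{T,y_0}$, in the spirit of the variational method for approximate controllability. First I would show that $J^{T,y_0}$ is continuous, strictly convex and coercive on $L^2(\Omega)$; by the direct method (weak lower semicontinuity plus coercivity on the reflexive space $L^2(\Omega)$) these give a unique minimizer $z^*$, which is part (i). Continuity and convexity are routine, since $z\mapsto\varphi(\cdot;T,z)$ is linear and bounded, the first term of $J^{T,y_0}$ is a nonnegative quadratic form, the second is linear, and $r\|z\|$ is convex; strict convexity comes from unique continuation for (\ref{adjoint-1}), which makes $z\mapsto\chi_\omega\varphi(\cdot;T,z)$ injective and hence the quadratic term positive definite. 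The heart of the matter is \emph{coercivity}, and this is the step I expect to be the main obstacle. I would argue by normalization: for $\|z_n\|\to\infty$ put $\hat z_n=z_n/\|z_n\|$ and study $J^{T,y_0}(z_n)/\|z_n\|$. If $\|\chi_\omega\varphi(\cdot;T,\hat z_n)\|_{L^2(0,T;L^2(\Omega))}$ stays bounded away from $0$, the quadratic term forces $J^{T,y_0}(z_n)\to+\infty$; otherwise, passing to a subsequence along which this norm tends to $0$, I would feed it into the interpolation inequality of Lemma~\ref{Theorem-ob-int-0} (with a fixed $S\in(0,T)$ and $\|\hat z_n\|=1$) to deduce $\|\varphi(0;T,\hat z_n)\|\to0$, hence $\langle y_0,\varphi(0;T,\hat z_n)\rangle\to0$ and $\liminf J^{T,y_0}(z_n)/\|z_n\|\ge r>0$. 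In either case $J^{T,y_0}$ is coercive.

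To see that the minimizer is \emph{nonzero}, I would use the hypothesis $T\in(0,T^*_{y_0})$. Since $t\mapsto\|e^{\Delta t}y_0\|$ is strictly decreasing, this hypothesis is equivalent to $\|e^{\Delta T}y_0\|>r$. Testing $J^{T,y_0}$ along $z=-sw$ with $w=e^{\Delta T}y_0/\|e^{\Delta T}y_0\|$, and using $\varphi(0;T,z)=e^{\Delta T}z$ together with self-adjointness of $e^{\Delta T}$, I would compute $J^{T,y_0}(-sw)=s\big(r-\|e^{\Delta T}y_0\|\big)+O(s^2)<0$ for small $s>0$. Hence $\min J^{T,y_0}<0=J^{T,y_0}(0)$, which forces $z^*\neq0$.

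For part (ii), since $z^*\neq0$ the map $z\mapsto r\|z\|$ is differentiable at $z^*$, so the Euler--Lagrange equation gives $\langle\chi_\omega\varphi(\cdot;T,z^*),\chi_\omega\varphi(\cdot;T,h)\rangle_{L^2(0,T;L^2(\Omega))}+\langle y_0,\varphi(0;T,h)\rangle+r\langle z^*,h\rangle/\|z^*\|=0$ for all $h\in L^2(\Omega)$. I would then set $v^*:=\chi_\omega\varphi(\cdot;T,z^*)$ and derive the state/adjoint duality identity $\langle y(T;y_0,v),h\rangle=\langle y_0,\varphi(0;T,h)\rangle+\langle v,\chi_\omega\varphi(\cdot;T,h)\rangle_{L^2(0,T;L^2(\Omega))}$ by multiplying (\ref{heat-1}) by $\varphi$ and integrating by parts. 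Taking $v=v^*$ and comparing with the Euler--Lagrange identity gives $\langle y(T;y_0,v^*),h\rangle=\langle -rz^*/\|z^*\|,h\rangle$ for every $h$, i.e. (\ref{op-final-dual}); in particular $\|y(T;y_0,v^*)\|=r$, so $v^*$ is admissible. For optimality I would use the same identity with a general admissible $v$ and $h=z^*$: it yields $\langle v,v^*\rangle_{L^2}=\langle y(T;y_0,v),z^*\rangle-\langle y_0,\varphi(0;T,z^*)\rangle\ge -r\|z^*\|-\langle y_0,\varphi(0;T,z^*)\rangle=\|v^*\|^2$, whence $\|v\|\,\|v^*\|\ge\|v^*\|^2$ and $\|v\|\ge\|v^*\|$. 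Thus $v^*$ is the minimal-norm control and $N(T,y_0)=\|v^*\|$; uniqueness follows because equality in Cauchy--Schwarz forces $v=v^*$ (here $v^*\neq0$ again by unique continuation).

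Finally, for part (iii), the computation of $\|v^*\|^2$ above gives $\langle y_0,\varphi(0;T,z^*)\rangle=-r\|z^*\|-\|v^*\|^2$; substituting this into $V(T,y_0)=J^{T,y_0}(z^*)=\tfrac12\|v^*\|^2+\langle y_0,\varphi(0;T,z^*)\rangle+r\|z^*\|$ collapses the expression to $-\tfrac12\|v^*\|^2$. Since $\|v^*\|=N(T,y_0)=\|\chi_\omega\varphi(\cdot;T,z^*)\|_{L^2(0,T;L^2(\Omega))}$, this is precisely the asserted identity $V(T,y_0)=-\tfrac12N(T,y_0)^2=-\tfrac12\|\chi_\omega\varphi(\cdot;T,z^*)\|_{L^2(0,T;L^2(\Omega))}^2$. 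As noted, the only step requiring genuine PDE input is the coercivity estimate (through the observability/unique continuation behind Lemma~\ref{Theorem-ob-int-0}); once it is available the remainder is convex analysis and bookkeeping.
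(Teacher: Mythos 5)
Your proposal is correct and follows essentially the same variational--duality route as the paper: the paper omits the proof of this theorem (deferring to \cite{TWW}), but its detailed proof of the sampled-data analogue, Theorem~\ref{Lemma-NP-bangbang}, has exactly your structure --- coercivity from the observability/interpolation inequality of Lemma~\ref{Theorem-ob-int-0}, nonzero minimizer from the hypothesis $T<T^*_{y_0}$, strict convexity from unique continuation and backward uniqueness, and then the Euler--Lagrange equation plus the state/adjoint duality pairing to get (ii) and (iii). The only cosmetic differences are that you prove coercivity by a qualitative normalization/subsequence argument and rule out $z^*=0$ by exhibiting a direction along which $J^{T,y_0}<0$, whereas the paper's analogue derives the quantitative bound $J\ge \frac{r}{2}\|z\|-C$ and excludes $0$ by a contradiction through one-sided directional derivatives at the origin.
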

The next theorem deals with connections between  $(NP)^{k\delta,y_0}_\delta$ (given by (\ref{0113-NP-delta}))
and $(JP)^{k\delta,y_0}_\delta$ (given by (\ref{fun-P})). Recall (\ref{adjoint-solution-P})
for the definition of  $\overline \varphi(\cdot;k\delta,z)$.

\begin{theorem}\label{Lemma-NP-bangbang}
Let $y_0\in L^2(\Omega)\setminus B_r(0)$. Let  $(\delta,k)\in \mathcal P_{T^*_{y_0}}$ (given (\ref{P-T*})). Then the following conclusions are true:

(i) The functional $J_{\delta}^{k\delta,y_0}$ has a unique  minimizer $z_\delta^*$ in $L^2(\Omega)$. Moreover, $z_\delta^*\neq 0$ and
\begin{eqnarray}\label{nonzero-varphi-minimizer}
\chi_\omega\overline\varphi_\delta(t;k\delta,z_\delta^*)\neq 0  \;\;\mbox{for all}\;\;
 t\in \big(0,(k-1)\delta \big].
\end{eqnarray}

(ii) Problem $(NP)_{\delta}^{k\delta,y_0}$ has a unique optimal control $v_\delta^*$,  which verifies that
\begin{eqnarray}\label{PMP-Ndelta-0}
   v_\delta^*(t)=\chi_\omega\overline\varphi_\delta(t;k\delta,z_\delta^*)
   \;\;\mbox{for all}\;\;  t\in (0,k\delta],
 \end{eqnarray}
 (where $z_\delta^*$ is the minimizer of $J_{\delta}^{k\delta,y_0}(\cdot)$)
 and that
 \begin{eqnarray}\label{op-control-final-state}
  y(k\delta;y_0,v_\delta^*)= -r z_\delta^*/\|z_\delta^*\|.
 \end{eqnarray}

 (iii)  $V_\delta(k\delta,y_0)=-\frac{1}{2} N_\delta(k\delta,y_0)^2=-\frac{1}{2} \|\chi_\omega\overline\varphi_\delta(\cdot;k\delta, z_\delta^*)\|_{L^2(0,k\delta;L^2(\Omega))}^2$.
 \end{theorem}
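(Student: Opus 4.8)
The plan is to follow the Fabre-type duality scheme used for the distributed case in Theorem~\ref{Lemma-Norm-fun}, adapted to the sampled-data setting, with the observability estimate (\ref{ob-0}) of Theorem~\ref{Theorem-eq-controllable-observable} serving as the coercivity engine. First I would note that $z\mapsto\overline\varphi_\delta(\cdot;k\delta,z)$ is linear, so $J_\delta^{k\delta,y_0}$ is the sum of the nonnegative quadratic form $\tfrac12\|\chi_\omega\overline\varphi_\delta(\cdot;k\delta,z)\|^2$, the linear functional $\langle y_0,\varphi(0;k\delta,z)\rangle=\langle e^{\Delta k\delta}y_0,z\rangle$, and the convex term $r\|z\|$; hence $J_\delta^{k\delta,y_0}$ is continuous and convex on $L^2(\Omega)$. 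For existence I would prove coercivity by a dichotomy along a sequence $\{z_n\}$ with $\|z_n\|\to\infty$: if $\|\chi_\omega\overline\varphi_\delta(\cdot;k\delta,z_n)\|/\|z_n\|$ stays bounded away from $0$, the quadratic term forces $J_\delta^{k\delta,y_0}(z_n)\to+\infty$; otherwise (\ref{ob-0}), used with $\varepsilon$ arbitrarily small (legitimate since $k\ge 2$), gives $\|\varphi(0;k\delta,z_n)\|/\|z_n\|\to 0$, so the linear term is negligible and $J_\delta^{k\delta,y_0}(z_n)/\|z_n\|\to r>0$. Either way $J_\delta^{k\delta,y_0}$ is coercive and a minimizer $z_\delta^*$ exists.

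For uniqueness I would exploit the unique continuation hidden in Lemma~\ref{Theorem-ob-int-0}. If $z_1,z_2$ were both minimizers, convexity makes $J_\delta^{k\delta,y_0}$ affine along the segment $[z_1,z_2]$; since the quadratic and the norm terms are separately convex, each must be affine there, forcing $\|\chi_\omega\overline\varphi_\delta(\cdot;k\delta,z_1-z_2)\|=0$. Applying Lemma~\ref{Theorem-ob-int-0} with $T=k\delta$, $S=\delta$ to $w=z_1-z_2$ (the average of $\varphi$ over the first interval vanishes on $\omega$) yields $\varphi(0;k\delta,w)=e^{\Delta k\delta}w=0$, and backward uniqueness gives $w=0$. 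To see $z_\delta^*\neq 0$ I would use $(\delta,k)\in\mathcal P_{T^*_{y_0}}$: since $k\delta<T^*_{y_0}$ and $t\mapsto\|e^{\Delta t}y_0\|$ is nonincreasing, $\|e^{\Delta k\delta}y_0\|>r$, so the direction $z=-e^{\Delta k\delta}y_0$ makes the sum of the linear and norm terms strictly negative, whence $J_\delta^{k\delta,y_0}(\tau z)<0=J_\delta^{k\delta,y_0}(0)$ for small $\tau>0$.

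Next I would write the Euler--Lagrange equation at $z_\delta^*\neq 0$, where $\|\cdot\|$ is differentiable: for every $h\in L^2(\Omega)$, $\langle\chi_\omega\overline\varphi_\delta(\cdot;k\delta,z_\delta^*),\chi_\omega\overline\varphi_\delta(\cdot;k\delta,h)\rangle+\langle e^{\Delta k\delta}y_0,h\rangle+r\langle z_\delta^*,h\rangle/\|z_\delta^*\|=0$. Setting $v_\delta^*:=\chi_\omega\overline\varphi_\delta(\cdot;k\delta,z_\delta^*)\in L_\delta^2(0,k\delta;L^2(\Omega))$, Lemma~\ref{lemma-control-op-dual} (with $\chi_\omega$ time-independent, so averaging commutes with it) lets me replace $\chi_\omega\overline\varphi_\delta(\cdot;k\delta,h)$ by $\chi_\omega\varphi(\cdot;k\delta,h)$ in the first pairing; the control--adjoint duality identity then collapses the first two terms into $\langle y(k\delta;y_0,v_\delta^*),h\rangle$, so the identity becomes $y(k\delta;y_0,v_\delta^*)=-rz_\delta^*/\|z_\delta^*\|$, which is (\ref{op-control-final-state}) and has norm $r$; thus $v_\delta^*$ is admissible for $(NP)_\delta^{k\delta,y_0}$ and has the form (\ref{PMP-Ndelta-0}). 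Optimality and uniqueness of $v_\delta^*$ then follow from the standard pairing argument against an arbitrary admissible $u_\delta$: writing $y(k\delta;y_0,u_\delta)-y(k\delta;y_0,v_\delta^*)=y(k\delta;0,u_\delta-v_\delta^*)$ and pairing with $z_\delta^*$ gives $\langle u_\delta-v_\delta^*,v_\delta^*\rangle\ge 0$, so $\|v_\delta^*\|\le\|u_\delta\|$ with equality only for $v_\delta^*$. Finally, evaluating the E--L identity at $h=z_\delta^*$ gives $\langle y_0,\varphi(0;k\delta,z_\delta^*)\rangle+r\|z_\delta^*\|=-\|\chi_\omega\overline\varphi_\delta(\cdot;k\delta,z_\delta^*)\|^2$; substituting this into $J_\delta^{k\delta,y_0}(z_\delta^*)$ yields conclusion (iii), $V_\delta(k\delta,y_0)=-\tfrac12 N_\delta(k\delta,y_0)^2=-\tfrac12\|\chi_\omega\overline\varphi_\delta(\cdot;k\delta,z_\delta^*)\|^2$.

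The step I expect to be the main obstacle is the refined non-vanishing property (\ref{nonzero-varphi-minimizer}). Writing $\psi=\tfrac1\delta\int_0^\delta e^{\Delta\tau}z_\delta^*\,d\tau$ (nonzero, since each Fourier multiplier $\tfrac1\delta\int_0^\delta e^{-\lambda_j\tau}\,d\tau>0$ and $z_\delta^*\neq 0$), a direct computation gives $\overline\varphi_\delta(t;k\delta,z_\delta^*)=e^{\Delta(k-i)\delta}\psi$ on $((i-1)\delta,i\delta]$. For $t\in(0,(k-1)\delta]$ the exponent $(k-i)\delta$ is a strictly positive multiple of $\delta$, so $\chi_\omega\overline\varphi_\delta(t;k\delta,z_\delta^*)=0$ would mean $e^{\Delta m\delta}\psi$ vanishes on the open set $\omega$ for some integer $m\ge 1$. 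Here I would invoke single-time unique continuation: $e^{\Delta m\delta}\psi$ is real-analytic in the interior of $\Omega$, so its vanishing on $\omega$ forces $e^{\Delta m\delta}\psi\equiv 0$ on the connected domain $\Omega$, hence $\psi=0$, a contradiction. This single-time argument (rather than the time-averaged Lemma~\ref{Theorem-ob-int-0}) is the delicate point, and it is also what explains why the property can only be asserted up to $(k-1)\delta$ and may fail on the last sampling interval $((k-1)\delta,k\delta]$, where the exponent $(k-i)\delta$ degenerates to $0$.
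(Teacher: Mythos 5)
Your proposal is correct, and its skeleton coincides with the paper's proof: direct method for existence (with the observability inequality (\ref{ob-0}) as the coercivity engine), the directional-derivative argument showing $0$ is not a minimizer, reduction of uniqueness to $\chi_\omega\overline\varphi_\delta(\cdot;k\delta,z_1-z_2)=0$ followed by Lemma \ref{Theorem-ob-int-0} and backward uniqueness, the Euler--Lagrange identity combined with Lemma \ref{lemma-control-op-dual} to obtain (\ref{PMP-Ndelta-0}) and (\ref{op-control-final-state}), and the substitution $h=z_\delta^*$ for (iii). Two places differ genuinely. First, in the optimality/uniqueness part of (ii): your inequality $\langle u_\delta-v_\delta^*,v_\delta^*\rangle\ge 0$ yields $\|v_\delta^*\|\le\|u_\delta\|$ together with the equality case directly (via $\|u_\delta-v_\delta^*\|^2\le 0$), whereas the paper runs Cauchy--Schwarz through $\|v_\delta^*\|\,\|\chi_\omega\overline\varphi_\delta\|\le\|v_\delta\|\,\|\chi_\omega\overline\varphi_\delta\|$, must invoke (\ref{nonzero-varphi-minimizer}) in order to divide, and proves uniqueness separately by the parallelogram law; your version is slightly cleaner and decouples (ii) from (\ref{nonzero-varphi-minimizer}). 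Second, and more substantively, for (\ref{nonzero-varphi-minimizer}) itself: the paper reduces (by piecewise constancy) to vanishing on a full sampling interval $\big((i_0-1)\delta,i_0\delta\big]$ with $i_0\le k-1$ and then reapplies the averaged interpolation inequality of Lemma \ref{Theorem-ob-int-0} with a shifted horizon plus backward uniqueness, staying entirely inside the Lebeau--Robbiano-based toolkit of Section 2; you instead use the explicit identity $\overline\varphi_\delta=e^{\Delta(k-i)\delta}\psi$ on $\big((i-1)\delta,i\delta\big]$ with $\psi=\frac1\delta\int_0^\delta e^{\Delta\tau}z_\delta^*\,\mathrm d\tau\ne 0$, and conclude by single-time unique continuation, i.e.\ interior spatial real-analyticity of $e^{\Delta m\delta}\psi$ for $m\ge 1$ on the connected domain $\Omega$. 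Your route imports a classical fact not otherwise used in the paper, but it buys a transparent explanation of why the last interval must be excluded: there $m=0$ and $\psi$ is merely in $H^2(\Omega)\cap H_0^1(\Omega)$, with no analyticity available, whereas in the paper this restriction appears only implicitly through the requirement $S<T$ in Lemma \ref{Theorem-ob-int-0}. Both arguments are sound.
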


\begin{proof}
(i)  First of all, we show the existence of minimizers of $(JP)_{\delta}^{k\delta,y_0}$. Indeed,
 by (\ref{fun-P}), one can easily see that  $J_{\delta}^{k\delta,y_0}$ is continuous and convex over   $L^2(\Omega)$. We now show its  coercivity.
  Since  $(\delta,k)\in \mathcal P_{T^*_{y_0}}$, we have that $k\geq 2$ (see
  (\ref{P-T*})). Thus, we can apply Theorem~\ref{Theorem-eq-controllable-observable} to see that both (\ref{ob-0}) and (\ref{WGS2.13}) are true.
  By taking $\varepsilon=\big(\frac{r}{2\|y_0\|}\big)^2$ in (\ref{ob-0}), we find that for each $z\in L^2(\Omega)$,
\begin{eqnarray*}
  \|\varphi(0;k\delta,z)\|^2
 &\leq& e^{C(1+\frac{1}{k\delta})} \Big(\frac{2\|y_0\|}{r}\Big)^2 \|\chi_\omega\overline\varphi_\delta(\cdot;k\delta,z)\|^2_{L^2(0,k\delta;L^2(\Omega))}  + \Big(\frac{r}{2\|y_0\|}\Big)^2 \|z\|^2
 \nonumber\\
 &\leq&  \Big( e^{\frac{C}{2}(1+\frac{1}{k\delta})} \frac{2\|y_0\|}{r} \|\chi_\omega\overline\varphi_\delta(\cdot;k\delta,z)\|_{L^2(0,k\delta;L^2(\Omega))}  + \frac{r}{2\|y_0\|} \|z\| \Big)^2,
\end{eqnarray*}
where $C\triangleq C(\Omega,\omega)$ is given by (\ref{WGS2.13}). The above yields that for each $z\in L^2(\Omega)$,
\begin{eqnarray*}
\langle y_0,\varphi(0;k\delta,z)\rangle
\geq - \big(2e^{\frac{C}{2}(1+\frac{1}{k\delta})} \|y_0\|^{2}r^{-1}\big) \|\chi_\omega\overline\varphi_\delta(\cdot;k\delta,z)\|_{L^2(0,k\delta;L^2(\Omega))} - \frac{r}{2}\|z\| .
\end{eqnarray*}
From this and  (\ref{fun-P}), one can easily check that
\begin{eqnarray}\label{op-control-1}
 J_\delta^{k\delta,y_0}(z)
 \geq \frac{r}{2} \|z\| - 2e^{C(1+\frac{1}{k\delta})} \|y_0\|^{4}r^{-2}\;\;\mbox{for each}\;\;z\in L^2(\Omega),
 \end{eqnarray}
 which leads to the coercivity of $J_{\delta}^{k\delta,y_0}$ over $L^2(\Omega)$.  Hence,   $J_{\delta}^{k\delta,y_0}$ has at least one   minimizer in   $L^2(\Omega)$.

 Next, we claim that $0$ is not a minimizer of $J_{\delta}^{k\delta,y_0}$. By contradiction, suppose that it were not true. Then  we would find from (\ref{fun-P}) that for all $z\in L^2(\Omega)$ and $\varepsilon>0$,
 \begin{eqnarray*}
  0\leq \frac{J_\delta^{k\delta,y_0}(\varepsilon z)-J_\delta^{k\delta,y_0}(0)}{\varepsilon}=\frac{\varepsilon}{2} \|\chi_\omega\overline\varphi_\delta(\cdot;k\delta,z)\|_{L^2(0,k\delta;L^2(\Omega))}^2 +\langle y_0,\varphi(0;k\delta,z) \rangle + r \|z\|.
 \end{eqnarray*}
 Sending $\varepsilon$ to $0$ in the above  leads to that
 \begin{eqnarray*}
   \langle e^{\Delta k\delta}y_0,z \rangle + r \|z\|
   =\langle y_0,\varphi(0;k\delta,z) \rangle + r \|z\|
   \geq 0
   \;\;\mbox{for all}\;\;   z\in L^2(\Omega).
 \end{eqnarray*}
 This yields that
 \begin{eqnarray*}
 \|e^{\Delta k\delta}y_0\|=\sup_{z\in L^2(\Omega)\setminus\{0\}} {\langle e^{\Delta k\delta}y_0,z \rangle}/{\|z\|} \leq r.
 \end{eqnarray*}
  Since $y_0\in L^2(\Omega)\setminus B_r(0)$, the above, along with (\ref{T*}), indicates that $k\delta\geq T^*_{y_0}$, which contradicts the assumption that $(\delta,k)\in \mathcal P_{T^*_{y_0}}$ (given by (\ref{P-T*})). Thus, $0$ is not a minimizer of $J_{\delta}^{k\delta,y_0}$.

 We now show the uniqueness of the minimizer of $J_{\delta}^{k\delta,y_0}$. To this end, we claim that the first term
 on the right hand side of  (\ref{fun-P}) is strictly convex.
 When this claim is proved, it follows from
 (\ref{fun-P}) that $J_{\delta}^{k\delta,y_0}$ is strictly convex
 over $L^2(\Omega)$. So its minimizer is unique.

   To show the above claim, we first observe from (\ref{adjoint-solution-P}) that
  \begin{eqnarray}\label{YUwanggeng4.9}
 \overline\varphi_\delta(t;k\delta,\lambda z_1+\mu z_2)=
  \lambda\overline\varphi_\delta(t;k\delta,z_1)
  + \mu\overline\varphi_\delta(t;k\delta,z_2)\;\;\mbox{for all}\;\; \lambda, \mu\in \mathbb{R}.
 \end{eqnarray}
 By this, we see  that the first term
 on the right hand side of  (\ref{fun-P}) is  convex.
 Next, we suppose, by contradiction, that this term
 were not strictly convex. Then, by the convexity of this term, there would be $\hat\lambda\in(0,1)$ and $z_1,\,z_2\in L^2(\Omega)$, with $z_1\neq z_2$, so that
 \begin{eqnarray*}
  & & \int_0^{k\delta} \|\chi_\omega \overline\varphi_\delta(t;k\delta,\hat\lambda z_1+(1-\hat\lambda)z_2)\|^2 \,\mathrm dt
  \nonumber\\
  &=& \hat\lambda \int_0^{k\delta} \| \chi_\omega \overline\varphi_\delta(t;k\delta,z_1)\|^2 \,\mathrm dt
  + (1-\hat\lambda)\int_0^{k\delta} \| \chi_\omega \overline\varphi_\delta(t;k\delta,z_2)\|^2 \,\mathrm dt,
 \end{eqnarray*}
 which, along with (\ref{YUwanggeng4.9}), yields that for each $t\in (0,k\delta)$,
 \begin{eqnarray*}
 \|\hat\lambda\chi_\omega \overline\varphi_\delta(t;k\delta,z_1)+(1-\hat\lambda)\chi_\omega \overline\varphi_\delta(t;k\delta,z_1)\|^2
 =\hat\lambda\|\chi_\omega\overline\varphi_\delta(t;k\delta,z_1)\|^2
 +(1-\hat\lambda)\|\chi_\omega\overline\varphi_\delta(t;k\delta,z_2)\|^2.
  \end{eqnarray*}
 From this  and the strict convexity of $\|\cdot\|^2$, we see  that for each $t\in (0,k\delta)$,
  \begin{eqnarray}\label{wang4.21}
  \chi_\omega \overline\varphi_\delta(t;k\delta,z_1)= \chi_\omega \overline\varphi_\delta(t;k\delta,z_2),\;\;\mbox{i.e.,}\;\;\chi_\omega \overline\varphi_\delta(t;k\delta,z_1-z_2)=0.
  \end{eqnarray}
   Notice that  $k\geq 2$. Thus, we can apply Lemma \ref{Theorem-ob-int-0} (where $S=(k-1)\delta$, $T=k\delta$ and $z=z_1-z_2$), and use  (\ref{wang4.21}) to  obtain that $\varphi(0; k\delta,z_1-z_2)=0$. This, together with the backward uniqueness of the heat equation, yields that  $z_1=z_2$ in $L^2(\Omega)$, which leads to a contradiction. Hence, the first term
 on the right hand side of  (\ref{fun-P}) is strictly convex.

   In summary, conclude that
$J_{\delta}^{k\delta,y_0}$ has a unique minimizer $z_\delta^*\neq 0$.

Finally, we prove that the minimizer $z_\delta^*$ satisfies  (\ref{nonzero-varphi-minimizer}). By contradiction, suppose  that
 it were not true. Then we would have that
\begin{eqnarray}\label{unique-11-1}
 \chi_\omega\overline\varphi_\delta(t_0;k\delta,z_\delta^*)= 0
 \;\;\mbox{for some}\;\;
 t_0\in \big(0,(k-1)\delta\big].
\end{eqnarray}
 Since $\overline\varphi_\delta(\cdot;k\delta,z_\delta^*)$ is a piece-wise constant function from $(0,k\delta]$ to $L^2(\Omega)$ (see (\ref{adjoint-solution-P})), it follows from  (\ref{unique-11-1}) that
 \begin{eqnarray}\label{wang4.24}
  \chi_\omega\overline\varphi_\delta(\cdot;k\delta,z_\delta^*)= 0 \;\;\mbox{over}\;\; \big((i_0-1)\delta,i_0\delta\big]
  \;\;\mbox{for some}\;\;  i_0\in\{1,\cdots,k-1\}.
 \end{eqnarray}
 By (\ref{wang4.24}), we can apply Lemma \ref{Theorem-ob-int-0} (where $T=(k+1-i_0)\delta$, $S=\delta$ and $z=e^{\Delta(i_0-1)\delta)}z_\delta^*$) to get that
 \begin{eqnarray*}
 0=\varphi(0;(k+1-i_0)\delta,e^{\Delta(i_0-1)\delta)}z_\delta^*)= \varphi((i_0-1)\delta;k\delta,z_\delta^*).
 \end{eqnarray*}
  This, along with the backward uniqueness  for the heat equation, yields that
  $z_\delta^*=0$ in $ L^2(\Omega)$,
   which leads to a contradiction. Therefore, (\ref{nonzero-varphi-minimizer}) holds.  This ends the proof of the conclusion (i) of Theorem~\ref{Lemma-NP-bangbang}.

(ii) Let $z_\delta^*$ be the minimizer of $J_{\delta}^{k\delta,y_0}$.
 Let  $v_\delta^*$ be given by (\ref{PMP-Ndelta-0}).
It suffices to show that  $v_\delta^*$ is the unique optimal control to $(NP)_\delta^{k\delta,y_0}$ and satisfies (\ref{op-control-final-state}).
{\it From now on and throughout the proof of Theorem~\ref{Lemma-NP-bangbang},
 we simply write $\varphi(\cdot)$ and $\overline\varphi_\delta(\cdot)$ for $\varphi(\cdot;k\delta,z_\delta^*)$ and $\overline\varphi_\delta(\cdot;k\delta,z_\delta^*)$; and simply write $L^2(0,k\delta)$ for $L^2(0,k\delta;L^2(\Omega))$.}

We first show that $v_\delta^*$ is an admissible control to $(NP)_\delta^{k\delta,y_0}$ and satisfies (\ref{op-control-final-state}).
By  (\ref{fun-P}), one can easily check that the   Euler-Lagrange equation associated with the minimizer $z_\delta^*$ is as follows:
\begin{eqnarray}\label{0221-NP-delta-ii-1}
 \langle \chi_\omega\overline\varphi_\delta(\cdot), \overline\varphi_\delta(\cdot;k\delta,z) \rangle_{L^2(0,k\delta)}
 + \langle y_0,e^{\Delta k\delta}z \rangle
 + \langle r \frac{z_\delta^*}{\|z_\delta^*\|},z \rangle =0,
 ~\forall\,
 z\in L^2(\Omega).
\end{eqnarray}
We claim that for each $z\in L^2(\Omega)$,
\begin{eqnarray}\label{Wang4.26}
\langle \chi_\omega\overline\varphi_\delta(\cdot), \chi_\omega\overline\varphi_\delta(\cdot;k\delta,z)\rangle _{L^2(0,k\delta)}
  =\langle \chi_\omega\overline\varphi_\delta(\cdot), \chi_\omega\varphi(\cdot;k\delta,z)\rangle_{L^2(0,k\delta)}.
\end{eqnarray}
To this end, we arbitrarily fix   $z\in L^2(\Omega)$. Let $f(\cdot)$ and $g(\cdot)$ be the zero extensions of $\chi_\omega\varphi(\cdot)$ and $\chi_\omega\varphi(\cdot;k\delta,z)$
 over $\mathbb{R}^+$.
  Then  by (\ref{adjoint-solution-P-1}) and
(\ref{adjoint-solution-P}), we see that
\begin{eqnarray*}
\bar f_{\delta}(\cdot)=\chi_\omega\overline\varphi_{\delta}(\cdot)\;\;\mbox{and}\;\;
\bar g_{\delta}(\cdot)=\chi_\omega\overline\varphi_{\delta}(\cdot; k\delta,z)\;\;\mbox{over}\;\;
\mathbb{R}^+,
\end{eqnarray*}
where $\overline\varphi_{\delta}(\cdot)$
and $\overline\varphi_{\delta}(\cdot; k\delta,z)$ are
 treated as their zero extensions over $\mathbb{R}^+$. Then by
 Lemma \ref{lemma-control-op-dual},
 we have that
 \begin{eqnarray*}
 \langle \bar f_\delta, \bar g_\delta \rangle_{L^2(\mathbb R^+;L^2(\Omega))}=\langle \bar f_\delta,g \rangle_{L^2(\mathbb R^+;L^2(\Omega))},
 \end{eqnarray*}
 which leads to  (\ref{Wang4.26}). Now, from (\ref{0221-NP-delta-ii-1}) and (\ref{Wang4.26}), it follows that for each
$z\in L^2(\Omega)$,
\begin{eqnarray*}
 \langle \chi_\omega\overline\varphi_\delta(\cdot), \chi_\omega\overline\varphi_\delta(\cdot;k\delta,z) \rangle_{L^2(0,k\delta)}
  =\langle v_\delta^*(\cdot),\chi_\omega \varphi(\cdot;k\delta,z) \rangle_{L^2(0,k\delta)}
  =\langle y(k\delta;0,v_\delta^*) , z \rangle.
\end{eqnarray*}
 This, along with (\ref{0221-NP-delta-ii-1}), yields that
\begin{eqnarray}\label{0221-NP-delta-ii-2}
 y(k\delta;y_0,v_\delta^*) + r z_\delta^*/\|z_\delta^*\|=0.
\end{eqnarray}
From (\ref{0221-NP-delta-ii-2}),  $v_\delta^*$ is an admissible control to $(NP)_\delta^{k\delta,y_0}$, and satisfies (\ref{op-control-final-state}).

We next prove that $v_\delta^*$ is an optimal control to $(NP)_\delta^{k\delta,y_0}$. To this end, we arbitrarily fix an admissible control $v_\delta$  to $(NP)_\delta^{k\delta,y_0}$. Then we have that
$\|y(k\delta;y_0,v_\delta)\| \leq r$.
This, together with (\ref{0221-NP-delta-ii-2}), implies that
\begin{eqnarray}\label{wang4.28}
& & \langle  y(k\delta;0,v_\delta^*), z_\delta^* \rangle
= \langle  y(k\delta;y_0,v_\delta^*), z_\delta^* \rangle -\langle e^{\Delta k\delta}y_0,z_\delta^* \rangle
 = - r \|z_\delta^*\| -\langle e^{\Delta k\delta}y_0,z_\delta^* \rangle
 \nonumber\\
 &\leq& \langle  y(k\delta;y_0,v_\delta), z_\delta^* \rangle -\langle e^{\Delta k\delta}y_0,z_\delta^* \rangle
 = \langle  y(k\delta;0,v_\delta), z_\delta^* \rangle.
\end{eqnarray}
Meanwhile, by Lemma \ref{lemma-control-op-dual} (where $(f,g)$ are taken as the zero extensions of $(v^*_\delta,\chi_\omega\varphi)$ and $(v_\delta,\chi_\omega\varphi)$
over $\mathbb{R}^+$ respectively), and by (\ref{adjoint-solution-P-1}) and
(\ref{adjoint-solution-P}), one can easily verify that
\begin{eqnarray}\label{wang4.30}
\langle v_\delta^*, \chi_\omega\overline\varphi_\delta\rangle _{L^2(0,k\delta)}
=\langle v_\delta^*, \chi_\omega \varphi \rangle_{L^2(0,k\delta)}\;\;\mbox{and}\;\;
\langle v_\delta, \chi_\omega\overline\varphi_\delta\rangle_{L^2(0,k\delta)}
=\langle v_\delta, \chi_\omega\varphi\rangle_{L^2(0,k\delta)}.
\end{eqnarray}
Since $v_\delta$ and $v^*_\delta$ are piece-wise constant functions (see (\ref{L2-delta}) and
(\ref{heat-Linfty-delta})), it follows from   (\ref{PMP-Ndelta-0}),  (\ref{wang4.30}) and (\ref{wang4.28}) that
\begin{eqnarray*}
 & & \|v_\delta^*\|_{L^2(0,k\delta)}\|\chi_\omega\overline\varphi_\delta
 \|_{L^2(0,k\delta)}
 =\langle v_\delta^*, \chi_\omega\overline\varphi_\delta\rangle_{L^2(0,k\delta)}
 =\langle v_\delta^*,\chi_\omega\varphi\rangle_{L^2(0,k\delta)}=
    \langle  y(k\delta;0,v_\delta^*), z_\delta^* \rangle\nonumber\\
 &\leq& \langle  y(k\delta;0,v_\delta), z_\delta^* \rangle
 = \langle v_\delta, \chi_\omega\varphi\rangle_{L^2(0,k\delta)}
 =\langle v_\delta, \chi_\omega\overline\varphi_\delta\rangle_{L^2(0,k\delta)}
  \leq \|v_\delta\|_{L^2(0,k\delta)}  \|\chi_\omega\overline\varphi_\delta\|_{L^2(0,k\delta)}.
\end{eqnarray*}
This, along with  (\ref{nonzero-varphi-minimizer}), yields that
$\|v_\delta^*\|_{L^2(0,k\delta)}
 \leq \|v_\delta\|_{L^2(0,k\delta)}$.
Because $v_\delta$ is an arbitrarily fixed admissible control to $(NP)_\delta^{k\delta,y_0}$, we see that $v_\delta^*$ is an optimal control to $(NP)_\delta^{k\delta,y_0}$.

Finally, we prove the uniqueness of the optimal control to $(NP)_\delta^{k\delta,y_0}$. By contradiction, we suppose that
$(NP)_\delta^{k\delta,y_0}$ had two different optimal controls $v^*_{\delta,1}$
and $v^*_{\delta,2}$. Then one could easily check that $(v^*_{\delta,1}+v^*_{\delta,2})/2$
is still an optimal control. Since $v^*_{\delta,1}\neq v^*_{\delta,2}$, we can use
the parallelogram law to get that
\begin{eqnarray*}
\|(v^*_{\delta,1}+v^*_{\delta,2})/2\|_{L^2(0, k\delta)}<N_\delta(k\delta,y_0),
 \end{eqnarray*}
 which contradicts the optimality of
$N_\delta(k\delta,y_0)$ to $(NP)_\delta^{k\delta,y_0}$.
 This proves the conclusion (ii) of Theorem~\ref{Lemma-NP-bangbang}.

(iii) Taking $z=z_\delta^*$ in (\ref{0221-NP-delta-ii-1}) leads to that
\begin{eqnarray*}
\langle y_0,\varphi(0)\rangle
 + r \|z_\delta^*\|
 = -\|\chi_\omega\overline\varphi_\delta\|^2_{L^2(0,k\delta)}.
 \end{eqnarray*}
Since $z_\delta^*$ is the minimizer of $J_\delta^{k\delta,y_0}$, the above equality, along with (\ref{fun-P}), indicates that
\begin{eqnarray}\label{0221-NP-delta-ii-3}
 V_\delta(k\delta,y_0) = J_\delta^{k\delta,y_0}(z_\delta^*)
 = - \frac{1}{2}\| \chi_\omega\overline\varphi_\delta\|^2_{L^2(0,k\delta)}.
\end{eqnarray}
Meanwhile, from (ii) of Theorem~\ref{Lemma-NP-bangbang}, we see that
\begin{eqnarray*}
N_\delta(k\delta,y_0)=\|v_\delta^*\|_{L^2(0,k\delta)}
 =\|\chi_\omega
 \overline\varphi_\delta\|_{L^2(0,k\delta)}.
 \end{eqnarray*}
This, along with (\ref{0221-NP-delta-ii-3}) leads to the conclusion (iii) of Theorem~\ref{Lemma-NP-bangbang}.

 In summary, we end the proof of Theorem~\ref{Lemma-NP-bangbang}.
\end{proof}

\section{Several auxiliary estimates}

This section presents several estimates, as well as properties,  on
minimizers (of $J_{\delta}^{k\delta,y_0}$ and $J^{T,y_0}$), the minimal norm functions and the minimal time functions. These estimate  will play important roles in the proofs of the main theorems.

\subsection{Some estimates on minimizers}

The following theorem concerns with the  $H_0^1(\Omega)$-estimates on  the minimizers of the functionals $J_{\delta}^{k\delta,y_0}$ and $J^{T,y_0}$.

\begin{theorem} \label{Lemma-minizer-P-L2-uni-bdd}
Let $y_0\in L^2(\Omega)\setminus B_r(0)$. Let $(\delta,k)\in\mathcal P_{T^*_{y_0}}$ (given by (\ref{P-T*})) and
$0<T<T^*_{y_0}$ (given by (\ref{T*})).
Write $z_\delta^*$ and $z^*$ for the minimizers of $J_{\delta}^{k\delta,y_0}$ and $J^{T,y_0}$, respectively. Then the following conclusions are true:

 (i) There is a positive constant $C_1 \triangleq C_1(\Omega,\omega)$ so that
\begin{eqnarray}
   \|z_\delta^*\| &\leq&    e^{C_1(1+\frac{1}{k\delta})}\|y_0\|^{4} r^{-3};
   \label{z-L2-uni-bdd}
\\
   \|\partial_t\varphi(\cdot,k\delta;z_\delta^*)\|_{L^2(0,k\delta;L^2(\Omega))}
   &\leq&  \|z_\delta^*\|_{H_0^1(\Omega)}
    \leq     e^{C_1(1+k\delta+\frac{1}{k\delta})}  \|y_0\|^{6} r^{-5}.
    \label{z-beta-uni-bdd}
\end{eqnarray}

 (ii) There is a positive constant $C_2 \triangleq C_2(\Omega,\omega)$ so that
\begin{eqnarray}\label{z-L2-bdd}
   \|z^*\| &\leq&    e^{C_2(1+\frac{1}{T})}\|y_0\|^{4} r^{-3};
\\ \label{z-bdd}
   \|\partial_t\varphi(\cdot,k\delta;z^*)\|_{L^2(0,T;L^2(\Omega))}
   &\leq&  \|z^*\|_{H_0^1(\Omega)}
    \leq     e^{C_2(1+T+\frac{1}{T})}  \|y_0\|^{6} r^{-5}.
\end{eqnarray}

\end{theorem}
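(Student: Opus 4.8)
The plan is to treat both parts in parallel, since the estimates for $z_\delta^*$ and $z^*$ are structurally identical; I will carry out part (i) and note that part (ii) follows verbatim with $(NP)^{T,y_0}$, $(JP)^{T,y_0}$, Theorem~\ref{Lemma-Norm-fun} and the continuous observability inequality replacing their sampled-data counterparts. For the $L^2$-bounds \eqref{z-L2-uni-bdd} and \eqref{z-L2-bdd}, I would first observe that $z=0$ gives $J_\delta^{k\delta,y_0}(0)=0$, so by minimality $J_\delta^{k\delta,y_0}(z_\delta^*)\le 0$. Feeding $z_\delta^*$ into the coercivity bound \eqref{op-control-1} yields $\frac r2\|z_\delta^*\|\le 2e^{C(1+1/(k\delta))}\|y_0\|^4 r^{-2}$, and absorbing the constant into the exponential gives \eqref{z-L2-uni-bdd}. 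The same argument applied to $J^{T,y_0}$, using the continuous analogue of \eqref{op-control-1} (which follows from the observability inequality derived from Lemma~\ref{Theorem-ob-int-0}), gives \eqref{z-L2-bdd}.

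For the first inequalities in \eqref{z-beta-uni-bdd} and \eqref{z-bdd}, I would use that $\varphi(t;T,z)=e^{\Delta(T-t)}z$, so $\partial_t\varphi(t;T,z)=-\Delta\varphi(t;T,z)$. Expanding $z=\sum_j z_je_j$ in the Dirichlet eigenbasis and substituting $\tau=T-t$ gives
\begin{eqnarray*}
\|\partial_t\varphi(\cdot;T,z)\|^2_{L^2(0,T;L^2(\Omega))}
=\sum_{j}\lambda_j^2 z_j^2\int_0^T e^{-2\lambda_j\tau}\,\mathrm d\tau
=\sum_j\frac{\lambda_j z_j^2(1-e^{-2\lambda_j T})}{2}
\le\frac12\|z\|_{H_0^1(\Omega)}^2 .
\end{eqnarray*}
Taking $z=z_\delta^*$, $T=k\delta$ (resp. $z=z^*$) gives the stated first inequalities, since $1/\sqrt2\le 1$.

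For the second (harder) inequalities I would exploit the final-state identity \eqref{op-control-final-state}, namely $z_\delta^*=-\tfrac{\|z_\delta^*\|}{r}\,y(k\delta;y_0,v_\delta^*)$, whence $\|z_\delta^*\|_{H_0^1(\Omega)}=\tfrac{\|z_\delta^*\|}{r}\|y(k\delta;y_0,v_\delta^*)\|_{H_0^1(\Omega)}$; the $L^2$ factor is already controlled by \eqref{z-L2-uni-bdd}, so it remains to bound the $H_0^1$-norm of the optimal final state. Writing the state by Duhamel's formula and expanding in the eigenbasis, a computation in the spirit of the one above yields $\|e^{\Delta k\delta}y_0\|_{H_0^1(\Omega)}\le C(k\delta)^{-1/2}\|y_0\|$ and, crucially, the source-term bound $\|y(k\delta;0,\chi_\omega v_\delta^*)\|_{H_0^1(\Omega)}\le\tfrac1{\sqrt2}\|v_\delta^*\|_{L^2(0,k\delta;L^2(\Omega))}$. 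The control norm $\|v_\delta^*\|_{L^2}=N_\delta(k\delta,y_0)$ is then bounded by combining (iii) of Theorem~\ref{Lemma-NP-bangbang}, which gives $V_\delta(k\delta,y_0)=-\tfrac12N_\delta(k\delta,y_0)^2$, with the lower bound $V_\delta(k\delta,y_0)=J_\delta^{k\delta,y_0}(z_\delta^*)\ge -2e^{C(1+1/(k\delta))}\|y_0\|^4r^{-2}$ from \eqref{op-control-1}, giving $N_\delta(k\delta,y_0)\le e^{C(1+1/(k\delta))}\|y_0\|^2r^{-1}$. To reconcile the two terms' differing homogeneities I would use $\|y_0\|>r$ (as $y_0\notin B_r(0)$), hence $\|y_0\|\le\|y_0\|^2/r$, so that $C(k\delta)^{-1/2}\|y_0\|\le e^{C(1+1/(k\delta))}\|y_0\|^2r^{-1}$. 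Multiplying $\|y(k\delta;y_0,v_\delta^*)\|_{H_0^1(\Omega)}\le e^{C(1+1/(k\delta))}\|y_0\|^2r^{-1}$ by $\|z_\delta^*\|/r$ and invoking \eqref{z-L2-uni-bdd} gives \eqref{z-beta-uni-bdd}; the exponent $C(1+1/(k\delta))$ I obtain is even smaller than the stated $C(1+k\delta+1/(k\delta))$, so the claim follows a fortiori. Part (ii) is identical, using \eqref{op-final-dual} and $N(T,y_0)$ in place of \eqref{op-control-final-state} and $N_\delta$.

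The one genuinely delicate step is the $H_0^1$-estimate of the optimal final state from an $L^2$-in-time control: the naive pointwise smoothing bound $\|(-\Delta)^{1/2}e^{\Delta t}\|\le Ct^{-1/2}$ produces a \emph{divergent} time integral after Cauchy--Schwarz, and one must instead extract the parabolic maximal-regularity-type gain through the eigenbasis to obtain the uniform constant $1/\sqrt2$. Matching the two differing homogeneities (the $\|y_0\|$ term from the free evolution versus the $\|y_0\|^2/r$ scaling of the remaining terms) via the constraint $\|y_0\|>r$ is the second point requiring care.
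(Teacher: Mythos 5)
Your proposal is correct, but it departs from the paper's proof at three essential points, and in each case your route is arguably cleaner. (1) For the $H_0^1$-bound on the optimal final state, the paper tests the state equation with $y$ and with $-t\Delta y$ and integrates, which brings in a Gronwall factor $e^{k\delta}$ --- this is precisely where the $k\delta$ in the stated exponent $e^{C_1(1+k\delta+\frac{1}{k\delta})}$ originates; your per-mode Duhamel--Cauchy--Schwarz computation in the Dirichlet eigenbasis (the ``$1/\sqrt{2}$'' maximal-regularity gain, which as you correctly note cannot be obtained from the pointwise smoothing bound $\|(-\Delta)^{1/2}e^{\Delta t}\|\le Ct^{-1/2}$) avoids this factor entirely, so you indeed prove the stronger bound $e^{C(1+\frac1{k\delta})}$ and the theorem follows a fortiori. (2) For the bound $N_\delta(k\delta,y_0)\le e^{C(1+\frac1{k\delta})}\|y_0\|^2r^{-1}$, the paper constructs an admissible control from the $L^2$-approximate null controllability with a cost (Theorem \ref{Theorem-eq-controllable-observable}), whereas you extract it from the identity $V_\delta=-\tfrac12N_\delta^2$ in (iii) of Theorem \ref{Lemma-NP-bangbang} together with the coercivity lower bound (\ref{op-control-1}); both are legitimate, and yours reuses machinery already needed elsewhere in your argument. (3) For part (ii), the paper does \emph{not} rerun the argument in continuous time: it approximates $T$ by $n_k/k$, uses the uniform $H_0^1$-bounds of part (i) to extract a weakly convergent subsequence of minimizers $z^*_{1/k}$, identifies the limit with $z^*$ through a $\Gamma$-convergence-type computation, and concludes by weak lower semicontinuity. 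Your direct continuous-time repetition is valid provided you have the continuous coercivity of $J^{T,y_0}$, which does follow from Lemma \ref{Theorem-ob-int-0} (take $S=T/2$, bound the averaged observation by $\|\chi_\omega\varphi\|_{L^2(0,T;L^2(\Omega))}/\sqrt{S}$ via Cauchy--Schwarz, then apply Young's inequality), or alternatively from the cited null-controllability cost estimate of \cite{FZ}; together with (\ref{op-final-dual}) and (iii) of Theorem \ref{Lemma-Norm-fun} this closes part (ii) without any limiting argument. In short: your proof is more elementary and yields sharper constants, at the price of leaning on the explicit spectral decomposition of the Dirichlet Laplacian, while the paper's energy-estimate route is structurally more robust and its part (ii) recycles part (i) verbatim.
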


\begin{proof}
Throughout the proof, $C(\Omega,\omega)$ stands for a positive constant depending only on $\Omega$ and $\omega$. It may vary in different contexts.

(i) We begin with proving (\ref{z-L2-uni-bdd}). From (\ref{op-control-1}), we find  that
\begin{eqnarray*}
  \frac{r}{2} \|z_\delta^*\| - 2e^{C(1+\frac{1}{k\delta})} \|y_0\|^{4}r^{-2} \leq J_{\delta}^{k\delta,y_0}(z_\delta^*),\;\;\mbox{for some}\;\;C=C(\Omega,\omega).
\end{eqnarray*}
Since $z_\delta^*$ is the minimizer of $J_{\delta}^{k\delta,y_0}$, the above inequality, along with   (\ref{fun-P}), implies that
\begin{eqnarray*}
\frac{r}{2} \|z_\delta^*\| - 2e^{C(1+\frac{1}{k\delta})} \|y_0\|^{4}r^{-2}
\leq J_{\delta}^{k\delta,y_0}(0)=0,
\end{eqnarray*}
which leads to (\ref{z-L2-uni-bdd}).

To  show (\ref{z-beta-uni-bdd}), we need two estimates related to the optimal control $u_\delta^*$ of $(NP)_\delta^{k\delta,y_0}$.
We first claim  that
\begin{eqnarray}\label{control-L2-uni-bdd}
  \|u_\delta^*\|_{L^2(0,k\delta;L^2(\Omega))}
   \leq e^{C(1+\frac{1}{k\delta})} \|y_0\|^{2} r^{-1}\;\;\mbox{for some}\;\;C\triangleq C(\Omega,\omega).
 \end{eqnarray}
  Indeed, since $(\delta,k)\in\mathcal P_{T^*_{y_0}}$, we have that $k\geq 2$. Thus, by  (ii)  of Theorem \ref{Theorem-eq-controllable-observable},  Equation (\ref{heat-2}) has the  $L^2$-approximate null controllability with a cost. From this,
  Definition~\ref{Def-ap-controllable-P} (see (\ref{P-p-appro-controllable})), and (iii) of Theorem \ref{Theorem-eq-controllable-observable} (see (\ref{WGS2.13})), we find that for $\varepsilon_0 = (r/\|y_0\|)^2$, there is $u_\delta\in L_\delta^2(0,k\delta;L^2(\Omega))$ so that
 \begin{eqnarray}\label{0113-sec3-1}
  \frac{\varepsilon_0}{e^{C(1+\frac{1}{k\delta})}}  \|u_\delta\|_{L^2(0,k\delta;L^2(\Omega))}^2
  + \frac{1}{\varepsilon_0} \|y(k\delta;y_0,u_\delta)\|^2
  \leq \|y_0\|^2\;\;\mbox{for some}\;\; C\triangleq C(\Omega,\omega).
 \end{eqnarray}
   Since $\varepsilon_0 = (r/\|y_0\|)^2$, it follows from (\ref{0113-sec3-1}) that $u_\delta$ is an admissible control to $(NP)_\delta^{k\delta,y_0}$. Then by the optimality of $u^*_\delta$ and  $N_\delta(k\delta,y_0)$,
    and by (\ref{0113-sec3-1}), we find that
 \begin{eqnarray*}
\|u_\delta^*\|_{L^2(0,k\delta;L^2(\Omega))}=N_\delta(k\delta,y_0)\leq \|u_\delta\|_{L^2(0,k\delta;L^2(\Omega))}
  \leq e^{\frac{C}{2}(1+\frac{1}{k\delta})} \|y_0\|^{2} r^{-1},
 \end{eqnarray*}
which leads to (\ref{control-L2-uni-bdd}).

Next, we claim   that
\begin{eqnarray}\label{0113-sec3-2}
\|y(k\delta;y_0,u_\delta^*)\|_{H_0^1(\Omega)}
\leq
e^{C(1+k\delta+\frac{1}{k\delta})}\|y_0\|^{2} r^{-1}\;\;\mbox{for some}\;\;C \triangleq C(\Omega,\omega).
\end{eqnarray}
For this purpose, we consider the following equation:
\begin{eqnarray}\label{0113-sec3-3}
\left\{\begin{array}{lll}
        \partial_t y - \Delta y = f  &\mbox{in}  &\Omega\times\mathbb R^+,\\
        y=0 &\mbox{on}  &\partial\Omega\times\mathbb R^+,\\
        y(0)=z &\mbox{in}  &\Omega,
       \end{array}
\right.
\end{eqnarray}
where $z\in C_0^\infty(\Omega)$ and $f\in C_0^\infty(\Omega\times \mathbb R^+)$.
Multiplying $y$ on both sides of Equation (\ref{0113-sec3-3}) and then integrating it over $\Omega$, after some  computations, we obtain that
for each $S>0$,
\begin{eqnarray}\label{0113-sec3-4}
  \int_0^S\int_\Omega  |\nabla y(x,t)|^2 \,\mathrm dx \mathrm dt
  \leq e^S\int_0^S\int_\Omega  |f(x,t)|^2 \,\mathrm dx \mathrm dt + e^S\int_\Omega |z(x)|^2 \,\mathrm dx.
\end{eqnarray}
Meanwhile,  multiplying $-t\Delta y(t)$ on both sides of Equation  (\ref{0113-sec3-3}) and then integrating it over $\Omega$, after some  computations,
we obtain that for each $S>0$,
 \begin{eqnarray}\label{wang5.10}
 \int_\Omega S |\nabla y(x,S)|^2 \,\mathrm dx
 \leq  \int_0^S\int_\Omega t|f(x,t)|^2 \,\mathrm dx \mathrm dt + \int_0^S\int_\Omega |\nabla y(x,t)|^2 \,\mathrm dx \mathrm dt.
\end{eqnarray}
From (\ref{wang5.10})  and (\ref{0113-sec3-4}), we deduce that for each $S>0$,
$z\in C_0^\infty(\Omega)$ and  $f\in C_0^\infty(\Omega\times \mathbb R^+)$,
\begin{eqnarray*}
\int_\Omega  |\nabla y(x,S)|^2 \,\mathrm dx
 \leq e^{1+S+1/S} \Big[ \int_0^S\int_\Omega |f(x,t)|^2 \,\mathrm dx\mathrm dt
      + \int_\Omega |z(x)|^2 \,\mathrm dx \Big].
\end{eqnarray*}
Then by a standard density argument, we can easily derive from the above inequality that
\begin{eqnarray*}
\|y(k\delta;y_0,u_\delta^*)\|_{H_0^1(\Omega)}
\leq e^{1+k\delta+\frac{1}{k\delta}} \big( \|u_\delta^*\|_{L^2(0,k\delta;L^2(\Omega))} + \|y_0\|  \big).
\end{eqnarray*}
Since $\|y_0\|>r$, the above, along with (\ref{control-L2-uni-bdd}), leads to (\ref{0113-sec3-2}).

We now show the second inequality in (\ref{z-beta-uni-bdd}).  From   (\ref{op-control-final-state}), we see that
\begin{eqnarray*}
\|  z_\delta^*\|_{H_0^1(\Omega)}= \frac{\|z_\delta^*\|}{r}
  \|y(k\delta;y_0,u_\delta^*)\|_{H_0^1(\Omega)},
  \end{eqnarray*}
which, together with (\ref{z-L2-uni-bdd}) and (\ref{0113-sec3-2}),  leads to the second inequality in (\ref{z-beta-uni-bdd}).

Then, we show the first inequality in (\ref{z-beta-uni-bdd}).
Simply write $\varphi(\cdot)$ for $\varphi(\cdot;k\delta,z^*_\delta)$.
Multiplying by $\Delta\varphi$ on both sides of the  equation
satisfied by $\varphi(\cdot;k\delta,z^*_\delta)$, and then integrating it over $\Omega$, after some computations, we obtain that
\begin{eqnarray*}
 \int_\Omega |\nabla\varphi(x,0)|^2 \,\mathrm dx  + \int_0^{k\delta} \int_\Omega |\Delta\varphi(x,t)|^2 \,\mathrm dx\mathrm dt
 = \int_\Omega |\nabla\varphi(x,k\delta)|^2 \,\mathrm dx.
\end{eqnarray*}
From this, it follows that
\begin{eqnarray*}
 \int_0^{k\delta} \int_\Omega |\partial_t\varphi(x,t)|^2 \,\mathrm dx\mathrm dt=\int_0^{k\delta} \int_\Omega |\Delta\varphi(x,t)|^2 \,\mathrm dx\mathrm dt
 \leq \int_\Omega |\nabla z_\delta^*(x)|^2 \,\mathrm dx,
\end{eqnarray*}
which leads to the first inequality in (\ref{z-beta-uni-bdd}). This ends the proof of the conclusion (i).

(ii)
 Arbitrarily fix $k_0\in \mathbb N^+$ so that $k_0\geq \max\{2,2/T\}$. For each integer $k\geq k_0$, let $n_k$ be the integer so that
\begin{eqnarray}\label{0824-pf2-1}
 kT-1<n_k \leq kT.
\end{eqnarray}
We first claim
\begin{eqnarray}\label{0824-pf2-2}
 \liminf_{k\rightarrow\infty} V_{1/k}(n_k/k,y_0)\leq V(T,y_0)\;\;\mbox{for all}\;\;
 k\geq k_0.
\end{eqnarray}
In fact, for each $k\geq k_0$,
 $(NP)_{1/k}^{n_k/k,y_0}$ has a unique optimal control  $v^*_{k}$ (see  (ii) of Theorem \ref{Lemma-NP-bangbang}).
Then, by (\ref{0824-pf2-1}),  one can easily check that the zero extension of $v^*_k$ over $(0,T)$
  is an admissible control to $(NP)^{T,y_0}$. From this and the optimality of $N(T,y_0)$, one can easily check that
  \begin{eqnarray}\label{YUwang5.13}
  N(T,y_0) \leq N_{1/k}(n_k/k,y_0)\;\;\mbox{for all}\;\;k\geq k_0.
  \end{eqnarray}
  Since $0<T<T^*_{y_0}$ and because $(1/k,n_k)\in \mathcal P_{T^*_{y_0}}$ for all $k\geq k_0$ (which follows from (\ref{0824-pf2-1}) and (\ref{P-T*})), we can apply
    (iii) of Theorem \ref{Lemma-Norm-fun} and (iii) of Theorem \ref{Lemma-NP-bangbang} (with $(\delta,k)$=$(1/k,n_k)$), and use (\ref{YUwang5.13}) to obtain (\ref{0824-pf2-2}).

For each integer $k\geq k_0$, write $z_{1/k}^*$ for the minimizer of $(JP)_{1/k}^{n_k/k,y_0}$. The key is to show that on a subsequence of $\{z^*_{1/k}\}_{k\geq k_0}$, still denoted in the same manner,
\begin{eqnarray}\label{haoren5.14}
z^*_{1/k} \rightarrow z^*
 \;\;\mbox{weakly in}\;\; H_0^1(\Omega); \;\;\mbox{strongly in}\;\; L^2(\Omega),
 \;\;\mbox{as}\;\;
 k\rightarrow\infty.
\end{eqnarray}
(Here, $z^*$ is the minimizer of  $J^{T,y_0}$.) To this end,
we notice that  $(1/k,n_k)\in \mathcal P_{T^*_{y_0}}$ for all $k\geq k_0$ (which follows from (\ref{0824-pf2-1}) and (\ref{P-T*})). Thus, we can use
  the second inequality in (\ref{z-beta-uni-bdd}) (where $\delta=1/k$; $k=n_k$)
to find that  $\{z^*_{1/k}\}_{k\geq k_0}$ is bounded in $H_0^1(\Omega)$. So there exists a subsequence of $\{z^*_{1/k}\}_{k\geq k_0}$, still denoted in the same manner, and some $\hat z\in H_0^1(\Omega)$ so that
\begin{eqnarray}\label{0824-pf2-4}
 z^*_{1/k} \rightarrow \hat z
 \;\;\mbox{weakly in}\;\; H_0^1(\Omega); \;\;\mbox{strongly in}\;\; L^2(\Omega),
 \;\;\mbox{as}\;\;
 k\rightarrow\infty.
\end{eqnarray}
From the above, we see that in order to show (\ref{haoren5.14}), it suffices to prove that $z^*=\hat z$. For this purpose, we first claim that for each $k\geq k_0$,
 \begin{eqnarray}\label{wang5.14}
 &&\|\varphi(0;T,\hat z)-\varphi(0;n_k/k,z^*_{1/k})\|
  \nonumber\\
 &\leq& \sup_{0\leq s\leq \hat t\leq s+\frac{1}{k}\leq T} \| \varphi(\hat t;T,\hat z)  -  \varphi(s;T,\hat z)\|  +  \| \hat z- z^*_{1/k}\|;
 \end{eqnarray}
 \begin{eqnarray}\label{wang5.15}
 & & \| \varphi(t;T,\hat z)  -  \overline\varphi_{1/k}(t;n_k/k,z^*_{1/k})\|
     \nonumber\\
   &\leq& 2 \sup_{0\leq s\leq \hat t\leq s+\frac{1}{k}\leq T} \| \varphi(\hat t;T,\hat z)  -  \varphi(s;T,\hat z)\|  +  \| \hat z- z^*_{1/k}\|,\;\forall\;t\in(0,n_k/k).
 \end{eqnarray}
To show (\ref{wang5.14}), we arbitrarily fix $k\geq k_0$. By  (\ref{0824-pf2-1}), we see that  $0\leq T-n_k/k\leq 1/k$. This, along with the time-invariance of  Equation (\ref{adjoint-1}), yields
 \begin{eqnarray}\label{wang5.16}
 \|\varphi(0;T,\hat z)-\varphi(0;n_k/k,\hat z)\|&=&
 \|\varphi(0;T,\hat z)-\varphi(T-n_k/k; T,\hat z)\|\nonumber\\
 &\leq& \sup_{0\leq s\leq t\leq s+\frac{1}{k}\leq T} \| \varphi(t;T,\hat z)  -  \varphi(s;T,\hat z)\|
  \end{eqnarray}
  Meanwhile, since $\{e^{t\Delta} : t\geq 0\}$ is contractive, we have that
  \begin{eqnarray}\label{wang5.17}
  \|\varphi(0;n_k/k,\hat z)-\varphi(0;n_k/k,z^*_{1/k})\|\leq \| \hat z- z^*_{1/k}\|.
  \end{eqnarray}
  Using the triangle inequality, by (\ref{wang5.16}) and (\ref{wang5.17}), we obtain
  (\ref{wang5.14}).

  To show (\ref{wang5.15}), we arbitrarily fix $k\geq k_0$ and  $t\in(0,n_k/k)$.
 Three facts are given in order. Fact one: Since $0\leq T-n_k/k\leq 1/k$, we can use the time-invariance of  Equation (\ref{adjoint-1}) to get
that
  \begin{eqnarray}\label{YUyongyu5.18}
  \|\varphi(t;T,\hat z)-\varphi(t;n_k/k,\hat z)\|
  &=& \|\varphi(t;T,\hat z)-\varphi(T-n_k/k+t;T,\hat z)\|\nonumber\\
  &\leq& \sup_{0\leq s\leq \hat t\leq s+\frac{1}{k}\leq T} \| \varphi(\hat t;T,\hat z)-  \varphi(s;T,\hat z)\|.
   \end{eqnarray}
   Fact two: Since $0\leq T-n_k/k\leq 1/k$, by (\ref{adjoint-solution-P}) and
    the time-invariance of  Equation (\ref{adjoint-1}),
    we can easily check that
  \begin{eqnarray}\label{YUyongyu5.19}
  & & \|\varphi(t;n_k/k,\hat z)-\overline\varphi_{1/k}(t;n_k/k,\hat z)\|
  \\
    &=& \Big\| \sum_{i=1}^{n_k} \chi_{((i-1)/k,i/k]}(t) k
  \int_{(i-1)/k}^{i/k} \big[\varphi(t;n_k/k,\hat z)-\varphi(s;n_k/k,\hat z) \big] \,\mathrm ds\Big\|
  \nonumber\\
  &\leq &
  \sup_{0\leq s\leq \hat t\leq s+\frac{1}{k}\leq n_k/k} \| \varphi(\hat t;n_k/k,\hat z)-  \varphi(s;n_k/k,\hat z)\|\nonumber\\
    &\leq& \sup_{0\leq s\leq \hat t\leq s+\frac{1}{k}\leq T} \| \varphi(\hat t;T,\hat z)-  \varphi(s;T,\hat z)\|.
  \nonumber
     \end{eqnarray}
   Fact three: Since $\{e^{t\Delta} : t\geq 0\}$ is contractive, by (\ref{adjoint-solution-P}), we see that
  \begin{eqnarray}\label{YUyongyu5.20}
  \|\overline\varphi_{1/k}(t;n_k/k,\hat z)-\overline\varphi_{1/k}(t;n_k/k,z^*_{1/k})\|
  = \|\overline\varphi_{1/k}(t;n_k/k,\hat z-z^*_{1/k})\|
  \leq
  \| \hat z- z^*_{1/k}\|.
      \end{eqnarray}
  The above three facts (\ref{YUyongyu5.18}), (\ref{YUyongyu5.19}) and (\ref{YUyongyu5.20}), together with the triangle inequality, leads to (\ref{wang5.15}).

Two observations are given in order: First, since $\varphi(\cdot; T,\hat z)$ is uniformly continuous on $[0,T]$, we see that two
supremums in (\ref{wang5.14}) and (\ref{wang5.15}) tend to zero as $k\rightarrow\infty$. Second, it follows by (\ref{0824-pf2-1}) that
 $\lim_{k\rightarrow\infty} n_k/k=T$. From these two observations,    (\ref{0824-pf2-4}), (\ref{wang5.14}) and  (\ref{wang5.15}), one can easily check  that
\begin{eqnarray*}
\langle y_0,\varphi(0;T,\hat z) \rangle
&=& \lim_{k\rightarrow\infty} \langle y_0,\varphi(0;n_k/k,z^*_{1/k}) \rangle;
\nonumber\\
\int_0^T \|\chi_\omega\varphi(t;T,\hat z)\|^2 \,\mathrm dt
&=& \lim_{k\rightarrow\infty}  \int_0^{n_k/k} \|\chi_\omega\overline\varphi_{1/k}(t;n_k/k,z^*_{1/k})\|^2 \,\mathrm dt.
\end{eqnarray*}
These, together with (\ref{fun-0}), (\ref{fun-P}) and (\ref{0824-pf2-4}), indicate that
\begin{eqnarray*}
 J^{T,y_0}(\hat z) = \lim_{k\rightarrow\infty} J_{1/k}^{n_k/k,y_0}(z^*_{1/k})=\lim_{k\rightarrow\infty} V_{1/k}(n_k/k,y_0).
\end{eqnarray*}
This, along with (\ref{0824-pf2-2}) and (\ref{fun-0}), yields that
\begin{eqnarray*}
J^{T,y_0}(\hat z)=V(T,y_0)=\inf_{z\in L^2(\Omega)} J^{T,y_0}(z).
\end{eqnarray*}
Hence,  $\hat z$ is a minimizer of $J^{T,y_0}$. Then, by the uniqueness of the minimizer, we see that $\hat z=z^*$. Hence, (\ref{haoren5.14}) is true.

Finally,  since $0<T<T^*_{y_0}$ and because $(1/k,n_k)\in \mathcal P_{T^*_{y_0}}$ for all $k\geq k_0$ (which follows from (\ref{0824-pf2-1}) and (\ref{P-T*})),
the conclusion (i)  in Theorem~\ref{Lemma-minizer-P-L2-uni-bdd} is available for $(\delta,k)$=$(1/k,n_k)$. Thus,
by   (\ref{z-L2-uni-bdd}),
 the second inequality in  (\ref{z-beta-uni-bdd}) (with $(\delta,k)$=$(1/k,n_k)$)
 and  (\ref{haoren5.14}),
   using the fact that
 $n_k/k\rightarrow T$ (see (\ref{0824-pf2-1})), we can easily obtain (\ref{z-L2-bdd}) and the second inequality in (\ref{z-bdd}). Besides, by the same way as that used to prove the first inequality in (\ref{z-beta-uni-bdd}), we get the first inequality in (\ref{z-bdd}).

In summary, we end the proof of Theorem~\ref{Lemma-minizer-P-L2-uni-bdd}.
\end{proof}

\subsection{Some estimates related to minimal norm functions}

Several inequalities related to the minimal norm functions $T\rightarrow N(T,y_0)$ and $k\delta\rightarrow
N_\delta(k\delta,y_0)$ will be presented in the following two theorems.

\begin{theorem}\label{Proposition-NP-Lip-T}
 Let $y_0\in L^2(\Omega)\setminus B_r(0)$. Then there is $C_3\triangleq C_3(\Omega,\omega)>0$ so that for each pair $(T_1,T_2)$, with $0<T_1\leq T_2<T^*_{y_0}$
 (given by (\ref{T*})),
 \begin{eqnarray}\label{NP-Lip-T}
  \lambda_1^{3/2} r (T_2-T_1)\leq N(T_1,y_0)-N(T_2,y_0)
  \leq e^{C_3(1+\frac{1}{T_1})} \|y_0\|  (T_2-T_1).
 \end{eqnarray}
 \end{theorem}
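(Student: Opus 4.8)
The plan is to prove the two inequalities by establishing the corresponding pointwise bounds on the derivative of the decreasing function $T\mapsto N(T,y_0)$ and then integrating over $[T_1,T_2]$. Throughout I write $z^*=z^*(T)$ for the minimizer of $J^{T,y_0}$ and use $\varphi(t;T,z^*)=e^{\Delta(T-t)}z^*$, together with the identity $V(T,y_0)=-\tfrac12 N(T,y_0)^2$ from Theorem~\ref{Lemma-Norm-fun}(iii). Since $N(\cdot,y_0)$ is monotone (Theorem~\ref{Theorem-eq-cont-discrete}(i)) it is differentiable a.e.; once I also know it is locally Lipschitz on $(0,T^*_{y_0})$ (which follows from the comparison $V(T_2,y_0)\le J^{T_2,y_0}(z^*(T_1))$ combined with the $H_0^1$-bounds of Theorem~\ref{Lemma-minizer-P-L2-uni-bdd}), it is absolutely continuous, so $N(T_1,y_0)-N(T_2,y_0)=\int_{T_1}^{T_2}(-N'(s,y_0))\,\mathrm ds$ and it suffices to bound $-N'(s,y_0)$ from both sides uniformly for $s\ge T_1$.

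The key computation is a formula for $V'$. Writing $J^{T,y_0}(z)=\tfrac12\int_0^T\|\chi_\omega e^{\Delta s}z\|^2\,\mathrm ds+\langle y_0,e^{\Delta T}z\rangle+r\|z\|$ and differentiating at the minimizer (the $r\|z\|$ term is smooth since $z^*\neq0$), the envelope principle gives $V'(T)=\tfrac12\|\chi_\omega e^{\Delta T}z^*\|^2+\langle y_0,\Delta e^{\Delta T}z^*\rangle$. Substituting the final-state identity $e^{\Delta T}y_0=-rz^*/\|z^*\|-\int_0^T e^{\Delta s}\chi_\omega e^{\Delta s}z^*\,\mathrm ds$ from Theorem~\ref{Lemma-Norm-fun}(ii) into the second term, and using $\tfrac{d}{ds}\|\chi_\omega e^{\Delta s}z^*\|^2=2\langle\chi_\omega e^{\Delta s}z^*,\Delta e^{\Delta s}z^*\rangle$ to evaluate the resulting integral, the two copies of $\tfrac12\|\chi_\omega e^{\Delta T}z^*\|^2$ cancel and I obtain the clean identity $V'(T)=\frac{r}{\|z^*\|}\|\nabla z^*\|^2+\tfrac12\|\chi_\omega z^*\|^2$. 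Since $V'=-N N'$, this yields $-N'(T)=V'(T)/N(T)$ with $V'(T)>0$, confirming monotonicity.

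For the lower bound I discard the nonnegative second term and use $\|\nabla z^*\|^2\ge\lambda_1\|z^*\|^2$ to get $V'(T)\ge r\lambda_1\|z^*\|$; combined with $N(T)^2=\int_0^T\|\chi_\omega e^{\Delta s}z^*\|^2\,\mathrm ds\le\int_0^\infty e^{-2\lambda_1 s}\,\mathrm ds\,\|z^*\|^2=\|z^*\|^2/(2\lambda_1)$, this gives $-N'(T)=V'(T)/N(T)\ge r\lambda_1\|z^*\|\cdot\sqrt{2\lambda_1}/\|z^*\|\ge\lambda_1^{3/2}r$, and integration gives the left inequality. For the upper bound I return to the first form $V'(T)\le\tfrac12\|e^{\Delta T}z^*\|^2+\|y_0\|\,\|\Delta e^{\Delta T}z^*\|$. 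The factor $\|e^{\Delta T}z^*\|=\|\varphi(0;T,z^*)\|$ is bounded by $e^{C(1+1/T)}N(T)$ through the observability inequality obtained by iterating Lemma~\ref{Theorem-ob-int-0}, and $N(T)\le e^{C(1+1/T)}\|y_0\|$ by the classical null-controllability cost for the heat equation (steering $y_0$ to $0\in B_r(0)$); this handles the first term. For $\|\Delta e^{\Delta T}z^*\|$ I write $\|\Delta e^{\Delta T}z^*\|\le\frac{2}{eT}\|e^{\Delta T/2}z^*\|=\frac{2}{eT}\|\varphi(T/2;T,z^*)\|$ and apply the observability/interpolation inequality on the subinterval $[T/2,T]$ to bound $\|\varphi(T/2;T,z^*)\|\le e^{C(1+1/T)}N(T)$. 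Dividing by $N(T)$ then gives $-N'(T)\le e^{C(1+1/T)}\|y_0\|$, and integrating over $[T_1,T_2]$, where $e^{C(1+1/s)}\le e^{C(1+1/T_1)}$, yields the right inequality.

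The main obstacle is the clean control of the term $\langle y_0,\Delta e^{\Delta T}z^*\rangle$ in the upper bound: a naive estimate produces $\|z^*\|$, which is only controlled by the non-clean bounds of Theorem~\ref{Lemma-minizer-P-L2-uni-bdd} (with higher powers of $\|y_0\|$ and $r^{-1}$) rather than by the advertised $e^{C(1+1/T)}\|y_0\|$. What makes the argument work is tying $\|\Delta e^{\Delta T}z^*\|$ back to $N(T)$ instead of to $\|z^*\|$, which the smoothing estimate plus an application of Lemma~\ref{Theorem-ob-int-0} on $[T/2,T]$ (recovering $\varphi(T/2)$, not $\varphi(0)$) accomplishes. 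Verifying the cancellation in the formula for $V'$, the envelope differentiation, and the measure-theoretic passage from the a.e. derivative bounds to the stated inequalities are the remaining routine points.
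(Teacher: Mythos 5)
Your proposal is correct, and it takes a genuinely different route from the paper's, most visibly for the lower bound. The paper proves $\lambda_1^{3/2}r(T_2-T_1)\leq N(T_1,y_0)-N(T_2,y_0)$ control-theoretically: setting $M_i=N(T_i,y_0)$ and invoking the equivalence $T(M_i,y_0)=T_i$ of Theorem \ref{Theorem-eq-cont-discrete}, it rescales the optimal control of $(TP)^{M_1,y_0}$ by $M_2/M_1$ and lets the exponentially decaying free dynamics run for an extra time $\widehat T\leq\frac{1}{\lambda_1^{3/2}r}(M_1-M_2)$ to re-enter $B_r(0)$. You instead stay entirely inside the variational framework: your cancellation identity $V'(T)=\frac{r}{\|z^*\|}\|\nabla z^*\|^2+\frac{1}{2}\|\chi_\omega z^*\|^2$ is correct (substituting the final-state identity of Theorem \ref{Lemma-Norm-fun} and using $\frac{d}{ds}\|\chi_\omega e^{\Delta s}z^*\|^2=2\langle\chi_\omega e^{\Delta s}z^*,\Delta e^{\Delta s}z^*\rangle$ produces exactly the stated cancellation, and the $H_0^1$ regularity of $z^*$ from Theorem \ref{Lemma-minizer-P-L2-uni-bdd} makes the integrals near $s=0$ legitimate), and together with Poincar\'e and $N(T)\leq\|z^*\|/\sqrt{2\lambda_1}$ it even yields the slightly stronger constant $\sqrt{2}\lambda_1^{3/2}r$. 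Your upper bound, by contrast, is a differential version of the paper's finite-difference argument (the paper directly estimates $J^{T_2,y_0}(z_1^*)-J^{T_1,y_0}(z_1^*)$ with $z_1^*$ the minimizer at $T_1$); both rest on the same external inputs, namely the observability cost $e^{C(1+1/T)}$ of \cite{FZ} and the smoothing bound $\|\Delta e^{\Delta t}\|_{\mathcal L(L^2(\Omega),L^2(\Omega))}\leq C/t$, and both use the same trick of converting $\|\Delta e^{\Delta T}z^*\|$ into a multiple of $N(T)$ via observability on a half-interval, which is precisely what defeats the ``naive $\|z^*\|$ estimate'' you identify. What your route buys: the lower bound needs neither the time-optimal/norm-optimal equivalence nor any admissible-control construction, and both bounds come from one formula. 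What the paper's route buys: the finite-difference comparison requires no differentiability theory, whereas you must justify the envelope identity and the integration of a.e.\ derivative bounds. Those points are indeed routine, as you claim: at any $T_0$ where the monotone (hence a.e.\ differentiable) function $V$ is differentiable, the function $g(T)\triangleq J^{T,y_0}(z^*(T_0))-V(T,y_0)$ is nonnegative, vanishes at $T_0$, and is differentiable there, so $g'(T_0)=0$ gives $V'(T_0)=\partial_T J^{T,y_0}(z^*(T_0))\big|_{T=T_0}$ without any continuity of $T\mapsto z^*(T)$; your Lipschitz estimate then legitimizes $N(T_1)-N(T_2)=\int_{T_1}^{T_2}(-N'(s))\,\mathrm ds$, and for the lower bound monotonicity alone already gives the inequality $N(T_1)-N(T_2)\geq\int_{T_1}^{T_2}(-N'(s))\,\mathrm ds$. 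One citation caveat: the observability inequality does not follow by simply ``iterating'' Lemma \ref{Theorem-ob-int-0}; that passage is the nontrivial Lebeau--Robbiano argument, so you should cite it as the paper does (\cite{FZ}, Proposition 3.1) rather than claim to derive it.
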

\begin{proof}
Arbitrarily fix a  pair $(T_1,T_2)$,  with $0<T_1<T_2<T^*_{y_0}$.
The proof is organized by  the following two steps:

\textit{Step 1. To show the first inequality in (\ref{NP-Lip-T})}

 By (i) of Theorem~\ref{Theorem-eq-cont-discrete}, we have that
\begin{eqnarray}\label{0223-low-estimate-NT}
  M_1 \triangleq N(T_1,y_0)  >     N(T_2,y_0)  \triangleq  M_2.
 \end{eqnarray}
Then by  (iii) in Theorem \ref{Theorem-eq-cont-discrete}, we see that
 \begin{eqnarray}\label{NP-Lip-1}
    0<T(M_1,y_0)=T_1    <   T_2=  T(M_2,y_0)<T^*_{y_0}.
 \end{eqnarray}
Let  $u^*_1$ be  an optimal control to $(TP)^{M_1,y_0}$. Then we find that
\begin{eqnarray}\label{NP-Lip-2}
  \|y(T(M_1,y_0);y_0,u^*_1)\|\leq r
  \;\;\mbox{and}\;\;
  \|u^*_1\|_{L^2(\mathbb R^+;L^2(\Omega))}\leq M_1.
\end{eqnarray}
It follows from the first inequality in (\ref{NP-Lip-2})  that
\begin{eqnarray*}
  \big\|y(T(M_1,y_0);y_0,\frac{M_2}{M_1}u^*_1)\big\|
  &\leq& \big\|y(T(M_1,y_0);y_0,\frac{M_2}{M_1}u^*_1) -y(T(M_1,y_0);y_0,u^*_1)\big\|
  \nonumber\\
  & & + \|y(T(M_1,y_0);y_0,u^*_1)\|
  \nonumber\\
  &\leq& \frac{M_1-M_2}{M_1} \int_0^{T(M_1,y_0)} \|e^{\Delta(T(M_1,y_0)-t)} \chi_\omega u^*_1(t)\| \,\mathrm dt  +r.
\end{eqnarray*}
Since
\begin{eqnarray*}
\|e^{\Delta t}\|_{\mathcal L(L^2(\Omega),L^2(\Omega))} \leq e^{-\lambda_1 t}\;\;
\mbox{for each}\;\;t\geq 0,
 \end{eqnarray*}
 the above, along with H\"{o}lder's inequality and the second inequality in (\ref{NP-Lip-2}), yields that
\begin{eqnarray}\label{NP-Lip-3}
  \big\|y(T(M_1,y_0);y_0,\frac{M_2}{M_1}u^*_1)\big\|
  &\leq& r+\frac{M_1-M_2}{M_1} \frac{1}{\sqrt{2\lambda_1}} M_1
   \nonumber\\
  &\leq& r+(M_1-M_2)/\sqrt{\lambda_1}.
\end{eqnarray}

Next, we define a control $u_2$ over $\mathbb{R}^+$ as follows:
  \begin{eqnarray}\label{NP-Lip-6}
   u_2(t)=\left\{\begin{array}{ll}
         \frac{M_2}{M_1}u^*_1(t),~&t\in (0,T(M_1,y_0)],\\
         0,~&t\in (T(M_1,y_0),\infty).
         \end{array}
       \right.
  \end{eqnarray}
From (\ref{NP-Lip-6}) and the second inequality in (\ref{NP-Lip-2}), it follows that
\begin{eqnarray}\label{NP-Lip-7}
 \|u_2\|_{L^2(\mathbb R^+;L^2(\Omega))}\leq M_2.
\end{eqnarray}
Meanwhile, we let
\begin{eqnarray}\label{NP-Lip-4}
 \widehat T \triangleq \frac{1}{\lambda_1}\ln \Big(1+\frac{1}{\lambda_1^{1/2}r}(M_1-M_2) \Big)
 \leq \frac{1}{\lambda_1^{3/2}r}(M_1-M_2).
\end{eqnarray}
Since $u_2=0$ over $\big(T(M_1,y_0),\infty\big)$, by  (\ref{NP-Lip-6}),   (\ref{NP-Lip-3}) and (\ref{NP-Lip-4}), one can easily check that
\begin{eqnarray}\label{NP-Lip-7-1}
 \|y(T(M_1,y_0)+\widehat T;y_0,u_2)\| &\leq& e^{-\lambda_1 \widehat T}\|y(T(M_1,y_0);y_0,u_2)\|
 \nonumber\\
 &\leq& e^{-\lambda_1 \widehat T} \big(r+(M_1-M_2)/\lambda_1^{1/2}\big) = r.
\end{eqnarray}
Now, it follows from  (\ref{NP-Lip-7}) and (\ref{NP-Lip-7-1})  that $u_2$ is an admissible control to $(TP)^{M_2,y_0}$, which drives the solution to $B_r(y_0)$
at time  $T(M_1,y_0)+\widehat T$. This, along with  the optimality of $T(M_2,y_0)$, yields that
\begin{eqnarray*}
T(M_2,y_0)\leq T(M_1,y_0)+\widehat T.
\end{eqnarray*}
From this,  (\ref{NP-Lip-1}) and (\ref{NP-Lip-4}), we find that
\begin{eqnarray*}
 T_2-T_1=T(M_2,y_0)-T(M_1,y_0)\leq\hat T\leq\frac{1}{\lambda_1^{3/2}r}(M_1-M_2)
 \leq\hat T.
\end{eqnarray*}
Since $M_1\triangleq(N(T_1,y_0)$ and $M_2\triangleq N_2(T_2,y_0))$ (see (\ref{0223-low-estimate-NT})), the above  leads to the first inequality in  (\ref{NP-Lip-T}). This ends the proof of Step 1.

\textit{Step 2. To show the second inequality in (\ref{NP-Lip-T})}

 Let $z^*_1$ be the minimizer of $J^{T_1,y_0}$.
 {\it Throughout this step, we simply write $\varphi_1(\cdot)$ and $\varphi_2(\cdot)$ for $\varphi(\cdot;T_1,z_1^*)$
 and  $\varphi(\cdot;T_2,z_1^*)$ respectively.}
  First, we claim that
\begin{eqnarray}\label{160904-lip-s2-p2.1}
 \|\varphi_2(T_2-T_1)\|_{H^2(\Omega)\cap H_0^1(\Omega)}
  \leq   e^{C_{21}(1+\frac{1}{T_1})} N(T_1,y_0)\;\;\mbox{for some}\;\;C_{21}\triangleq C_{21}(\Omega,\omega).
\end{eqnarray}
Indeed, according to \cite[Theorem 6.13 in Chapter 2]{Pazy}, there is $C_{22}\triangleq C_{22}(\Omega)>0$ so that
\begin{eqnarray*}
\|\Delta e^{\Delta s}\|_{\mathcal L(L^2(\Omega),L^2(\Omega))} \leq C_{22}/s\;\;\mbox{for each}\;\; s>0.
\end{eqnarray*}
From this,
 we see  that
\begin{eqnarray*}
 \|\varphi_2(T_2-T_1)\|_{H^2(\Omega)\cap H_0^1(\Omega)}
 =\|\Delta \varphi_2(T_2-T_1)\|
 =\|\Delta e^{\Delta \frac{T_1}{2} } \varphi_2(T_2-T_1/2)\|
  \leq \frac{2C_{22}}{T_1} \| \varphi_2(T_2-T_1/2)\|.
\end{eqnarray*}
This, along with \cite[Proposition 3.1]{FZ}, yields that for some $C_{23}\triangleq C_{23}(\Omega,\omega)>0$,
\begin{eqnarray}\label{160904-lip-s2-p2.1-1}
 \|\varphi_2(T_2-T_1)\|_{H^2(\Omega)\cap H_0^1(\Omega)}\leq\frac{2C_{22}}{T_1}  e^{C_{23}(1+\frac{2}{T_1})}
\|\chi_\omega\varphi_2(\cdot)\|_{L^2(T_2-T_1/2,T_2;L^2(\Omega))}.
\end{eqnarray}
Meanwhile, by (ii) of Theorem \ref{op-control-0} and  the time-invariance of Equation (\ref{adjoint-1}), we see   that
\begin{eqnarray*}
N(T_1,y_0)=\|\chi_\omega\varphi_1(\cdot)\|_{L^2(0,T_1;L^2(\Omega))}
= \|\chi_\omega\varphi_2(\cdot)\|_{L^2(T_2-T_1,T_2;L^2(\Omega))}.
\end{eqnarray*}
This, along with (\ref{160904-lip-s2-p2.1-1}), yields that
\begin{eqnarray*}
 \|\varphi_2(T_2-T_1)\|_{H^2(\Omega)\cap H_0^1(\Omega)}
 \leq e^{2C_{22}} e^{\frac{1}{T_1}}
 e^{C_{23}(1+\frac{2}{T_1})}   N(T_1,y_0),
\end{eqnarray*}
which leads to (\ref{160904-lip-s2-p2.1}).

Next, since $0<T_1\leq T_2<T^*_{y_0}$, it follows by (i) and (ii) of Theorem \ref{Theorem-eq-cont-discrete} that $N(T_1,y_0)\geq N(T_2,y_0)$. From this and (iii) of Theorem \ref{Lemma-Norm-fun}, it follows that
\begin{eqnarray}\label{160904-lip-s2-1}
 V(T_1,y_0)=-\frac{1}{2} N(T_1,y_0)^2
 \leq
 -\frac{1}{2} N(T_2,y_0)^2= V(T_2,y_0).
\end{eqnarray}
This, along with  (\ref{fun-0}), yields that
\begin{eqnarray}\label{160904-lip-s2-2}
 0&\leq& V(T_2,y_0)-V(T_1,y_0)
 \leq J^{T_2,y_0}(z^*_1)-J^{T_1,y_0}(z^*_1)
 \nonumber\\
 &\leq& \frac{1}{2} \Big[\int_0^{T_2} \|\chi_\omega\varphi_2(t)\|^2 \,\mathrm dt -  \int_0^{T_1} \|\chi_\omega\varphi_1(t)\|^2 \,\mathrm dt\Big]
  +  \langle y_0,\varphi_2(0)-\varphi_1(0) \rangle.
\end{eqnarray}
At the same time, by the time-invariance of  Equation (\ref{adjoint-1}), we have that
\begin{eqnarray}\label{160904-lip-s2-3-1}
 \varphi_1(t)=\varphi_2(t+T_2-T_1)
 \;\;\mbox{for each}\;\;t\in(0,T_1).
\end{eqnarray}
Since the semigroup $\{e^{\Delta t}\}_{t\geq 0}$ is contractive, from (\ref{160904-lip-s2-3-1}), we see that
\begin{eqnarray}\label{160904-lip-s2-3}
   \int_0^{T_2} \|\chi_\omega\varphi_2(t)\|^2 \,\mathrm dt-\int_0^{T_1} \|\chi_\omega\varphi_1(t)\|^2 \,\mathrm dt
   \leq (T_2-T_1) \|\varphi_2(T_2-T_1)\|^2.
\end{eqnarray}
From (\ref{160904-lip-s2-3-1}), we also have that
\begin{eqnarray}\label{160904-lip-s2-4}
 & &    \langle y_0,\varphi_2(0)-\varphi_1(0) \rangle
   = \langle y_0,\varphi_2(0)-\varphi_2(T_2-T_1) \rangle
\\
 &\leq& \|y_0\| \big\|\int_0^{T_2-T_1} \partial_t\varphi_2(t)\,\mathrm dt \big\|
 = \|y_0\| \big\|\int_0^{T_2-T_1} e^{\Delta(T_2-T_1-t)}\Delta \varphi_2(T_2-T_1)\,\mathrm dt \big\|
 \nonumber\\
 &\leq& (T_2-T_1)  \|y_0\|\|\varphi_2(T_2-T_1) \|_{H^2(\Omega)\cap H_0^1(\Omega)}.
 \nonumber
\end{eqnarray}

Now, by  (\ref{160904-lip-s2-2}), (\ref{160904-lip-s2-3}) and (\ref{160904-lip-s2-4}), we obtain that
\begin{eqnarray*}
 0&\leq& V(T_2,y_0)-V(T_1,y_0)
 \nonumber\\
  &\leq& (T_2-T_1)\Big[\|\varphi_2(T_2-T_1)\|_{H^2(\Omega)\cap H_0^1(\Omega)}^2
   +\|y_0\| \|\varphi_2(T_2-T_1)\|_{H^2(\Omega)\cap H_0^1(\Omega)} \Big].
\end{eqnarray*}
By this, (\ref{160904-lip-s2-1}) and (\ref{160904-lip-s2-p2.1}), we get that
\begin{eqnarray}\label{160904-lip-s2-6}
 N(T_1,y_0)-N(T_2,y_0)&\leq& \frac{2}{N(T_2,y_0)+N(T_1,y_0)} \big(V(T_2,y_0)-V(T_1,y_0)\big)
 \nonumber\\
 &\leq& 2e^{C_{21}(1+\frac{1}{T_1})}(N(T_1,y_0)+\|y_0\|) (T_2-T_1).
\end{eqnarray}

Finally, by \cite[Proposition 3.1]{FZ}, we can find  $u_{T_1}\in L^2(0,T_1;L^2(\Omega))$ so that
\begin{eqnarray*}
 y(T_1;y_0,u_{T_1})=0
 \;\;\mbox{and}\;\;
 \|u_{T_1}\|_{L^2(0,T_1;L^2(\Omega))} \leq e^{C_{24}(1+\frac{1}{T_1})} \|y_0\|\;\;\mbox{for some}\;\;C_{24}\triangleq C_{24}(\Omega,\omega).
\end{eqnarray*}
From the first equality in the above, we see that $u_{T_1}$ is an admissible to $(NP)^{T_1,y_0}$. This, along with the second inequality in the above and the optimality of $N(T_1,y_0)$, indicates
\begin{eqnarray*}
 N(T_1,y_0)\leq \|u_{T_1}\|_{L^2(0,T_1;L^2(\Omega))} \leq e^{C_{24}(1+\frac{1}{T_1})} \|y_0\|,
\end{eqnarray*}
which, along with (\ref{160904-lip-s2-6}), leads to the second inequality in (\ref{NP-Lip-T}) for some $C_3\triangleq C_3(\Omega,\omega)$.

In summary, we finish the proof of Theorem~\ref{Proposition-NP-Lip-T}.
\end{proof}

\begin{theorem}\label{theorem-NP-delta-error}
Let $y_0\in L^2(\Omega)\setminus B_r(0)$. Let $\mathcal P_{T^*_{y_0}}$ and $T^*_{y_0}$
be given by (\ref{P-T*}) and (\ref{T*}) respectively.
Then there is $C_4\triangleq C_4(\Omega,\omega)>0$ so that for each $(\delta,k)\in \mathcal P_{T^*_{y_0}}$,
\begin{eqnarray}\label{NP-delta-error-0}
0\leq N_\delta(k\delta,y_0)-N(k\delta,y_0) \leq e^{C_4 (1+ T^*_{y_0}+\frac{1}{k\delta}+\frac{1}{T^*_{y_0}-k\delta} )} \|y_0\|^{12} r^{-11} \delta^2.
\end{eqnarray}
\end{theorem}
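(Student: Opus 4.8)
The plan is to route everything through the two minimization problems and the identities $V(k\delta,y_0)=-\tfrac12 N(k\delta,y_0)^2$ and $V_\delta(k\delta,y_0)=-\tfrac12 N_\delta(k\delta,y_0)^2$ furnished by (iii) of Theorem~\ref{Lemma-Norm-fun} and (iii) of Theorem~\ref{Lemma-NP-bangbang} (both applicable, since $(\delta,k)\in\mathcal P_{T^*_{y_0}}$ forces $0<k\delta<T^*_{y_0}$ and $k\geq2$). The lower bound $N_\delta(k\delta,y_0)-N(k\delta,y_0)\geq0$ is immediate: because $L_\delta^2(0,k\delta;L^2(\Omega))\subset L^2(0,k\delta;L^2(\Omega))$, every admissible control for $(NP)_\delta^{k\delta,y_0}$ is admissible for $(NP)^{k\delta,y_0}$, so the infimum over the smaller class is no smaller. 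For the upper bound I would write $N_\delta^2-N^2=2\big(V(k\delta,y_0)-V_\delta(k\delta,y_0)\big)$ and estimate the right-hand side.

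The core observation is that, at $T=k\delta$, the functionals $J^{k\delta,y_0}$ and $J_\delta^{k\delta,y_0}$ differ only in their first term, and that difference is exactly a projection defect. Writing $g=\chi_\omega\varphi(\cdot;k\delta,z)$ and using that $\chi_\omega$ commutes with the time-averaging so that $\chi_\omega\overline\varphi_\delta(\cdot;k\delta,z)=\bar g_\delta$, Lemma~\ref{lemma-control-op-dual} gives $\langle g,\bar g_\delta\rangle=\|\bar g_\delta\|^2$ and hence
\[
J^{k\delta,y_0}(z)-J_\delta^{k\delta,y_0}(z)=\tfrac12\big(\|g\|^2-\|\bar g_\delta\|^2\big)=\tfrac12\|g-\bar g_\delta\|_{L^2(0,k\delta;L^2(\Omega))}^2\geq0.
\]
Let $z^*$ and $z_\delta^*$ be the minimizers of $J^{k\delta,y_0}$ and $J_\delta^{k\delta,y_0}$. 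Since $z^*$ minimizes $J^{k\delta,y_0}$, I would estimate
\begin{align*}
V(k\delta,y_0)-V_\delta(k\delta,y_0)
&=J^{k\delta,y_0}(z^*)-J_\delta^{k\delta,y_0}(z_\delta^*)\\
&\leq J^{k\delta,y_0}(z_\delta^*)-J_\delta^{k\delta,y_0}(z_\delta^*)\\
&=\tfrac12\big\|\chi_\omega\big(\varphi(\cdot;k\delta,z_\delta^*)-\overline\varphi_\delta(\cdot;k\delta,z_\delta^*)\big)\big\|_{L^2(0,k\delta;L^2(\Omega))}^2.
\end{align*}
Now $\overline\varphi_\delta(\cdot;k\delta,z_\delta^*)$ is the subinterval mean of $\varphi(\cdot;k\delta,z_\delta^*)$, so the Poincar\'e--Wirtinger inequality on each interval $((i-1)\delta,i\delta]$, summed over $i$ and combined with $\|\chi_\omega\cdot\|\leq\|\cdot\|$, yields $\|\chi_\omega(\varphi-\overline\varphi_\delta)\|_{L^2(0,k\delta;L^2(\Omega))}^2\leq\delta^2\|\partial_t\varphi(\cdot;k\delta,z_\delta^*)\|_{L^2(0,k\delta;L^2(\Omega))}^2$. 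The right factor is exactly what the uniform estimate (\ref{z-beta-uni-bdd}) of Theorem~\ref{Lemma-minizer-P-L2-uni-bdd} controls, giving $V-V_\delta\leq C\delta^2 e^{2C_1(1+k\delta+1/(k\delta))}\|y_0\|^{12}r^{-10}$ with $C=C(\Omega,\omega)$; and $e^{k\delta}\leq e^{T^*_{y_0}}$ because $k\delta<T^*_{y_0}$.

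It then remains to pass from $N_\delta^2-N^2$ back to $N_\delta-N$ through $N_\delta-N=(N_\delta^2-N^2)/(N_\delta+N)\leq 2(V-V_\delta)/N(k\delta,y_0)$, which demands a quantitative lower bound on $N(k\delta,y_0)$. I would obtain it from the first inequality of Theorem~\ref{Proposition-NP-Lip-T}: taking $T_1=k\delta$, letting $T_2\to{T^*_{y_0}}^-$, and invoking $\lim_{T\to{T^*_{y_0}}^-}N(T,y_0)=0$ from (i) of Theorem~\ref{Theorem-eq-cont-discrete} gives $N(k\delta,y_0)\geq\lambda_1^{3/2}r\,(T^*_{y_0}-k\delta)$. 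Dividing by this introduces one further power $r^{-1}$ (producing $r^{-11}$) together with the factor $1/(T^*_{y_0}-k\delta)$; since $1/x\leq e^{1/x}$ for $x>0$, the latter is absorbed into $e^{1/(T^*_{y_0}-k\delta)}$, and all residual $\Omega,\omega$-constants (including $\lambda_1^{-3/2}$ and the Poincar\'e constant) are absorbed into a single $C_4=C_4(\Omega,\omega)$ using that $1+T^*_{y_0}+1/(k\delta)+1/(T^*_{y_0}-k\delta)\geq1$. The main obstacle is precisely this last passage: reducing the defect to a projection error and applying Poincar\'e is routine, but securing the lower bound on $N(k\delta,y_0)$ and checking that its blow-up as $k\delta\uparrow T^*_{y_0}$ is dominated by the prescribed exponential weight is where the geometry of $(NP)^{T,y_0}$ near the critical time $T^*_{y_0}$ genuinely enters.
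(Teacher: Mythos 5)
Your proposal is correct and follows essentially the same route as the paper's proof: both reduce $N_\delta-N$ to $2(V-V_\delta)/(N+N_\delta)$ via the identities $V=-\tfrac12 N^2$, $V_\delta=-\tfrac12 N_\delta^2$, bound $V-V_\delta$ by the projection defect $\tfrac12\|\chi_\omega(\varphi-\overline\varphi_\delta)\|^2$ at $z_\delta^*$ using the orthogonality of Lemma~\ref{lemma-control-op-dual}, estimate that defect by $\delta^2\|\partial_t\varphi\|^2$ (your Poincar\'e--Wirtinger step is the paper's fundamental-theorem-of-calculus plus H\"older computation) together with (\ref{z-beta-uni-bdd}), and close with the same lower bound $N(k\delta,y_0)\geq\lambda_1^{3/2}r(T^*_{y_0}-k\delta)$ obtained from Theorem~\ref{Proposition-NP-Lip-T} and (i) of Theorem~\ref{Theorem-eq-cont-discrete}. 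No gaps.
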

\begin{proof}
Arbitrarily fix $(\delta,k)\in \mathcal P_{T^*_{y_0}}$. Let  $z^*_\delta$ be the minimizer of $J^{k\delta,y_0}_\delta$.
{\it Throughout the proof of Theorem~\ref{theorem-NP-delta-error}, we simply write respectively $\varphi(\cdot)$ and $\overline\varphi_\delta(\cdot)$
for $\varphi(\cdot;k\delta,z^*_\delta)$ (see (\ref{adjoint-1})) and $\overline \varphi(\cdot;k\delta,z^*_\delta)$ (see (\ref{adjoint-solution-P})).}
 We organize the proof by  several steps as follows:

\textit{Step 1. To prove that
\begin{eqnarray}\label{NP-delta-error-1}
 0\leq V(k\delta,y_0)-V_\delta(k\delta,y_0)
 \leq  \|\chi_\omega\varphi\|^2_{L^2(0,k\delta; L^2(\Omega))}- \|\chi_\omega\overline\varphi_\delta\|^2_{L^2(0,k\delta; L^2(\Omega))}
 \end{eqnarray}}
$\;\;$ Since $ L^2_\delta (0,k\delta;L^2(\Omega))
 \subset  L^2(0,k\delta;L^2(\Omega))$ (see (\ref{heat-Linfty-delta})), we find that each admissible control to $(NP)_\delta^{k\delta,y_0}$ is also an admissible control to $(NP)^{k\delta,y_0}$. This, along with  (\ref{0113-NP-0}) and (\ref{0113-NP-delta}), yields that
$N(k\delta,y_0)\leq N_\delta(k\delta,y_0)$, from which, as well as (iii) of Theorems \ref{Lemma-Norm-fun} and (iii) of Theorem~\ref{Lemma-NP-bangbang}, it follows that
\begin{eqnarray}\label{NP-delta-error-2}
 V_\delta(k\delta,y_0)=-\frac{1}{2} N_\delta(k\delta,y_0)^2
 \leq
 -\frac{1}{2}N(k\delta,y_0)^2= V(k\delta,y_0).
\end{eqnarray}
This, along with  (\ref{fun-0}) and (\ref{fun-P}), yields that
\begin{eqnarray*}
 0&\leq& V(k\delta,y_0)-V_\delta(k\delta,y_0)
 \leq J^{k\delta,y_0}(z^*_\delta)-J_\delta^{k\delta,y_0}(z^*_\delta)
\\
  &\leq& \frac{1}{2}\big[\|\chi_\omega\varphi\|^2_{L^2(0,k\delta; L^2(\Omega))}- \|\chi_\omega\overline\varphi_\delta\|^2_{L^2(0,k\delta; L^2(\Omega))}\big],
\end{eqnarray*}
which leads to (\ref{NP-delta-error-1}).

\textit{Step 2. To show that
\begin{eqnarray}\label{NP-delta-error-3}
 \|\chi_\omega\varphi\|^2_{L^2(0,k\delta; L^2(\Omega))}- \|\chi_\omega\overline\varphi_\delta\|^2_{L^2(0,k\delta; L^2(\Omega))}=
  \|\chi_\omega\varphi-
 \chi_\omega\overline\varphi_\delta\|^2_{L^2(0,k\delta; L^2(\Omega))}
 \end{eqnarray}}
$\;\;$ First, we claim that for each $f\in L^2(\mathbb R^+;L^2(\Omega))$,
\begin{eqnarray}\label{NP-delta-error-4}
 \|f\|^2_{L^2(\mathbb R^+;L^2(\Omega))}
 = \|\bar f_\delta\|^2_{L^2(\mathbb R^+;L^2(\Omega))}
 +\|f-\bar f_\delta\|^2_{L^2(\mathbb R^+;L^2(\Omega))},
\end{eqnarray}
where $\bar f_\delta$ is given by (\ref{adjoint-solution-P-1}). Indeed, for an arbitrarily fixed $f\in L^2(\mathbb R^+;L^2(\Omega))$, one can directly check that
\begin{eqnarray}\label{NP-delta-error-5}
 \|f\|^2_{L^2(\mathbb R^+;L^2(\Omega))}
 =\|\bar f_\delta\|^2_{L^2(\mathbb R^+;L^2(\Omega))}
 +\|f-\bar f_\delta\|^2_{L^2(\mathbb R^+;L^2(\Omega))}
 +2\langle \bar f_\delta,f- \bar f_\delta\rangle_{L^2(\mathbb R^+;L^2(\Omega))}
 \nonumber.
\end{eqnarray}
Meanwhile,  it follows by (\ref{adjoint-solution-P-1}) that $\bar g_\delta=0$, where $g\triangleq f- \bar f_\delta$. Then by Lemma \ref{lemma-control-op-dual}, we obtain that
\begin{eqnarray*}
 \langle \bar f_\delta,f- \bar f_\delta\rangle_{L^2(\mathbb R^+;L^2(\Omega))}
 = \langle \bar f_\delta,g\rangle_{L^2(\mathbb R^+;L^2(\Omega))}
 = \langle \bar f_\delta,\bar g_\delta\rangle_{L^2(\mathbb R^+;L^2(\Omega))}
 =0.
\end{eqnarray*}
This, along with (\ref{NP-delta-error-5}), leads to (\ref{NP-delta-error-4}).

Next, by taking $f$ to be the zero extension of $\varphi$ over $\mathbb R^+$  in (\ref{NP-delta-error-4}), we obtain (\ref{NP-delta-error-3}). Here, we used the fact that in this case, $\bar f_\delta(\cdot)$ is the zero extension of $\chi_\omega\overline \varphi_\delta(\cdot)$ over $\mathbb{R}^+$, which follows from
 (\ref{adjoint-solution-P-1}) and (\ref{adjoint-solution-P}).

\textit{Step 3. To verify that there exists $C_{41}\triangleq C_{41}(\Omega,\omega)>0$ so that
\begin{eqnarray}\label{NP-delta-error-6}
 \|\chi_\omega\varphi-
 \chi_\omega\overline\varphi_\delta\|^2_{L^2(0,k\delta; L^2(\Omega))}
  \leq  e^{C_{41}(1+k\delta+\frac{1}{k\delta})}\|y_0\|^{12} r^{-10} \delta^2
\end{eqnarray}}
$\;\;$ From (\ref{adjoint-solution-P}), it follows that
\begin{eqnarray*}
 & & \int_0^{k\delta}
 \|\chi_\omega\varphi(t)-
 \chi_\omega\overline\varphi_\delta(t)\|^2
 \,\mathrm dt
  =  \sum_{j=1}^{k}
 \int_{(j-1)\delta}^{j\delta} \Big\|\chi_\omega\varphi(t)- \frac{1}{\delta} \int_{(j-1)\delta}^{j\delta} \chi_\omega\varphi(s) \,\mathrm ds \Big\|^2 \,\mathrm dt
 \nonumber\\
 &=& \sum_{j=1}^{k}
 \int_{(j-1)\delta}^{j\delta} \Big\| \frac{1}{\delta} \int_{(j-1)\delta}^{j\delta} \int_s^t \chi_\omega\partial_\tau  \varphi(\tau) \,\mathrm d\tau\mathrm ds \Big\|^2 \,\mathrm dt
 \leq \sum_{j=1}^{k}
 \int_{(j-1)\delta}^{j\delta} \Big( \int_{(j-1)\delta}^{j\delta}\| \partial_\tau  \varphi(\tau) \| \,\mathrm d\tau \Big)^2  \, \mathrm dt.
 \end{eqnarray*}
 Applying  the H\"{o}lder inequality to the above leads to that
 \begin{eqnarray*}
  \|\chi_\omega\varphi-
 \chi_\omega\overline\varphi_\delta\|^2_{L^2(0,k\delta; L^2(\Omega))}
  \leq  \delta^2\|\partial_t  \varphi\|^2_{L^2(0,k\delta; L^2(\Omega))}.
 \end{eqnarray*}
This, along with (\ref{z-beta-uni-bdd}), implies (\ref{NP-delta-error-6}) for some $C_{41}\triangleq C_{41}(\Omega,\omega)$.

\textit{Step 4. To show (\ref{NP-delta-error-0})}

 We first claim that
\begin{eqnarray}\label{NP-delta-error-7}
 N(k\delta,y_0)
 \geq  e^{-\frac{2}{\lambda_1}-\frac{1}{T^*_{y_0}-k\delta}} r.
 \end{eqnarray}
In fact, by (i) of Theorem \ref{Theorem-eq-cont-discrete}, we have that
\begin{eqnarray*}
\lim_{T_2\rightarrow {T^*_{y_0}}^-} N(T_2,y_0)=0.
\end{eqnarray*}
 This, along with
   the first inequality in  (\ref{NP-Lip-T}) (where $T_1=k\delta$), yields that
 \begin{eqnarray}\label{NP-delta-error-8}
  N(k\delta,y_0)&=& \lim_{T_2\rightarrow {T^*_{y_0}}^-} (N(k\delta,y_0)-N(T_2,y_0))
  \nonumber\\
  &\geq& \lim_{T_2\rightarrow {T^*_{y_0}}^-} \lambda_1^{3/2}  r  (T_2-k\delta)
  =  \lambda_1^{3/2}r  (T^*_{y_0}-k\delta).
 \end{eqnarray}
 Since we clearly have that
 \begin{eqnarray*}
 \lambda_1 \geq e^{-\frac{1}{\lambda_1}}\;\;\mbox{and}\;\;T^*_{y_0}-k\delta \geq e^{-\frac{1}{T^*_{y_0}-k\delta}},
  \end{eqnarray*}
  (\ref{NP-delta-error-7}) follows from  (\ref{NP-delta-error-8}) at once.

 Meanwhile, from (\ref{NP-delta-error-1}), (\ref{NP-delta-error-3}) and (\ref{NP-delta-error-6}), we obtain that
 \begin{eqnarray*}
  0\leq V(k\delta,y_0) - V_\delta(k\delta,y_0)  \leq e^{C_{41}(1+k\delta+\frac{1}{k\delta})}  \|y_0\|^{12} r^{-10} \delta^2.
 \end{eqnarray*}
 From this, (\ref{NP-delta-error-2}) and (\ref{NP-delta-error-7}), we find that
 \begin{eqnarray*}
  0\leq N_\delta(k\delta,y_0)-N(k\delta,y_0)
  &=& \frac{2V(k\delta,y_0)-2V_\delta(k\delta,y_0)}
  {N(k\delta,y_0)+N_\delta(k\delta,y_0)}
  \nonumber\\
  &\leq& 2 e^{\frac{2}{\lambda_1}+\frac{1}{T^*_{y_0}-k\delta}}
  e^{C_{41}(1+k\delta+\frac{1}{k\delta})}  \|y_0\|^{12} r^{-11} \delta^2.
 \end{eqnarray*}
 Since $k\delta<T^*_{y_0}$, the above leads to  (\ref{NP-delta-error-0}) for some $C_4\triangleq C_4(\Omega,\omega)$.

 In summary, we end the proof of Theorem~\ref{theorem-NP-delta-error}.
\end{proof}

\subsection{Some properties on minimal time functions}

Some inequalities, as well as properties, related to the minimal time functions $M\rightarrow T_\delta(M,y_0)$ and $M\rightarrow T(M,y_0)$ will be given in this subsection.

\begin{theorem}\label{theorem-optimality}
Let $y_0\in L^2(\Omega)\setminus B_r(0)$. For each $M>0$ and $\eta\in(0,1)$, there is a measurable subset $\mathcal A_{M,\eta}\subset (0,1)$
(depending also on $y_0$ and $r$), with $\lim_{h\rightarrow 0^+} \frac{1}{h} |\mathcal A_{M,\eta}\cap (0,h)|=\eta$, so that for each $\delta\in\mathcal A_{M,\eta}$, there is $a_\delta\in (0,\eta)$ so that
\begin{eqnarray}\label{ineqs-optimality}
 T_\delta(M,y_0) - T(M,y_0) = (1-a_\delta) \delta
 \;\;\mbox{and}\;\;
  M\geq N_\delta(T_\delta(M,y_0),y_0) + \frac{1}{2} \lambda_1^{3/2} r (1-\eta) \delta.
\end{eqnarray}
\end{theorem}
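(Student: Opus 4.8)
The plan is to build $\mathcal A_{M,\eta}$ out of exactly those sampling periods whose grid $\{k\delta\}_{k\in\mathbb N^+}$ straddles $T:=T(M,y_0)$ in a controlled way, namely those $\delta$ for which the fractional part $\{T/\delta\}$ is small. First I would invoke Theorem~\ref{Theorem-eq-cont-discrete} to record that $T\in(0,T^*_{y_0})$, that $N(T,y_0)=M$, and that $N(\cdot,y_0)$ is strictly decreasing; this lets me read off $T_\delta(M,y_0)$ from the minimal-norm functions alone. Concretely I would set
\[
\mathcal A_{M,\eta}\triangleq\{\delta\in(0,\delta_*):\{T/\delta\}\in(0,\eta)\},
\]
for a threshold $\delta_*=\delta_*(M,\eta,y_0,r)\in(0,T/3)$ fixed at the end, and put $a_\delta:=\{T/\delta\}\in(0,\eta)$. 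Since $\{T/\delta\}\in(0,1)$ forces $T/\delta\notin\mathbb Z$, we have $\lceil T/\delta\rceil=\lfloor T/\delta\rfloor+1$, so the smallest multiple $k_\delta^+\delta:=\lceil T/\delta\rceil\delta$ exceeding $T$ satisfies precisely $k_\delta^+\delta-T=(1-a_\delta)\delta$. The first identity in (\ref{ineqs-optimality}) then reduces to the claim $T_\delta(M,y_0)=k_\delta^+\delta$.

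The heart of the argument is to pin down $T_\delta(M,y_0)=k_\delta^+\delta$, which is where the second-order accuracy of Theorem~\ref{theorem-NP-delta-error} must beat the first-order gap of Theorem~\ref{Proposition-NP-Lip-T}. Because $k_\delta^+\delta-T=(1-a_\delta)\delta\ge(1-\eta)\delta$, the lower bound in (\ref{NP-Lip-T}) gives $N(k_\delta^+\delta,y_0)\le M-\lambda_1^{3/2}r(1-\eta)\delta$, while Theorem~\ref{theorem-NP-delta-error} gives $N_\delta(k_\delta^+\delta,y_0)\le N(k_\delta^+\delta,y_0)+C\delta^2$ with $C$ depending on $M,y_0,r,\Omega,\omega$ (the exponential prefactor being bounded since $k_\delta^+\delta$ stays away from $0$ and $T^*_{y_0}$). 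Choosing $\delta_*$ so small that $C\delta_*<\tfrac12\lambda_1^{3/2}r(1-\eta)$ yields $N_\delta(k_\delta^+\delta,y_0)\le M$; extending the optimal control of $(NP)_\delta^{k_\delta^+\delta,y_0}$ by zero then gives an admissible control to $(TP)_\delta^{M,y_0}$ reaching the target at $k_\delta^+\delta$, so $T_\delta(M,y_0)\le k_\delta^+\delta$. For the reverse inequality I use $(k_\delta^+-1)\delta=\lfloor T/\delta\rfloor\delta<T$ together with $N_\delta\ge N$ and strict monotonicity: $N_\delta((k_\delta^+-1)\delta,y_0)\ge N((k_\delta^+-1)\delta,y_0)\ge M+\lambda_1^{3/2}r\,a_\delta\delta>M$. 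Since $N_\delta(\cdot,y_0)$ is nonincreasing in the horizon (extend controls by zero, using contractivity of $\{e^{\Delta t}\}$), no multiple of $\delta$ not exceeding $(k_\delta^+-1)\delta$ admits a norm-$\le M$ control reaching $B_r(0)$, forcing $T_\delta(M,y_0)\ge k_\delta^+\delta$. Hence $T_\delta(M,y_0)=k_\delta^+\delta$, which is the first identity.

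For the second inequality in (\ref{ineqs-optimality}) I would combine the same two estimates in the sharper direction. Writing $T_\delta(M,y_0)=T+(1-a_\delta)\delta$ and using (\ref{NP-Lip-T}) with $T_1=T$, $T_2=T_\delta(M,y_0)$ gives $N(T_\delta(M,y_0),y_0)\le M-\lambda_1^{3/2}r(1-a_\delta)\delta$; Theorem~\ref{theorem-NP-delta-error} then gives $N_\delta(T_\delta(M,y_0),y_0)\le M-\lambda_1^{3/2}r(1-a_\delta)\delta+C\delta^2$. Since $a_\delta<\eta$ we have $1-a_\delta>1-\eta$, and shrinking $\delta_*$ once more so that $C\delta_*\le\tfrac12\lambda_1^{3/2}r(1-\eta)$ absorbs the quadratic term into half the linear one, leaving $M-N_\delta(T_\delta(M,y_0),y_0)\ge\tfrac12\lambda_1^{3/2}r(1-\eta)\delta$, exactly as required.

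It remains to verify the density $\lim_{h\to0^+}\frac1h|\mathcal A_{M,\eta}\cap(0,h)|=\eta$, which I regard as the main obstacle. On each band $\delta\in(T/(n+1),T/n)$ the condition $\{T/\delta\}\in(0,\eta)$ is equivalent to $\delta\in(T/(n+\eta),T/n)$, of length $T\eta/[n(n+\eta)]$ against the band length $T/[n(n+1)]$, so the local ratio $\eta(n+1)/(n+\eta)$ tends to $\eta$ as $n\to\infty$. Summing lengths over $n\ge N$, using the telescoping identity $\sum_{n\ge N}1/[n(n+1)]=1/N$ and comparing with $h\approx T/N$, yields the stated density $\eta$; the truncation at $\delta_*$ affects only large $\delta$ and so leaves the limit at $0$ unchanged. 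The conceptual crux throughout is that the requirement $a_\delta<\eta<1$ keeps $T_\delta(M,y_0)-T(M,y_0)$ of exact order $\delta$, bounded below by $(1-\eta)\delta$, so that the first-order decrease of $N(\cdot,y_0)$ dominates the $O(\delta^2)$ discrepancy between $N_\delta$ and $N$; without this separation from the integer grid the two effects would be comparable and the lower bounds would collapse.
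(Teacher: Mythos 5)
Your proposal is correct and takes essentially the same route as the paper: your set of $\delta$ with $\{T/\delta\}\in(0,\eta)$ is exactly the paper's $\mathcal B_{M,\eta}=\cup_{k\in\mathbb N^+}\left(T^*/(k+\eta),\,T^*/k\right)$ truncated at a small threshold, and the core mechanism---playing the $O(\delta^2)$ estimate of Theorem \ref{theorem-NP-delta-error} against the first-order decrease in Theorem \ref{Proposition-NP-Lip-T} to pin down $T_\delta(M,y_0)=(\lfloor T/\delta\rfloor+1)\delta$ and to extract the slack $\tfrac12\lambda_1^{3/2}r(1-\eta)\delta$---is identical, as is the band-by-band density computation. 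The only cosmetic deviation is your lower bound $T_\delta(M,y_0)\geq\lceil T/\delta\rceil\delta$ via monotonicity of $N_\delta(\cdot,y_0)$ in the horizon, where the paper simply uses $T_\delta(M,y_0)\geq T(M,y_0)$ (from $\mathcal U^M_\delta\subset\mathcal U^M$) together with the fact that $T_\delta(M,y_0)$ is a multiple of $\delta$.
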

\begin{proof}
Arbitrarily fix $M>0$ and $\eta\in(0,1)$.
{\it Throughout the proof of Theorem~\ref{theorem-optimality}, we simply write $T^*\triangleq T(M,y_0)$.}
For each  $k\in\mathbb N^+$ and $a\in(0,\eta)$, we define a subset of $\mathbb R^+$ in the following manner:
\begin{eqnarray}\label{160907-th1-op-1}
 \mathcal B_{M,\eta}^{k,a} \triangleq
 \{\delta>0 ~:~
  (k+a)\delta=T^*\}.
\end{eqnarray}
We then define another subset of $\mathbb R^+$ as follows:
\begin{eqnarray}\label{160907-th1-op-2}
\mathcal B_{M,\eta} \triangleq  \cup_{k\in\mathbb N^+ }\cup_{a\in(0,\eta)} \mathcal B_{M,\eta}^{k,a}.
\end{eqnarray}
The rest proof is divided into the following two steps:

\textit{Step 1. To prove that $\lim_{h\rightarrow0^+} \frac{1}{h} |\mathcal B_{M,\eta}\cap(0,h)|=\eta$}

From (\ref{160907-th1-op-1}), we see that
\begin{eqnarray*}
\cup_{a\in(0,\eta)} \mathcal B_{M,\eta}^{k,a}=\left({T^*}/{(k+\eta)},{T^*}/{k}\right)
\;\;\mbox{for each}\;\;k\in\mathbb N^+.
\end{eqnarray*}
From this and (\ref{160907-th1-op-2}), it follows that
\begin{eqnarray}\label{160907-th1-op-3}
 \mathcal B_{M,\eta} = \cup_{k\in\mathbb N^+ } \left({T^*}/{(k+\eta)},{T^*}/{k}\right).
\end{eqnarray}
For each $j\in \mathbb{N}^+$, we let $h_j\triangleq T^*/j$. For each $h\in (0,T^*)$, we let $j(h)$ be the integer
 so that
$h_{j(h)+1}\leq h<h_{j(h)}$. Then, by (\ref{160907-th1-op-3}), one can easily verify that
\begin{eqnarray}\label{Wang7.19}
\lim_{h\rightarrow 0^+} \frac{|\mathcal{B}_{M,\eta}\cap (0,h_{j(h)})|}{h_{j(h)}} =\eta;\;\;\lim_{h\rightarrow0^+}\frac{h_{j(h)+1}}{h}
=\lim_{h\rightarrow0^+}\frac{h_{j(h)}}{h}=1;
\end{eqnarray}
\begin{eqnarray}\label{Wang7.21}
\frac{h_{j(h)+1}}{h} \frac{|\mathcal{B}_{M,\eta}\cap (0,h_{j(h)+1})|}{h_{j(h)+1}}
\leq \frac{|\mathcal{B}_{M,\eta}\cap (0,h)|}{h}
\leq \frac{|\mathcal{B}_{M,\eta}\cap (0,h_{j(h)})|}{h_{j(h)}} \frac{h_{j(h)}}{h}.
\end{eqnarray}
From (\ref{Wang7.19}) and (\ref{Wang7.21}), we can easily
obtain the conclusion in Step 1.

\textit{Step 2. To show   (\ref{ineqs-optimality})}

We first claim that for each $\delta\in\mathcal B_{M,\eta}\cap (0,1)$, there is a unique pair $(k_\delta,a_\delta)$  so that
\begin{eqnarray}\label{160907-th1-op-5}
  (k_\delta+a_\delta)\delta=T^*
  \;\;\mbox{with}\;\;
  k_\delta\in\mathbb N^+
  \;\;\mbox{and}\;\;
  a_\delta\in(0,\eta).
\end{eqnarray}
Indeed, the existence  of such a pair follows from
   (\ref{160907-th1-op-2}) and (\ref{160907-th1-op-1}) at once, while the uniqueness of such pairs can be directly checked.

   Thus, for each $\delta\in\mathcal B_{M,\eta}\cap (0,1)$, we can define $k_\delta$ to be the first component of the unique pair satisfying (\ref{160907-th1-op-5}).
     We next claim that there exists $\delta^1_{M,\eta}\in(0,1)$ so that
\begin{eqnarray}\label{160907-th1-op-4}
M\geq N_\delta((k_\delta+1)\delta,y_0)
+ \frac{1}{2} \lambda_1^{3/2} r (1-\eta)\delta\;\;\mbox{for each}\;\;\delta\in \mathcal B_{M,\eta} \cap (0,\delta^1_{M,\eta}).
\end{eqnarray}
To this end, we  notice that $T^*<T^*_{y_0}$ (see (iii) of Theorem~\ref{Theorem-eq-cont-discrete}). Arbitrarily fix $\delta\in \mathcal B_{M,\eta}\cap(0,1)$ so that
\begin{eqnarray}\label{160907-th1-op-6}
 0<\delta<\min\{T^*/2,(T^*_{y_0}-T^*)/2\}.
\end{eqnarray}
(The existence of such $\delta$ is ensured by (\ref{160907-th1-op-3}).) Then it follows from (\ref{160907-th1-op-6}) and (\ref{160907-th1-op-5}) that
\begin{eqnarray*}
 2\delta<T^*< (k_\delta+1)\delta<T^*+\delta<{(T^*_{y_0}+T^*)}/{2}<T^*_{y_0}.
\end{eqnarray*}
This, along with (\ref{P-T*}), yields that
\begin{eqnarray*}
(\delta,k_\delta)\in \mathcal{P}^*_{T^*_{y_0}}\;\;\mbox{and}\;\;
2\delta<T^*<(k_\delta+1)\delta<T^*_{y_0}.
\end{eqnarray*}
By these, we can apply Theorem \ref{theorem-NP-delta-error} (with $(\delta,k)=(\delta, k_\delta)$)
and Theorem~\ref{Proposition-NP-Lip-T} (with $T_1=T^*$ and $T_2=(k_\delta+1)\delta$) to get that
\begin{eqnarray}\label{160907-th1-op-7}
N_\delta((k_\delta+1)\delta,y_0)
&\leq& N(( k_\delta+1)\delta,y_0) + e^{C_4 \big[1+ T^*_{y_0}+\frac{1}{( k_\delta+1)\delta}+\frac{1}{T^*_{y_0}-( k_\delta+1)\delta} \big]} \|y_0\|^{12} r^{-11} \delta^2
\nonumber\\
&\leq& N(T^*,y_0)- \lambda_1^{3/2} r  \big((k_\delta+1)\delta - T^* \big) +
\nonumber\\
& &   e^{C_4 \big[1+ T^*_{y_0}+\frac{1}{( k_\delta+1)\delta}+\frac{1}{T^*_{y_0}-( k_\delta+1)\delta} \big]} \|y_0\|^{12} r^{-11} \delta^2,
\end{eqnarray}
where $C_4$ is given by (\ref{NP-delta-error-0}).
Meanwhile, by (\ref{160907-th1-op-5}) and (\ref{160907-th1-op-6}), we find that
\begin{eqnarray*}
 (k_\delta+1)\delta - T^*\geq (1-\eta) \delta
 \;\;\mbox{and}\;\;
 T^*< ( k_\delta+1)\delta < (T^*_{y_0}+T^*)/2.
\end{eqnarray*}
These, along with (\ref{160907-th1-op-7}) and (ii) of  Theorem \ref{Theorem-eq-cont-discrete}, yield that
\begin{eqnarray*}
  N_\delta((k_\delta+1)\delta,y_0)
  &\leq& N(T^*,y_0) - \lambda_1^{3/2} r  (1-\eta)\delta +
   e^{C_4 \big[1+ T^*_{y_0}+\frac{1}{T^*}+\frac{2}{T^*_{y_0}-T^*} \big]} \|y_0\|^{12} r^{-11} \delta^2
  \nonumber\\
  &=&
   M - \lambda_1^{3/2} r  (1-\eta)\delta +
   e^{C_4 \big[1+ T^*_{y_0}+\frac{1}{T^*}+\frac{2}{T^*_{y_0}-T^*} \big]} \|y_0\|^{12} r^{-11} \delta^2.
\end{eqnarray*}
By this and (\ref{160907-th1-op-6}), we obtain (\ref{160907-th1-op-4}).

Define a set $\mathcal A_{M,\eta}$ in the following manner:
 \begin{eqnarray}\label{Yuyongyu5.56}
 \mathcal A_{M,\eta}\triangleq \mathcal B_{M,\eta} \cap (0,\delta^1_{M,\eta}),
  \;\;\mbox{with}\;\;\delta^1_{M,\eta}\;\;\mbox{given by}\;\; (\ref{160907-th1-op-4}).
  \end{eqnarray}
We now show that the second conclusion in (\ref{ineqs-optimality})
holds for each $\delta$ in  $\mathcal A_{M,\eta}$ defined by (\ref{Yuyongyu5.56}). To this end, we arbitrarily fix $\delta\in\mathcal A_{M,\eta}$.
We claim that
\begin{eqnarray}\label{gwang7.28}
  T_\delta(M,y_0)\leq (k_\delta+1)\delta\;\;\mbox{and}\;\; T_\delta(M,y_0)>k_\delta \delta.
  \end{eqnarray}
To show the first inequality in (\ref{gwang7.28}), we let $u_\delta$ be an admissible control to $(NP)^{( k_\delta+1)\delta,y_0}_\delta$ and let $\tilde u_\delta$ be the zero extension of $u_\delta$ over $\mathbb R^+$.
 Since $N_\delta((k_\delta+1)\delta, y_0)\leq M$ (see (\ref{160907-th1-op-4})),
one can easily check
 that $\tilde u_\delta$ is an admissible control (to $(TP)^{M,y_0}_\delta$), which drives the solution to $B_r(0)$ at time $(k_\delta+1)\delta$.
  This, along with  the optimality of $T_\delta(M,y_0)$, leads to the first inequality in (\ref{gwang7.28}). To prove the second inequality in (\ref{gwang7.28}), we notice that
       $\mathcal U^M_\delta\subset\mathcal U^M$. This, along with  (\ref{time-1}) and (\ref{time-2}), yields  that $T^* \leq T_\delta(M,y_0)$. From this and (\ref{160907-th1-op-5}), we obtain the second inequality in (\ref{gwang7.28}).

           Since $T_\delta(M,y_0)$ is a multiple of $\delta$ (see (\ref{U-ad-delta})),
           it follows from (\ref{gwang7.28})  that
       \begin{eqnarray}\label{0919-s5-1}
        T_\delta(M,y_0)=(k_\delta+1)\delta.
       \end{eqnarray}
      This, along with (\ref{160907-th1-op-4}),  implies that the second conclusion in (\ref{ineqs-optimality}).

       Finally, from (\ref{0919-s5-1}) and  (\ref{160907-th1-op-5}), we see that   the first conclusion in (\ref{ineqs-optimality})  is true for each $\delta\in \mathcal A_{M,\eta}$ defined by (\ref{Yuyongyu5.56}).

       In summary, we end the proof of Theorem~\ref{theorem-optimality}.
\end{proof}

\begin{theorem}\label{Lemma-T*}
 For each $M\geq 0$,
\begin{eqnarray}\label{0916-TP-lip}
 |T(M,y_1)-T(M,y_2)| \leq
 \frac{1}{\lambda_1 r} \|y_1-y_2\|\;\;\mbox{for all}\;\;y_1,\,y_2\in L^2(\Omega)\setminus B_r(0).
\end{eqnarray}
 Besides, $T(0,y_0)=T^*_{y_0}$ for each $y_0\in L^2(\Omega)\setminus B_r(0)$. Here, $T^*_{y_0}$ is given by (\ref{T*}).
\end{theorem}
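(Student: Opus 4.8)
The plan is to treat the two assertions separately, proving the Lipschitz estimate (\ref{0916-TP-lip}) first and then the identity $T(0,y_0)=T^*_{y_0}$.

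For the Lipschitz estimate, I would fix $M\geq 0$ and $y_1,y_2\in L^2(\Omega)\setminus B_r(0)$, and assume without loss of generality that $T(M,y_1)\geq T(M,y_2)$. The idea is to transplant an optimal control of $(TP)^{M,y_2}$ onto the initial datum $y_1$, exactly in the spirit of Step~1 of the proof of Theorem~\ref{Proposition-NP-Lip-T}. Let $u_2^*$ be the optimal control to $(TP)^{M,y_2}$ (which exists by (i) of Theorem~\ref{Lemma-existences-TP}), so that $\|u_2^*\|_{L^2(\mathbb R^+;L^2(\Omega))}\leq M$ and $\|y(T(M,y_2);y_2,u_2^*)\|\leq r$. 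Since $y(t;y_1,u_2^*)-y(t;y_2,u_2^*)=e^{\Delta t}(y_1-y_2)$ and $\|e^{\Delta t}\|_{\mathcal L(L^2(\Omega),L^2(\Omega))}\leq e^{-\lambda_1 t}$ (as used in Theorem~\ref{Proposition-NP-Lip-T}), at time $T(M,y_2)$ I would obtain
\[
 \|y(T(M,y_2);y_1,u_2^*)\|\leq r+e^{-\lambda_1 T(M,y_2)}\|y_1-y_2\|.
\]

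Next I would switch off the control and let the free dynamics decay: after an additional time $s$ the norm is bounded by $e^{-\lambda_1 s}\bigl(r+e^{-\lambda_1 T(M,y_2)}\|y_1-y_2\|\bigr)$, which is $\leq r$ as soon as $s\geq \widehat T$, where
\[
 \widehat T\triangleq \frac{1}{\lambda_1}\ln\!\Big(1+\tfrac{1}{r}e^{-\lambda_1 T(M,y_2)}\|y_1-y_2\|\Big)\leq \frac{1}{\lambda_1 r}e^{-\lambda_1 T(M,y_2)}\|y_1-y_2\|,
\]
the last inequality using $\ln(1+x)\leq x$. The control equal to $u_2^*$ on $(0,T(M,y_2)]$ and $0$ afterwards lies in $\mathcal U^M$ and steers $y_1$ into $B_r(0)$ at time $T(M,y_2)+\widehat T$; hence by optimality of $T(M,y_1)$ together with $e^{-\lambda_1 T(M,y_2)}\leq 1$,
\[
 T(M,y_1)-T(M,y_2)\leq \widehat T\leq \frac{1}{\lambda_1 r}\|y_1-y_2\|.
\]
Exchanging the roles of $y_1$ and $y_2$ then yields (\ref{0916-TP-lip}).

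For the identity $T(0,y_0)=T^*_{y_0}$, I would observe that $\mathcal U^0=\{0\}$, so $T(0,y_0)=\inf\{\hat t>0:\|e^{\Delta \hat t}y_0\|\leq r\}$. Expanding $y_0$ in the Dirichlet eigenbasis shows that $t\mapsto\|e^{\Delta t}y_0\|$ is continuous and strictly decreasing (here $\|y_0\|>r>0$ forces $y_0\neq 0$), so there is a unique $t_0>0$ with $\|e^{\Delta t_0}y_0\|=r$, $\|e^{\Delta t}y_0\|>r$ for $t<t_0$ and $\|e^{\Delta t}y_0\|<r$ for $t>t_0$. Consequently $\{t>0:e^{\Delta t}y_0\notin B_r(0)\}=(0,t_0)$ and $\{\hat t>0:e^{\Delta \hat t}y_0\in B_r(0)\}=[t_0,\infty)$, so that $T^*_{y_0}=t_0=T(0,y_0)$. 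The computations are elementary; the only point needing a little care is the passage from the transplanted bound to the clean constant $1/(\lambda_1 r)$, namely controlling the extra decay time $\widehat T$ via $\ln(1+x)\leq x$ and the contractivity factor $e^{-\lambda_1 T(M,y_2)}\leq 1$. I expect no genuine obstacle, since the argument mirrors Step~1 of Theorem~\ref{Proposition-NP-Lip-T}, with the initial-data perturbation $e^{\Delta t}(y_1-y_2)$ playing the role played there by the rescaling of the control.
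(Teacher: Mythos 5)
Your proposal is correct and follows essentially the same route as the paper: transplant the optimal control of $(TP)^{M,y_2}$ onto the initial datum $y_1$, switch the control off, and absorb the perturbation $e^{\Delta t}(y_1-y_2)$ by waiting an extra time $\widehat T\leq \frac{1}{\lambda_1 r}\|y_1-y_2\|$ obtained from $\ln(1+x)\leq x$, then symmetrize; the paper bounds the perturbation simply by contractivity rather than by your sharper factor $e^{-\lambda_1 T(M,y_2)}$, which is an immaterial difference. Your spectral verification of $T(0,y_0)=T^*_{y_0}$ just fills in a detail the paper states as immediate from (\ref{T*}) and (\ref{time-1}).
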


\begin{proof}
It follows from (\ref{T*}) and (\ref{time-1}) that $T(0,y_0)=T^*_{y_0}$. To prove (\ref{0916-TP-lip}), we arbitrarily fix $y_1,\,y_2\in L^2(\Omega)\setminus B_r(0)$.
 We first claim that
\begin{eqnarray}\label{0916-TP-lip-1}
 T(M,y_1)-T(M,y_2) \leq
 \frac{1}{\lambda_1 r} \|y_1-y_2\|.
\end{eqnarray}
Let   $\hat u_2 \triangleq \chi_{(0,T(M,y_2))} u_2$,
where $u_2$ is the optimal control to $(TP)^{M,y_2}$. Then we have  that
\begin{eqnarray}\label{ST-12-1}
 y(T(M,y_2);y_2,\hat u_2)\in B_r(0)
 \;\;\mbox{and}\;\;
 \|\hat u_{2}\|_{L^2(\mathbb R^+;L^2(\Omega))}\leq M.
\end{eqnarray}
Define a number $\widehat T_1$ by
\begin{eqnarray}\label{0118-sec-102-1}
 \widehat T_1  \triangleq \frac{1}{\lambda_1} \ln \big(1+\frac{1}{r}\|y_1-y_2\|\big)
 \leq \frac{1}{\lambda_1 r} \|y_1-y_2\|.
\end{eqnarray}
Since $\hat u_2=0$ over $\big(T(M,y_2),\infty\big)$, by the first conclusion in (\ref{ST-12-1}) and (\ref{0118-sec-102-1}), we see that
\begin{eqnarray*}
 \|y(T(M,y_2)+\widehat T_1;y_1, \hat u_2)\|
  &\leq&  e^{-\lambda_1 \widehat T_1 } \big(\|y(T(M,y_2);y_2, \hat u_2)\|+\|y(T(M,y_2);y_1-y_2, 0)\|\big)
 \nonumber\\
 &\leq& e^{-\lambda_1 \widehat T_1 } \big(r + \|y_1-y_2\|\big)
 =r.
 \end{eqnarray*}
 From this and the second inequality in (\ref{ST-12-1}), we see that
    $\hat u_2$ is an admissible control (to $(TP)^{M,y_1}$) which drives the solution to $B_r(0)$ at time $T(M,y_2)+\widehat T_1$. This, along with  the optimality of $T(M,y_1)$, yields that
$T(M,y_1) \leq T(M,y_2) + \widehat T_1$.
From this and (\ref{0118-sec-102-1}),  we deduce (\ref{0916-TP-lip-1}).

Next, by a very similar way as that to show (\ref{0916-TP-lip-1}), we can easily check that
\begin{eqnarray}\label{0916-TP-lip-2}
 T(M,y_2)-T(M,y_1) \leq
 \frac{1}{\lambda_1 r} \|y_1-y_2\|.
\end{eqnarray}

 Finally, (\ref{0916-TP-lip}) follows from (\ref{0916-TP-lip-1}) and (\ref{0916-TP-lip-2}) immediately. This completes the proof of Theorem~\ref{Lemma-T*}.
\end{proof}

\section{The proofs of the main theorems}

In this section, we will prove Theorems  \ref{Theorem-time-order}-\ref{Theorem-time-control-convergence-1}. The strategies to show them  have been introduced in Subsection 1.3.

\subsection{The proof of Theorem \ref{Theorem-time-order}}

\begin{proof}[Proof of Theorem \ref{Theorem-time-order}]
 Let $y_0\in L^2(\Omega)\setminus B_r(0)$. We will prove the conclusions (i) and (ii) in Theorem \ref{Theorem-time-order} one by one.

(i)
 Arbitrarily fix $0<M_1<M_2$. Then arbitrarily fix $M\in [M_1,M_2]$. From the conclusion (iii) in Theorem \ref{Theorem-eq-cont-discrete}, it follows
 \begin{eqnarray}\label{0906-NP-Lip-1}
    0<T(M_2,y_0)\leq T(M,y_0)\leq T(M_1,y_0)
   < T^*_{y_0}.
 \end{eqnarray}
 We take $\delta$ so that
 \begin{eqnarray}\label{0906-NP-Lip-2}
  0<\delta< \min\left\{T(M_2,y_0)/2,\big(T^*_{y_0}-T(M_1,y_0)\big)/4 \right\}\triangleq \delta_1.
 \end{eqnarray}
 Let $\hat k_\delta\in \mathbb{N}$ satisfy that
 \begin{eqnarray}\label{0906-NP-Lip-3}
  (\hat k_\delta-1)\delta< T(M,y_0)
  \leq \hat k_\delta \delta.
 \end{eqnarray}
   We first claim that
  \begin{eqnarray}\label{0906-NP-Lip-8}
   N_\delta((\hat k_\delta+1)\delta,y_0)
   \leq M+e^{C_4 \big[1+ T^*_{y_0}+\frac{1}{T(M_2,y_0)}+\frac{2}{T^*_{y_0}-T(M_1,y_0)} \big]} \|y_0\|^{12} r^{-11} \delta^2-\lambda_1^{3/2} r  \delta.
  \end{eqnarray}
  Indeed, from  (\ref{P-T*}) and (\ref{0906-NP-Lip-1})-(\ref{0906-NP-Lip-3}), one can easily check that
  \begin{eqnarray}\label{Yuwanggs6.5}
  0<T(M,y_0)<(\hat k_\delta+1)\delta<T^*_{y_0}\;\;\mbox{and}\;\;(\delta,\hat k_\delta+1)\in\mathcal P_{T^*_{y_0}}.
  \end{eqnarray}
  Three facts are given in order: (a) By the second conclusion in (\ref{Yuwanggs6.5}),  we can apply Theorem \ref{theorem-NP-delta-error}, with $(\delta,k)=(\delta,\hat k_\delta+1)$, to obtain that
  \begin{eqnarray*}\label{0906-NP-Lip-6}
   N_\delta((\hat k_\delta+1)\delta,y_0)
   \leq
    N((\hat k_\delta+1)\delta,y_0) + e^{C_4 \big[1+ T^*_{y_0}+\frac{1}{(\hat k_\delta+1)\delta}+\frac{1}{T^*_{y_0}-(\hat k_\delta+1)\delta} \big]} \|y_0\|^{12} r^{-11} \delta^2,
  \end{eqnarray*}
  where $C_4\triangleq C_4(\Omega,\omega)$ is given by (\ref{NP-delta-error-0}).
    (b) By the first conclusion in (\ref{Yuwanggs6.5}), we can
      use the first inequality in (\ref{NP-Lip-T}) in Theorem~\ref{Proposition-NP-Lip-T} (where $T_1=T(M,y_0)$ and $T_2=(\hat k_\delta+1)\delta$) to get that
  \begin{eqnarray*}
   N((\hat k_\delta+1)\delta,y_0)
   &\leq& N(T(M,y_0),y_0) -\lambda_1^{3/2} r   |(\hat k_\delta+1)\delta-T(M,y_0)|
   \nonumber\\
   &\leq&  N(T(M,y_0),y_0) - \lambda_1^{3/2} r  \delta;
  \end{eqnarray*}
  (c) By (ii) of  Theorem \ref{Theorem-eq-cont-discrete}, we have that
  $N(T(M,y_0),y_0)=M$.

   From above three facts (a)-(c), we find that
  \begin{eqnarray}\label{0906-NP-Lip-7}
   N_\delta((\hat k_\delta+1)\delta,y_0)
   \leq M+e^{C_4 \big[1+T^*_{y_0}+\frac{1}{(\hat k_\delta+1)\delta}+\frac{1}{T^*_{y_0}-(\hat k_\delta+1)\delta} \big]}\|y_0\|^{12} r^{-11}\delta^2-\lambda_1^{3/2}r\delta.
    \end{eqnarray}
  Meanwhile, from (\ref{0906-NP-Lip-3}), (\ref{0906-NP-Lip-2}) and
  (\ref{0906-NP-Lip-1}), one can easily check that
  \begin{eqnarray*}
    T(M_2,y_0)  \leq (\hat k_\delta-1)\delta+2\delta \leq T(M_1,y_0)+\frac{T^*_{y_0}-T(M_1,y_0)}{2}= \frac{T^*_{y_0}+T(M_1,y_0)}{2}.
  \end{eqnarray*}
  This, along with  (\ref{0906-NP-Lip-7}), leads to (\ref{0906-NP-Lip-8}).

We next  claim that
  \begin{eqnarray}\label{0906-NP-Lip-9}
   T_\delta(M,y_0) \leq (\hat k_\delta+1)\delta\;\;\mbox{for each}\;\;0<\delta<\delta_0\triangleq\min\{\delta_1, \delta_2\}),
  \end{eqnarray}
  where $\delta_1$ is given by (\ref{0906-NP-Lip-2}) and $\delta_2$ is defined by
  \begin{eqnarray}\label{0906-NP-Lip-10}
   \delta_2 \triangleq  \frac{1}{2}  \lambda_1^{3/2}  e^{-C_4 \big[1+ T^*_{y_0}+\frac{1}{T(M_2,y_0)}+\frac{2}{T^*_{y_0}-T(M_1,y_0)} \big]} \|y_0\|^{-12} r^{12}.
  \end{eqnarray}
 In fact, for an arbitrarily fixed $\delta\in(0,\delta_0)$,
 by (\ref{0906-NP-Lip-8}) and (\ref{0906-NP-Lip-10}), after some computations, we find that
  \begin{eqnarray}\label{0906-NP-Lip-10-1}
    M \geq N_\delta((\hat k_\delta+1)\delta,y_0) + \frac{1}{2} \lambda_1^{3/2} r  \delta
   > N_\delta((\hat k_\delta+1)\delta,y_0).
  \end{eqnarray}
  Let $u_\delta$ be the zero extension of an admissible control (to $(NP)^{(\hat k_\delta+1)\delta,y_0}_\delta$)  over $\mathbb R^+$. Then by (\ref{0906-NP-Lip-10-1}), one can easily check that
 $u_\delta$ is an admissible control (to $(TP)^{M,y_0}_\delta$), which drives the solution to $B_r(0)$ at time $(\hat k_\delta +1)\delta$. This, along with  the optimality of $T_\delta(M,y_0)$, leads to  (\ref{0906-NP-Lip-9}).

We now show (\ref{TP-order-0}) with  $\delta_0$ given by (\ref{0906-NP-Lip-9}). For this purpose, we arbitrarily fix $\delta\in(0,\delta_0)$. Since $\mathcal U^M_\delta\subset \mathcal U^M$, it follows by (\ref{time-1}) and (\ref{time-2}) that $T(M,y_0) \leq T_\delta(M,y_0)$. This, along with (\ref{0906-NP-Lip-3}) and (\ref{0906-NP-Lip-9}), leads to (\ref{TP-order-0}).

Finally, we will prove that the above  $\delta_0$  is continuous in $(M_1,M_2,y_0,r)$. Indeed, according to the definitions of
$\delta_0$,
$\delta_1$ and $\delta_2$ (see  (\ref{0906-NP-Lip-9}), (\ref{0906-NP-Lip-2}) and (\ref{0906-NP-Lip-10}), respectively), it suffices to show that the  maps $y_0\rightarrow T^*_{y_0}$   and $(\hat M,y_0)\rightarrow T(\hat M,y_0)$ are  continuous.
To show the first one, we notice from Theorem~\ref{Lemma-T*} that for each  $y_0\in L^2(\Omega)\setminus B_r(0)$,  $T(0,y_0)=T^*_{y_0}$ and that the map $y_0\rightarrow
T(0,y_0)$ is continuous outside $B_r(0)$. Hence, the  map $y_0\rightarrow T^*_{y_0}$
 is continuous outside $B_r(0)$. To show the continuity of the map $(\hat M,y_0)\rightarrow T(\hat M,y_0)$, two observations are given in order: First, it follows from Theorem~\ref{Lemma-T*} that   $T(\hat M,y_0)$  continuously depends on $y_0\in L^2(\Omega)\setminus B_r(0)$; Second, according to the conclusion (iii) of Theorem \ref{Theorem-eq-cont-discrete}, for an fixed $y_0\in L^2(\Omega)\setminus B_r(0)$, the map $\hat M\rightarrow T(\hat M,y_0)$ is continuous and decreasing.
From these two observations,  the desired continuity follows at once. This ends the proof of the conclusion (i).

(ii)  Let $\mathcal A_{M,\eta}$, with $M>0$ and $\eta\in(0,1)$, be given by Theorem \ref{theorem-optimality}. Then the conclusion (ii) of Theorem \ref{Theorem-time-order}  follows from the first conclusion in (\ref{ineqs-optimality}) at once.

  In summary, we end the proof of Theorem \ref{Theorem-time-order}.
\end{proof}

\subsection{The proof of Theorem \ref{Theorem-time-control-convergence}}

\begin{proof}[Proof of Theorem \ref{Theorem-time-control-convergence}]
Arbitrarily fix $y_0\in L^2(\Omega)\setminus B_r(0)$.
For each $M>0$ and $\delta>0$, we let $u_M^*$ and $u^*_{\delta,M}$ be the  optimal control and the  optimal control with the minimal norm to $(TP)^{M,y_0}$
and $(TP)^{M,y_0}_\delta$ respectively (see Theorem~\ref{Lemma-existences-TP}).
We will prove the conclusions (i)-(ii) one by one.

 (i)  Let $0<M_1<M_2$. Let $\delta_0=\delta_0(M_1,M_2,y_0,r)$ and $C_3=C_3(\Omega,\omega)$ be given by Theorem \ref{Theorem-time-order} and Theorem \ref{Proposition-NP-Lip-T}, respectively. Arbitrarily fix $M\in [M_1,M_2]$ and $\delta>0$.
 {\it In the proof of (i) of Theorem \ref{Theorem-time-control-convergence},
 we simply write $T^*$ and $T^*_\delta$ for $T(M,y_0)$ and $T_\delta(M,y_0)$, respectively; simply write $u^*$ and $u^*_\delta$ for $u_M^*$ and $u^*_{\delta,M}$, respectively.}

 Since $T^*_\delta$ is a multiple of $\delta$ (see (\ref{time-2})), we can write
 \begin{eqnarray}\label{yuanhuangyuan6.9}
 T^*_\delta\triangleq k_\delta \delta\;\;{with}\;\;k_\delta\in \mathbb{N}^+.
 \end{eqnarray}

 In the case that
\begin{eqnarray}\label{WWGSS6.34}
 \delta\geq \min\Big\{ \delta_0,\frac{T^*_{y_0}-T(M_1,y_0)}{4},  \frac{T(M_2,y_0)}{3},\frac{M_1}{4 e^{C_3(1+\frac{1}{T(M_2,y_0)})} (1+\|y_0\|)} \Big\},
\end{eqnarray}
one can easily show (\ref{TP-control-converge-0}) and the desired continuity of
$C(M_1,M_2,y_0,r)$.
In fact,  it follows
 from (\ref{WWGSS6.34})  that
\begin{eqnarray}\label{WWGSS6.35}
  \|u^* - u_\delta^*\|_{L^2(\mathbb R^+;L^2(\Omega))}
 &\leq& \|u^* \|_{L^2(\mathbb R^+;L^2(\Omega))}
 + \| u_\delta^*\|_{L^2(\mathbb R^+;L^2(\Omega))}
 \nonumber\\
 &\leq& 2M\leq 2M_2\leq \frac{6M_2}{T(M_2,y_0)} \delta
\triangleq \hat C(M_1,M_2,y_0,r)\delta.
\end{eqnarray}
Since   $(M_2,y_0)\rightarrow T(M_2,y_0)$ is continuous (see the proof of the continuity of $\delta_0$ with respect to $(M_1,M_2,y_0,r)$ in   Theorem \ref{Theorem-time-order}),  and because
$T(M_2,y_0)>0$ for all $M_2>0$ and $y_0\in L^2(\Omega)\setminus B_r(0)$,  we see
from (\ref{WWGSS6.35})  that $\hat C(M_1,M_2,y_0,r)$  is continuous in $M_1$, $M_2$, $y_0$ and $r$.

Thus, we only need to show (\ref{TP-control-converge-0}), as well as the continuous dependence of $C$ in  (\ref{TP-control-converge-0}), for  the case that
\begin{eqnarray}\label{0322-th1.4-delta}
  0<\delta< \min\Big\{ \delta_0, \frac{T^*_{y_0}-T(M_1,y_0)}{4},  \frac{T(M_2,y_0)}{3},\frac{M_1}{4 e^{C_3(1+\frac{1}{T(M_2,y_0)})} (1+\|y_0\|)} \Big\}.
 \end{eqnarray}
   For this purpose, some preliminaries are needed. First we claim that
 \begin{eqnarray}\label{0322-T-Tdelta-range}
  0< T(M_2,y_0)\leq T^* \leq T_\delta^*
  \leq {(T(M_1,y_0)+T^*_{y_0})}/{2}
  <T^*_{y_0}.
 \end{eqnarray}
 Indeed, (\ref{0322-T-Tdelta-range}) follows from the next three facts at once.
 Fact one: From Theorem \ref{Theorem-eq-cont-discrete}, we have  that $0<T(M_2,y_0)\leq T^*\leq T(M_1,y_0)<T^*_{y_0}$;
 Fact two:
 Since $\mathcal U^M_\delta\subset \mathcal U^M$,  we find from (\ref{time-1}) and (\ref{time-2})  that $T^*\leq T_\delta^*$;
 Fact three: By Theorem \ref{Theorem-time-order} and (\ref{0322-th1.4-delta}), we see that
 \begin{eqnarray*}
  T_\delta^* \leq T^*+2\delta \leq T(M_1,y_0)+2\delta\leq {(T(M_1,y_0)+T^*_{y_0})}/{2}.
 \end{eqnarray*}

   Then it follows from  (\ref{0322-T-Tdelta-range}), (\ref{0322-th1.4-delta}) and (\ref{P-T*}) that
 \begin{eqnarray}\label{0322-Tdelta-P}
  3\delta\leq T^*_\delta<T^*_{y_0},
  \;\;\mbox{i.e.},\;\;
  (\delta,T^*_\delta/\delta)\triangleq(\delta,k_\delta)\in \mathcal P_{T^*_{y_0}},\;\;\mbox{with}\;\;k_\delta\;\;\mbox{given by}\;\;(\ref{yuanhuangyuan6.9}).
 \end{eqnarray}

   Next, we let $z^*\neq 0$  and $z^*_{\delta}\neq 0$ be the minimizers of $J^{T^*,y_0}$ and $J_{\delta}^{T^*_\delta,y_0}$, respectively (see Theorem \ref{Lemma-Norm-fun} and Theorem \ref{Lemma-NP-bangbang}).
 Write
 \begin{eqnarray}\label{z-normalization}
  \hat z^*\triangleq z^*/\|z^*\|
  \;\;\mbox{and}\;\;
  \hat z^*_\delta\triangleq z^*_\delta/\|z^*_\delta\|.
 \end{eqnarray}
 By (iii) of Theorem \ref{Theorem-eq-cont-discrete}, $u^*|_{(0,T^*)}$ is the optimal control to $(NP)^{T^*,y_0}$. Then by (ii) of Theorem \ref{Lemma-Norm-fun} (with $T=T^*\in (0,T^*_{y_0})$),
 we see that   $u^*(\cdot)=\chi_\omega\varphi(\cdot;T^*,z^*)$ over $(0,T^*)$. Meanwhile, by (ii) of
 Theorem \ref{Theorem-eq-cont-discrete}, we find that $N(T^*,y_0)=M$. These, along with
 (\ref{z-normalization}), yield that
   \begin{eqnarray}\label{th2-2}
  u^*(t) =  \chi_\omega\varphi(t;T^*,z^*)
    = M \frac{\chi_\omega\varphi(t;T^*,\hat z^*)}{\|\chi_\omega\varphi(\cdot;T^*,\hat z^*)\|_{L^2(0,T^*;L^2(\Omega))}},~t\in (0,T^*).
 \end{eqnarray}

     Finally,  it follows from (iii)
 of Theorem \ref{Lemma-existences-TP} that  $u_\delta^*|_{(0, T^*_\delta)}$ is  an optimal control to $(NP)_\delta^{T^*_\delta,y_0}$. This, along with the fact that $u^*_\delta$ is an optimal control to $(TP)_\delta^{M,y_0}$, yields
 \begin{eqnarray}\label{yuanhen6.18}
 N_\delta(T^*_\delta,y_0)=\|u^*_\delta\|_{L^2(0,T^*_\delta;L^2(\Omega))}
 \leq \|u^*_\delta\|_{L^2(\mathbb{R}^+;L^2(\Omega))}\leq M.
 \end{eqnarray}
  Meanwhile,
  by (\ref{0322-Tdelta-P}), we can apply   Theorem  \ref{Lemma-NP-bangbang}
  (with $(\delta,k)=(\delta, T^*_\delta/\delta)\triangleq(\delta,k_\delta)$), as well as (\ref{z-normalization}), to obtain that
    \begin{eqnarray}\label{0322-th1.2-udelta}
  u^*_{\delta}(t) =
  \chi_\omega \overline\varphi_\delta(t;T^*_\delta,z_\delta^*)
  = M_\delta \frac{\chi_\omega\overline\varphi_\delta(t;T^*_\delta, \hat z_\delta^*)}
  {\|\chi_\omega \overline\varphi_\delta(\cdot;T^*_\delta, \hat z_\delta^*)\|_{L^2(0,T^*_\delta;L^2(\Omega))}}
  ,~t\in (0,T^*_\delta).
 \end{eqnarray}
 Here, $\overline\varphi_\delta(\cdot;T^*_\delta,z_\delta^*)$
 and $\overline\varphi_\delta(\cdot;T^*_\delta,\hat z_\delta^*)$
 are given by (\ref{adjoint-solution-P}) with $(\delta,k)=(\delta,k_\delta)$ and
 $M_\delta$ is defined by
 \begin{eqnarray}\label{0322-th1.4-s1-ii}
  M_\delta \triangleq N_\delta(T^*_\delta,y_0).
 \end{eqnarray}

We now prove
 (\ref{TP-control-converge-0}) for an arbitrarily fixed $\delta$ (satisfying (\ref{0322-th1.4-delta})) by  several steps.

 \textit{Step 1. To show that
 \begin{eqnarray}\label{0322-appendix-4.44}
 \|y(T^*;y_0,u_\delta^*)\|_{H^2(\Omega)\cap H_0^1(\Omega)}
 \leq \|y_0\|/T^* + 2 \left( \sqrt{T^*_\delta}\|z^*_\delta\|_{H_0^1(\Omega)}+\|z^*_\delta\|
 \right)
\end{eqnarray}}
$\;\;$ One can easily check the following two estimates:
\begin{eqnarray*}
\|y(T^*;y_0,u_\delta^*)\|_{H^2(\Omega)\cap H_0^1(\Omega)}
\leq  \|e^{\Delta T^*} y_0\|_{H^2(\Omega)\cap H_0^1(\Omega)}
+ \|y(T^*;0,u_\delta^*)\|_{H^2(\Omega)\cap H_0^1(\Omega)};
\end{eqnarray*}
\begin{eqnarray*}
  \|e^{\Delta T^*} y_0\|_{H^2(\Omega)\cap H_0^1(\Omega)}
  = \| \Delta  e^{\Delta T^*} y_0\|
  \leq \|y_0\|/T^*.
\end{eqnarray*}
From these, we see that to prove (\ref{0322-appendix-4.44}), it suffices to show that
\begin{eqnarray}\label{appendix-p1-1}
 \|y(T^*;0,u_\delta^*)\|_{H^2(\Omega)\cap H_0^1(\Omega)}
 \leq  2\big(\sqrt{T^*_\delta}\|z^*_\delta\|_{H_0^1(\Omega)}
 +\|z^*_\delta\|\big).
\end{eqnarray}
For this purpose, let $\bar k$ be the integer so that
$\bar k \delta< T^* \leq (\bar k +1 )\delta$. Because $T^*\leq T^*_\delta$ (see (\ref{0322-T-Tdelta-range})) and $T^*_\delta$ is a multiple of $\delta$ (see (\ref{time-2})), we find that $T^*_\delta\geq (\bar k+1)\delta$.
Since $u^*_\delta$ is a piece-wise constant function over $(0,T^*_\delta)$,
and because
\begin{eqnarray*}
\Delta e^{\Delta(T^*-t)}fdt=-\frac{d}{dt}(e^{\Delta(T^*-t)}f)\;\;\mbox{for each}\;\;
 f\in L^2(\Omega),
 \end{eqnarray*}
  one can easily check that
\begin{eqnarray*}
\Delta  y(T^*;0,u_\delta^*)
&=& \Delta \int_{\bar k \delta}^{T^*}  e^{\Delta(T^*-t)} \chi_\omega u_\delta^*((\bar k+1)\delta) \,\mathrm dt
 +  \sum_{j=1}^{\bar k}  \Delta \int_{(j-1) \delta}^{j \delta}  e^{\Delta(T^*-t)} \chi_\omega u^*_\delta(j\delta) \,\mathrm dt
 \nonumber\\
&=& \sum_{j=1}^{\bar k}  e^{\Delta(T^*-j \delta)}
\chi_\omega\big( u^*_\delta((j+1) \delta) - u^*_\delta(j \delta) \big)
+ e^{\Delta T^*} \chi_\omega u^*_\delta(\delta) -\chi_\omega u^*_\delta((\bar k+1)\delta).
\end{eqnarray*}
This yields that
\begin{eqnarray}\label{appendix-p1-2}
 & & \|y(T^*;0,u_\delta^*)\|_{H^2(\Omega)\cap H_0^1(\Omega)}
 = \| \Delta  y(T^*;0,u_\delta^*) \|
 \nonumber\\
 &\leq& \sum_{j=1}^{\bar k} \| u^*_\delta((j+1) \delta) - u^*_\delta(j \delta)\|
 + \|u^*_\delta(\delta)\| + \|u^*_\delta((\bar k+1)\delta)\|.
\end{eqnarray}
Meanwhile,
from the first equality in (\ref{0322-th1.2-udelta}), one can easily verify that
when  $j=1,\dots,\bar k$,
\begin{eqnarray*}
 & & \| u^*_\delta((j+1) \delta) - u^*_\delta(j \delta)\|
  \nonumber\\
 &=& \Big\| \frac{1}{\delta} \int_{(j-1)\delta}^{j\delta} \chi_\omega\varphi(s+\delta;T^*_\delta,z^*_\delta) \,\mathrm ds
 - \frac{1}{\delta} \int_{(j-1)\delta}^{j\delta} \chi_\omega\varphi(s;T^*_\delta,z^*_\delta) \,\mathrm ds\Big\|
 \nonumber\\
 &=& \Big\| \chi_\omega \frac{1}{\delta} \int_{(j-1)\delta}^{j\delta}
 \int_s^{s+\delta} \partial_\tau \varphi(\tau;T^*_\delta,z^*_\delta) \,\mathrm d\tau \mathrm ds  \Big\|\leq \int_{(j-1)\delta}^{(j+1)\delta}
 \| \partial_\tau \varphi(\tau;T^*_\delta,z^*_\delta) \| \,\mathrm d\tau.
\end{eqnarray*}
This, along with (\ref{appendix-p1-2}), (\ref{0322-th1.2-udelta}),
(\ref{0322-th1.4-s1-ii}) and the contractivity of $\{e^{\Delta t}\}_{t\geq 0}$, yields that
\begin{eqnarray}\label{huangyuan2.62}
  \|y(T^*;0,u_\delta^*)\|_{H^2(\Omega)\cap H_0^1(\Omega)}
  \leq 2 \int_0^{T^*_\delta}  \| \partial_\tau \varphi(\tau;T^*_\delta,z^*_\delta) \| \,\mathrm d\tau + 2\|z^*_\delta\|.
\end{eqnarray}
Since $\|\partial_\tau \varphi(\cdot;T^*_\delta,z^*_\delta)\|_{L^2(0,T^*_\delta;L^2(\Omega))} \leq \|z^*_\delta\|_{H_0^1(\Omega)}$ (see (\ref{z-beta-uni-bdd})),
by applying  the H\"{o}lder inequality to (\ref{huangyuan2.62}), we obtain (\ref{appendix-p1-1}). This ends the proof of (\ref{0322-appendix-4.44}).

\textit{Step 2. To show that
\begin{eqnarray}\label{0322-error-varphi}
  \|\varphi(\cdot;T^*,z_\delta^*) -\overline\varphi_\delta(\cdot;T^*_\delta,z_\delta^*)\|_{L^2(0,T^*;L^2(\Omega))}
  \leq \|z^*_\delta\|_{H_0^1(\Omega)} (|T_\delta^*-T^*|+\delta)
 \end{eqnarray}}
 $\;\;$ Observe that
 \begin{eqnarray*}
  & & \|\varphi(\cdot;T^*,z_\delta^*) -\overline\varphi_\delta(\cdot;T^*_\delta,z_\delta^*)\|_{L^2(0,T^*;L^2(\Omega))}
  \nonumber\\
 &\leq& \|\varphi(\cdot;T^*,z_\delta^*) -\varphi(\cdot;T^*_\delta,z_\delta^*)\|_{L^2(0,T^*;L^2(\Omega))}
 + \|\varphi(\cdot;T^*_\delta,z_\delta^*) -\overline\varphi_\delta(\cdot;T^*_\delta,z_\delta^*)\|_{L^2(0,T^*;L^2(\Omega))}
 \nonumber\\
 &\triangleq& I_1 + I_2.
 \end{eqnarray*}
 We first  claim that
 \begin{eqnarray}\label{1009-1order-4}
   I_1
   \leq \| z_\delta^* \|_{H_0^1(\Omega)} (T^*_\delta-T^*).
 \end{eqnarray}
 Write $\{\lambda_j\}_{j=1}^\infty$ for the family of all eigenvalues of $-\Delta$ with the zero Dirichlet boundary condition so that $\lambda_1<\lambda_2\leq\cdots$. Let  $\{e_j\}_{j=1}^\infty$ be the family of  the corresponding normalized eigenvectors.  Arbitrarily fix $f\in L^2(0,T^*;L^2(\Omega))$. Write $f(t)=\sum_{j=1}^\infty f_j(t) e_j$, $t\in(0,T^*)$. Then we have that
 \begin{eqnarray}\label{1009-1order-5}
  \|f\|_{L^2(0,T^*;L^2(\Omega))}^2  = \int_0^{T^*} \sum_{j=1}^\infty |f_j(t)|^2  \,\mathrm dt<\infty.
 \end{eqnarray}
  By some computations, we obtain that
 \begin{eqnarray}\label{1009-1order-3}
   & & \Big\| \int_0^{T^*} e^{\Delta (T^*-t)}f(t) \,\mathrm dt  -  \int_0^{T^*} e^{\Delta (T^*_\delta-t)}f(t) \,\mathrm dt  \Big\|_{H^{-1}(\Omega)}^2
   \nonumber\\
   &=& \sum_{j=1}^\infty  \frac{1}{\lambda_j} \Big[ \int_0^{T^*} e^{-\lambda_j (T^*-t)}f_j(t) \,\mathrm dt  -  \int_0^{T^*} e^{-\lambda_j (T^*_\delta-t)}f_j(t) \,\mathrm dt
   \Big]^2
   \nonumber\\
   &=& \sum_{j=1}^\infty  \frac{1}{\lambda_j} \Big[ (e^{-\lambda_j (T^*_\delta-T^*)}-1)  \int_0^{T^*} e^{-\lambda_j (T^*_\delta-t)}f_j(t) \,\mathrm dt
   \Big]^2.
 \end{eqnarray}
 Two facts are given in order: (a) It is clear  that
 \begin{eqnarray}\label{1009-1order-1}
  |e^{-\lambda_j (T^*_\delta-T^*)}-1 |
  = |\int_0^{T^*_\delta-T^*} \lambda_j e^{-\lambda_j t} \,\mathrm dt|
   \leq \lambda_j (T^*_\delta-T^*),~j=1,2,\dots;
 \end{eqnarray}
 (b) By the H\"{o}lder inequality, we obtain that
 \begin{eqnarray}\label{1009-1order-2}
  \int_0^{T^*} e^{-\lambda_j (T^*_\delta-t)}|f_j(t)| \,\mathrm dt
  &\leq& \Big[ \int_0^{T^*} e^{-2 \lambda_j (T^*_\delta-t)}\,\mathrm dt \Big]^{1/2}
    \Big[ \int_0^{T^*} |f_j(t)|^2 \,\mathrm dt \Big]^{1/2}
    \nonumber\\
    &\leq& \frac{1}{\sqrt{\lambda_j}} \Big[ \int_0^{T^*} |f_j(t)|^2 \,\mathrm dt \Big]^{1/2},
    ~j=1,2,\dots.
 \end{eqnarray}
 The above facts (\ref{1009-1order-1}) and (\ref{1009-1order-2}), along with (\ref{1009-1order-3}) and (\ref{1009-1order-5}), yield that
 \begin{eqnarray*}
   & & \Big\| \int_0^{T^*} e^{\Delta (T^*-t)}f(t) \,\mathrm dt  -  \int_0^{T^*} e^{\Delta (T^*_\delta-t)}f(t) \,\mathrm dt  \Big\|_{H^{-1}(\Omega)}^2
   \nonumber\\
   &\leq& \sum_{j=1}^\infty  \frac{1}{\lambda_j} \Big[\lambda_j (T^*_\delta-T^*)\Big]^2
   \Big[ \frac{1}{\lambda_j} \Big[ \int_0^{T^*} |f_j(t)|^2 \,\mathrm dt \Big]
   \nonumber\\
   &=& (T^*_\delta-T^*)^2  \int_0^{T^*}   \sum_{j=1}^\infty |f_j(t)|^2 \,\mathrm dt
   = (T^*_\delta-T^*)^2  \|f\|_{L^2(0,T^*;L^2(\Omega))}^2.
 \end{eqnarray*}
 This implies that
 \begin{eqnarray*}
  & & \int_0^{T^*} \langle  \varphi(t;T^*,z_\delta^*) -\varphi(t;T^*_\delta,z_\delta^*),
  f(t) \rangle \,\mathrm dt
  \nonumber\\
  &=& \Big\langle z_\delta^*, \int_0^{T^*} e^{\Delta (T^*-t)}f(t) \,\mathrm dt  -  \int_0^{T^*} e^{\Delta (T^*_\delta-t)}f(t) \,\mathrm dt
  \Big\rangle
    \nonumber\\
  &\leq& \| z_\delta^* \|_{H_0^1(\Omega)}  (T^*_\delta-T^*)  \|f\|_{L^2(0,T^*;L^2(\Omega))}.
 \end{eqnarray*}
 Since $f$ was arbitrarily taken from $L^2(0,T^*;L^2(\Omega))$, the above indicates that
  \begin{eqnarray*}
  I_1=\|\varphi(\cdot;T^*,z_\delta^*) -\varphi(\cdot;T^*_\delta,z_\delta^*)\|_{L^2(0,T^*;L^2(\Omega))}
   \leq \| z_\delta^* \|_{H_0^1(\Omega)} (T^*_\delta-T^*),
 \end{eqnarray*}
 which leads to (\ref{1009-1order-4}).

 We next  estimate  $I_2$. Since $T^*_\delta=k_\delta \delta$ (see (\ref{yuanhuangyuan6.9})) and because
   $T^*_\delta\geq T^*$,  we see from (\ref{adjoint-solution-P}) that
 \begin{eqnarray*}
 & & I_2^2 \leq \int_0^{T^*_\delta} \| \varphi(\cdot;T^*_\delta,z_\delta^*) -\overline\varphi_\delta(\cdot;T^*_\delta,z_\delta^*) \|^2 \,\mathrm dt
 \nonumber\\
 &=&  \sum_{j=1}^{k_\delta}
 \int_{(j-1)\delta}^{j\delta} \Big\|\varphi(t;T^*_\delta,z^*_\delta)- \frac{1}{\delta} \int_{(j-1)\delta}^{j\delta} \varphi(s;T^*_\delta,z^*_\delta) \,\mathrm ds \Big\|^2 \,\mathrm dt
  \nonumber\\
 &\leq& \sum_{j=1}^{k_\delta}
 \int_{(j-1)\delta}^{j\delta} \left( \int_{(j-1)\delta}^{j\delta}\| \partial_\tau  \varphi(\tau;T^*_\delta,z^*_\delta) \| \,\mathrm d\tau \right)^2  \, \mathrm dt.
 \end{eqnarray*}
 By using the H\"{o}lder inequality in the above and by  (\ref{z-beta-uni-bdd}), we see that
 \begin{eqnarray*}
   I_2 \leq \|\partial_\tau  \varphi(\cdot;T^*_\delta,z^*_\delta)\|_{L^2(0,T^*_\delta;L^2(\Omega))} \delta \leq \| z_\delta^* \|_{H_0^1(\Omega)} \delta.
 \end{eqnarray*}
  Finally, (\ref{0322-error-varphi}) follows from the above estimates on $I_1$ and $I_2$.

 \textit{Step 3. To prove that
 \begin{eqnarray}\label{0322-th1.4-s1-i}
    |T^*-T^*_\delta| + |M-M_\delta|
   \leq  2e^{C_3(1+\frac{1}{T(M_2,y_0)})} (1+\|y_0\|) \delta
  \triangleq C_1(M_2,y_0) \delta,
 \end{eqnarray}
 where   $C_1(M_2,y_0)$ is continuous in   $M_2$ and $y_0$ }

By (\ref{0322-Tdelta-P}), we can use Theorem~\ref{theorem-NP-delta-error}, with $(\delta,k)=
(\delta, k_\delta)$ (where $k_\delta$ is given by (\ref{yuanhuangyuan6.9})),  to see that
$N_\delta(T^*_\delta,y_0)\geq N(T^*_\delta,y_0)$. By (\ref{yuanhen6.18})
and (\ref{0322-th1.4-s1-ii}), we find that $M\leq M_\delta$. These, along with  (ii) of Theorem \ref{Theorem-eq-cont-discrete}, yield that
\begin{eqnarray}\label{yuanhen6.22}
 0\leq M-M_\delta=N(T(M,y_0),y_0)-N_\delta(T^*_\delta,y_0)
 \leq
 N(T(M,y_0),y_0)-N(T^*_\delta,y_0).
\end{eqnarray}
Meanwhile, by (\ref{0322-T-Tdelta-range}), we can use Theorem~\ref{Proposition-NP-Lip-T} (with $T_1=T^*$ and $T_2=T^*_\delta$) to see that
\begin{eqnarray*}
N(T^*,y_0)-N(T^*_\delta,y_0)\leq e^{C_3(\Omega,\omega)(1+1/T^*)}\|y_0\|(T^*_\delta-T^*).
\end{eqnarray*}
where $C_3(\Omega,\omega)$ is given by (\ref{NP-Lip-T}).
Since $T(M_2,y_0)\leq T^*$ (see (\ref{0322-T-Tdelta-range})), the above, along with (\ref{yuanhen6.22}), yields that
 \begin{eqnarray*}\label{160830-th1.2-1}
 |M-M_\delta| &\leq&   e^{C_3(\Omega,\omega)(1+{1}/{T(M_2,y_0)})} \|y_0\| |T^*_\delta-T^*|.
 \end{eqnarray*}
 Since $\delta\in(0,\delta_0)$, the above, along with Theorem \ref{Theorem-time-order}, leads to (\ref{0322-th1.4-s1-i}).
Finally, since  $T(M_2,y_0)$ is continuous in $M_2$ and $y_0$ (see the proof of the continuity of $\delta_0$ with respect to $(M_1,M_2,y_0,r)$ in   Theorem \ref{Theorem-time-order}), we obtain from (\ref{0322-th1.4-s1-i}) that $C_1(M_2,y_0)$ is continuous in   $M_2$ and $y_0$.

 \textit{Step 4. To show that
 \begin{eqnarray}\label{0213-order-control-step1}
  \| \hat z^* - \hat z^*_\delta \|  \leq  C_2(M_1,M_2,y_0,r) \delta,
 \end{eqnarray}
 where  $C_2(M_1,M_2,y_0,r)$ is continuous in  $M_1$, $M_2$, $y_0$  and $r$ }

  Define an affiliated control $\hat u_\delta$ over $\mathbb{R}^+$ by
 \begin{eqnarray}\label{0213-order-control-01}
  \hat u_\delta(t) \triangleq  M \frac{\chi_\omega\varphi(t;T^*,\hat z_\delta^*)}{\|\chi_\omega\varphi(\cdot;T^*,\hat z_\delta^*)\|_{L^2(0,T^*;L^2(\Omega))}}~t\in(0,T^*);\; \hat u_\delta(t) \triangleq 0, \; t\in (T^*,\infty).
 \end{eqnarray}
 We divide the rest of the proof of Step 4 by several parts.

 \textit{Part 4.1. To prove that
 \begin{eqnarray}\label{0213-order-control-part1.1}
  \langle \hat z^* - \hat z^*_\delta, \hat z^* - \hat z^*_\delta \rangle
  \leq   -\frac{1}{r}  \big\langle \hat z^* - \hat z^*_\delta, y(T^*;y_0,\hat u_\delta) - y(T^*_\delta;y_0,u_\delta^*) \big\rangle
 \end{eqnarray}}
$\;\;$ By  (\ref{th2-2}) and (\ref{0213-order-control-01}), one can directly  check that
\begin{eqnarray*}
0\leq \langle \chi_\omega \varphi(t;T^*,\hat z^*)-\chi_\omega\varphi(t;T^*,\hat z^*_\delta), u^*(t)-\hat u_\delta(t) \big\rangle\;\;\mbox{for a.e.}\;\; t\in (0,T^*).
\end{eqnarray*}
Hence, we have that
  \begin{eqnarray*}\label{0213-order-control-00}
   0 &\leq & \big\langle \chi_\omega \varphi(\cdot;T^*,\hat z^*)- \chi_\omega\varphi(\cdot;T^*,\hat z^*_\delta), u^*(\cdot)-\hat u_\delta(\cdot) \big\rangle_{L^2(0,T^*;L^2(\Omega))}
   \nonumber\\
   &=&
   \big\langle \hat z^* - \hat z^*_\delta,  y(T^*;y_0,u^*) - y(T^*;y_0,\hat u_\delta)  \big\rangle.
    \end{eqnarray*}
 This, along with (\ref{op-final-dual}), (\ref{op-control-final-state}) and (\ref{z-normalization}), yields that
 \begin{eqnarray*}
  & & \langle \hat z^* - \hat z^*_\delta, \hat z^* - \hat z^*_\delta \rangle
  =    \Big\langle \hat z^* - \hat z^*_\delta, -\frac{1}{r} \big( y(T^*;y_0,u^*) - y(T^*_\delta;y_0,u_\delta^*) \big) \Big\rangle
    \nonumber\\
  &\leq& -\frac{1}{r} \big\langle \hat z^* - \hat z^*_\delta,   y(T^*;y_0,\hat u_\delta) - y(T^*_\delta;y_0,u_\delta^*)  \big\rangle,
 \end{eqnarray*}
 which leads to (\ref{0213-order-control-part1.1}).

\textit{Part 4.2. To show that there exists $C_{21}\triangleq C_{21}(\Omega)>0$ so that
 \begin{eqnarray}\label{0213-order-control-part1.2}
  \|y(T^*;y_0,\hat u_\delta) - y(T^*_\delta;y_0,u_\delta^*)\|
  &\leq& C_{21} \Big[ \big(1+\frac{1}{T(M_2,y_0)}+\sqrt{T^*_{y_0}}\big)
  (\|y_0\|+\|z^*_\delta\|_{H_0^1(\Omega)})
  \nonumber\\
  & & \times (T^*_\delta-T^*)  +  \|\hat u_\delta-u_\delta^*\|_{L^2(0,T^*;L^2(\Omega))}
  \Big]
 \end{eqnarray}}
$\;\;$  Three facts are given in order. Fact one: By  the H\"{o}lder inequality, we find that for some $C_{22}\triangleq C_{22}(\Omega)>0$,
 \begin{eqnarray}\label{Wang6.49}
  & &  \|y(T^*;y_0,\hat u_\delta) - y(T^*;y_0,u_\delta^*)\|
   = \big\| \int_0^{T^*} e^{\Delta(T^*-t)} \chi_\omega(\hat u_\delta-u_\delta^*)(t) \,\mathrm dt \big\|
   \nonumber\\
   &\leq& \int_0^{T^*} e^{-\lambda_1(T^*-t)} \|(\hat u_\delta-u_\delta^*)(t) \|\,\mathrm dt
   \leq C_{22} \|\hat u_\delta-u_\delta^*\|_{L^2(0,T^*;L^2(\Omega))};
 \end{eqnarray}
 Fact two: Since  $\|u^*_\delta\|_{L^\infty(0,T^*_\delta;L^2(\Omega))} \leq \|z^*_\delta\|$ (which follows from (\ref{0322-th1.2-udelta}) and (\ref{adjoint-solution-P})), and because $T^*\leq T^*_\delta$ (see
(\ref{0322-T-Tdelta-range})),   we find that
\begin{eqnarray}\label{Wang6.51}
   & & \|y(T^*;y_0,u_\delta^*) - y(T^*_\delta;y_0,u_\delta^*)\|
   \nonumber\\
   &\leq& \| y(T^*;y_0,u_\delta^*)- e^{\Delta (T^*_\delta-T^*)}  y(T^*;y_0,u_\delta^*)\|  + \Big\| \int_{T^*}^{T^*_\delta} e^{\Delta(T^*_\delta-t)} \chi_\omega u_\delta^*(t) \,\mathrm dt \Big\|
   \nonumber\\
   &\leq& (T^*_\delta-T^*) \left[  \|y(T^*;y_0,u_\delta^*)\|_{H^2(\Omega)\cap H_0^1(\Omega)}
    +  \|z_\delta^*\| \right].
 \end{eqnarray}
Fact three: By (\ref{0322-T-Tdelta-range}), we see that
\begin{eqnarray}\label{Wang6.52}
T(M_2,y_0)\leq T^*\;\;\mbox{and}\;\; T^*_\delta<T^*_{y_0}.
\end{eqnarray}
Now, by the triangle inequality, (\ref{Wang6.49}), (\ref{0322-appendix-4.44}),
 (\ref{Wang6.51}),  (\ref{Wang6.52}) and the Poincar\'{e} inequality, we obtain
(\ref{0213-order-control-part1.2}) for some $C_{21}=C_{21}(\Omega)$.

\textit{Part 4.3. To show that
there exists $C_{23}\triangleq C_{23}(\Omega)>1$ so that
 \begin{eqnarray}\label{0213-order-control-part1.3-1}
  \|\hat u_\delta-u_\delta^*\|_{L^2(0,T^*;L^2(\Omega))}
  \leq C_{23} \big(1+\|z^*_\delta\|_{H_0^1(\Omega)}^4 \big)
  \big(1+\frac{1}{M_1}\big)
  \big( |T_\delta^*-T^*| +\delta
  + |M-M_\delta| \big)
 \end{eqnarray}}
$\;\;$  Recall (\ref{z-normalization}) for the definition of $\hat z^*_\delta$.
{\it In Part 4.3,   we simply  write respectively $\varphi(\cdot)$ and $\overline\varphi_\delta(\cdot)$ for $\varphi(\cdot;T^*, \hat z^*_\delta)$ and $\overline\varphi_\delta(\cdot;T^*_\delta, \hat z^*_\delta)$; simply write $L^2(0,T^*)$,
 $L^2(0,T^*_\delta)$ and $L^2(T^*,T^*_\delta)$ for $L^2(0,T^*;L^2(\Omega))$, $L^2(0,T^*_\delta;L^2(\Omega))$ and $L^2(T^*,T^*_\delta;L^2(\Omega))$ respectively.}
 From  (\ref{0213-order-control-01}) and (\ref{0322-th1.2-udelta}), using the triangle inequality, we obtain that
 \begin{eqnarray}\label{0213-order-control-04}
  & & \|\hat u_\delta-u_\delta^*\|_{L^2(0,T^*)}
  =
  \Big\|M \frac{\chi_\omega\varphi}{\|\chi_\omega\varphi\|_{L^2(0,T^*)}}
   - M_\delta \frac{\chi_\omega\overline\varphi_\delta}
   {\|\chi_\omega\overline\varphi_\delta\|_{L^2(0,T^*_\delta)}} \Big\|_{L^2(0,T^*)}
  \nonumber\\
  &\leq& M_\delta  \Big\|\frac{\chi_\omega\varphi}{\|\chi_\omega\varphi\|_{L^2(0,T^*)}}
  - \frac{\chi_\omega\overline\varphi_\delta}
  {\|\chi_\omega\overline\varphi_\delta\|_{L^2(0,T^*_\delta)}} \Big\|_{L^2(0,T^*)}
  + |M-M_\delta|.~~~
 \end{eqnarray}
 By direct computations, we find that
 \begin{eqnarray}\label{wanggs6.55}
   & & \frac{\chi_\omega\varphi}{\|\chi_\omega\varphi\|_{L^2(0,T^*)}}
    - \frac{\chi_\omega\overline\varphi_\delta}
    {\|\chi_\omega\overline\varphi_\delta\|_{L^2(0,T^*_\delta)}}
   \\
  &=& \frac{\chi_\omega\varphi
  (\|\chi_\omega\overline\varphi_\delta\|_{L^2(0,T^*_\delta)}- \|\chi_\omega\varphi\|_{L^2(0,T^*)}) } {\|\chi_\omega\varphi\|_{L^2(0,T^*)}\|
  \chi_\omega\overline\varphi_\delta\|_{L^2(0,T^*_\delta)}}
   + \frac{\chi_\omega\varphi- \chi_\omega\overline\varphi_\delta  }
  {\|\chi_\omega\overline\varphi_\delta\|_{L^2(0,T^*_\delta)}};
  \nonumber
 \end{eqnarray}
  \begin{eqnarray}\label{wanggs6.56}
  & & \big| \|\chi_\omega\overline\varphi_\delta\|_{L^2(0,T^*_\delta)}- \|\chi_\omega\varphi\|_{L^2(0,T^*)} \big|\nonumber\\
      &\leq&  \frac{ \big|\|\chi_\omega\overline\varphi_\delta\|^2_{L^2(0,T^*)} -\|\chi_\omega\varphi\|^2_{L^2(0,T^*)}\big|
  +\|\chi_\omega\overline\varphi_\delta\|^2_{L^2(T^*,T^*_\delta)}}
  {\|\chi_\omega\overline\varphi_\delta\|_{L^2(0,T^*_\delta)} +  \|\chi_\omega\varphi\|_{L^2(0,T^*)}}
  \nonumber\\
  &\leq& \|\chi_\omega\varphi -\chi_\omega\overline\varphi_\delta\|_{L^2(0,T^*)}
  + \frac{\|\chi_\omega\overline\varphi_\delta\|^2_{L^2(T^*,T^*_\delta)}}
  {\|\chi_\omega\overline\varphi_\delta\|_{L^2(0,T^*_\delta)}}.
 \end{eqnarray}
 From  (\ref{0213-order-control-04}), (\ref{wanggs6.55}) and (\ref{wanggs6.56}), we deduce that
 \begin{eqnarray}\label{0213-order-control-06}
   \|\hat u_\delta-u_\delta^*\|_{L^2(0,T^*)}
  &\leq& \frac{M_\delta}{\|\chi_\omega\overline\varphi_\delta\|_{L^2(0,T^*_\delta)}} \Big[ 2\|\chi_\omega\varphi -\chi_\omega\overline\varphi_\delta\|_{L^2(0,T^*)}
  + \frac{ \|\chi_\omega\overline\varphi_\delta\|^2_{L^2(T^*,T^*_\delta)}}
  {\|\chi_\omega\overline\varphi_\delta\|_{L^2(0,T^*_\delta)}} \Big]\nonumber\\
  &&+ |M-M_\delta|.
 \end{eqnarray}
 Meanwhile, by (\ref{0322-th1.2-udelta}) and (\ref{z-normalization}), we see that
 $ M_\delta=
  \| z_\delta^*\| \|\chi_\omega\overline\varphi_\delta(\cdot)\|_{L^2(0,T^*_\delta)}$.
 This, together with  (\ref{0213-order-control-06}) and (\ref{z-normalization}), yields    that
 \begin{eqnarray}\label{0213-order-control-06-1}
   \|\hat u_\delta-u_\delta^*\|_{L^2(0,T^*)}
  &\leq& \| z_\delta^*\| \Big[2\|\varphi(\cdot;T^*,z_\delta^*) -\overline\varphi_\delta(\cdot;T^*_\delta,z_\delta^*)\|_{L^2(0,T^*)}
  \nonumber\\
  & & +\frac{\| z_\delta^*\|}{M_\delta}  \|\chi_\omega\overline\varphi_\delta(\cdot;T^*_\delta,z_\delta^*)\|^2_{L^2(T^*,T^*_\delta)}
  \Big]
  + |M-M_\delta|.
 \end{eqnarray}
    Since  $\|\overline\varphi_\delta(t;T^*_\delta,z_\delta^*)\| \leq \|z_\delta^*\|$ for each $t\in(0,T^*_\delta)$ (which follows from (\ref{adjoint-solution-P})), we find from (\ref{0213-order-control-06-1}) and (\ref{0322-error-varphi}) that
 \begin{eqnarray*}\label{0322-error-varphi-1}
   \|\hat u_\delta-u_\delta^*\|_{L^2(0,T^*;L^2(\Omega))}
  \leq  \left( 1+2\|z^*_\delta\|_{H_0^1(\Omega)}\|z^*_\delta\| + \frac{ \|z^*_\delta\|^4}{M_\delta} \right)\Big( |T_\delta^*-T^*| +\delta  + |M-M_\delta| \Big)
   \end{eqnarray*}
At the same time, since $M\geq M_1$, it follows from (\ref{0322-th1.4-s1-i}) and (\ref{0322-th1.4-delta}) that
 \begin{eqnarray*}
 M_\delta\geq M-2e^{C_3(1+\frac{1}{T(M_2,y_0)})} (1+\|y_0\|) \delta \geq M_1/2.
 \end{eqnarray*}
 The above two inequalities, along with  the Poincar\'{e} inequality, yield (\ref{0213-order-control-part1.3-1}).

 \textit{Part 4.4. To show (\ref{0213-order-control-step1}) and the continuity of $C_2(M_1,M_2,y_0,r)$}

 By (\ref{0213-order-control-part1.2}) and (\ref{0213-order-control-part1.3-1}), we can easily check that
 \begin{eqnarray}\label{0322-th1.2-p2.4-0}
  & &  \|y(T^*;y_0,\hat u_\delta) - y(T^*_\delta;y_0,u_\delta^*)\|
  \nonumber\\
  &\leq& C_{21} C_{23}  \left(1+\frac{1}{T(M_2,y_0)}+\sqrt{T^*_{y_0}}\right)
  \left( 1+\|y_0\|+\|z^*_\delta\|_{H_0^1(\Omega)}+ \|z^*_\delta\|_{H_0^1(\Omega)}^4  \right)
  \nonumber\\
  & & \times \big(1+\frac{1}{M_1}\big)  \times \big(|T^*_\delta-T^*|
  + |M_\delta-M|+\delta \big).
   \end{eqnarray}
 Meanwhile, by (\ref{0322-Tdelta-P}),  we can use Theorem~\ref{Lemma-minizer-P-L2-uni-bdd}
 (with $(\delta,k)=(\delta,k_\delta)$, where $k_\delta$ is given by (\ref{yuanhuangyuan6.9})),
 as well as  (\ref{0322-T-Tdelta-range}), to get that
   \begin{eqnarray}\label{160830-11}
   1+\|y_0\|+\|z^*_\delta\|_{H_0^1(\Omega)}+ \|z^*_\delta\|_{H_0^1(\Omega)}^4
     \leq 4 e^{C_1(1+T^*_{y_0}+\frac{1}{T(M_2,y_0)})} \|y_0\|^{24}r^{-20},
 \end{eqnarray}
 where $C_1=C_1(\Omega,\omega)$ is given by (\ref{z-beta-uni-bdd}). This, along with (\ref{0322-th1.2-p2.4-0}), leads to that
 \begin{eqnarray}\label{0322-th1.2-p2.4}
  \|y(T^*;y_0,\hat u_\delta) - y(T^*_\delta;y_0,u_\delta^*)\|
  \leq \hat C_3 \big(1+\frac{1}{M_1}\big) \big(|T^*_\delta-T^*|  + |M_\delta-M|+\delta \big),
 \end{eqnarray}
 where
 \begin{eqnarray*}
 \hat C_3\triangleq\hat C_3(M_1,M_2,y_0,r)\triangleq 4C_{21} C_{23}  \left(1+\frac{1}{T(M_2,y_0)}+\sqrt{T^*_{y_0}}\right)
   e^{C_1(1+T^*_{y_0}+\frac{1}{T(M_2,y_0)})} \|y_0\|^{24}r^{-20}.
 \end{eqnarray*}
Notice that $\hat C_3(M_1,M_2,y_0,r)$ is continuous in $M_1$, $M_2$, $y_0$ and $r$. This can be showed
by the same way as that used to prove  of the continuity of $\delta_0$ with respect to $(M_1,M_2,y_0,r)$ in  Theorem \ref{Theorem-time-order}.

Now it follows from (\ref{0213-order-control-part1.1}) that
 \begin{eqnarray*}
  \| \hat z^* - \hat z^*_\delta \|^2
  &\leq&   \frac{1}{2} \|\hat z^* - \hat z^*_\delta\|^2 + \frac{1}{2r^2}  \| y(T^*;y_0,\hat u_\delta) - y(T^*_\delta;y_0,u_\delta^*) \|^2.
   \end{eqnarray*}
 This, along with (\ref{0322-th1.2-p2.4}) and (\ref{0322-th1.4-s1-i}), yields (\ref{0213-order-control-step1}), with
  \begin{eqnarray*}
  C_2(M_1,M_2,y_0,r)\triangleq
    \frac{1}{r} \Big[ \hat C_3(M_1,M_2,y_0,r)\big(1+\frac{1}{M_1}\big) (C_1(M_2,y_0)+1) \Big].
 \end{eqnarray*}
 Finally, along with the continuity of $C_3(M_1,M_2,y_0,r)$ and $C_1(M_2,y_0)$, the above leads to the desired continuity of $C_2(M_1,M_2,y_0,r)$.
  This ends the proof of Step 4.

 \textit{Step 5. To show that
 \begin{eqnarray}\label{0219-error-control-Step2}
  \|u^* - u_\delta^*\|_{L^2(0,T^*;L^2(\Omega))}
  \leq C_4(M_1,M_2,y_0,r) \delta,
 \end{eqnarray}
 where  $C_4(M_1,M_2,y_0,r)$ is continuous in   $M_1$, $M_2$, $r$ and $y_0$}

Recall  (\ref{z-normalization}) for the definitions of $\hat z^*_\delta$ and $\hat z^*$.
{\it In Step 5, we simply  write $\varphi_1(\cdot)$ and $\varphi_2(\cdot)$ for $\varphi(\cdot;T^*,\hat z^*)$ and $\varphi(\cdot;T^*,\hat z^*_\delta)$, respectively;
simply write $L^2(0,T^*)$ for $L^2(0,T^*;L^2(\Omega))$}. By (\ref{th2-2}) and (\ref{0213-order-control-01}), we see that
\begin{eqnarray}\label{th1.3-pf-0809-1-1}
 \|u^* - \hat u_\delta\|_{L^2(0,T^*)}
  &\leq& M  \Big\|\frac{\chi_\omega\varphi_1}{\|\chi_\omega\varphi_1\|_{L^2(0,T^*)}} - \frac{\chi_\omega\varphi_2}{\|\chi_\omega\varphi_2\|_{L^2(0,T^*)}} \Big\|_{L^2(0,T^*)}
 \nonumber\\
 &\leq&  \frac{2M}
 {\|\chi_\omega\varphi_1\|_{L^2(0,T^*)}}\| \chi_\omega\varphi_1  - \chi_\omega\varphi_2   \|_{L^2(0,T^*)}.
\end{eqnarray}
Meanwhile,   from (\ref{th2-2}) and (\ref{z-normalization}), we find that
 \begin{eqnarray*}
 M=\| z^*\| \|\chi_\omega\varphi_1\|_{L^2(0,T^*;L^2(\Omega))}.
 \end{eqnarray*}
  This, along with (\ref{th1.3-pf-0809-1-1}), yields that
 \begin{eqnarray}\label{th1.3-pf-0809-1}
 \|u^* - \hat u_\delta\|_{L^2(0,T^*)}
  \leq 2\|z^*\|  \|\varphi_1-\varphi_2\|_{L^2(0,T^*)}
   \leq 2\|z^*\|  \|\hat z^*-\hat z^*_\delta\|.
 \end{eqnarray}
By (\ref{th1.3-pf-0809-1}), (\ref{0213-order-control-part1.3-1}) and the triangle inequality, then using (\ref{0213-order-control-step1}), we see that
\begin{eqnarray*}
 \|u^* - u_\delta^*\|_{L^2(0,T^*)}
&\leq&
2C_2(M_1,M_2,y_0,r)\|z^*\| \delta
\nonumber\\
& & + C_{23} (1+\|z^*_\delta\|_{H_0^1(\Omega)}^4)
  \big( |T_\delta^*-T^*| +\delta
  + |M-M_\delta| \big) .
\end{eqnarray*}
From this, (\ref{160830-11}) and (\ref{0322-th1.4-s1-i}),
we obtain  (\ref{0219-error-control-Step2}), with
\begin{eqnarray*}
C_4(M_1,M_2,y_0,r)\triangleq
(1+e^{C_1(1+T^*_{y_0}+\frac{1}{T(M_2,y_0)})}\|y_0\|^{24}r^{-20})(8C_2+C_{23}(1+C_1)),
\end{eqnarray*}
 where $C_1=C_1(M_2,y_0)$ is given by (\ref{0322-th1.4-s1-i}), $C_2=C_2(M_1,M_2,y_0,r)$ is given by (\ref{0213-order-control-step1})
 and $C_{23}=C_{23}(\Omega)$ is given by (\ref{0213-order-control-part1.3-1}).
 Notice that $(a)$ $C_1(M_2,y_0)$ and $C_2(M_1,M_2,y_0,r)$ are continuous; $(b)$ the maps $y_0\rightarrow T^*_{y_0}$ and $(M_2, y_0)\rightarrow T(M_2,y_0)$ are continuous
 (which were showed in the proof of the continuity of $\delta_0$ with respect to $(M_1,M_2,y_0,r)$ in
 Theorem \ref{Theorem-time-order}); $(c)$ $T(M_2,y_0)>0$. From these, we can easily show that the above  $C_4(M_1,M_2,y_0,r)$ is continuous in $M_1$, $M_2$, $y_0$ and $r$. This  ends the proof of Step 5.

In summary, we end the proof of the conclusion (i) in Theorem \ref{Theorem-time-control-convergence}.

(ii) Arbitrarily fix $M>0$ and $\eta>0$. Let $\mathcal A_{M,\eta}$ be given by Theorem \ref{theorem-optimality}. Then, by Theorem \ref{theorem-optimality}, we see that \begin{eqnarray*}
\lim_{h\rightarrow 0^+} \frac{1}{h} |\mathcal A_{M,\eta}\cap (0,h)|=\eta
 \end{eqnarray*}
 and that for each $\delta\in\mathcal A_{M,\eta}$,
\begin{eqnarray}\label{0912-th2-ii-1}
 M-N_\delta(T_\delta(M,y_0),y_0) \geq  \frac{1}{2} \lambda_1^{3/2} r (1-\eta)\delta.
\end{eqnarray}
Arbitrarily fix $\delta\in \mathcal A_{M,\eta}$.
Let $u^*_M$ and $u^*_{M,\delta}$
be  the optimal control to  $(TP)^{M,y_0}$ and  the optimal control optimal with the minimal norm   to $(TP)_\delta^{M,y_0}$, respectively. (see Theorem \ref{Lemma-existences-TP}.) Three facts are given in order:
(a)
By Theorem \ref{Theorem-eq-cont-discrete}, one can easily check
\begin{eqnarray*}
 \|u^*_M\|_{L^2(0,T(M,y_0);L^2(\Omega))}
 =\|u^*_M|_{(0,T(M,y_0))}\|_{L^2(0,T(M,y_0);L^2(\Omega))}
 =N(T(M,y_0),y_0)
 =M;
\end{eqnarray*}
(b) From (iii) of Theorem \ref{Lemma-existences-TP}, we see that
\begin{eqnarray*}
\|u^*_{M,\delta}\|_{L^2(0,T_\delta(M,y_0);L^2(\Omega))}
  =N_\delta(T_\delta(M,y_0),y_0);
  \end{eqnarray*}
(c) Since $\mathcal U^M_\delta\subset \mathcal U^M$,  by (\ref{time-1}) and (\ref{time-2}), we find that $T(M,y_0) \leq T_\delta(M,y_0)$.
Combining the above facts (a)-(c) with  (\ref{0912-th2-ii-1}), we find that
\begin{eqnarray*}
  \|u^*_M-u^*_{M,\delta}\|_{L^2(0,T(M,y_0);L^2(\Omega))}
  &\geq& \|u^*_M\|_{L^2(0,T(M,y_0);L^2(\Omega))}
  - \|u^*_{M,\delta}\|_{L^2(0,T(M,y_0);L^2(\Omega))}
  \nonumber\\
  &\geq& M-N_\delta(T_\delta(M,y_0),y_0) \geq \frac{1}{2} \lambda_1^{3/2}r (1-\eta)\delta,
\end{eqnarray*}
which leads to (\ref{TP-control-converge-0-1}). Thus, the conclusion (ii) in Theorem \ref{Theorem-time-control-convergence} is true.

In summary, we end the proof of  Theorem \ref{Theorem-time-control-convergence}.
\end{proof}

\subsection{The proof of  Theorem \ref{Theorem-time-control-convergence-1}}

This subsection devotes to the proof of Theorem \ref{Theorem-time-control-convergence-1}. To show (\ref{TP-control-converge-1-1}) in Theorem \ref{Theorem-time-control-convergence-1}, we
 need the next lemma which gives a lower bound for the diameter of the subset $\mathcal O_{M,\delta}$ (in $L^2(0,T(M,y_0);L^2(\Omega))$), which is defined by (\ref{geometic-OMdelta}).

\begin{lemma}\label{0912-diam-O}
Let $y_0\in L^2(\Omega)\setminus B_r(0)$ and $M>0$.
Then  there is  $\delta_M\triangleq \delta_M(y_0,r)>0$ so that for each  $\eta\in(0,1)$ and $\delta\in \mathcal A_{M,\eta}\cap(0,\delta_M)$ (where $\mathcal A_{M,\eta}$ is given by Theorem \ref{theorem-optimality}),
\begin{eqnarray}\label{0910-op-lower}
 \mbox{diam}\, \mathcal O_{M,\delta}
 &\triangleq& \sup\{\|u_\delta-v_\delta\|_{L^2(0,T(M,y_0);L^2(\Omega))}~:~u_\delta,\,v_\delta\in \mathcal O_{M,\delta}\}
 \nonumber\\
  &\geq&   \hat C_M \sqrt{(1-\eta)\delta}\;\;\mbox{for some}\;\;\hat C_M\triangleq \hat C_M(y_0,r).
\end{eqnarray}
\end{lemma}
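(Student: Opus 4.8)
The plan is to exhibit, for each admissible $\delta$, two genuine optimal controls of $(TP)_\delta^{M,y_0}$ whose restrictions to $(0,T(M,y_0))$ are separated by at least $\hat C_M\sqrt{(1-\eta)\delta}$ in $L^2(0,T(M,y_0);L^2(\Omega))$. The engine is the strict norm gap furnished by Theorem~\ref{theorem-optimality}: for $\delta\in\mathcal A_{M,\eta}$ one has $M-N_\delta(T_\delta(M,y_0),y_0)\ge\tfrac12\lambda_1^{3/2}r(1-\eta)\delta$, so the minimal-norm control leaves a definite slack in the admissibility budget $\|\cdot\|\le M$, and this slack can be spent on perturbations that keep the control optimal while displacing it inside the measuring window.

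Write $T^*_\delta\triangleq T_\delta(M,y_0)$ and $N^*_\delta\triangleq N_\delta(T^*_\delta,y_0)$. First I would choose $\delta_M\triangleq\delta_M(y_0,r)\le T(M,y_0)$ small enough that, for $\delta<\delta_M$, the first sampling interval $(0,\delta]$ sits strictly inside $(0,T(M,y_0))$; indeed $\delta<T(M,y_0)$ forces $k_\delta\ge1$ in the representation $T(M,y_0)=(k_\delta+a_\delta)\delta$ of Theorem~\ref{theorem-optimality}. Next, let $v^*_\delta$ be the optimal control of $(NP)_\delta^{T^*_\delta,y_0}$ (Theorem~\ref{Lemma-NP-bangbang}) and let $u^*_\delta$ be its zero extension to $\mathbb R^+$; by the argument in (iii) of Theorem~\ref{Lemma-existences-TP}, $u^*_\delta$ is an optimal control of $(TP)_\delta^{M,y_0}$ with $\|u^*_\delta\|_{L^2(\mathbb R^+;L^2(\Omega))}=N^*_\delta$, and on $(0,T^*_\delta)$ it is supported in $\omega$ by (\ref{PMP-Ndelta-0}). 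Finally I would pick a nonzero \emph{control-invisible} perturbation: a sampled control $w=\chi_{(0,\delta]}w^1$ with $w^1\in L^2(\Omega)$, $w^1\neq0$ and $\chi_\omega w^1=0$ (such $w^1$ exists since $\omega$ is a proper subdomain, i.e. $|\Omega\setminus\omega|>0$). The requirement $\chi_\omega w\equiv0$ serves two purposes: $w$ enters (\ref{heat-2}) only through $\chi_\omega$, so $y(\cdot;y_0,u^*_\delta\pm\rho w)=y(\cdot;y_0,u^*_\delta)$ for every $\rho$; and $u^*_\delta$ is supported in $\omega$ while $w$ is supported in $\Omega\setminus\omega$ on $(0,\delta]$, whence $\langle u^*_\delta,w\rangle_{L^2(\mathbb R^+;L^2(\Omega))}=0$.

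With these choices I would set $u^{\pm}_\delta\triangleq u^*_\delta\pm\rho w$ with $\rho$ fixed by $\rho\|w\|_{L^2(\mathbb R^+;L^2(\Omega))}=\sqrt{M^2-(N^*_\delta)^2}$. By control-invisibility, $y(T^*_\delta;y_0,u^{\pm}_\delta)=y(T^*_\delta;y_0,u^*_\delta)\in B_r(0)$, and by orthogonality $\|u^{\pm}_\delta\|^2=(N^*_\delta)^2+\rho^2\|w\|^2=M^2$, so $u^{\pm}_\delta\in\mathcal U^M_\delta$; hence by Definition~\ref{wgsdefinition1.1} both $u^+_\delta$ and $u^-_\delta$ are optimal controls of $(TP)_\delta^{M,y_0}$, and their restrictions lie in $\mathcal O_{M,\delta}$. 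Since $w$ is supported on $(0,\delta]\subset(0,T(M,y_0))$, its norm over $(0,T(M,y_0))$ equals its norm over $\mathbb R^+$, so
\[
\mathrm{diam}\,\mathcal O_{M,\delta}\ \ge\ \|u^+_\delta-u^-_\delta\|_{L^2(0,T(M,y_0);L^2(\Omega))}=2\rho\|w\|_{L^2(\mathbb R^+;L^2(\Omega))}=2\sqrt{M^2-(N^*_\delta)^2}.
\]
Then I would bound $M^2-(N^*_\delta)^2=(M-N^*_\delta)(M+N^*_\delta)\ge M\,(M-N^*_\delta)\ge\tfrac12 M\lambda_1^{3/2}r(1-\eta)\delta$ via Theorem~\ref{theorem-optimality}, yielding (\ref{0910-op-lower}) with $\hat C_M=\sqrt{2M\lambda_1^{3/2}r}$.

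The main obstacle, and the step deserving most care, is verifying that $u^{\pm}_\delta$ are optimal controls in the exact sense of Definition~\ref{wgsdefinition1.1}: the budget $\|\cdot\|\le M$ must hold on all of $\mathbb R^+$ (which is why I perturb the zero extension $u^*_\delta$, so that $\|u^*_\delta\|_{L^2(\mathbb R^+;L^2(\Omega))}=N^*_\delta$ exactly), and the perturbation must be felt \emph{inside} the window $(0,T(M,y_0))$ rather than only on the truncated last subinterval (which is why $w$ is placed on the full interval $(0,\delta]$ and $\delta_M$ is taken below $T(M,y_0)$). The orthogonality and control-invisibility are precisely what make the two identities for the final state and for the norm \emph{exact}, thereby converting the scalar gap of Theorem~\ref{theorem-optimality} into a diameter bound; securing a nonzero $w$ is immediate from $|\Omega\setminus\omega|>0$, and more generally one may instead take any nonzero sampled $w$ supported on the full intervals inside $(0,T(M,y_0))$, lying in the kernel of the control-to-final-state map and orthogonal to $u^*_\delta$, whose existence is guaranteed once $k_\delta\ge2$ because that kernel is infinite dimensional.
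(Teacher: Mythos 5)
Your construction is correct in substance and takes a genuinely different, and considerably simpler, route than the paper. The paper perturbs the minimal-norm optimal control $u^*_\delta$ by a general time-localized direction $\hat v_\delta$ (conditions (\ref{0910-op-lower-1})) whose final-state effect $y(T_\delta(M,y_0);0,\hat v_\delta)$ need not vanish; it must therefore characterize optimality of $\alpha u^*_\delta+\beta\hat v_\delta$ through the quadratic system (\ref{0910-op-lower-5})--(\ref{0910-op-lower-6}), extract a sub-family $\mathcal O^2_{M,\delta}$, and control the constants $a_\delta$, $c_\delta$, $M_\delta$, $\|z^*_\delta\|$ via Theorems \ref{Lemma-minizer-P-L2-uni-bdd}, \ref{Proposition-NP-Lip-T} and \ref{theorem-NP-delta-error}. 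You instead take the perturbation $w$ in the kernel of the control-to-state map (spatially supported in $\Omega\setminus\omega$), which makes both the final-state identity and the Pythagorean norm identity exact; the quadratic constraints collapse, and the diameter bound follows in a few lines from the slack $M-N_\delta(T_\delta(M,y_0),y_0)\ge\tfrac12\lambda_1^{3/2}r(1-\eta)\delta$ of Theorem \ref{theorem-optimality}, with the clean constant $\hat C_M=\sqrt{2M\lambda_1^{3/2}r}$ and none of the Section 5 machinery. In effect you choose the paper's $\hat v_\delta$ with $a_\delta=b_\delta=0$, a degenerate case the paper's framework allows but does not exploit; what your approach buys is brevity and an explicit constant, while the paper's argument works verbatim without any properness assumption on $\omega$.

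Two details need attention. First, your appeal to (\ref{PMP-Ndelta-0}) for the spatial support of $u^*_\delta$ in $\omega$ requires $(\delta,T_\delta(M,y_0)/\delta)\in\mathcal P_{T^*_{y_0}}$, hence also $T_\delta(M,y_0)<T^*_{y_0}$, which your choice $\delta_M\le T(M,y_0)$ alone does not guarantee; either shrink $\delta_M$ to $\min\{T(M,y_0),T^*_{y_0}-T(M,y_0)\}$ (using $T_\delta(M,y_0)=T(M,y_0)+(1-a_\delta)\delta<T(M,y_0)+\delta$ from Theorem \ref{theorem-optimality}), or avoid the citation altogether by replacing $u^*_\delta$ with $\chi_\omega u^*_\delta$, which has the same trajectory and no larger norm, hence is again optimal and is supported in $\omega$ by construction. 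Second, your primary construction needs $|\Omega\setminus\omega|>0$, which the paper's standing hypotheses ($\omega\subset\Omega$ open and nonempty) do not formally impose; your fallback does cover $\omega=\Omega$, but the infinite-dimensionality of the kernel should be substantiated rather than asserted, e.g. by the explicit family $w=\chi_{(0,\delta]}w^1-\chi_{(\delta,2\delta]}e^{\Delta\delta}w^1$, $w^1\in L^2(\Omega)$, which lies in the kernel of the control-to-final-state map when $\omega=\Omega$, is parametrized by an infinite-dimensional space (so orthogonality to $u^*_\delta$, a single linear constraint, still leaves nonzero choices), and sits inside $(0,T(M,y_0))$ once $\delta_M\le T(M,y_0)/2$. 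With these repairs the argument is complete.
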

\begin{proof}
Arbitrarily fix $y_0\in L^2(\Omega)\setminus B_r(0)$ and $M>0$. Let  $\delta_0>0$ be given by (i) of Theorem \ref{Theorem-time-order} with $M_1=M/2$ and $M_2=M$. From (i) and (ii) of Theorem \ref{Theorem-eq-cont-discrete}, we see that $0<T(M,y_0)<T^*_{y_0}$. Thus we can take a positive number $\delta_1$ in the following manner:
\begin{eqnarray}\label{0910-op-lower-00}
\delta_1\triangleq \min\{\delta_0,T(M,y_0)/2,(T^*_{y_0}-T(M,y_0))/2\}.
\end{eqnarray}
 Arbitrarily fix $\eta\in(0,1)$ and $\delta\in\mathcal A_{M,\eta}\cap(0,\delta_1)$.
  {\it From now on and throughout the proof of Lemma~\ref{0912-diam-O},
  We simply write $T^*$ and $T^*_\delta$ for $T(M,y_0)$ and  $T_\delta(M,y_0)$ respectively; simply write $L^2(0,T^*)$ and $L^2(0,T^*_\delta)$ for $L^2(0,T^*;L^2(\Omega))$ and $L^2(0,T^*_\delta;L^2(\Omega))$
  respectively.}

From (\ref{0910-op-lower-00}) and Theorem \ref{Theorem-time-order}, we see that
\begin{eqnarray}\label{0910-op-lower-0}
 2\delta< T^*\leq T^*_\delta
 \leq T^*+2\delta<T^*_{y_0}.
\end{eqnarray}
Meanwhile, it follows from the second conclusion in (\ref{ineqs-optimality})
in Theorem~\ref{theorem-optimality} that
\begin{eqnarray}\label{0910-op-lower-000}
 M_\delta\triangleq N_\delta(T^*_\delta,y_0)
   \leq M - \frac{1}{2} \lambda_1^{3/2} r (1-\eta)\delta.
\end{eqnarray}

   To show (\ref{0910-op-lower}), it suffices to find a subset $\mathcal O^2_{M,\delta}\subset \mathcal O_{M,\delta}$ so that for some $\hat C_M\triangleq
   C_M(y_0,r)$,
    $\hat C_M \sqrt{(1-\eta)\delta}$ is a lower bound for the "diam\,$\mathcal O^2_{M,\delta}$". To this end, we first introduce an affiliated subset  $\mathcal O_{M,\delta}^1\subset \mathcal O_{M,\delta}$ in the following manner:
     Let $u^*_\delta$ be the optimal control with the minimal norm to $(TP)^{M,y_0}_\delta$ (see (iii) of Theorem \ref{Lemma-existences-TP}). Arbitrarily fix  $\hat v_\delta\in L^2_\delta(0,T^*_\delta)$  so that
\begin{eqnarray}\label{0910-op-lower-1}
\begin{cases}
 \mbox{supp}\, \hat v_\delta \subset(0,T(M,y_0)),~
 \langle \hat v_\delta, u^*_\delta \rangle_{L^2(0,T^*_\delta)}=0,
 \\
 \|\hat v_\delta\|_{L^2(0,T^*_\delta;L^2(\Omega))}=1,~
 \langle y(T^*_\delta;0,u^*_\delta),y(T^*_\delta;0,\hat v_\delta) \rangle \leq 0.
 \end{cases}
\end{eqnarray}
(The existence of such $\hat v_\delta$ can be easily verified.)
Define $\mathcal O_{M,\delta}^1$ to be the set of all solutions $u_\delta$  to the following problem:
\begin{eqnarray}\label{0910-op-lower-3}
   u_\delta=\alpha u^*_\delta+\beta \hat v_\delta,~\alpha,\,\beta\in\mathbb R;\;\;
  \|u_\delta\|_{L^2(0,T^*_\delta)} \leq M;\;\;
  \|y(T^*_\delta;y_0,u_\delta)\|\leq r.
  \end{eqnarray}
From (\ref{0910-op-lower-3}), we see that $\mathcal O_{M,\delta}^1\subset \mathcal O_{M,\delta}$.

We next  characterize  elements in $\mathcal O_{M,\delta}^1$ via studying
the problem (\ref{0910-op-lower-3}).
To this end, we first claim
\begin{eqnarray}\label{0910-op-lower-4}
 \begin{cases}
  \|u_\delta^*\|_{L^2(0,T^*_\delta)} =N_\delta(T^*_\delta,y_0)\triangleq M_\delta,
  \\
  \|y(T^*_\delta;y_0,u_\delta^*)\|=r,
  \\
  \langle y(T^*_\delta;y_0,u^*_\delta),y(T^*_\delta;0,u^*_\delta) \rangle
  =-\frac{r M_\delta^2}{\|z_\delta^*\|},
  \\
  \langle y(T^*_\delta;y_0,u^*_\delta),y(T^*_\delta;0,\hat v_\delta) \rangle
  =0,
 \end{cases}
\end{eqnarray}
where $z^*_\delta$ denotes the minimizer of $(JP)_\delta^{T^*_\delta,y_0}$.
 Indeed, the first equality in (\ref{0910-op-lower-4})  follows from (iii) of Theorem \ref{Lemma-existences-TP}; To show the second one, two facts are given in order. Fact one:
   From (iii) of Theorem \ref{Lemma-existences-TP}, we see that the restriction of $u^*_\delta$
 over $(0,T^*_\delta)$, denoted in the same manner,
 is an optimal control to $(NP)^{T^*_\delta,y_0}_\delta$.
 Fact two: By (\ref{0910-op-lower-0}) and (\ref{P-T*}), we find that $(\delta, T^*_\delta/\delta)\in  \mathcal P_{T^*_{y_0}}$. By these two facts, we
 can use (\ref{op-control-final-state}) in
 Theorem~\ref{Lemma-NP-bangbang} (with $(\delta,k)=(\delta, T^*_\delta/\delta)$) to obtain the second equality in (\ref{0910-op-lower-4}); To show  the third equality in (\ref{0910-op-lower-4}), we recall the above two facts. Then we can apply (ii) in Theorem~\ref{Lemma-NP-bangbang} (with $(\delta,k)=(\delta, T^*_\delta/\delta)$) to get that
  \begin{eqnarray}\label{0910-op-lower-4-1}
 & &\langle y(T^*_\delta;y_0,u^*_\delta),y(T^*_\delta;0,u^*_\delta) \rangle
 =\langle -r \frac{z^*_\delta}{\|z^*_\delta\|}, y(T^*_\delta;0,u^*_\delta) \rangle
 \nonumber\\
 &=&-\frac{r}{\|z^*_\delta\|} \langle \chi_\omega \varphi(\cdot;T^*_\delta,z^*_\delta), u^*_\delta(\cdot)\rangle_{L^2(0,T^*_\delta)}
 \nonumber\\
 &=&-\frac{r}{\|z^*_\delta\|} \langle \chi_\omega \overline\varphi_\delta(\cdot;T^*_\delta,z^*_\delta), u^*_\delta(\cdot)\rangle_{L^2(0,T^*_\delta)}
 =-\frac{r}{\|z^*_\delta\|}\|u^*_\delta\|^2_{L^2(0,T^*_\delta)}.
\end{eqnarray}
(The  first equality on the last line of (\ref{0910-op-lower-4-1}) is obtained by the same way as that used to show  (\ref{wang4.30}).) Then the third equality in (\ref{0910-op-lower-4}) follows from (\ref{0910-op-lower-4-1}) and the first equality in (\ref{0910-op-lower-4}) at once; To show the last equality in (\ref{0910-op-lower-4}),
 we still recall the above two facts (given in the proof of the second equality in (\ref{0910-op-lower-4})). Then we can apply (ii) in Theorem~\ref{Lemma-NP-bangbang}
  (with $(\delta,k)=(\delta, T^*_\delta/\delta)$) to see that
\begin{eqnarray}\label{0910-op-lower-4-2}
 & &\langle y(T^*_\delta;y_0,u^*_\delta),y(T^*_\delta;0,\hat v_\delta) \rangle
 = \langle -r \frac{z^*_\delta}{\|z^*_\delta\|}, y(T^*_\delta;0,\hat v_\delta)\rangle
 \nonumber\\
 &=&-\frac{r}{\|z^*_\delta\|}\langle \chi_\omega \varphi(\cdot;T^*_\delta,z^*_\delta), \hat v_\delta(\cdot) \rangle_{L^2(0,T^*_\delta)}
 \nonumber\\
 &=&
 -\frac{r}{\|z^*_\delta\|}\langle \chi_\omega \overline\varphi_\delta(\cdot;T^*_\delta,z^*_\delta), \hat v_\delta(\cdot) \rangle_{L^2(0,T^*_\delta)}
  =-\frac{r}{\|z^*_\delta\|}\langle u^*_\delta(\cdot), \hat v_\delta(\cdot) \rangle_{L^2(0,T^*_\delta)}.
\end{eqnarray}
(The first  equality on the last line in (\ref{0910-op-lower-4-2}) is obtained by the same way as that used to show  (\ref{wang4.30}).)
From (\ref{0910-op-lower-4-2}) and  (\ref{0910-op-lower-1}), we are led to the last equality in (\ref{0910-op-lower-4}). Hence,  (\ref{0910-op-lower-4}) has been proved.

With the aid of (\ref{0910-op-lower-4}), we can characterize elements $u_\delta$ of  $\mathcal O_{M,\delta}^1$ as follows:
 \begin{eqnarray}\label{0910-op-lower-5}
 \left\{
 \begin{array}{l}
  u_\delta=\alpha u^*_\delta+\beta \hat v_\delta,~\alpha,\,\beta\in\mathbb R,\\
  \alpha^2 M_\delta^2+\beta^2 \leq M^2,\\
   a_\delta^2\beta^2+2(\alpha-1)b_\delta \beta \leq 2(\alpha-1) \frac{rM_\delta^2}{\|z^*_\delta\|} - (\alpha-1)^2 c_\delta^2,
 \end{array}
 \right.
\end{eqnarray}
where the pair $(a_\delta,b_\delta,c_\delta)$ is given by
\begin{eqnarray}\label{0910-op-lower-6}
 \left\{
 \begin{array}{l}
  a_\delta \triangleq \|y(T^*_\delta;0,\hat v_\delta)\|,\\
  b_\delta \triangleq  \langle y(T^*_\delta;0,u^*_\delta),y(T^*_\delta;0,\hat v_\delta) \rangle,\\
  c_\delta \triangleq \|y(T^*_\delta;0,u^*_\delta)\|.
 \end{array}
 \right.
\end{eqnarray}
Indeed,  for each $u_\delta=\alpha u^*_\delta+\beta \hat v_\delta$, with $\alpha, \beta\in \mathbb{R}$, we have that
\begin{eqnarray*}
y(T^*_\delta;y_0,u_\delta)=y(T^*_\delta;y_0,u^*_\delta)
+(\alpha-1)y(T^*_\delta;y_0,u^*_\delta)
+\beta y(T^*_\delta;0,\hat v_\delta).
\end{eqnarray*}
Thus,  from  (\ref{0910-op-lower-4}), (\ref{0910-op-lower-1}) and
 (\ref{0910-op-lower-3}), we can easily verify that  $u_\delta$ is a solution to the problem
 (\ref{0910-op-lower-3})
 if and only if  $u_\delta$ is a solution to the problem   (\ref{0910-op-lower-5}).

We now on the position to introduce the desired subset $(\mathcal{O})_{M,\delta}^2$.
 Define a number $\hat\lambda$ by
\begin{eqnarray}\label{0910-op-lower-7}
 \hat\lambda \triangleq
 \min \left\{ \frac{rM_\delta^3}{\|z_\delta^*\| c_\delta^2(M-M_\delta)}, \frac{1}{2} \right\}.
\end{eqnarray}
(Notice that since $\delta\in\mathcal A_{M,\eta}$, it follows from (\ref{ineqs-optimality})
in Theorem~\ref{theorem-optimality} that $M>M_\delta$.)
Let $(\mathcal{O})_{M,\delta}^2$ be the set of solutions $u_\delta$ to the following problem:
\begin{eqnarray}\label{0910-op-lower-8}
  \left\{
 \begin{array}{l}
 u_\delta=\alpha u^*_\delta+\beta \hat v_\delta,~
  \alpha= 1+ \hat \lambda \frac{M-M_\delta}{M_\delta},~\beta>0,\\
  \beta^2 \leq M(1-\hat\lambda)(M-M_\delta),\\
  a_\delta^2 \beta^2 \leq  \frac{\hat \lambda rM_\delta(M-M_\delta)}{\|z^*_\delta\|}.
 \end{array}
 \right.
 \quad\quad
\end{eqnarray}
We claim that
\begin{eqnarray}\label{0910-op-lower-9}
 (\mathcal{O})_{M,\delta}^2 \subset (\mathcal{O})_{M,\delta}^1\subset (\mathcal{O})_{M,\delta}.
\end{eqnarray}
Since the second conclusion in (\ref{0910-op-lower-9}) has been proved, we only need to show the first one. Arbitrarily fix $\hat u_\delta\triangleq\hat \alpha u^*_\delta+\hat\beta \hat v_\delta\in (\mathcal{O})_{M,\delta}^2$. We will show that $(\hat u_\delta, \hat\alpha,\hat\beta)$ satisfies (\ref{0910-op-lower-5}). Since $\hat\lambda\in(0,1)$ and $M>M_\delta$ (see (\ref{0910-op-lower-7}) and (\ref{0910-op-lower-000})),
it follows from (\ref{0910-op-lower-8})  that
\begin{eqnarray*}
 \hat \alpha^2 M_\delta^2 + \hat\beta^2
 &\leq& \big(M_\delta+\hat\lambda (M-M_\delta) \big)^2 + M(1-\hat\lambda)(M-M_\delta)
 \nonumber\\
 &=& M^2 - (1-\hat\lambda)(M-M_\delta) \big(M -(1-\hat\lambda) (M-M_\delta) \big)
 \leq  M^2.
\end{eqnarray*}
Meanwhile, since $b_\delta \leq 0$ (see (\ref{0910-op-lower-6})
and (\ref{0910-op-lower-1})), we find from (\ref{0910-op-lower-8}) and (\ref{0910-op-lower-7}) that
\begin{eqnarray*}
 a_\delta^2 \hat\beta^2+2(\alpha-1)b_\delta \hat\beta
 &\leq& a_\delta^2 \hat\beta^2 \leq \frac{\hat \lambda rM_\delta(M-M_\delta)}{\|z^*_\delta\|}
 \nonumber\\
 &=& (\hat\alpha-1) \frac{rM_\delta^2}{\|z^*_\delta\|}
 \leq 2(\hat\alpha-1) \frac{rM_\delta^2}{\|z^*_\delta\|} - (\hat\alpha-1)^2 c_\delta^2.
\end{eqnarray*}
From these, we see that  $(\hat u_\delta,\hat\alpha,\hat\beta)$ verifies (\ref{0910-op-lower-5}). Hence,  (\ref{0910-op-lower-9}) is true.
By (\ref{0910-op-lower-9}), (\ref{0910-op-lower-8}) and (\ref{0910-op-lower-1}), we find that
\begin{eqnarray}\label{0910-op-lower-10}
 \mbox{diam}\,(\mathcal{O})_{M,\delta}
 \geq \sup\{\|u_\delta-u^*_\delta\|_{L^2(0,T^*)}
 : u_\delta\in (\mathcal{O})_{M,\delta}^2\}
 \geq \beta \|\hat v_\delta\|_{L^2(0,T^*)}
 =\beta,
\end{eqnarray}
when $\beta$  satisfies that
\begin{eqnarray*}
0<\beta^2 \leq \min\Big\{ M(1-\hat\lambda)(M-M_\delta), \frac{\hat \lambda rM_\delta(M-M_\delta)}{a_\delta^2\|z^*_\delta\|}
\Big\}.
\end{eqnarray*}
(Here, we agree that $\frac{1}{0}\triangleq \infty$.)
Then by (\ref{0910-op-lower-10}) and (\ref{0910-op-lower-7}),  we get that
\begin{eqnarray}\label{0910-op-lower-11}
 \mbox{diam}\,(\mathcal{O})_{M,\delta}
 \geq C_{M,\delta} \min\{\sqrt{M-M_\delta},1\},
\end{eqnarray}
where $C_{M,\delta}$ is defined by
\begin{eqnarray}\label{0910-op-lower-12}
 C_{M,\delta} \triangleq C_{M,\delta}(y_0,r)\triangleq
 \min \Big\{
 \sqrt{\frac{M}{2}}, \frac{r M_\delta^2}{a_\delta c_\delta \|z^*_\delta\|}, \sqrt{\frac{r M_\delta}{2a_\delta^2\|z^*_\delta\| }}
 \Big\}.
\end{eqnarray}
To get a lower bound of $C_{M,\delta}$ w.r.t. $\delta$, we first present the following inequalities (their proofs will be given at the end of the proof of this lemma):
\begin{eqnarray}\label{0910-op-lower-13}
a_\delta \leq \frac{1}{\sqrt{2\lambda_1}};\; c_\delta \leq \frac{M_\delta}{\sqrt{2\lambda_1}};\;M_\delta\geq M-2 e^{C_3(1+\frac{1}{T^*})}\|y_0\| \delta;\;\|z^*_\delta\|\leq e^{C_1(1+\frac{1}{T^*})}\|y_0\|^{4} r^{-3},
\end{eqnarray}
where $C_3$ and $C_1$ are given by Theorem \ref{Proposition-NP-Lip-T} and (i) of Theorem \ref{Lemma-minizer-P-L2-uni-bdd}, respectively.
We next define
\begin{eqnarray}\label{0910-op-lower-14}
 \delta_M\triangleq\delta_M(y_0,r) \triangleq
 \min \left\{\delta_1, \frac{1}{4} Me^{-C_3(1+\frac{1}{T(M,y_0)})} \|y_0\|^{-1},
  \lambda_1^{-3/2}r^{-1}
 \right\},
\end{eqnarray}
where $\delta_1$ is given by (\ref{0910-op-lower-00}). From (\ref{0910-op-lower-13}) and (\ref{0910-op-lower-14}), we have that
\begin{eqnarray*}
M_\delta\geq M/2\;\;\mbox{for each}\;\;\delta\in \mathcal A_{M,\eta} \cap (0,\delta_M).
\end{eqnarray*}
 This, along with (\ref{0910-op-lower-12}) and (\ref{0910-op-lower-13}), yields that for each $\delta\in \mathcal A_{M,\eta} \cap (0,\delta_M)$,
\begin{eqnarray*}
 C_{M,\delta}
 &\geq& \min \Big\{
 \sqrt{\frac{M}{2}}, \frac{2\lambda_1 r M_\delta}{ \|z^*_\delta\|}, \sqrt{\frac{\lambda_1 r M_\delta}{\|z^*_\delta\| }}
 \Big\}
 \geq  \min \Big\{
 \sqrt{\frac{M}{2}}, \frac{\lambda_1 r M}{ \|z^*_\delta\|}, \sqrt{\frac{\lambda_1 r M}{2\|z^*_\delta\| }}
 \Big\}.
\end{eqnarray*}
By this and the last inequality in (\ref{0910-op-lower-13}), we can find  $C_M^\prime\triangleq C_M^\prime(y_0,r)>0$ so that
$C_{M,\delta} \geq C_M^\prime$, when  $\delta\in \mathcal A_{M,\eta} \cap (0,\delta_M)$.
({\it Hence, $C_M^\prime$ is a lower bound for $C_{M,\delta}$ w.r.t. $\delta$.})
This, along with (\ref{0910-op-lower-11}), (\ref{0910-op-lower-000}) and (\ref{0910-op-lower-14}), yields that for each $\delta\in \mathcal A_{M,\eta} \cap (0,\delta_M)$,
\begin{eqnarray*}
 \mbox{diam}\,(\mathcal{O})_{M,\delta}
 \geq C_{M}^\prime \min\Big\{\sqrt{\frac{1}{2} \lambda_1^{3/2} r (1-\eta)\delta},1 \Big\}
 = \Big(\frac{1}{\sqrt{2}} C_{M}^\prime \lambda_1^{3/4} \sqrt{r} \Big) \sqrt{(1-\eta)\delta}.
\end{eqnarray*}
By the above and (\ref{0910-op-lower-14}), we obtain (\ref{0910-op-lower}), with $\hat C_M=\frac{1}{\sqrt{2}} C_{M}^\prime \lambda_1^{3/4} \sqrt{r}$.

Finally, we show (\ref{0910-op-lower-13}). By  the H\"{o}lder inequality, (\ref{0910-op-lower-6}) and (\ref{0910-op-lower-1}), we find that
\begin{eqnarray*}
 a_\delta \leq \int_0^{T^*_\delta} \|e^{\Delta (T^*_\delta-t)}\| \|\hat v_\delta(t)\| \,\mathrm dt
 \leq  \int_0^{T^*_\delta} e^{-\lambda_1(T^*_\delta-t)} \|\hat v_\delta(t)\| \,\mathrm dt
 \leq 1/\sqrt{2\lambda_1}.
\end{eqnarray*}
Similarly,  from (\ref{0910-op-lower-6}) and (\ref{0910-op-lower-4}), we can obtain  the estimate for $c_\delta$ in (\ref{0910-op-lower-13}). We now show the third inequality in (\ref{0910-op-lower-13}). By (\ref{0910-op-lower-0}), we have that
 \begin{eqnarray}\label{YUyongyu6.66}
 0<T^*<T^*_{y_0},\;\; (\delta, T^*_\delta/\delta)\in \mathcal{P}_{y^*_{y_0}}\;\;\mbox{and}\;\; 0<T^*_\delta-T^*<2\delta.
 \end{eqnarray}
 From the first two conclusions in (\ref{YUyongyu6.66}), we can apply  (ii) of Theorem \ref{Theorem-eq-cont-discrete} and the first inequality in (\ref{NP-delta-error-0}) in Theorem~\ref{theorem-NP-delta-error} (with
 $(\delta,k)=(\delta, T^*_\delta/\delta)$), as well as (\ref{0910-op-lower-000}), to get that
 \begin{eqnarray}\label{YUyongyu6.67}
 M-M_\delta=N(T^*,y_0)-N_\delta(T^*_\delta,y_0)
 \leq
 N(T^*,y_0)-N(T^*_\delta,y_0).
\end{eqnarray}
From (\ref{YUyongyu6.67}),  the second inequality in (\ref{NP-Lip-T}) in Theorem~\ref{Proposition-NP-Lip-T}, with $T_1=T^*$ and $T_2=T^*_\delta$, (Notice that $T^*_\delta>T^*$.)
 and the last inequality in (\ref{YUyongyu6.66}), we can easily derive
 the   last inequality in (\ref{0910-op-lower-13}). Hence, (\ref{0910-op-lower-13}) is true. This ends the proof of Lemma~\ref{0912-diam-O}.
\end{proof}

We are now on the position to prove Theorem \ref{Theorem-time-control-convergence-1}.

\begin{proof}[Proof of Theorem \ref{Theorem-time-control-convergence-1}]
Arbitrarily fix $y_0\in L^2(\Omega)\setminus B_r(0)$.
For each $M>0$ and $\delta>0$, we let $u_M^*$ and $u^*_{\delta,M}$ be the  optimal control and the  optimal control with the minimal norm to $(TP)^{M,y_0}$
and $(TP)^{M,y_0}_\delta$ respectively (see Theorem~\ref{Lemma-existences-TP}).
{\it Throughout the proof of Theorem \ref{Theorem-time-control-convergence-1},
we  simply write respectively $T^*$ and $T^*_\delta$ for $T(M,y_0)$ and $T_\delta(M,y_0)$;
simply write $L^2(0,T^*)$ and $L^2(0,T^*_\delta)$ for $L^2(0,T^*;L^2(\Omega))$ and $L^2(0,T^*_\delta;L^2(\Omega))$ respectively}.
 We will  prove the conclusions (i)-(ii) of Theorem \ref{Theorem-time-control-convergence-1} one by one.

  (i) Arbitrarily fix $M_1$ and $M_2$, with  $0<M_1<M_2$. Then arbitrarily fix $M\in [M_1,M_2]$.
   For each $\delta>0$, there are only two possibilities:  either (\ref{WWGSS6.34}) or (\ref{0322-th1.4-delta}) holds. In the case when $\delta$ verifies (\ref{WWGSS6.34}), we can obtain (\ref{TP-control-converge-1}) by the  similar way to that used to show (\ref{TP-control-converge-0}). We next consider the case that $\delta$ satisfies (\ref{0322-th1.4-delta}).
   Recall (\ref{geometic-OMdelta}) for the subset $\mathcal O_{M,\delta}$ (which  consists of all optimal controls to $(TP)_\delta^{M,y_0}$).
 Then it follows from  Definition \ref{wgsdefinition1.1} that
 \begin{eqnarray}\label{0219-error-control-02}
  \|u_\delta^*\|_{L^2(0,T_\delta^*)}
  \leq \|v_\delta\|_{L^2(0,T_\delta^*)}\leq M
  \;\;\mbox{for each}\;\;
  v_\delta \in \mathcal O_{M,\delta}.
 \end{eqnarray}
 Arbitrarily fix  $v_\delta\in \mathcal O_{M,\delta}$. One can directly check that
 \begin{eqnarray*}
 \lambda v_\delta + (1-\lambda) u_\delta^*\in\mathcal O_{M,\delta}\;\;\mbox{for each}\;\;
 \lambda\in(0,1).
 \end{eqnarray*}
   From this and (\ref{0219-error-control-02}), we find that for each $\lambda\in(0,1)$,
 \begin{eqnarray*}
  \|u_\delta^*\|_{L^2(0,T_\delta^*)}^2
  &\leq& \|\lambda (v_\delta-u^*_\delta)+u_\delta^*\|_{L^2(0,T_\delta^*)}^2
  \nonumber\\
  &=&\|u_\delta^*\|_{L^2(0,T_\delta^*)}^2+2\lambda \langle v_\delta-u_\delta^*, u_\delta^* \rangle_{L^2(0,T_\delta^*)}
   + \lambda^2 \|v_\delta-u_\delta^*\|_{L^2(0,T_\delta^*)}^2.
 \end{eqnarray*}
 Dividing the above by $\lambda$ and then sending $\lambda\rightarrow\infty$, we obtain that
\begin{eqnarray*}
\langle u_\delta^*, u_\delta^*\rangle_{L^2(0,T_\delta^*)}
   \leq \langle v_\delta, u_\delta^* \rangle_{L^2(0,T_\delta^*)}.
   \end{eqnarray*}
  From this, (\ref{0219-error-control-02}) and (\ref{0322-th1.2-udelta}) (as well as (\ref{0322-th1.4-s1-ii})), one can directly check that
 \begin{eqnarray}\label{0219-error-control-03}
  \|v_\delta-u_\delta^*\|^2_{L^2(0,T_\delta^*)}
  \leq 2M(M-N_\delta(T_\delta^*,y_0))\triangleq 2M(M-M_\delta).
 \end{eqnarray}
 (Here, we used the fact that $M\geq N_\delta(T^*_\delta,y_0)$, which follows from
 (iii) of Theorem~\ref{Lemma-existences-TP}.)
Hence,  from (\ref{0219-error-control-Step2}), (\ref{0219-error-control-03}) and (\ref{0322-th1.4-s1-i}), we find that
 \begin{eqnarray*}\label{0219-error-control-Step3}
  & & \|u^* - u_\delta\|_{L^2(0,T^*)}
  \leq \|u^* - u_\delta^*\|_{L^2(0,T^*)}
  +\|u^*_\delta - u_\delta\|_{L^2(0,T^*)}
  \nonumber\\
  &\leq& \big[C_4(M_1,M_2,y_0,r)+2M_2C_1(M_2,y_0)\big]\delta
  \triangleq C_5(M_1,M_2,y_0,r)\delta,
 \end{eqnarray*}
 where $C_1(M_2,y_0)$ and $C_4(M_1,M_2,y_0,r)$ are respectively given by (\ref{0322-th1.4-s1-i}) and (\ref{0219-error-control-Step2}).  The continuity of $C_5(M_1,M_2,y_0,r)$
 follows from the continuity of $C_1(M_2,y_0)$ and $C_4(M_1,M_2,y_0,r)$.
 This ends the proof of the conclusion (i) of Theorem \ref{Theorem-time-control-convergence-1}.

 \vskip 5pt
 (ii) We mainly use  Lemma \ref{0912-diam-O} to prove (\ref{TP-control-converge-1-1}).  Arbitrarily fix $M>0$ and $\eta\in (0,1)$. Let $\mathcal A_{M,\eta}$ be given by Theorem \ref{theorem-optimality}. Let $\hat C_M$ and $\delta_M$ be given by Lemma \ref{0912-diam-O}. Arbitrarily fix $\delta\in \mathcal A_{M,\eta}\cap (0,\delta_M)$. We claim that there is $\hat u_{M,\delta} \in \mathcal O_{M,\delta}$ so that
 \begin{eqnarray}\label{0912-th3-op-1}
  \|\hat u_{M,\delta}-u_\delta^*\|_{L^2(0,T^*)} \geq \hat C_M\sqrt{(1-\eta)\delta}/3.
 \end{eqnarray}
 By contradiction, we suppose that it were not true. Then we would find that
 \begin{eqnarray*}
   \| v_{\delta}-u_\delta^*\|_{L^2(0,T^*)}
   \leq \hat C_M\sqrt{(1-\eta)\delta}/3,
   ~\forall\, v_{\delta} \in \mathcal O_{M,\delta}.
 \end{eqnarray*}
 This, along with the definition of $\mathcal O_{M,\delta}$ (see (\ref{0910-op-lower})), implies that
 \begin{eqnarray*}
  \mbox{diam}\,\mathcal O_{M,\delta}
  &\leq& \sup\{\|v_\delta^1-v_\delta^2\|_{L^2(0,T^*)}
  ~:~v_\delta^1,\,v_\delta^2\in \mathcal O_{M,\delta}\}
  \nonumber\\
  &\leq& \sup\{2\|v_\delta-u_\delta^*\|_{L^2(0,T^*)}
  ~:~v_\delta\in \mathcal O_{M,\delta}\}
  \nonumber\\
  &\leq& 2\hat C_M \sqrt{(1-\eta)\delta}/3,
 \end{eqnarray*}
  which contradicts Lemma \ref{0912-diam-O}. Thus, (\ref{0912-th3-op-1}) is true.

Now, we arbitrarily fix $\hat u_{M,\delta} \in \mathcal O_{M,\delta}$ satisfying
(\ref{0912-th3-op-1}). Then by (\ref{0912-th3-op-1}) and  by (i) of Theorem \ref{Theorem-time-control-convergence} (with $M_1=M/2$ and $M_2=M$), there is $C(M,y_0,r)>0$ so that
  \begin{eqnarray}\label{yuanhen6.72}
    \|\hat u_{M,\delta}-u^*\|_{L^2(0,T^*)}
   &\geq& \|\hat u_{M,\delta}-u_\delta^*\|_{L^2(0,T^*)}
   - \|u_\delta^*-u^*\|_{L^2(0,T^*)}
   \nonumber\\
   &\geq& \hat C_M\sqrt{(1-\eta)\delta}/3-C(M,y_0,r)\delta.
  \end{eqnarray}
  Write
  \begin{eqnarray}\label{yuanhen6.73}
  \delta_{M,\eta}\triangleq \min\big\{\delta_M,\big(\hat C_M/(6C(M,y_0,r))\big)^2(1-\eta)\big\};\;\;\hat{\mathcal A}_{M,\eta}\triangleq
  \mathcal A_{M,\eta}\cap (0,\delta_{M,\eta})
  \end{eqnarray}
     Then, one can easily check that
     \begin{eqnarray*}
     \lim_{h\rightarrow 0^+} \frac{1}{h} |\hat{\mathcal A}_{M,\eta}\cap (0,h)|=\eta.
      \end{eqnarray*}
      From (\ref{yuanhen6.72}), and (\ref{yuanhen6.73}), one can easily verify that
     \begin{eqnarray*}
     \|\hat u_{M,\delta}-u^*\|_{L^2(0,T^*)}
  \geq \hat C_M\sqrt{(1-\eta)\delta}/6\;\;\mbox{for each}\;\;\delta\in\hat{\mathcal A}_{M,\eta},
     \end{eqnarray*}
       which leads to (\ref{TP-control-converge-1-1}), with $C_M\triangleq \hat C_M/6$.
   This ends the proof of  Theorem \ref{Theorem-time-control-convergence-1}.
   \end{proof}

\subsection{Further discussions on the main results}

From (ii) of Theorem \ref{Theorem-time-order}, we see that when $\delta\in \mathcal A_{M,\eta}$, $T_\delta(M,y_0)-T(M,y_0)$ has a lower bound $(1-\eta)\delta$.
  The next Theorem~\ref{theorem6.2-wang} tells us that  when  $\delta\notin \mathcal A_{M,\eta}$,
  $(1-\eta)\delta$ will not be a lower bound for $T_\delta(M,y_0)-T(M,y_0)$.

\begin{theorem}\label{theorem6.2-wang}
Let $y_0\in L^2(\Omega)\setminus B_r(0)$. Then for each $M>0$, there is $k_0\in \mathbb{N}^+$ and  $\{\delta_k\}_{k=k_0}^\infty\subset \mathbb R^+$, with $\lim_{k\rightarrow\infty} \delta_k=0$,  so that when $k\geq k_0$,
\begin{eqnarray}\label{111111}
  T_{\delta_k}(M,y_0)-T(M,y_0)=C_M \delta^2_k\;\;\mbox{for some}\;\;C_M\triangleq C_M(y_0,r).
\end{eqnarray}
\end{theorem}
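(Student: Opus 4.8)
The plan is to construct, for a suitably chosen constant $C_M=C_M(y_0,r)>0$, a sequence $\delta_k\downarrow 0$ that makes the identity $T_{\delta_k}(M,y_0)-T(M,y_0)=C_M\delta_k^2$ hold \emph{by design}, and then to verify a posteriori that this choice forces the optimal sampled-data time to take exactly the prescribed value. Write $T^*\triangleq T(M,y_0)$; by (i)--(iii) of Theorem~\ref{Theorem-eq-cont-discrete} we have $0<T^*<T^*_{y_0}$, while $\mathcal U^M_\delta\subset\mathcal U^M$ gives $T^*\le T_\delta(M,y_0)$ for every $\delta$, and $T_\delta(M,y_0)$ is always a multiple of $\delta$ (see (\ref{time-2})). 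For each large integer $k$ I would define $\delta_k$ to be the smaller positive root of the quadratic $C_M\delta^2-(k+1)\delta+T^*=0$, namely $\delta_k=\big[(k+1)-\sqrt{(k+1)^2-4C_MT^*}\,\big]/(2C_M)$; this root is real and positive once $(k+1)^2>4C_MT^*$, satisfies $\delta_k\sim T^*/(k+1)\to0$, and by construction obeys $(k+1)\delta_k=T^*+C_M\delta_k^2$.

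The heart of the argument is to show that $(k+1)\delta_k$ is reachable at cost level $M$, i.e. $N_{\delta_k}\big((k+1)\delta_k,y_0\big)\le M$. Here I would combine the two norm estimates of Section~5. First, since $(k+1)\delta_k=T^*+C_M\delta_k^2>T^*$, the lower Lipschitz bound in Theorem~\ref{Proposition-NP-Lip-T} (with $T_1=T^*$, $T_2=(k+1)\delta_k$) yields $N\big((k+1)\delta_k,y_0\big)\le N(T^*,y_0)-\lambda_1^{3/2}r\,C_M\delta_k^2=M-\lambda_1^{3/2}r\,C_M\delta_k^2$, using $N(T^*,y_0)=M$ from (ii) of Theorem~\ref{Theorem-eq-cont-discrete}. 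Second, for $k$ large we have $(\delta_k,k+1)\in\mathcal P_{T^*_{y_0}}$ (as $2\delta_k\le(k+1)\delta_k<T^*_{y_0}$), so Theorem~\ref{theorem-NP-delta-error} gives $N_{\delta_k}\big((k+1)\delta_k,y_0\big)-N\big((k+1)\delta_k,y_0\big)\le \Gamma_k\delta_k^2$, where $\Gamma_k=e^{C_4[1+T^*_{y_0}+1/((k+1)\delta_k)+1/(T^*_{y_0}-(k+1)\delta_k)]}\|y_0\|^{12}r^{-11}$. Since $(k+1)\delta_k\to T^*\in(0,T^*_{y_0})$, the exponent stays bounded, so $\Gamma_k\to\Gamma_\infty$ and there is $k_0$ with $\Gamma_k\le 2\Gamma_\infty$ for all $k\ge k_0$. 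Choosing $C_M\triangleq 2\Gamma_\infty/(\lambda_1^{3/2}r)$ then gives $N_{\delta_k}\big((k+1)\delta_k,y_0\big)\le M-\lambda_1^{3/2}r\,C_M\delta_k^2+\Gamma_k\delta_k^2\le M$, as wanted.

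With reachability in hand, I would pin down $T_{\delta_k}(M,y_0)$ purely from the lattice structure. Reachability gives $T_{\delta_k}(M,y_0)\le(k+1)\delta_k$, while $T^*\le T_{\delta_k}(M,y_0)$ was noted above. For $k$ large enough that $C_M\delta_k<1$ one has $k\delta_k=(k+1)\delta_k-\delta_k=T^*-\delta_k(1-C_M\delta_k)<T^*$, so the only multiple of $\delta_k$ lying in $[T^*,(k+1)\delta_k]$ is $(k+1)\delta_k$ itself; hence $T_{\delta_k}(M,y_0)=(k+1)\delta_k$ and therefore $T_{\delta_k}(M,y_0)-T^*=(k+1)\delta_k-T^*=C_M\delta_k^2$, which is (\ref{111111}). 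The main obstacle is the reachability step: one must ensure that the $\delta^2$-error constant $\Gamma_k$ from Theorem~\ref{theorem-NP-delta-error} can be bounded \emph{uniformly} in $k$, so that a single constant $C_M$ (independent of $k$) dominates it for all $k\ge k_0$; this is exactly why $(k+1)\delta_k$ must approach $T^*$ from above while staying strictly below $T^*_{y_0}$, keeping the exponential factor under control.
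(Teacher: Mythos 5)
Your proposal is correct and follows essentially the same route as the paper's own proof: define $\delta_k$ as the smaller root of the quadratic $(k+1)\delta-C_M\delta^2=T^*$, establish $N_{\delta_k}\big((k+1)\delta_k,y_0\big)\le M$ by combining the lower Lipschitz bound of Theorem \ref{Proposition-NP-Lip-T} with the $\delta^2$-error estimate of Theorem \ref{theorem-NP-delta-error}, and then use the fact that $T_{\delta_k}(M,y_0)$ is a multiple of $\delta_k$ trapped in $[T^*,(k+1)\delta_k]$ to force $T_{\delta_k}(M,y_0)=(k+1)\delta_k$. The only (cosmetic) difference is that the paper fixes the constant explicitly up front, bounding the exponent uniformly via $(k+1)\delta_k<(T^*_{y_0}+T^*)/2$, whereas you define $C_M$ through the limit $\Gamma_\infty$ of $\Gamma_k$; this is harmless since $\Gamma_\infty=e^{C_4[1+T^*_{y_0}+1/T^*+1/(T^*_{y_0}-T^*)]}\|y_0\|^{12}r^{-11}$ does not depend on $C_M$, so no circularity arises.
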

\begin{proof}
Arbitrarily fix $M>0$ and $\eta\in(0,1)$.
{\it Throughout the proof of Theorem~\ref{theorem6.2-wang}, we simply write $T^*$ for $T(M,y_0)$.} Let
\begin{eqnarray}\label{0919-constant-delta2}
\hat k_0\triangleq 4aT^*,\;\;\mbox{with}\;\;a\triangleq a(M,y_0,r) \triangleq 2 \lambda_1^{-3/2} e^{C_4 \big[1+ T^*_{y_0}+\frac{1}{T^*}+\frac{2}{T^*_{y_0}-T^*} \big]} \|y_0\|^{12} r^{-12}.
\end{eqnarray}
We define a sequence $\{\delta_k\}_{k=k_0}^\infty$ of $\mathbb R^+$ in the following manner:
\begin{eqnarray}\label{160919-th1-op-1-1}
 \delta_k \triangleq \frac{2T^*}{(k+1)+\sqrt{(k+1)^2-4aT^*}},~k\geq \hat k_0.
\end{eqnarray}
One can easily check that
\begin{eqnarray}\label{160919-th1-op-1}
 \delta_k\in(0,1/a) \;\;\mbox{and}\;\;
  (k+1)\delta_k-a \delta^2_k=T^*\;\;\mbox{for all}\;\;k\geq \hat k_0.
\end{eqnarray}

  We now claim that there exists $k_0\geq \hat k_0$ so that
\begin{eqnarray}\label{160919-th1-op-4}
M\geq N_{\delta_k}((k+1)\delta_k,y_0)
+ \frac{1}{2} \lambda_1^{3/2} r a\delta_k^2\;\;\mbox{for all}\;\;k\geq k_0.
\end{eqnarray}
In fact, by (\ref{160919-th1-op-1-1}),  we can
choose  $\hat k_1\geq k_0$ large enough so that
\begin{eqnarray}\label{160919-th1-op-6}
 0<\delta_k<\min\{T^*/2,(T^*_{y_0}-T^*)/2\},\;\;\mbox{when}\;\;k\geq \hat k_1.
\end{eqnarray}
Arbitrarily fix $k\geq \hat k_1$.  Since $T^*<T^*_{y_0}$ (see (iii) in Theorem \ref{Theorem-eq-cont-discrete}),
 from (\ref{160919-th1-op-6}) and (\ref{160919-th1-op-1}), we can easily check that
\begin{eqnarray*}
 2\delta_k<T^*< (k+1)\delta_k<T^*+\delta_k<(T^*_{y_0}+T^*)/{2}<T^*_{y_0}.
\end{eqnarray*}
These, along with (\ref{P-T*}), yield that
\begin{eqnarray}\label{0919-gwang7.26}
2\delta_k<T^*<(k+1)\delta_k<T^*_{y_0}\;\;\mbox{and}\;\; (\delta_k,k+1)\in \mathcal{P}^*_{T^*_{y_0}}.
\end{eqnarray}
By (\ref{0919-gwang7.26}), we can apply Theorem \ref{theorem-NP-delta-error}
(see the second inequality in (\ref{NP-delta-error-0}), where $(\delta,k)$ is replaced by $(\delta,k+1)$)
and  Theorem~\ref{Proposition-NP-Lip-T} (see the first inequality in (\ref{NP-Lip-T}),
with $T_1=T^*$ and $T_2=(k+1)\delta_k$) to obtain that
\begin{eqnarray}\label{160919-th1-op-7}
N_{\delta_k}((k+1)\delta_k,y_0)
&\leq& N(( k+1)\delta_k,y_0) + e^{C_4 \big[1+ T^*_{y_0}+\frac{1}{( k+1)\delta_k}+\frac{1}{T^*_{y_0}-( k+1)\delta_k} \big]} \|y_0\|^{12} r^{-11} \delta^2_k
\nonumber\\
&\leq& N(T^*,y_0)- \lambda_1^{3/2} r  \big((k+1)\delta_k - T^* \big) +
\nonumber\\
& &   e^{C_4 \big[1+ T^*_{y_0}+\frac{1}{( k+1)\delta_k}+\frac{1}{T^*_{y_0}-( k+1)\delta_k} \big]} \|y_0\|^{12} r^{-11} \delta^2_k,
\end{eqnarray}
where $C_4\triangleq C_4(\Omega,\omega)$ is given by (\ref{NP-delta-error-0}).
Meanwhile, by (\ref{160919-th1-op-1}) and (\ref{160919-th1-op-6}), we find
\begin{eqnarray*}
 (k+1)\delta_k - T^*= a \delta_k^2
 \;\;\mbox{and}\;\;
 T^*< ( k+1)\delta_k < (T^*_{y_0}+T^*)/2.
\end{eqnarray*}
These, along with (\ref{160919-th1-op-7}) and (ii) of  Theorem \ref{Theorem-eq-cont-discrete}, yield that
\begin{eqnarray*}
  N_{\delta_k}((k+1)\delta_k,y_0)
  &\leq& N(T^*,y_0) - \lambda_1^{3/2} r  a\delta_k^2 +
   e^{C_4 \big[1+ T^*_{y_0}+\frac{1}{T^*}+\frac{2}{T^*_{y_0}-T^*} \big]} \|y_0\|^{12} r^{-11} \delta^2_k
  \nonumber\\
  &=&
   M - \lambda_1^{3/2} r  a\delta_k^2 +
   e^{C_4 \big[1+ T^*_{y_0}+\frac{1}{T^*}+\frac{2}{T^*_{y_0}-T^*} \big]} \|y_0\|^{12} r^{-11} \delta^2_k.
\end{eqnarray*}
This, together with (\ref{0919-constant-delta2}), leads to (\ref{160919-th1-op-4}), with
$k_0=\hat k_1$.

Next, we arbitrarily fix $k \geq k_0\triangleq\hat k_1$.
 Let $u_{\delta_k}$ be an admissible control to $(NP)^{( k+1)\delta_k,y_0}_{\delta_k}$. Let $\tilde u_{\delta_k}$ be the zero extension of $u_{\delta_k}$ over $\mathbb R^+$. Then by (\ref{160919-th1-op-4}),
one can easily check
 that $\tilde u_{\delta_k}$ is an admissible control (to $(TP)^{M,y_0}_{\delta_k}$),
 which drives the solution to $B_r(0)$ at time $(k+1)\delta_k$.
  This, along with  the optimality of $T_\delta(M,y_0)$, yields  that
  \begin{eqnarray}\label{0919-gwang7.28}
  T_{\delta_k}(M,y_0)\leq (k+1)\delta_k.
  \end{eqnarray}
     Meanwhile, Since $\mathcal U^M_{\delta_k}\subset\mathcal U^M$, we find from (\ref{time-1}) and (\ref{time-2})  that $T^* \leq T_{\delta_k}(M,y_0)$. From this and (\ref{160919-th1-op-1}), we get that
     \begin{eqnarray}\label{0919-gwang7.29}
     T_{\delta_k}(M,y_0)\geq k\delta_k+\delta_k(1-a\delta_k)>k \delta_k.
     \end{eqnarray}
           Since $T_{\delta_k}(M,y_0)$ is a multiple of $\delta_k$ (see (\ref{U-ad-delta})), from (\ref{0919-gwang7.28}) and (\ref{0919-gwang7.29}),
           we obtain  that
       \begin{eqnarray*}
       T_{\delta_k}(M,y_0)=(k+1)\delta_k.
       \end{eqnarray*}
            This, along with (\ref{160919-th1-op-1}) and (\ref{0919-constant-delta2}),  yields (\ref{111111}), with $C_M=a(M,y_0,r)$ and with $k_0$ given by (\ref{160919-th1-op-4}). Thus, we end the proof of Theorem~\ref{theorem6.2-wang}.
\end{proof}

\begin{Remark}\label{remark6.3-yuan}
(i) Let $y_0\in L^2(\Omega)\setminus B_r(0)$. The above theorem implies that the following conclusion is not true: For each $M>0$, there exists $\delta_1>0$ and $C>0$ so that
\begin{eqnarray*}
 |T_\delta(M,y_0)-T(M,y_0)| \geq C \delta
 \;\;\mbox{for each}\;\;
 \delta\in(0,\delta_1).
\end{eqnarray*}

(ii) We think of  that the similar result to that  in Theorem \ref{Theorem-time-order} can be obtained for optimal controls. But it seems for us that the corresponding proof will be more complicated.
\end{Remark}

\section{Appendix}
The next Lemma~\ref{lemma-0428-fn} is the copy of \cite[Lemma 5.1]{WangWangZ}.

\begin{lemma}\label{lemma-0428-fn}
Let $\mathbb K$ be either $\mathbb R$ or $\mathbb C$. Let $X$, $Y$ and $Z$ be three Banach spaces over  $\mathbb K$, with their dual spaces $X^*$, $Y^*$ and $Z^*$. Let $R\in \mathcal L(Z,X)$ and $O\in \mathcal L(Z,Y)$. Then the following two propositions are equivalent:

\noindent (i) There exists $\widehat C_0>0$ and $\hat\varepsilon_0>0$ so that for each $z\in Z$,
\begin{eqnarray}\label{lemma-0428-fn-ii}
 \| R z \|^2_X  \leq   \widehat C_0 \|Oz\|^2_Y
  +  \hat\varepsilon_0 \|z\|_Z^2.
\end{eqnarray}

\noindent (ii) There is  $C_0>0$ and $\varepsilon_0>0$ so that for each $x^*\in X^*$, there is
$y^*\in Y^*$ satisfying that
\begin{eqnarray}\label{lemma-0428-fn-i}
 \frac{1}{C_0} \|y^*\|^2_{Y^*} + \frac{1}{\varepsilon_0} \|R^*x^*-O^*y^*\|^2_{Z^*}
 \leq \|x^*\|^2_{X^*}.
\end{eqnarray}

Furthermore, when one of the above two propositions holds, the  pairs $(C_0,\varepsilon_0)$ and $(\widehat C_0,\hat\varepsilon_0)$ can be chosen to be the same.

\end{lemma}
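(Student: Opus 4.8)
The plan is to recast both propositions as statements about a single auxiliary operator and then invoke the Hahn--Banach theorem. Introduce the product space $\mathcal H \triangleq Y\times Z$ equipped with the weighted norm $\|(y,z)\|_{\mathcal H} \triangleq (\widehat C_0\|y\|_Y^2 + \hat\varepsilon_0\|z\|_Z^2)^{1/2}$, together with the bounded operator $G\colon Z\to\mathcal H$, $Gz\triangleq(Oz,z)$. A routine computation (the Cauchy--Schwarz inequality in $\mathbb R^2$) identifies the dual as $\mathcal H^*=Y^*\times Z^*$ with norm $\|(y^*,z^*)\|_{\mathcal H^*}=(\widehat C_0^{-1}\|y^*\|_{Y^*}^2+\hat\varepsilon_0^{-1}\|z^*\|_{Z^*}^2)^{1/2}$, and shows that the adjoint is $G^*(y^*,z^*)=O^*y^*+z^*$. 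With these identifications, proposition (i) becomes the single inequality $\|Rz\|_X\le\|Gz\|_{\mathcal H}$ for all $z\in Z$ (with constants $(\widehat C_0,\hat\varepsilon_0)$), while proposition (ii), upon setting $z^*\triangleq R^*x^*-O^*y^*$ so that $G^*(y^*,z^*)=R^*x^*$, becomes: for each $x^*\in X^*$ there is $(y^*,z^*)\in\mathcal H^*$ with $G^*(y^*,z^*)=R^*x^*$ and $\|(y^*,z^*)\|_{\mathcal H^*}\le\|x^*\|_{X^*}$. This reduces the lemma to the abstract equivalence ``$\|Rz\|_X\le\|Gz\|_{\mathcal H}$ for all $z$'' $\iff$ ``$R^*(X^*)\subset G^*(\mathcal H^*)$ with norm control'', and makes transparent that the two pairs of constants can be taken to coincide.

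For the implication (ii)$\Rightarrow$(i), I would argue directly: given $z\in Z$ and $x^*\in X^*$, choose the $y^*$ furnished by (ii) and use the decomposition $\langle Rz,x^*\rangle=\langle z,R^*x^*-O^*y^*\rangle+\langle Oz,y^*\rangle$. Bounding the two summands by Cauchy--Schwarz in $\mathbb R^2$ with weights $(\varepsilon_0^{1/2},C_0^{1/2})$ yields $|\langle Rz,x^*\rangle|\le(C_0\|Oz\|_Y^2+\varepsilon_0\|z\|_Z^2)^{1/2}\|x^*\|_{X^*}$; taking the supremum over $\|x^*\|_{X^*}\le 1$ and squaring gives (i) with $(\widehat C_0,\hat\varepsilon_0)=(C_0,\varepsilon_0)$.

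The hard part is (i)$\Rightarrow$(ii), and this is precisely where Hahn--Banach enters. Fix $x^*\in X^*$ and define a linear functional $\phi$ on the (possibly non-closed) range $G(Z)\subset\mathcal H$ by $\phi(Gz)\triangleq\langle z,R^*x^*\rangle$. The crucial check is that $\phi$ is well defined and bounded: if $Gz=Gz'$ then $G(z-z')=0$, so (i) forces $R(z-z')=0$ and hence $\phi(Gz)=\phi(Gz')$; moreover $|\phi(Gz)|=|\langle Rz,x^*\rangle|\le\|Rz\|_X\|x^*\|_{X^*}\le\|Gz\|_{\mathcal H}\|x^*\|_{X^*}$ by (i), so $\|\phi\|\le\|x^*\|_{X^*}$ on $G(Z)$. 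Extending $\phi$ by Hahn--Banach to some $h^*\in\mathcal H^*$ with $\|h^*\|_{\mathcal H^*}\le\|x^*\|_{X^*}$ and testing against $Gz$ gives $\langle z,G^*h^*\rangle=\langle z,R^*x^*\rangle$ for every $z$, i.e.\ $G^*h^*=R^*x^*$. Writing $h^*=(y^*,z^*)$ and unwinding the formulas for $G^*$ and $\|\cdot\|_{\mathcal H^*}$ recovers (ii) with $(C_0,\varepsilon_0)=(\widehat C_0,\hat\varepsilon_0)$. The remaining obstacles are purely bookkeeping: verifying the dual-norm identity for the weighted $\ell^2$-sum, and confirming that Hahn--Banach is available over both $\mathbb R$ and $\mathbb C$ (using the Bohnenblust--Sobczyk form in the complex case). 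Working on $G(Z)$ directly sidesteps any issue with closedness of the range of $G$.
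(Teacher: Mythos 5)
Your proposal is correct and follows essentially the same route as the paper's proof: both directions match, with (ii)$\Rightarrow$(i) via the identical decomposition $\langle Rz,x^*\rangle=\langle z,R^*x^*-O^*y^*\rangle+\langle Oz,y^*\rangle$ plus Cauchy--Schwarz, and (i)$\Rightarrow$(ii) via a Hahn--Banach extension of the functional $Gz\mapsto\langle Rz,x^*\rangle$ defined on the range of an auxiliary map into $Y\times Z$, followed by representing the extension as a pair $(y^*,z^*)$ and unwinding. The only difference is bookkeeping: the paper scales the elements of the subspace by $\sqrt{\widehat C_0}$ and $\sqrt{\hat\varepsilon_0}$ and keeps the unweighted product norm, whereas you absorb the weights into the norm of $\mathcal H$; these are isometric reformulations, and your dual-norm identity for the weighted product requires exactly the approximate-norming argument the paper carries out with its $o_1(1)$, $o_2(1)$ terms.
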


\begin{proof}[Proof of Lemma \ref{lemma-0428-fn}]
The proof is divided into the following several steps:

\textit{Step 1. To show that (ii)$\Rightarrow$(i)}

 Suppose that (ii) is true. Then, for each $x^*\in X^*$, there is $y^*_{x^*}\in Y^*$ so that (\ref{lemma-0428-fn-i}), with $y^*=y^*_{x^*}$, is true. From this, one can easily check that   for any  $x^*\in X^*$ and $z\in Z$,
\begin{eqnarray*}
 \langle Rz, x^* \rangle_{X,X^*}
= \langle z, R^*x^* - O^* y^*_{x^*} \rangle_{Z,Z^*} + \langle O z,  y^*_{x^*} \rangle_{Y,Y^*}.
\end{eqnarray*}
By this and the Cauchy-Schwarz inequality, we deduce that for each  $x^*\in X^*$ and $z\in Z$,
\begin{eqnarray*}
 | \langle Rz, x^* \rangle_{X,X^*}  |
  \leq \big( C_0\|z\|_Z^2  +  \varepsilon_0\|O z\|_Y^2 \big)^{1/2}   \|x^*\|_{X^*}.
\end{eqnarray*}
Hence,  (\ref{lemma-0428-fn-ii}), with
$(\widehat C_0,\hat\varepsilon_0)$$=(C_0,\varepsilon_0)$, is true.

\vskip 5pt
\textit{Step 2. To show that (i)$\Rightarrow$(ii)}

\noindent Suppose that (i) is true.
Define a    subspace $E$ of $Y\times Z$ in the following manner:
\begin{eqnarray*}
 E\triangleq\Big\{ \left(\sqrt{\widehat C_0} Oz,\sqrt{\hat\varepsilon_0} z \right) ~:~  z\in Z \Big\}.
\end{eqnarray*}
The norm of $E$ is inherited form the following usual norm of $Y\times Z$:
\begin{eqnarray}\label{0426-5}
\|(f,g)\|_{Y\times Z} \triangleq
\left( \|f\|_{Y}^2 + \|g\|_Z^2 \right)^{1/2},\; (f,g)\in Y\times Z.
\end{eqnarray}
Arbitrarily fix   $x^*\in X^*$. Define  an operator $\mathcal T_{x^*}: E\rightarrow  \mathbb K$ in the following manner:
\begin{eqnarray}\label{0428-th1.2-control-4}
  \Big(\sqrt{\widehat C_0} Oz,\sqrt{\hat\varepsilon_0} z \Big) \mapsto \langle x^*, Rz \rangle_{X^*,X}.
\end{eqnarray}
By (\ref{lemma-0428-fn-ii}) and (\ref{0428-th1.2-control-4}), we can easily check that $\mathcal T_{x^*}$ is well defined and linear.
We now claim
\begin{eqnarray}\label{0428-th1.2-control-5}
  \| \mathcal T_{x^*}\|_{\mathcal L(E,\mathbb K)}
 \leq   \|x^*\|_{X^*}.
\end{eqnarray}
Indeed, by the definition of $E$, we see that
  given $(f,g)\in E$, there is $z\in Z$ so that
$$
(f,g)= \left(\sqrt{\widehat C_0} Oz,\sqrt{\hat\varepsilon_0} z \right).
$$
Then by (\ref{0428-th1.2-control-4}), we find that
$|\mathcal T_{x^*} \big((f,g)\big)|
   = | \langle x^*, Rz \rangle_{X^*,X} |
  \leq \|x^*\|_{X^*} \|Rz\|_X$.
This, along with   (\ref{lemma-0428-fn-ii}), shows (\ref{0428-th1.2-control-5}).

Since $\mathcal T_{x^*}$ is a linear and bounded functional, we can apply  the Hahn-Banach extension theorem to find
 $\widetilde{ \mathcal T}_{x^*}$ in $(Y\times Z)^*$ so that
\begin{eqnarray}\label{0428-th1.2-control-7}
 \widetilde{ \mathcal T}_{x^*}\big((f,g)\big) = \mathcal T_{x^*}\big((f,g)\big)
 \;\;\mbox{for all}\;\;
 (f,g)\in E
\end{eqnarray}
and so that
\begin{eqnarray}\label{0428-th1.2-control-8}
 \|\widetilde{ \mathcal T}_{x^*}\|_{\mathcal L(Y\times Z,\mathbb K)}
 = \| \mathcal T_{x^*}\|_{\mathcal L(E,\mathbb K)}.
\end{eqnarray}
These, together with (\ref{0426-5}) and (\ref{0428-th1.2-control-5}), yield that
for all $f\in T$ and $g\in Z$,
\begin{eqnarray*}
 |\widetilde{ \mathcal T}_{x^*}\big((f,0)\big)|
 \leq  \|x^*\|_{X^*} \|f\|_{Y};\;\;
  |\widetilde{ \mathcal T}_{x^*}\big((0,g)\big)|
\leq  \|x^*\|_{X^*} \|g\|_{Z}.
\end{eqnarray*}
Thus,    there exists   $(y^*_{x^*},z^*_{x^*})\in Y^*\times Z^*$ so that
for all $(f,g)\in Y\times Z$,
\begin{eqnarray*}
 \widetilde{ \mathcal T}_{x^*}\big((f,0)\big)
 =  \langle y^*_{x^*},f \rangle_{Y^*,Y};\;\;
 \widetilde{ \mathcal T}_{x^*}\big((0,g)\big)
 =  \langle z^*_{x^*},g \rangle_{Z^*,Z},
\end{eqnarray*}
from which,  it follows that
\begin{eqnarray}\label{0428-th1.2-control-9}
 \widetilde{ \mathcal T}_{x^*}\big( (f,g) \big)
 = \langle y^*_{x^*},f \rangle_{Y^*,Y}  +  \langle z^*_{x^*},g \rangle_{Z^*,Z}\;\;\mbox{for any}\;\;(f,g)\in Y\times Z.
\end{eqnarray}

 We now claim that
  \begin{eqnarray}\label{0428-th1.2-control-11}
  \|y^*_{x^*}\|_{Y^*}^2 + \|z^*_{x^*}\|_{Z^*}^2  \leq \|x^*\|_{X^*}^2;\;  R^*x^* - O^*(\sqrt{\widehat C_0}y^*_{x^*}) = \sqrt{\hat\varepsilon_0} z^*_{x^*}
  \;\;\mbox{in}\;\; Z^*.
 \end{eqnarray}
When    (\ref{0428-th1.2-control-11})
is proved, the conclusion (ii)  (with $(C_0,\varepsilon_0)$$=(\widehat C_0,\hat\varepsilon_0)$) follows at once.

 To prove the first inequality in (\ref{0428-th1.2-control-11}), we see   from (\ref{0428-th1.2-control-9}), (\ref{0428-th1.2-control-8}) and (\ref{0426-5}) that
  \begin{eqnarray*}
  | \langle y^*_{x^*},f \rangle_{Y^*,Y}  +  \langle z^*_{x^*},g \rangle_{Z^*,Z} |
  \leq \|x^*\|_{X^*}  \left( \|f\|_{Y}^2 + \|g\|_Z^2 \right)^{1/2}\;\;\mbox{for all}\;\;(f,g)\in Y\times Z.
 \end{eqnarray*}
Meanwhile, for each $\delta\in(0,1)$, we can choose $(f_\delta,g_\delta)\in Y\times Z$ so that
 \begin{eqnarray*}
  \langle y^*_{x^*},f_\delta \rangle_{Y^*,Y} &=& \|y^*_{x^*}\|^2_{Y^*}  + o_1(1), ~ \|f_\delta\|_Y=\|y^*_{x^*}\|_{Y^*},
\\
  \langle z^*_{x^*},g_\delta \rangle_{Z^*,Z} &=& \|z^*_{x^*}\|^2_{Z^*} + o_2(1), ~ \|g_\delta\|_Z=\|z^*_{x^*}\|_{Z^*},
  \end{eqnarray*}
 where $o_1(1)$ and $o_2(1)$ are so that
 $\lim_{\delta\rightarrow0^+} o_1(1)=\lim_{\delta\rightarrow0^+} o_2(1)=0$.
  From these,  it follows that
 \begin{eqnarray*}
  \|y^*_{x^*}\|_{Y^*}^2 + \|z^*_{x^*}\|_{Z^*}^2 - |o_1(1)| - |o_2(1)|  \leq \|x^*\|_{X^*} \left( \|y^*_{x^*}\|_{Y^*}^2 + \|z^*_{x^*}\|_{Z^*}^2 \right)^{1/2}.
 \end{eqnarray*}
 Sending $\delta\rightarrow 0^+$ in the above inequality leads to the first inequality in (\ref{0428-th1.2-control-11}).

To prove the second equality in   (\ref{0428-th1.2-control-11}), we find  from (\ref{0428-th1.2-control-4}), (\ref{0428-th1.2-control-7}) and (\ref{0428-th1.2-control-9}) that
\begin{eqnarray*}
 \langle x^*, Rz \rangle_{X^*,X}
 = \langle y^*_{x^*}, \sqrt{\widehat C_0} Oz \rangle_{Y^*,Y}
    + \langle z^*_{x^*}, \sqrt{\hat \varepsilon_0} z \rangle_{Z^*,Z}\;\;\mbox{for all}\;\;z\in Z,
\end{eqnarray*}
which yields that for all $z\in Z$,
\begin{eqnarray*}
 \langle R^*x^*, z \rangle_{Z^*,Z}
 = \langle O^*(\sqrt{\widehat C_0}y^*_{x^*}),  z \rangle_{Z^*,Z}
    + \langle \sqrt{\hat\varepsilon_0} z^*_{x^*},  z \rangle_{Z^*,Z}.
\end{eqnarray*}
This leads to the second equality in   (\ref{0428-th1.2-control-11}).

\vskip 5pt
\textit{Step 3. On the  pairs $(C_0,\varepsilon_0)$ and $(\widehat C_0,\hat\varepsilon_0)$}

\noindent From the proofs in Step 1 and Step 2, we see that  when one of the propositions (i) and (ii) holds,
 $(C_0,\varepsilon_0)$ and $(\widehat C_0,\hat\varepsilon_0)$ can be chosen to be the same. This ends the proof of Lemma~\ref{lemma-0428-fn}.
\end{proof}

The next Lemma~\ref{lemma-0428-fn} is a part copy of the proof of \cite[Theorem 2.1]{PWX}.
\begin{lemma}\label{lemma-1009-interpolation}
There exists $C\triangleq C(\Omega,\omega)>0$ so that for each $t>0$ and $z\in L^2(\Omega)$,
\begin{eqnarray}\label{1009-interpolation}
  \|e^{\Delta t} z\|  \leq   Ce^{C/t} \|\chi_\omega e^{\Delta t} z \| ^{1/2} \|z\|^{1/2}.
\end{eqnarray}
\end{lemma}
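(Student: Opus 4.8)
The statement to prove is the interpolation inequality
\begin{equation*}
  \|e^{\Delta t} z\|  \leq   Ce^{C/t} \|\chi_\omega e^{\Delta t} z \| ^{1/2} \|z\|^{1/2}
  \quad\text{for all } t>0,\ z\in L^2(\Omega),
\end{equation*}
which is a spectral-inequality / frequency-function type estimate for the heat semigroup. The plan is to deduce it from a quantitative unique continuation (observation at one time) property for the heat equation, combined with the contractivity and analyticity of the semigroup. I would first reduce to controlling $\|e^{\Delta t}z\|$ by the $\omega$-localized quantity $\|\chi_\omega e^{\Delta t}z\|$ at the cost of a factor measuring the full $L^2$-size, which is exactly the shape of the claimed inequality.

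The central tool I would invoke is the Lebeau–Robbiano / frequency-function machinery. Concretely, let $u(s)\triangleq e^{\Delta s}z$ solve the heat equation with $u(0)=z$. The key known ingredient is a quantitative estimate of the form $\|u(t)\|\le C e^{C/t}\|u(t)\|_{L^2(\omega)}^{\theta}\|u(t/2)\|^{1-\theta}$ or, more directly, an interpolation inequality at a fixed time coming from Carleman estimates for $\Delta$. The cleanest route is to use the analyticity of the semigroup to pass to a spatial-type interpolation: using the spectral decomposition $z=\sum_j z_je_j$, one has $e^{\Delta t}z=\sum_j e^{-\lambda_j t}z_je_j$, and the function $x\mapsto e^{\Delta t}z$ is real-analytic in $\Omega$ with quantitative bounds on its analyticity radius governed by $t$. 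Then I would apply a Carleman-based three-ball or propagation-of-smallness inequality for elliptic-type functions to bound $\|e^{\Delta t}z\|$ by a fractional power of $\|\chi_\omega e^{\Delta t}z\|$ times a fractional power of the global norm, with the explicit constant $Ce^{C/t}$ tracking the loss of analyticity as $t\to 0^+$.

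The main steps, in order, would be: (i) set $w\triangleq e^{\Delta t}z$ and record that $\|w\|\le\|z\|$ by contractivity, so the global factor in the target is under control; (ii) establish that $w$ satisfies the needed elliptic regularity/analyticity estimates with constants depending explicitly on $t$ (here the analyticity radius degenerates like $\sqrt{t}$, producing the $e^{C/t}$ factor); (iii) apply the quantitative unique continuation (spectral inequality type) estimate on $\Omega$ observed over $\omega$, yielding $\|w\|\le Ce^{C/t}\|\chi_\omega w\|^{1/2}\|w\|^{1/2}$; (iv) absorb $\|w\|^{1/2}\le\|z\|^{1/2}$ from step (i) to obtain exactly the claimed bound. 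Since the statement is quoted as a copy of a portion of the proof of \cite[Theorem 2.1]{PWX}, I would reproduce that Carleman-estimate argument, the heart of which is a global Carleman inequality for the heat (or elliptic) operator with a boundary-and-interior weight, optimized over the Carleman parameter to extract the exponent $1/2$ and the explicit $t$-dependence.

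The hard part will be step (iii): producing the quantitative unique-continuation inequality with the sharp exponent $1/2$ and the explicit constant $e^{C/t}$, which genuinely requires a Carleman estimate rather than soft functional-analytic arguments. Everything else—contractivity, the spectral representation, and the final absorption—is routine. In practice I would not re-derive the Carleman estimate from scratch but cite the version in \cite{PWX} (reproduced in the appendix), tracking only how the time $t$ enters the constant through the degeneration of the analyticity/observation region as $t\to 0^+$, which is what forces the factor $Ce^{C/t}$ in the final inequality.
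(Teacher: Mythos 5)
Your plan has a genuine flaw at its central step. In step (iii) you assert the estimate in the form $\|w\| \leq Ce^{C/t}\|\chi_\omega w\|^{1/2}\|w\|^{1/2}$ with $w = e^{\Delta t}z$, and only bring in the initial datum afterwards via $\|w\|\leq\|z\|$. But an inequality with $\|w\|^{1/2}$ on the right self-improves: dividing by $\|w\|^{1/2}$ and squaring gives $\|w\| \leq C^2e^{2C/t}\|\chi_\omega w\|$ for every $w$ in the range of $e^{\Delta t}$. That range contains every Dirichlet eigenfunction (since $e_j = e^{\Delta t}(e^{\lambda_j t}e_j)$), so this would force $\|\chi_\omega e_j\| \geq C^{-2}e^{-2C/t}>0$ uniformly in $j$, which is false in general: whispering-gallery eigenfunctions of a disc carry exponentially small mass (in $\sqrt{\lambda_j}$) on any $\omega$ compactly contained in the interior, consistent with the factor $e^{C\sqrt{\lambda_j}}$ in the spectral inequality being essentially sharp. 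The same defect reappears if you run the analyticity/three-balls route honestly: the quantitative analyticity bounds for $e^{\Delta t}z$ (radius $\sim\sqrt t$) are expressed in terms of $\|z\|$, not of $\|w\|$, so propagation of smallness produces a second factor $\|z\|^{1-\theta}$ directly, with a geometry-dependent exponent $\theta$ that need not equal $1/2$ (and $\theta<1/2$ cannot be upgraded to $1/2$). The point you are missing is that the lemma is intrinsically a two-time statement: the norm at time $0$ must enter the estimate itself, through the smoothing of the semigroup between times $0$ and $t$; it cannot be appended at the end by contractivity.

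The paper's proof is more elementary and avoids all of this. Write $z=\sum_j z_je_j$, so $e^{\Delta t}z=\sum_j e^{-\lambda_jt}z_je_j$. Apply the Lebeau--Robbiano spectral inequality $\|\sum_{\lambda_j\leq\lambda}a_je_j\|\leq Ce^{C\sqrt{\lambda}}\|\chi_\omega\sum_{\lambda_j\leq\lambda}a_je_j\|$ to the modes with $\lambda_j\leq\lambda$, and bound the high-frequency tail by $\|\sum_{\lambda_j>\lambda}e^{-\lambda_jt}z_je_j\|\leq e^{-\lambda t}\|z\|$ --- this tail bound is precisely where $\|z\|$ enters and is what makes the argument close. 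Then the elementary inequality $C\sqrt{\lambda}\leq C^2/(2t)+\lambda t/2$ converts $e^{C\sqrt{\lambda}}$ into $e^{C^2/(2t)}e^{\lambda t/2}$, giving $\|e^{\Delta t}z\|\leq 2Ce^{C^2/(2t)}\big[\varepsilon^{-1}\|\chi_\omega e^{\Delta t}z\|+\varepsilon\|z\|\big]$ for every $\varepsilon>0$ (first for $\varepsilon=e^{-\lambda t/2}\in(0,1)$, then for all $\varepsilon>0$ using $\|\chi_\omega e^{\Delta t}z\|\leq\|z\|$); minimizing over $\varepsilon$ yields the exponent exactly $1/2$. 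No Carleman estimate is re-derived beyond citing the spectral inequality, and note that your fallback of citing \cite{PWX} for the estimate would be circular here, since the lemma \emph{is} the quoted portion of that proof.
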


\begin{proof}
Write $\{\lambda_j\}_{j=1}^\infty$ and $\{e_j\}_{j=1}^\infty$ for the eigenvalues and the normalized orthogonal eigenfunctions for $-\Delta$ with the zero Dirichlet boundary condition. Arbitrarily fix $z\in L^2(\Omega)$ and $0\leq t<T$. We have that
\begin{eqnarray*}
 z=\sum_{j=1}^\infty z_j e_j \;\;\mbox{for some}\;\;  \{z_j\}_{j=1}^\infty \in  l^2.
\end{eqnarray*}
Then it follows that
\begin{eqnarray}\label{161009-a1}
 e^{\Delta t} z = \sum_{j=1}^\infty e^{-\lambda_j t} z_j e_j.
\end{eqnarray}
Recall that the Lebeau-Robbiano inequality says that there exists $C\triangleq C(\Omega,\omega)>0$ so that for each $\lambda>0$,
\begin{eqnarray*}
 \|\sum_{\lambda_j\leq \lambda} a_j e_j \| \leq  C e^{C \sqrt{\lambda}}
 \| \chi_\omega \sum_{\lambda_j\leq \lambda} a_j e_j \|,
 ~\forall\, \{a_j\}_{j=1}^\infty \in l^2.
\end{eqnarray*}
We apply the above inequality (where $a_j=e^{-\lambda_j t} z_j$), as well as (\ref{161009-a1}), to get that for each $\lambda>0$,
\begin{eqnarray*}
 \|e^{\Delta t} z\|
 &\leq& \|\sum_{\lambda_j\leq \lambda} e^{-\lambda_j t} z_j e_j \|
    +  \| \sum_{\lambda_j> \lambda} e^{-\lambda_j t} z_j e_j \|
    \nonumber\\
  &\leq& Ce^{C\sqrt{\lambda}} \|\chi_\omega \sum_{\lambda_j\leq \lambda} e^{-\lambda_j t} z_j e_j \|
    +  \| \sum_{\lambda_j> \lambda} e^{-\lambda_j t} z_j e_j \|
    \nonumber\\
 &\leq& Ce^{C\sqrt{\lambda}} \|\chi_\omega \sum_{j=1}^\infty e^{-\lambda_j t} z_j e_j \|
    +  (Ce^{C\sqrt{\lambda}}+1)  \| \sum_{\lambda_j> \lambda} e^{-\lambda_j t} z_j e_j\|
    \nonumber\\
 &\leq& Ce^{C\sqrt{\lambda}} \|\chi_\omega e^{\Delta t} z \|
    +  2Ce^{C\sqrt{\lambda}} e^{-\lambda t}  \| z\|
\end{eqnarray*}
This, as well as the following facts
$$
C\sqrt{\lambda}\leq \frac{C^2}{2t}+\frac{1}{2}\lambda t,~\forall\,\lambda>0,
$$
 indicates that for each $\lambda>0$,
\begin{eqnarray}\label{161011-a2}
 \|e^{\Delta t} z\|  \leq   2Ce^{C^2/(2t)} \Big[ e^{\lambda t/2} \|\chi_\omega e^{\Delta t} z \|
    +  e^{-\lambda t/2}  \| z\| \Big].
\end{eqnarray}
Since the function $\lambda\rightarrow e^{-\lambda t/2}$, $\lambda\in\mathbb R^+$ can take any value in $(0,1)$, it follows by (\ref{161011-a2}) that for each $\varepsilon\in (0,1)$,
\begin{eqnarray}\label{161011-a3}
\|e^{\Delta t} z\|  \leq   2Ce^{C^2/(2t)} \Big[ \frac{1}{\varepsilon} \|\chi_\omega e^{\Delta t} z \|
    +  \varepsilon  \| z\| \Big].
\end{eqnarray}
Meanwhile,  one can easily check that
\begin{eqnarray*}
  \|\chi_\omega e^{\Delta t} z \| \leq \| z\|.
\end{eqnarray*}
From this and (\ref{161011-a3}), we deduce that for each $\varepsilon>0$,
\begin{eqnarray}\label{161011-a4}
\|e^{\Delta t} z\|  \leq   2Ce^{C^2/(2t)} \Big[ \frac{1}{\varepsilon} \|\chi_\omega e^{\Delta t} z \|
    +  \varepsilon  \| z\| \Big].
\end{eqnarray}
Take the infimum  in (\ref{161011-a4}) for $\varepsilon$ over $\mathbb R^+$ to get (\ref{1009-interpolation}).
This ends the proof.

\end{proof}

\end{document}